\patchcmd{\l@chapter}{1.0em}{0.8em}{}{}
\numberwithin{equation}{section}
\def\vecb{{\text{\boldmath$b$}}}
\def\vece{{\text{\boldmath$e$}}}
\def\vecm{{\text{\boldmath$m$}}}
\def\vecu{{\text{\boldmath$u$}}}
\def\vecv{{\text{\boldmath$v$}}}
\def\vecx{{\text{\boldmath$x$}}}
\def\vecy{{\text{\boldmath$y$}}}
\def\vecz{{\text{\boldmath$z$}}}
\def\vecalf{{\text{\boldmath$\alpha$}}}
\def\vec0{{\text{\boldmath$0$}}}
\def\scrB{{\mathcal B}}
\def\scrC{{\mathcal C}}
\def\scrD{{\mathcal D}}
\def\scrF{{\mathcal F}}
\def\scrI{{\mathcal I}}
\def\scrJ{{\mathcal J}}
\def\scrN{{\mathcal N}}
\def\scrO{{\mathcal O}}
\def\scrR{{\mathcal R}}
\def\scrU{{\mathcal U}}
\def\scrY{{\mathcal Y}}
\def\fC{{\mathfrak C}}
\def\supp{\operatorname{supp}}
\def\Lip{\operatorname{Lip}}
\def\diag{\operatorname{diag}}
\def\dist{\operatorname{dist}}
\def\DI{\operatorname{\mathbf{DI}}}
\def\Leb{\operatorname{Leb}}
\def\id{\operatorname{id}}
\newcommand{\figref}[1]{\hyperref[#1]{Figure \ref{#1}}}
\newcommand{\lemref}[1]{\hyperref[#1]{Lemma \ref{#1}}}
\newcommand{\thmref}[1]{\hyperref[#1]{Theorem \ref{#1}}}
\newcommand{\conjref}[1]{\hyperref[#1]{Conjecture \ref{#1}}}
\newcommand{\propref}[1]{\hyperref[#1]{Proposition \ref{#1}}}
\newcommand{\corref}[1]{\hyperref[#1]{Corollary \ref{#1}}}
\newcommand{\defref}[1]{\hyperref[#1]{Definition \ref{#1}}}
\newcommand{\rmkref}[1]{\hyperref[#1]{Remark \ref{#1}}}
\newcommand{\qref}[1]{\hyperref[#1]{Question \ref{#1}}}
\newcommand{\secref}[1]{\hyperref[#1]{\S\ref{#1}}}
\newcommand{\appref}[1]{\hyperref[#1]{Appendix \ref{#1}}}
\newcommand{\cC}{\mathcal{C}}
\newcommand{\cK}{\mathcal{K}}
\newcommand{\cN}{\mathcal{N}}
\newcommand{\cR}{\mathcal{R}}
\renewcommand{\d}[1]{\ensuremath{\operatorname{d}\!{#1}}}
\newcommand{\tDelta}{\widetilde{\Delta}}
\newcommand{\uK}{\underline{K}}
\newcommand{\bn}{\mathbf{0}}
\newcommand{\col}{\: : \:}
\newcommand{\ve}{\varepsilon}
\newcommand{\vol}{\operatorname{vol}}
\newcommand{\matr}[4]{\left( \begin{matrix} #1 & #2 \\ #3 & #4 \end{matrix} \right) }
\newcommand{\G}{\Gamma}
\newcommand{\SL}{\text{SL}}
\newcommand{\N}{{\mathbb N}}
\newcommand{\Z}{{\mathbb Z}}
\newcommand{\R}{{\mathbb R}}
\newcommand{\GL}{\text{GL}}
\newtheorem{thm}{Theorem}[section]
\newtheorem{lem}[thm]{Lemma}%
\newtheorem{prop}[thm]{Proposition}%
\newtheorem{cor}[thm]{Corollary}%
\theoremstyle{definition}
\newtheorem{remark}{Remark}
\begin{document}
\begin{normalsize}
\begin{abstract}
%Let $m,n$ be two positive integers. In the present paper, we present sufficient conditions on a function $\psi$ which ensure that for almost every (or almost no) $A\in M_{m,n}(\R)$, the system of inequalities $\|A\bm{q}-\bm{p}\|_{\infty}^m<\psi(t)$ and $\|\bm{q}\|_{\infty}^n<t$ is solvable in $(\bm{p},\bm{q})\in \Z^m\times (\Z^n\smallsetminus \{\bm{0}\})$ for all sufficiently large $t$. 
Let $\psi$ be a continuous decreasing function defined on all large positive real numbers. We say that a real
$m\times n$ matrix $A$ is $\psi$-Dirichlet if for every sufficiently large real number $t$ one can find $\bm{p} \in \Z^m$, $\bm{q} \in \Z^n\smallsetminus\{\bm{0}\}$ satisfying $\|A\bm{q}-\bm{p}\|^m< \psi({t})$ and $\|\bm{q}\|^n<{t}$. This property was introduced by Kleinbock and Wadleigh in 2018, 
generalizing the property of $A$ being Dirichlet improvable which dates back to Davenport and Schmidt (1969). 
In the present paper, we give sufficient conditions on $\psi$ to ensure
that the set of $\psi$-Dirichlet matrices   has zero or full Lebesgue measure.
Our proof is dynamical and relies on the effective equidistribution and doubly mixing of certain expanding horospheres %of certain one-parameter flow 
in the space of lattices. Another main ingredient of our proof is an asymptotic measure estimate for certain compact neighborhoods of the critical locus (with respect to the supremum norm) in the space of lattices. Our method also works for the analogous weighted problem where the relevant supremum norms are replaced by certain weighted quasi-norms.
\end{abstract}
\title[Upper bound...]{A measure estimate in geometry of numbers and improvements to Dirichlet's theorem}
\author{Dmitry Kleinbock}
\address{Brandeis University, Waltham MA, USA, 02454-9110}
\email{kleinboc@brandeis.edu}

\author{Andreas Str\"ombergsson}
\address{Department of Mathematics, Uppsala University, Box 480, SE-75106, Uppsala, SWEDEN}
\email{astrombe@math.uu.se}

\author{Shucheng Yu}
\address{Department of Mathematics, Uppsala University, Box 480, SE-75106, Uppsala, SWEDEN}
\email{shucheng.yu@math.uu.se}
\date{\today}
\thanks{D.K.\ was supported by NSF grant  DMS-1900560. A.S.\ and S.Y.\ were supported by the Knut and Alice Wallenberg Foundation}
{\maketitle}

%\enlargethispage{12pt}
%\addtocontents{toc}{\protect\enlargethispage{\baselineskip}}
\enlargethispage{10pt}

\rule{0pt}{0pt}\vspace{-45pt}
\tableofcontents

\setlength{\parskip}{6pt}

%\newpage
\section{Introduction}
%\begin{thm}\label{thm:improvedirichlecon}
%Fix $m, n\in \N$. Let $\psi: [t_0,\infty)\to {(0,\infty)}$ be a continuous, decreasing function satisfying \eqref{equ:con1psi} and \eqref{equ:con2psi}. 
%If the series 
%\begin{align}\label{equ:crticalseriesconj}
%\sum_{k}\frac{\left(1-k\psi(k)\right)^{\frac{(d-1)(d+2)}{2}-1}\left(-\log\left(1-k\psi(k)\right)\right)^{\frac{d(d-1)}{2}}}{k}
%\end{align} 
%converges, then almost every $A\in M_{m,n}(\R)$ is $\psi$-Dirichlet.
%\end{thm}

\subsection{Background} %History of the problem}
Let $m, n$ be two positive integers and let $M_{m,n}(\R)$ be the space of $m$ by $n$ real matrices. The starting point of our work is the following higher dimensional generalization of the classical Dirichlet's Diophantine approximation theorem, see e.g. \cite[\S 1.5]{Cassels1957}.
\begin{thm}
For any $A\in M_{m,n}(\R)$ and $t>1$, there exists $(\bm{p},\bm{q})\in \Z^m\times (\Z^n\smallsetminus\{\bm{0}\})$ satisfying the following system of inequalities:
 \begin{align}\label{equ:dirichletoriginal}
\|A\bm{q}-\bm{p}\|^m\leq \frac{1}{t}\quad \textrm{and}\quad \|\bm{q}\|^n<t.
\end{align}
Here $\|\cdot\|$ denotes the supremum norm on $\R^m$ and $\R^n$ respectively.
\end{thm}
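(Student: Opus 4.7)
The plan is to deduce the statement from Minkowski's first convex body theorem applied to a unimodular lattice built from $A$. I would introduce the unimodular linear map
\begin{equation*}
L_A : \R^m \times \R^n \to \R^m \times \R^n, \quad (\bm{p}, \bm{q}) \mapsto (\bm{p} - A\bm{q}, \bm{q}),
\end{equation*}
which has determinant one, so that $\Lambda_A := L_A(\Z^{m+n})$ is a unimodular lattice in $\R^{m+n}$. Finding an integer pair $(\bm{p}, \bm{q})$ satisfying~\eqref{equ:dirichletoriginal} then amounts to finding a nonzero element of $\Lambda_A$ inside the closed centrally symmetric convex box
\begin{equation*}
K_t := \bigl\{(\bm{x}, \bm{y}) \in \R^m \times \R^n \col \|\bm{x}\|^m \leq 1/t,\ \|\bm{y}\|^n \leq t\bigr\},
\end{equation*}
whose $\bm{y}$-component is nonzero.

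Next I would compute $\Vol(K_t) = (2t^{-1/m})^m (2t^{1/n})^n = 2^{m+n}$, so that Minkowski's first theorem immediately produces a nonzero $(\bm{p}, \bm{q}) \in \Z^{m+n}$ with $\|A\bm{q} - \bm{p}\|^m \leq 1/t$ and $\|\bm{q}\|^n \leq t$. The degenerate case $\bm{q} = \bm{0}$ is ruled out by noting that it would force $\bm{p} \in \Z^m \setminus \{\bm{0}\}$ with $\|\bm{p}\|^m \leq 1/t < 1$, contradicting $\|\bm{p}\| \geq 1$ for any nonzero integer vector.

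Finally I would upgrade $\|\bm{q}\|^n \leq t$ to the strict inequality claimed in~\eqref{equ:dirichletoriginal}. Since $\|\bm{q}\|$ is a positive integer, the bound $\|\bm{q}\|^n \leq t$ is automatically strict unless $t$ is the $n$-th power of a positive integer, in which case a routine perturbation argument (e.g.\ applying the same Minkowski step to $K_{t+\ve}$ for $\ve > 0$ chosen so that $t + \ve < (\lfloor t^{1/n}\rfloor + 1)^n$, and then exploiting the discreteness of $\Lambda_A$ on bounded sets) settles the boundary case. The only substantive step is the Minkowski application together with the elementary volume identity; I do not anticipate any serious obstacle, as this is the standard higher-dimensional Dirichlet theorem and the remaining bookkeeping is purely combinatorial.
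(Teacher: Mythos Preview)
The paper does not actually prove this theorem; it is stated as classical background with a reference to Cassels's book. Your approach via Minkowski's convex body theorem is exactly the standard proof, and the main steps (unimodularity of $\Lambda_A$, the volume computation $\Vol(K_t)=2^{m+n}$, and the exclusion of $\bm{q}=\bm{0}$) are correct.

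There is, however, a small slip in your treatment of the boundary case. Perturbing \emph{upward} to $t+\ve$ yields a pair with $\|A\bm{q}-\bm{p}\|^m\leq 1/(t+\ve)<1/t$ and, after your integrality argument, $\|\bm{q}\|^n\leq t$; this has the strictness on the wrong inequality relative to \eqref{equ:dirichletoriginal}, and no amount of discreteness-based limiting fixes that. The remedy is simply to perturb \emph{downward}: for each small $\ve>0$ apply Minkowski at $t-\ve$ to obtain $(\bm{p}_\ve,\bm{q}_\ve)$ with $\|A\bm{q}_\ve-\bm{p}_\ve\|^m\leq 1/(t-\ve)$ and $\|\bm{q}_\ve\|^n\leq t-\ve<t$; by discreteness a single pair $(\bm{p},\bm{q})$ works along a sequence $\ve_k\to0^+$, giving $\|\bm{q}\|^n<t$ (from any one $\ve_k$) and $\|A\bm{q}-\bm{p}\|^m\leq 1/t$ (in the limit). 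Alternatively, you can bypass the perturbation entirely by invoking Minkowski's linear forms theorem in the form that allows one non-strict inequality and the rest strict; placing the non-strict bound on one of the $\bm{x}$-coordinates immediately gives \eqref{equ:dirichletoriginal} as stated.
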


%{\color{red}{[Maybe add a discussion on the Khintchine-Groshev theorem; \textit{asymptotic} v.s. \textit{uniform} Diophantine approximation problem]}}
A natural question to ask is whether one can improve \eqref{equ:dirichletoriginal} by replacing $1/t$ by a smaller function, that is, consider the following system of inequalities:
\begin{align}\label{equ:dirichlet}
\|A\bm{q}-\bm{p}\|^m<\psi(t)\quad \textrm{and}\quad \|\bm{q}\|^n<t
\end{align}
where $\psi$ is a positive, continuous, decreasing function which decays to zero at infinity. 
Historically there have been two directions to pursue in this regard: looking for solvability of \eqref{equ:dirichlet} for an unbounded set of $t > 0$ vs.\ for all large enough $t$. The former is sometimes referred to as  \textit{asymptotic approximation}, and has culminated in definitive results such as the Khintchine-Groshev theorem. In this paper we are interested in the latter, less studied set-up of \textit{uniform approximation}.
%A natural question to ask is whether one can improve \eqref{equ:dirichletoriginal} by replacing $1/t$ by a smaller function. Let $t_0\geq 1$ be a fixed number and let $\psi:[t_0,\infty)\to (0,\infty)$ be a continuous, decreasing function which decays to zero at infinity. 
Following the definition in 
Kleinbock and Wadleigh \cite{KleinbockWadleigh2018}, 
we say that an $m$ by $n$ real matrix $A$ is \textit{$\psi$-Dirichlet} if the system of inequalities \eqref{equ:dirichlet}
%\begin{align}\label{equ:dirichlet}
%\|A\bm{q}-\bm{p}\|^m<\psi(t)\quad \textrm{and}\quad \|\bm{q}\|^n<t
%\end{align}
has solutions in $(\bm{p},\bm{q})\in \Z^m\times (\Z^n\smallsetminus \{\bm{0}\})$ for all sufficiently large $t$. It is clear that $A\in M_{m,n}(\R)$ is $\psi$-Dirichlet if and only if $A+A'$ is $\psi$-Dirichlet for any $A'\in M_{m,n}(\Z)$. Thus with slight abuse of notation, we denote by $\DI_{m,n}(\psi)\subset M_{m,n}(\R/\Z)$ the set of $\psi$-Dirichlet matrices. 

Let $\psi_1(t)=1/t$. The problem of improving Dirichlet's theorem was initiated by Davenport and Schmidt  \cite{hDwS70a,hDwS69b} where they showed that the set 
\begin{align}\label{equ:DI}
\DI_{m,n}:=\bigcup_{0<c<1}\DI_{m,n}(c\psi_1)
\end{align}
of \textit{Dirichlet improvable matrices}
is of Lebesgue measure zero, while having full Hausdorff dimension $mn$. More recently, Kleinbock and Mirzadeh \cite[Theorem 1.5]{dKsM2020} showed that for any fixed $0<c<1$, the Hausdorff dimension of $\DI_{m,n}(c\psi_1)$ is strictly smaller than $mn$.
There have also been extensive studies on the Hausdorff dimensions of the (even smaller) set of the \textit{singular matrices}, 
\begin{align*}
\textbf{Sing}_{m,n}:=\bigcap_{0<c<1}\DI_{m,n}(c\psi_1).
\end{align*} 
After a series of breakthrough work, it is now known that the Hausdorff dimension of $\textbf{Sing}_{m,n}$ is $mn-\tfrac{mn}{m+n}$ whenever $\max\{m,n\}>1$; see \cite{Cheung2011,yCnC2016,sKdKeLgM2017,tDlFdSmU2017,tDlFdSmU2019}. 
%for a non-exhaustive list. In particular, it was proved that  has Hausdorff dimension  combining the work of Kadyrov–Kleinbock–Lindenstrauss–Margulis \cite[Corollary 1.2]{sKdKeLgM2017} (upper bounds) and the work of Das–Fishman-Simmons–Urba\'nski \cite[Theorem 3.1]{tDlFdSmU2019} (lower bounds).

On the other hand, for a general decreasing function $\psi$
with $t\mapsto t\psi(t)$ increasing,
%using the theory of continued fractions 
Kleinbock and Wadleigh proved a zero-one law for the Lebesgue measure of $\DI_{1,1}(\psi)$ depending on the divergence or convergence of a certain series involving $\psi$ \cite[Theorem 1.8]{KleinbockWadleigh2018}. See also \cite{HussainKleinbockWadleighWang2018} for the relevant dimension theory of $\DI_{1,1}(\psi)$, \cite{KleinbockRao2022} for a similar zero-one law with the supremum norm replaced by the Euclidean norm and \cite{KleinbockWadleigh2019,KimKim2022} for analogous results in the \textit{inhomogeneous} setting.

The arguments in \cite{KleinbockWadleigh2018} rely on the theory of continued fractions and are not applicable for higher dimensions. Nevertheless, for general dimensions, building on ideas from \cite{Dani1985,KleinbockMargulis1999},
a dynamical approach was proposed in \cite[\S 4]{KleinbockWadleigh2018}, reformulating the problem as a \textit{shrinking target problem}, which asks whether a generic orbit in a dynamical system hits a given sequence of shrinking targets %indefinitely
{infinitely often}. 
%More explicitly, they gave a dynamical interpretation for $\psi$-Dirichlet matrices based on ideas dating back to \cite{Dani1985,KleinbockMargulis1999}.
To describe this dynamical interpretation, let us first fix some notation. Let $d=m+n$ and let $X_d:=\SL_d(\R)/ \SL_d(\Z)$ 
%be the space of unimodular lattices in $\R^d$.
be the homogeneous space which parameterizes the space of unimodular lattices in $\R^d$ via $g\SL_d(\Z) \leftrightarrow g\Z^d $. We note that $\SL_d(\R)$ acts on $X_d$ naturally via the regular action: $g\Lambda=gh\Z^d$ for any $g\in\SL_d(\R)$ and $\Lambda=h\Z^d\in X_d$. 
For any $s\in \R$, %let us denote
let $a_s$ be the diagonal matrix
\begin{align*}
%a_s:=\diag\bigl(e^{s/m},\ldots,e^{s/m},e^{-s/n},\ldots,e^{-s/n}\bigr)\in \SL_d(\R).
a_s:=\matr{e^{s/m}I_m}00{e^{-s/n}I_n}\in \SL_d(\R).
%a_s:=\diag(e^{s/m}I_m, e^{-s/n}I_n)\in \SL_d(\R).
\end{align*}
Let $\Delta: X_d\to [0,\infty)$ be the function defined by
\begin{align}\label{def:delta}
\Delta(\Lambda):=\sup_{\bm{v}\in \Lambda\smallsetminus \{\bm{0}\}}\log\Bigl(\frac{1}{\|\bm{v}\|}\Bigr).
\end{align}
Finally, let us denote 
\begin{align}\label{equ:submainfold}
\mathcal{Y}:=\left\{\Lambda_A:= \left(\begin{matrix}
I_m & A\\
0 & I_n\end{matrix}\right)\Z^d\in X_d\col  A\in M_{m,n}(\R)\right\}.
\end{align}
The submanifold $\mathcal{Y}\subset X_d$ can be naturally identified with the $mn$-dimensional torus $M_{m,n}(\R/\Z)$ via $\Lambda_A\leftrightarrow A\in M_{m,n}(\R/\Z)$. Throughout the paper, we denote by $\textrm{Leb}$ the probability Lebesgue measure on $\mathcal{Y}\cong M_{m,n}(\R/\Z)$; for simplicity of notation, for any function $f$ on $\scrY$ we will abbreviate the space average $\int_{\scrY}f(\Lambda_A)\d\,\textrm{Leb}(A)$ by either $\textrm{Leb}(f)$ or $\int_{\scrY}f(\Lambda_A)\,\d A$.

It was shown in \cite[Proposition 4.5]{KleinbockWadleigh2018} that for any given $\psi$ as above, there exists a unique continuous function $r=r_{\psi}: [s_0,\infty)\to (0,\infty)$ such that 
\begin{align}\label{equ:dyintkw}
\textrm{$A\in M_{m,n}(\R)$ is \textit{not} $\psi$-Dirichlet\ $\Leftrightarrow$\ $a_s \Lambda_A\in \Delta^{-1}[0, r(s)]$ for an unbounded set of $s>s_0$.}
\end{align} 
%The correspondence between the $\psi$ and $r$-functions are usually referred as the  \textit{Dani Correspondence}.
This equivalence is usually called the \textit{Dani Correspondence}.
In view of this interpretation, our task is %we thus need 
to find conditions which ensure that for almost every (or almost no) $A\in M_{m,n}(\R)$, the orbit $\{a_s\Lambda_A\}_{s>s_0}$ hits the shrinking target $\Delta^{-1}[0, r(s)]$ for an unbounded set of $s$-values. We note that this was also the strategy used in \cite{KleinbockMargulis1999} giving a dynamical proof of the classical Khintchine-Groshev Theorem, where the relevant shrinking targets are certain cusp neighborhoods in $X_d$. For our case, by Mahler's compactness criterion, the shrinking targets $\Delta^{-1}[0, r(s)]$ are compact neighborhoods of the 
critical locus %level set 
$\Delta^{-1}\{0\}$,
%Elements in this level set are usually called \textit{critical lattices} and they are exactly the lattices with largest shortest length (with respect to the supremum norm). Here the shortest length of a lattice means the length of a shortest nonzero vector of this lattice. 
whose explicit description is given by 
Haj\'os's Theorem \cite{Hajos1941} %the  Haj\'os--Minkowski Theorem 
(cf.\ Theorem \ref{thm:hajosmin} below). %We note that 
The fact that these shrinking targets are compact sets causes new difficulties compared to the situation in \cite{KleinbockMargulis1999}, see the discussion in Section \ref{sec:discussion}.

\subsection{Main results}\label{MAINRESsec}
In the present paper, based on the dynamical interpretation described above, we give sufficient conditions on $\psi$ implying that %determining when 
$\DI_{m,n}(\psi)$ is of zero or full Lebesgue measure. In fact, with some %necessary 
modifications, our arguments also work for the analogous \textit{weighted} problem where the supremum norms in \eqref{equ:dirichlet} are replaced by certain \textit{weighted quasi-norms},
as introduced in \cite{Kleinbock1998}.
We thus prove our main result in that generality. 
We first introduce the relevant notation.

%More generally, one can study the \textit{weighted} $\psi$-Dirichlet matrices by replacing the above supremum norm by certain weighted quasi-norms: 
Let $\bm{\alpha}\in \R^m$ and $\bm{\beta}\in \R^n$ be two \textit{weight vectors}, that is 
\begin{align*}
\bm{\alpha}=(\alpha_1,\ldots, \alpha_m)\in (\R_{>0})^m\quad\textrm{and}\quad \bm{\beta}=(\beta_1,\ldots, \beta_n)\in (\R_{>0})^n
\end{align*}
with $\sum_i\alpha_i=\sum_j\beta_j=1$. We say that $A\in M_{m,n}(\R)$ is \textit{$\psi_{\bm{\alpha},\bm{\beta}}$-Dirichlet} if the system of inequalities 
\begin{align}\label{equ:dirichletwei}
\|A\bm{q}-\bm{p}\|_{\bm{\alpha}}<\psi(t)\quad \textrm{and}\quad \|\bm{q}\|_{\bm{\beta}}<t
\end{align}
has solutions in $(\bm{p},\bm{q})\in \Z^m\times (\Z^n\smallsetminus \{\bm{0}\})$ for all sufficiently large $t$. Here % following the notation introduced in \cite[\S 2.1]{Kleinbock1998},
\begin{align*}
\|\bm{x}\|_{\bm{\alpha}}:=\max\left\{|x_i|^{1/\alpha_i}\col 1\leq i\leq m\right\}\quad\textrm{and}\quad \|\bm{y}\|_{\bm{\beta}}:=\max\left\{|y_j|^{1/\beta_j}\col 1\leq j\leq n\right\}
\end{align*}
are the two quasi-norms associated with $\bm{\alpha}$ and $\bm{\beta}$ respectively. Again it is easy to see that $A\in M_{m,n}(\R)$ is $\psi_{\bm{\alpha},\bm{\beta}}$-Dirichlet if and only if $A+A'$ is $\psi_{\bm{\alpha},\bm{\beta}}$-Dirichlet for any $A'\in M_{m,n}(\Z)$, and %; similarly, 
we denote by $\DI_{\bm{\alpha},\bm{\beta}}(\psi)\subset M_{m,n}(\R/\Z)$ the set of $\psi_{\bm{\alpha},\bm{\beta}}$-Dirichlet matrices.
We note that when $\bm{\alpha}=(\tfrac{1}{m},\ldots, \tfrac{1}{m})$ and $\bm{\beta}=(\tfrac{1}{n},\ldots,\tfrac{1}{n})$, then $\DI_{\bm{\alpha},\bm{\beta}}(\psi)=\DI_{m,n}(\psi)$.
%the notion $\psi_{\bm{\alpha},\bm{\beta}}$-Dirichlet coincides with notion of $\psi$-Dirichlet. %Let us denote $D(\psi,\bm{\alpha},\bm{\beta})\subset M_{m,n}(\R)$ the set of $(\psi,\bm{\alpha},\bm{\beta})$-Dirichlet $n$ by $m$ matrices.
 
We now state our main result which gives sufficient conditions on $\psi$ determining when $\DI_{\bm{\alpha},\bm{\beta}}(\psi)$ is of full or zero Lebesgue measure.

\begin{thm}\label{thm:improvedirichleconwei}
Fix $m,n\in \N$ and two weight vectors $\bm{\alpha}\in \R^m$ and $\bm{\beta}\in \R^n$. Let $d=m+n$, and let 
\begin{align*}
\varkappa_d=\frac{d^2+d-4}{2}\quad \textrm{and}\quad \lambda_d=\frac{d(d-1)}{2}.
\end{align*}
Let $t_0>0$ and let $\psi: [t_0,\infty)\to {(0,\infty)}$ be a continuous, decreasing function such that
\begin{align}\label{equ:con1psi}
\textrm{the function $t\mapsto t\psi(t)$ is increasing}
\end{align} 
and
\begin{align}\label{equ:con2psi}
\psi(t)<\psi_1(t)=1/t\quad \textrm{for all $t\geq t_0$}.
\end{align} 
%\eqref{equ:con1psi} and \eqref{equ:con2psi}. 
Let $F_{\psi}(t):=1-t\psi(t)$. If the series 
\begin{align}\label{equ:crticalseriesconj}
\sum_{k\geq t_0}k^{-1}F_{\psi}(k)^{\varkappa_d}\log^{\lambda_d}\left(\frac{1}{F_{\psi}(k)}\right)
\end{align} 
converges, then $\DI_{\bm{\alpha},\bm{\beta}}(\psi)$ is of full Lebesgue measure. Conversely, if the series \eqref{equ:crticalseriesconj} diverges, and %if $\lim_{t\to\infty}t\psi(t)=1$, we further assume that
\begin{align}\label{equ:condipsi}
\liminf_{t_1\to\infty}\frac{\sum_{t_0\leq k\leq t_1}k^{-1}F_{\psi}(k)^{\varkappa_d}\log^{\lambda_d+1}\left(\frac{1}{F_{\psi}(k)}\right)}
{\left(\sum_{t_0\leq k\leq t_1}k^{-1}F_{\psi}(k)^{\varkappa_d}\log^{\lambda_d}\left(\frac{1}{F_{\psi}(k)}\right)\right)^2}=0,
\end{align} 
then $\DI_{\bm{\alpha},\bm{\beta}}(\psi)$ is of zero Lebesgue measure. %Here for any $t_1>t_0$ and $\alpha, \beta>0$,
%\begin{align*}
%I_{\psi}(t_1;\alpha,\beta):=\int_{t_0}^{t_1}\frac{(1-t\psi(t))^{\alpha}\left(-\log(1-t\psi(t))\right)^{\beta}}{t}dt.
%\end{align*}
\end{thm}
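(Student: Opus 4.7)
The overall strategy is to invoke the (weighted analog of the) Dani correspondence \eqref{equ:dyintkw} to recast the problem as a shrinking-target problem on $X_d$: there is a continuous function $r=r_\psi$ such that $A$ fails to be $\psi_{\bm{\alpha},\bm{\beta}}$-Dirichlet exactly when the $a_s$-orbit of $\Lambda_A$ visits the compact neighborhood $E(s):=\Delta^{-1}[0,r(s)]$ of the critical locus $\Delta^{-1}\{0\}$ for an unbounded set of $s$. The task thus reduces to deciding, by Borel--Cantelli type arguments, whether for almost every (resp.\ almost no) $A$ the translate $a_{s_k}\Lambda_A$ meets this target infinitely often along a discrete time-sequence $\{s_k\}$, e.g.\ $s_k=\log k$. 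Through the explicit relation between $r(s)$ and $F_\psi(e^s)$ coming from the correspondence, the hitting probabilities $\textrm{Leb}\bigl(\{A\col a_{s_k}\Lambda_A\in E(s_k)\}\bigr)$ will be seen to match, up to constants, the terms of the series \eqref{equ:crticalseriesconj}.

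For the convergence direction I would apply the first Borel--Cantelli lemma. Effective equidistribution of the expanding horospherical translates $a_s\scrY$ in $X_d$ gives
\begin{equation*}
\int_{\scrY}\chi_{E(s)}(a_s\Lambda_A)\,\d A=\mu_{X_d}(E(s))+O(\text{error}(s)),
\end{equation*}
where the main term is controlled by a sharp asymptotic estimate of the form $\mu_{X_d}(\Delta^{-1}[0,r])\asymp r^{\varkappa_d}\log^{\lambda_d}(1/r)$ as $r\to 0^+$. Summability of the main terms when \eqref{equ:crticalseriesconj} converges then yields that a.e.\ $A$ visits $E(s_k)$ only finitely often, so $\DI_{\bm{\alpha},\bm{\beta}}(\psi)$ has full Lebesgue measure.

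For the divergence direction one needs a quasi-independence argument through a quantitative (Chung--Erd\H{o}s type) second Borel--Cantelli lemma. The decisive input is a double mixing estimate for the expanding horospheres: for $j<k$,
\begin{equation*}
\int_{\scrY}\chi_{E(s_j)}(a_{s_j}\Lambda_A)\chi_{E(s_k)}(a_{s_k}\Lambda_A)\,\d A=\mu_{X_d}(E(s_j))\,\mu_{X_d}(E(s_k))+O(\text{error}(j,k)).
\end{equation*}
Combined with the measure estimate, one verifies that hypothesis \eqref{equ:condipsi} is precisely what dominates the off-diagonal contributions by the square of the main series, so Chung--Erd\H{o}s yields that the target is hit infinitely often for a.e.\ $A$, and hence $\DI_{\bm{\alpha},\bm{\beta}}(\psi)$ has zero measure.

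The main obstacle will be the asymptotic measure estimate $\mu_{X_d}(\Delta^{-1}[0,r])\asymp r^{\varkappa_d}\log^{\lambda_d}(1/r)$ as $r\to 0$. Unlike the cuspidal neighborhoods used in \cite{KleinbockMargulis1999}, the critical locus $\Delta^{-1}\{0\}$ is compact, with a highly nontrivial stratified structure dictated by Haj\'os's theorem on lattices with no points in the open sup-norm unit ball. Producing matching upper and lower bounds with the exponents $\varkappa_d=\tfrac{d^2+d-4}{2}$ and $\lambda_d=\tfrac{d(d-1)}{2}$ requires a careful tubular parameterization around each stratum of the critical locus and an integration of the transverse measure, with the logarithmic factors reflecting the adjacency of strata of different codimensions. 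A secondary challenge is arranging the equidistribution and mixing rates so as to accommodate the non-smooth indicators $\chi_{E(s)}$ uniformly as $s\to\infty$; this will likely require smooth regularizations together with Lipschitz-type control on the boundary measure of the targets.
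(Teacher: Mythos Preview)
Your overall architecture is right and matches the paper: Dani correspondence, then Borel--Cantelli driven by effective equidistribution and double mixing of $g_s$-translates of $\scrY$, with the measure of the shrinking targets as the decisive input. But there is a concrete error that would make the argument fail as written.

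Your claimed estimate $\mu_{X_d}(\Delta^{-1}[0,r])\asymp r^{\varkappa_d}\log^{\lambda_d}(1/r)$ is off by one in the exponent: the correct asymptotic (Theorem~\ref{thm:mainthm}) is
\[
\mu_d\bigl(\Delta^{-1}[0,r]\bigr)\asymp r^{\varkappa_d+1}\log^{\lambda_d}(1/r).
\]
The exponent $\varkappa_d$ applies instead to the \emph{thickened} targets $\widetilde{\Delta}_r:=\bigcup_{0\le s<1}g_{-s}\Delta^{-1}[0,r]$. This is not a cosmetic slip; it is exactly the missing ingredient in your discretization. With your choice $s_k=\log k$ and the unthickened targets $E(s_k)=\Delta^{-1}[0,r(s_k)]$, effective equidistribution gives hitting probabilities $\asymp r(s_k)^{\varkappa_d+1}\log^{\lambda_d}(1/r(s_k))\asymp F_\psi(k)^{\varkappa_d+1}\log^{\lambda_d}(1/F_\psi(k))$, which does \emph{not} match the terms $k^{-1}F_\psi(k)^{\varkappa_d}\log^{\lambda_d}(1/F_\psi(k))$ of \eqref{equ:crticalseriesconj}; for instance when $F_\psi(t)=(\log t)^{-\tau}$ the two series have different convergence thresholds. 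Moreover, sampling at $s_k=\log k$ without thickening does not reduce the continuous-time hitting statement to a discrete one in the convergence direction: a hit at some $s\in(s_k,s_{k+1})$ need not force a hit at $s_k$.

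The paper fixes both issues simultaneously by discretizing at integer $s$-times and replacing $E(k)$ by the thickened set $B_k:=\bigcup_{0\le s<1}g_{-s}\Delta^{-1}[0,r(k+s)]$, so that $g_k\Lambda_A\in B_k$ is \emph{equivalent} to a continuous hit in $[k,k+1)$. One then needs, and proves, the separate estimate $\mu_d(\widetilde{\Delta}_r)\asymp r^{\varkappa_d}\log^{\lambda_d}(1/r)$; the extra factor $r^{-1}$ over $\mu_d(\Delta^{-1}[0,r])$ comes from the unit-time flow thickening, and it is precisely this that produces the series \eqref{equ:crticalseriesconj} after the change of variables $t\leftrightarrow s$ in the Dani correspondence. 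Your proposal should incorporate this thickening step (and the corresponding measure estimate for $\widetilde{\Delta}_r$); once it does, the rest of your outline, including the smooth approximation to handle indicators and the second-moment bound under \eqref{equ:condipsi}, lines up with the paper's proof.
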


%Before proceeding to the discussion of the proof, let us first give a few remarks regarding Theorem \ref{thm:improvedirichleconwei}. 
\begin{remark}\label{technassumptionREM}
When $m=n=1$, Theorem \ref{thm:improvedirichleconwei} is not new; in fact \cite[Theorem 1.8]{KleinbockWadleigh2018} is stronger in the sense that it gives a tight zero-one law without the extra assumption \eqref{equ:condipsi}. 
We believe that an analogous tight zero-one law should also hold for general dimensions $m,n$,
i.e.\ that %the statement of 
Theorem \ref{thm:improvedirichleconwei} should hold
with the assumption \eqref{equ:condipsi} removed.
See Remark \ref{rmk:teassu} %and \eqref{thm:divcasewecopf21} 
for a discussion of why assumption \eqref{equ:condipsi} is needed in our proof.
%The assumption \eqref{equ:condipsi} is needed in our proof for technical reasons (see especially Remark \ref{rmk:teassu} and \eqref{thm:divcasewecopf21}); we suspect that an analogous tight zero-one law should also hold for general dimensions $m,n$. 
%%However, one might need some new ideas to remove the assumption \eqref{equ:condipsi}.
\end{remark}

\setlength{\parskip}{2pt}

\begin{remark}\label{rmk:kw}
The function $F_{\psi}(t)=1-t\psi(t)$ encodes $\psi%(\cdot)
$ via $\psi(t)=\frac{1-F_{\psi}(t)}{t}$. In view of the assumptions \eqref{equ:con1psi} and \eqref{equ:con2psi}, $F_{\psi}%(\cdot)
$ is a decreasing function and takes values in $(0,1)$. In particular, the limit $\lim_{t\to\infty}F_{\psi}(t)$ exists and lies in $[0,1)$. 

If $\lim_{t\to\infty}F_{\psi}(t)>0$,
then the conclusion of Theorem \ref{thm:improvedirichleconwei} follows from
the work of Kleinbock and Weiss \cite{KleinbockWeiss2008}.
Indeed, in this case 
% (or equivalently, $\lim_{t\to\infty}t\psi(t)<1$), 
the series \eqref{equ:crticalseriesconj} diverges,
and for any fixed $\beta$ we have
$\sum_{t_0\leq k\leq t_1}k^{-1}F_{\psi}(k)^{\varkappa_d}\log^{\beta}\bigl(\frac{1}{F_{\psi}(k)}\bigr)\asymp\log t_1$
as $t_1\to\infty$, so that also the assumption \eqref{equ:condipsi} holds;
moreover, $\lim_{t\to\infty}F_{\psi}(t)>0$ implies that there exists some $c\in(0,1)$
%Moreover, in this case %$c_1<F_{\psi}(t)<c_2$ for all $t>t_0$ and for some $c_2>c_1>0$. Thus 
%the quotient in \eqref{equ:condipsi} satisfies $\asymp_{t_0}\frac{1}{\log t_1}\to 0$ as $t_1\to\infty$. In particular, condition \eqref{equ:condipsi} is automatically satisfied. In fact, when $\lim_{t\to\infty}F_{\psi}(t)>0$, there exists some $c\in (0,1)$ 
%
such that $\psi(t)<c/t$, implying that $\DI_{\bm{\alpha},\bm{\beta}}(\psi)\subset \DI_{\bm{\alpha},\bm{\beta}}(c\psi_1)$; %when $\bm{\alpha}=(\tfrac1m,\ldots, \tfrac1m)$ and $\bm{\beta}=(\tfrac1n,\ldots, \tfrac1n)$, 
and by
%the conclusion that $\DI_{\bm{\alpha},\bm{\beta}}(\psi)$ is a null set already follows from the work 
\cite[Theorem 1.4]{KleinbockWeiss2008}, $\DI_{\bm{\alpha},\bm{\beta}}(c\psi_1)$ is a null set.
%Therefore, when proving Theorem \ref{thm:improvedirichleconwei},
%we may from the start assume that $\lim_{t\to\infty}F_{\psi}(t)=0$.
%
%the main substance of Theorem \ref{thm:improvedirichleconwei} is when $\lim_{t\to\infty}F_{\psi}(t)=0$.
%
%Hence the interesting cases for the divergent case of Theorem \ref{thm:improvedirichleconwei} are when $\lim_{t\to\infty}t\psi(t)=1$.
\end{remark}
%\begin{thm}\label{thm:improvedirichlediv}
%Keep the assumptions as in Theorem \ref{thm:improvedirichleconwei}. %Fix $m, n\in \N$ and two weights $\bm{\alpha}\in \R^m$ and $\bm{\beta}\in \R^n$. Let $\psi: [t_0,\infty)\to {(0,\infty)}$ be a continuous, decreasing function satisfying \eqref{equ:con1psi} and \eqref{equ:con2psi}. 
%If the series \eqref{equ:crticalseriesconj}
%%\begin{align}\label{equ:crticalseriesconj}
%%\sum_{k}\frac{\left(1-k\psi(k)\right)^{\frac{(d-1)(d+2)}{2}-1}\left(-\log\left(1-k\psi(k)\right)\right)^{\frac{d(d-1)}{2}}}{k}
%%\end{align} 
%diverges, and if $\lim_{t\to\infty}t\psi(t)=1$, we further assume that %$\psi(\cdot)$ satisfies
%%\begin{align}\label{equ:polydecacond}
%%\left(\log(t)\right)^{-L_1}\ll 1-t\psi(t)\ll \left(\log(t)\right)^{-L_2}\ \textrm{for some $L_1>L_2>0$},
%%\end{align}
%%and
%\begin{align}\label{equ:condipsi}
%\lim_{t_1\to\infty}\frac{I_{\psi}(t_1;\varkappa_d,\beta_n+1)}{I_{\psi}(t_1;\varkappa_d,\beta_n)^{2}}=0,
%\end{align} 
%then almost every $A\in M_{m,n}(\R)$ is not $(\psi,\bm{\alpha},\bm{\beta})$-Dirichlet. Here for any $t_1>t_0$ and $\alpha, \beta>0$,
%\begin{align*}
%I_{\psi}(t_1;\alpha,\beta):=\int_{t_0}^{t_1}\frac{(1-t\psi(t))^{\alpha}\left(-\log(1-t\psi(t))\right)^{\beta}}{t}dt.
%\end{align*}
%\end{thm}

\setlength{\parskip}{6pt}

\begin{remark}\label{rmk:examples}
Let us give some explicit examples to illustrate our results. 
We note that each function $\psi$ appearing below is
strictly decreasing on $[t_0,\infty)$ for $t_0$ sufficiently large.
%by taking $t_0\geq 1$ sufficiently large, the examples below are all strictly decreasing on $[t_0,\infty)$.
\begin{itemize}
%\item[(1)]
%If $\psi(t)=\frac{1-at^{-b}}{t}$ for some $a>0$ and $b\geq 0$. In this case the series \eqref{equ:crticalseriesconj} converges if and only if $b>0$. 
\item[(1)]
Let $\psi(t)=\frac{1-c(\log t )^{-\tau}}{t}$ ($\Leftrightarrow F_{\psi}(t)=c(\log t )^{-\tau}$) for some $c>0$ and $\tau\geq 0$. %In this case $r(s)\asymp s^{-b}$ and the $i$-th (thickened) shrinking target has volume $\mu(\tilde{B}_k)\asymp \frac{\log^{\beta_n}(i)}{i^{b\varkappa_d}}$. It is not difficult to see that 
In this case the series \eqref{equ:crticalseriesconj} diverges if and only if $\tau\leq  \frac{1}{\varkappa_d}$. It is also easy to check that condition \eqref{equ:condipsi} is satisfied whenever $\tau\leq \frac{1}{\varkappa_d}$. %(if $b<\frac{1}{\varkappa_d}$, then any $\gamma_1\in (0,1)$ and $\gamma_2\in (1,\infty)$ would work; if $b=\frac{1}{\varkappa_d}$, then we can take any $\gamma_1\in (\frac{1}{\beta_n+1},1)$ and $\gamma_2\in (1, \gamma_1(\beta_n+1))$). 
Hence Theorem \ref{thm:improvedirichleconwei} implies that for such $\psi$, $\DI_{\bm{\alpha},\bm{\beta}}(\psi)$ 
is of full measure if $\tau>\frac1{\varkappa_d}$,
and of zero measure if $\tau\leq\frac1{\varkappa_d}$.
%(resp. null) measure $\Leftrightarrow \tau>\frac{1}{\varkappa_d}$ (resp. $0\leq \tau\leq \frac{1}{\varkappa_d}$) $\Leftrightarrow$ the series \eqref{equ:crticalseriesconj} converges (resp. diverges). 
\item[(2)] Let $\psi(t)=\frac{1-c(\log t )^{-1/\varkappa_d}(\log\log t)^{-\tau}}{t}$ ($\Leftrightarrow F_{\psi}(t)=c(\log t)^{-1/\varkappa_d}(\log\log t)^{-\tau}$) for some $c>0$ and $\tau\in \R$. %In this case $r(s)\asymp s^{-1/\varkappa_d}\log^{-c}(s)$ and the $i$-th shrinking target has volume $\mu(\tilde{B}_i)\asymp \frac{\left(\log(i)\right)^{\beta_n-c\varkappa_d}}{i}$. It is not difficult to see that 
In this case the series \eqref{equ:crticalseriesconj} diverges if and only if $\tau\leq \frac{\lambda_d+1}{\varkappa_d}$,
while the condition \eqref{equ:condipsi} is satisfied if and only if %there exist $0<\gamma_1<1$ and $\gamma_2>1$ such that 
%\begin{align*}
%\beta_n+\gamma_2-c\varkappa_d+1<(1+\gamma_1)(\beta_n-c\varkappa_d+1)\Leftrightarrow \beta_n-c\varkappa_d+1>\frac{\gamma_2}{\gamma_1}.
%\end{align*}
%Then the existence of such $\gamma_1$ and $\gamma_2$ can be guaranteed whenever $\beta_n-c\varkappa_d+1>1$, or equivalently, 
$\tau<\frac{\lambda_d}{\varkappa_d}$. 
Hence Theorem \ref{thm:improvedirichleconwei} implies that for such $\psi$, $\DI_{\bm{\alpha},\bm{\beta}}(\psi)$
is of full measure if $\tau>\frac{\lambda_d+1}{\varkappa_d}$,
and of zero measure if $\tau< \frac{\lambda_d}{\varkappa_d}$.
However, for $\tau$ in the range 
$\frac{\lambda_d}{\varkappa_d}\leq \tau\leq \frac{\lambda_d+1}{\varkappa_d}$,
Theorem \ref{thm:improvedirichleconwei} gives \textit{no information}
(although we believe that $\DI_{\bm{\alpha},\bm{\beta}}(\psi)$
is of zero measure also for these $\tau$; cf.\ Remark \ref{technassumptionREM}).
We point out that in the special case $\tau=\frac{\lambda_d}{\varkappa_d}$,
the quotient in \eqref{equ:condipsi} remains \textit{bounded}
as $t_1\to\infty$; in this case our method of proof allows us to conclude that
at least the set $\DI_{\bm{\alpha},\bm{\beta}}(\psi)$ is not of full 
Lebesgue measure;
see Remark~\ref{rmk:final}.
%while the series \eqref{equ:crticalseriesconj} converges (resp. diverges) if and only if $\tau>\frac{\lambda_d+1}{\varkappa_d}$ (resp. $\tau\leq \frac{\lambda_d+1}{\varkappa_d}$). Thus there is a range $\frac{\lambda_d}{\varkappa_d}\leq \tau\leq \frac{\lambda_d+1}{\varkappa_d}$ where the series \eqref{equ:crticalseriesconj} diverges and Theorem \ref{thm:improvedirichleconwei} falls short to confirm whether $\DI_{\bm{\alpha},\bm{\beta}}(\psi)$ is of full or zero measure. However, when $\tau=\frac{\lambda_d}{\varkappa_d}$, the quotient in \eqref{equ:condipsi} satisfies $\asymp_{t_0} \frac{(\log\log t_1)^2}{(\log\log t_1)^2}=1$. In particular, the limsup of this quotient as $t_1\to\infty$ is finite; in this case we can conclude that the set $\DI_{\bm{\alpha},\bm{\beta}}(\psi)^c$ is of positive Lebesgue measure, see Remark \ref{rmk:final}. %(The full conjecture would imply the set of $\psi$-Dirichlet $l\times m$ matrices is of full (resp. null) measure if and only if $c>\frac{\beta_n+1}{\varkappa_d}$ (resp. $c\leq \frac{\beta_n+1}{\varkappa_d}$).)
\end{itemize}
%As illustrated in the above second example, the trick in the double sum estimate in the proof of Theorem \ref{thm:divcaseweco} would allow us to confirm shrinking targets property for (not necessarily monotone) shrinking targets $\{B_k\}$ (in the setting of exponential decay and polynomial blow up in norms) with volume satisfying $\mu(B_k)\asymp \frac{\log^{\gamma} k}{k}$ for any $\gamma>0$. (So it's still not enough for the critical case $\mu(B_k)\asymp \frac{1}{k}$, but very close.)
\end{remark}

\begin{remark}
Let us also point out that the assumption \eqref{equ:con2psi} is imposed only to avoid making the statement of Theorem \ref{thm:improvedirichleconwei} unnecessarily complicated (since otherwise $F_{\psi}(t)$ could be negative and then the series \eqref{equ:crticalseriesconj} is not well-defined). Indeed, if \eqref{equ:con2psi} fails but $\psi$ satisfies the other assumptions in Theorem \ref{thm:improvedirichleconwei}, then $\DI_{\bm{\alpha},\bm{\beta}}(\psi)$ is certainly of full Lebesgue measure. This is true since in this case %since \eqref{equ:con1psi} holds but \eqref{equ:con2psi} fails for $\psi$, 
after possibly enlarging $t_0$, we have $\psi(t)\geq1/t\geq\frac{1-(\log t)^{-\tau}}t$ for all $t\geq t_0$ and $\tau\geq 0$, and therefore $\DI_{\bm{\alpha},\bm{\beta}}(\psi)\supset \DI_{\bm{\alpha},\bm{\beta}}\left({t\mapsto}\frac{1-(\log t)^{-\tau}}t\right)$, where the last set is of full Lebesgue measure whenever $\tau>\frac{1}{\varkappa_d}$ by Remark \ref{rmk:examples}(1).
\end{remark}
%{\color{red}{[maybe one or two remarks on assumptions \eqref{equ:con1psi}, \eqref{equ:con2psi} and \eqref{equ:condipsi}]}}
%\begin{remark}
%{\color{red}{[Regarding condition \eqref{equ:condipsi}, if we let $\underline{B}_k$ be the relevant shrinking targets as in the proof of the divergent case of Theorem \ref{thm:improvedirichleconwei} (so that $\mu_d(\underline{B}_k)\asymp r(k+1)^{\varkappa_d}\log^{\lambda_d}\left(\tfrac{1}{r(k+1)}\right)$), then it seems to me \eqref{equ:condipsi} (comparing with also condition \eqref{equ:serassump2}) is equivalent to that
%\begin{align*}
%\lim_{k\to\infty}\frac{\sum_{s_0<i\leq k}\mu_d(\underline{B}_i)\log\left(\tfrac{1}{\mu_d(\underline{B}_i)}\right)}{\left(\sum_{s_0<i\leq k}\mu_n(\underline{B}_i)\right)^2}=0.
%\end{align*}
%Maybe this can give us some intuition on condition \eqref{equ:condipsi} (certainly it's a nontrivial restriction on the measure of the shrinking targets and shows the limitations of our methods)?]}}
%\end{remark}

%We next present 
One of the main ingredients in our proof of Theorem \ref{thm:improvedirichleconwei}
is a measure estimate in geometry of numbers, which we believe is of independent interest.
Let $\mu_d$ be the unique left $\SL_d(\R)$-invariant probability measure on $X_d=\SL_d(\R)/\SL_d(\Z)$.
%One of the main ingredients of our proof of Theorem \ref{thm:improvedirichleconwei} is 
We are interested in the sets 
%The following theorem gives %In Theorem \ref{thm:mainthm} below we give
%an asymptotic estimate of the 
%$\mu_d$-measure %measure with respect to $\mu_d$ 
%of the set 
$\Delta^{-1}[0, r]$ in $X_d$, as $r\to0^+$.
As we have discussed, these sets shrink toward the critical locus $\Delta^{-1}\{0\}$
as $r\to0^+$,
and by Haj\'os's Theorem \cite{Hajos1941} %the  Haj\'os--Minkowski Theorem 
(cf.\ Theorem \ref{thm:hajosmin} below),
the set $\Delta^{-1}\{0\}$ has a simple explicit description %, namely 
as a finite union of
%certain 
compact submanifolds of positive codimension $\frac{d^2+d}2-1=\varkappa_d+1$ %\geq 2$.
in $X_d$.
In particular this implies that $\mu_d\bigl(\Delta^{-1}\{0\}\bigr)=0$
and $\mu_d\left(\Delta^{-1}[0, r]\right)\to0$ as $r\to0^+$.
The following theorem gives an asymptotic estimate 
on the exact rate of convergence in the limit just mentioned. % relation.
\begin{thm}\label{thm:mainthm}
We have 
\begin{align}\label{thm:mainthmRES}
\mu_d\left(\Delta^{-1}[0, r]\right)\asymp_d r^{\frac{(d-1)(d+2)}{2}}\log^{\frac{d(d-1)}{2}}\Bigl(\frac{1}{r}\Bigr)=r^{\varkappa_d+1}\log^{\lambda_d}\Bigl(\frac{1}{r}\Bigr),\qquad \textrm{as $r\to 0^+$}.
\end{align}
\end{thm}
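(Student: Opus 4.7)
My plan is to estimate $\mu_d\bigl(\Delta^{-1}[0,r]\bigr)$ by a local analysis near the critical locus $\Delta^{-1}\{0\}$, using Haj\'os's theorem to describe the locus explicitly and then computing the measure of its $r$-neighborhood via Iwasawa-type coordinates. By Haj\'os's theorem (\thmref{thm:hajosmin}), $\Delta^{-1}\{0\}$ decomposes as a finite union of compact subsets $\mathcal{S}_\Pi \subset X_d$ indexed by signed permutation matrices $\Pi$, each parameterized by unit upper triangular matrices $U = (u_{ij})_{i < j}$ with entries in a fundamental domain modulo integer translates. Each piece has dimension $\binom{d}{2} = \lambda_d$, matching codimension $\varkappa_d + 1$ in $X_d$. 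By $\SL_d(\R)$-equivariance, it suffices to estimate the measure near a single piece (say $\Pi = I$) and sum over the finitely many pieces.

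I would then parameterize a neighborhood via coordinates $g = U a k$, with $U$ along $\mathcal{S}_I$, $a = \operatorname{diag}(a_1,\dots,a_d)$ satisfying $\prod a_i = 1$ (contributing $d - 1$ transverse dimensions), and $k \in K = \SO(d)$ (contributing $\binom{d}{2}$), giving a total of $\varkappa_d + 1$ transverse dimensions. The Haar measure factors as $\rho(a)\, dU \, da \, dk$ with $\rho$ bounded near $a = I$. For each critical lattice $\Lambda_0 = [U] \in \mathcal{S}_I$, the finite symmetric set $\mathcal{N}(U) \subset \Z^d \setminus \{0\}$ of short vectors (those $\vecn$ with $\|U \vecn\|_\infty = 1$) governs the local behavior. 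A direct expansion shows that for small perturbations, each sup norm satisfies $\|U a k \vecn\|_\infty \approx e^{t_{j^*(\vecn)}}\bigl(1 - q_\vecn(k)\bigr)$, with $t_i = \log a_i$, $j^*(\vecn)$ the index of the maximal coordinate of $U \vecn$, and $q_\vecn(k) \geq 0$ quadratic in $k - I$ (reflecting the piecewise-smoothness of the sup norm in rotational directions). The condition $\lambda_1(\Lambda) \geq e^{-r}$ thus reduces to the linear-quadratic polytope system $t_{j^*(\vecn)} - q_\vecn(k) \geq -r$ for all $\vecn \in \mathcal{N}(U)$.

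The logarithmic enhancement arises from the following mechanism: for short vectors of type $e_j - e_i$ (with $i < j$), whose linear contribution in $k$ is proportional to $u_{ij}$, the corresponding constraint restricts $k$ to a range of size $\sim r/|u_{ij}|$. Combined with $a$-thickness $\sim r$, the transverse volume at $U$ in the two relevant coordinates scales as $r^2/|u_{ij}|$. Integrating $\int_{\sqrt{r}}^{1/2} du_{ij}/|u_{ij}| \asymp \log(1/r)$ for each of the $\lambda_d = \binom{d}{2}$ off-diagonal entries (the lower cutoff $\sqrt{r}$ arises because smaller $|u_{ij}|$ triggers additional short vectors which restore the full codimension and contribute subleading $r^{\varkappa_d + 1}$ terms) yields the factor $\log^{\lambda_d}(1/r)$. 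The base power $r^{\varkappa_d + 1}$ comes from the total codimension: thickness $r$ in the $d - 1$ diagonal directions and $\sqrt{r}$ in the $\binom{d}{2}$ rotational directions (since $q_\vecn$ is quadratic), multiplying to $r^{d-1} \cdot r^{\binom{d}{2}/2} \cdot$ (log corrections), which can be reorganized to produce $r^{\varkappa_d + 1} \log^{\lambda_d}(1/r)$. This is consistent with the $d = 2$ benchmark, where at $U = U_{u}$ the transverse area is $\asymp r^2/|u|$ for $|u| \gg \sqrt{r}$, and $\int_{\sqrt{r}}^{1/2} r^2/u \, du \asymp r^2\log(1/r)$.

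The main obstacle is the explicit enumeration of $\mathcal{N}(U)$ and a uniform analysis of the polytope along the entire locus $\mathcal{S}_I$. In particular, one must carefully handle the degenerate sub-strata (where several $u_{ij}$ vanish simultaneously and extra short vectors appear), verifying that their contribution is a subleading $O(r^{\varkappa_d + 1} \log^{\lambda_d - 1}(1/r))$. Matching upper and lower bounds requires controlling this combinatorial stratification carefully, and confirming that lattices outside the neighborhood of $\Delta^{-1}\{0\}$ contribute negligibly (which follows from Mahler's compactness and the containment $\Delta^{-1}[0,r] \subset$ a fixed compact set for $r$ small). Summing over the finitely many pieces $\mathcal{S}_\Pi$ then gives the asymptotic $\mu_d(\Delta^{-1}[0,r]) \asymp r^{\varkappa_d + 1} \log^{\lambda_d}(1/r)$.
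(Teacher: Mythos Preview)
Your outline has the right high-level shape—localize near the Haj\'os locus, compute a transverse thickness, integrate over the locus to pick up logarithms—but the specific mechanism you propose for the transverse directions is internally inconsistent, and the upper bound (which is the substantive half of the theorem) is not addressed.

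The problem is your treatment of the rotational directions. You first assert that $q_\vecn(k)$ is \emph{quadratic} in $k-I$, giving thickness $\sqrt{r}$ in each of the $\binom{d}{2}$ rotational coordinates and hence a factor $r^{d-1}\cdot r^{\binom{d}{2}/2}=r^{(d-1)(d+4)/4}$. This is the wrong exponent: for $d=2$ it gives $r^{3/2}$ instead of $r^2$. The ``reorganization'' you allude to cannot repair this. In fact the $k$-dependence is \emph{linear}: for generic $U$ on the locus the short vector $U\vece_j$ has $j$-th coordinate $1$ and $i$-th coordinate $u_{ij}\neq0$, so a rotation in the $(i,j)$-plane by angle $\theta$ changes the $j$-th coordinate by $\theta u_{ij}+O(\theta^2)$. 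You seem to recognize this later when you write that the $k$-range is $\sim r/|u_{ij}|$, but these two pictures contradict each other, and neither is tied cleanly to Iwasawa $k$-coordinates (the below-diagonal entries $g_{ji}$ are not the $SO(d)$-parameters).

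The paper avoids Iwasawa coordinates entirely and works directly with matrix entries. The key step for the upper bound is to show that any $g$ with $g\Z^d\in K_r$ and $g$ close to the Haj\'os locus must satisfy $|g_{ij}g_{ji}|\ll_d r$ for every off-diagonal pair (\lemref{secondkeyboundLEM}). This is proved by a geometric volume argument: one explicitly constructs a box in $\R^d$, disjoint from the cube packing $\fC_{g\Z^d,r}$, whose volume is $\gg|g_{ij}g_{ji}|$, and then invokes \lemref{fCkeyLEM}, which says any such gap has volume $<dr$. Once $|g_{ij}g_{ji}|\leq Cr$ is established, the measure computation is immediate: $d-1$ diagonal entries contribute $r^{d-1}$, and each of the $\lambda_d$ pairs contributes $\int_{-1}^1\int_{-1}^1\chi_{\{|xy|<Cr\}}\,dx\,dy\asymp r\log(1/r)$, giving $r^{\varkappa_d+1}\log^{\lambda_d}(1/r)$. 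Your $r/|u_{ij}|$ heuristic is a shadow of this constraint (taking $g_{ij}\approx u_{ij}$ along the locus and $g_{ji}$ transverse), but you offer no argument for why it should hold—and that argument is the actual content of the upper bound. The lower bound in the paper is also done via explicit matrix-entry conditions (\propref{prop:regulargen}), not via an Iwasawa neighborhood.
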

%We prove Theorem \ref{thm:mainthm}
Our proof of Theorem \ref{thm:mainthm} proceeds 
by bounding the sets $\Delta^{-1}[0,r]$ from above and below by 
more explicit sets whose Haar measure we can estimate directly.
In the proof of the upper bound %bound from above 
we make crucial use of Haj\'os's Theorem.
%applied in combination with a compactness argument.
We remark that
Haj\'os's proof (from 1941) of the theorem,
which settled a conjecture 
of Minkowski from 1896,
is surprisingly complicated, with the first step being %and proceeds via 
a translation of the question
into an algebraic statement about factorizations of finite abelian groups
(see also \cite{SteinSzabo1994} for a nice presentation).
It seems difficult to extend this proof in any direct way
from the case of 
$\Delta^{-1}\{0\}$
%so as 
to %in order to 
deduce restrictions on the sets 
$\Delta^{-1}[0, r]$ which are sufficiently strong to imply the desired upper bound on 
$\mu_d\left(\Delta^{-1}[0, r]\right)$.
Instead we apply Haj\'os's Theorem, in 
combination with a compactness argument,
to obtain a convenient containment relation for 
$\Delta^{-1}[0, r]$ valid for \textit{all} sufficiently small $r$
(see Lemma \ref{MinkowskiHajosCor2} and Remark \ref{MinkowskiHajosCor2REM}).
This initial restriction 
%$\Delta^{-1}[0, r]$ for $r$ small 
serves as the starting point for our analysis %NO: in Section \ref{upperboundsec},
where we use direct, geometric arguments to derive further, $r$-dependent restrictions
on $\Delta^{-1}[0, r]$ for $r$ small,
strong enough to finally imply the desired upper bound
on $\mu_d\left(\Delta^{-1}[0, r]\right)$.

\begin{remark}
In the case $d=2$, the following explicit formula holds
\cite[p.\ 74]{StrombergssonVenkatesh2005}:
\begin{align*}
\mu_2\left(\Delta^{-1}[0, r]\right)=\begin{cases}
\frac{12}{\pi^2}\bigl(e^{-2r}+2r-1+\Psi(e^{-2r})\bigr)&\text{if }\:0\leq r\leq\frac12\log2,
\\[3pt]
1-\frac{12}{\pi^2}e^{-2r}&\text{if }\:r\geq\frac12\log2,
\end{cases}
\end{align*}
where the function $\Psi: (0,1]\to\R$ is defined by $\Psi(1)=0$
and $\Psi'(x)=(x^{-1}-1)\log(x^{-1}-1)$.
It follows that in this case %, when $d=2$, 
we have an explicit asymptotic expansion sharpening \eqref{thm:mainthmRES}:
%can be sharpened to an explicit asymptotic expansion:
%with the first two terms being:
%it follows that the following more precise version of  ** holds when $d=2$:
\begin{align*}
\mu_2\left(\Delta^{-1}[0, r]\right)
\sim\frac{24}{\pi^2}r^2\log\Bigl(\frac{1}{r}\Bigr)+\frac{12(3-2\log2)}{\pi^2}r^2
-\frac{16}{\pi^2}r^3\log\Bigl(\frac1r\Bigr)+\cdots %\frac{-\frac{112}3+16\log2}{\pi^2}r^3+\cdots
%+O\Bigl(r^3\log\Bigl(\frac1r\Bigr)\Bigr)
\qquad\text{as }\: r\to0^+.
\end{align*}
The explicit formula for 
$\mu_2\left(\Delta^{-1}[0, r]\right)$,
stated in a different notation, %{\color{red}{[correct?]}},
was independently obtained in \cite{KleinbockYu2020}  using a different method.
%
%When $d=2$, a stronger asymptotic formula than the one in Theorem \ref{thm:mainthm},
%with an explicit leading coefficient and error term was obtained in \cite[Theorem 1.2]{KleinbockYu2020} using a second moment formula of the Siegel transform on $X_2$. In fact, the analysis in \cite{KleinbockYu2020} gives an explicit formula for $\mu_2\left(\Delta^{-1}[0, r]\right)$ when $0<r<\frac12\log 2$; we also note that this formula (stated in a  different terminology) was already proved in \cite[p.\ 74]{StrombergssonVenkatesh2005} using different methods. %{\color{red}{[Andreas, could you provide the page number for this formula in the published version?]}}
\end{remark}

\begin{remark}
Theorem \ref{thm:mainthm} is also relevant for the study of the %in connection with studying the %the study of  the
Hausdorff dimension of the set $\DI_{m,n}(c\psi_1)$.
As we have mentioned,
Kleinbock and Mirzadeh 
recently proved that the Hausdorff dimension of $\DI_{m,n}(c\psi_1)$ is less than $mn$ 
for every $0<c<1$ \cite[Theorem 1.5]{dKsM2020}.
They derived this as an application of their main result, 
\cite[Theorem 1.2]{dKsM2020}, which gives
an explicit upper bound on the Hausdorff dimension of a certain kind of dynamically defined subsets in the space $X_d$.
%for any given $\alpha>0$ and any open subset $U$ of $X_d$, an explicit upper bound on the Hausdorff dimension of
%the set of points in $X_d$ whose $\{a_{k\alpha}\}_{k\in\N}$-trajectory eventually stays away from $U$.
It seems that by using 
Theorem \ref{thm:mainthm} (cf.\ also Theorem \ref{thm:volumethickening} below),
%the upper bound on $\mu_d\left(\Delta^{-1}[0, r]\right)$ provided by 
% -- NO: BOTH upper and lower bound!    
together with a further analysis of the quantities appearing in 
\cite[Theorem 1.2]{dKsM2020},
it should be possible to sharpen the conclusion of \cite[Theorem 1.5]{dKsM2020}
into a bound of the form
\begin{align*}
\dim_H\bigl(\DI_{m,n}(c\psi_1)\bigr)<mn-\delta (1-c)^{\varkappa_d}\log^{\lambda_d-1}\bigl((1-c)^{-1}\bigr)
\end{align*}
for all $c<1$ sufficiently near $1$, where 
$d=m+n$ and $\delta>0$ is a constant which only depends on $m,n$.
\end{remark}

\subsection{Discussion of the proof of Theorem \ref*{thm:improvedirichleconwei}}\label{sec:discussion}
We next give %For convenience of the readers, let us now give 
a more detailed outline of our proof of Theorem \ref{thm:improvedirichleconwei}. 
For simplicity of presentation, we will only focus on the special case when $\bm{\alpha}=(\tfrac{1}{m},\ldots, \tfrac{1}{m})$ and $\bm{\beta}=(\tfrac{1}{n},\ldots,\tfrac{1}{n})$;
we comment in Remark~\ref{rmk:effequdoumixww} below on the modifications needed to treat general weights.

We start from the Dani Correspondence, \eqref{equ:dyintkw}, and discretize the 
shrinking target problem which appears there %was there obtained,
by introducing the following thickened %shrinking 
targets:
%In view of \eqref{equ:dyintkw}, we need to find sufficient conditions on $\psi$ which determine, for generic $A\in M_{m,n}(\R)$, whether the (continuous) orbits $a_s\Lambda_A$ hit the shrinking targets $\Delta^{-1}[0, r(s)]$ for an unbounded set of $s$. Here $r=r_{\psi} :[s_0,\infty)\to (0,\infty)$ is the unique function determined by $\psi$ as before. In fact, we will \textit{discretize} this shrinking target problem for the continuous time flow $\{a_s\}_{s>0}$ by considering the following \textit{thickened} shrinking targets 
\begin{align*}
B_k:=\bigcup_{0\leq s<1}a_{-s}\Delta^{-1}[0, r(k+s)],\qquad \textrm{for any integer}\ k>s_0.
\end{align*} 
It follows from this definition that for any $\Lambda\in X_d$, $a_k\Lambda\in B_k$ if and only if $a_s\Lambda\in \Delta^{-1}[0,r(s)]$ for some $k\leq s<k+1$. In particular, by \eqref{equ:dyintkw},
$A\in M_{m,n}(\R)$ is not $\psi$-Dirichlet
if and only if $a_k\Lambda_A\in B_k$ for infinitely many integers $k$. For any $k>s_0$ let us define %{\color{red}{[This notation conflicts with the identity matrix notation; maybe change it to $E_k$? But probably it's probably fine to keep this notation...]}}
\begin{align*}
E_k:=\left\{\Lambda_A\in \scrY\col a_k\Lambda_A\in B_k\right\}.
\end{align*}
Then, in view of the previous discussion and the identification
$\mathcal{Y}\cong M_{m,n}(\R/\Z)$,
we have
$$\DI^c_{m,n}(\psi)=\limsup_{k\to\infty}E_k.$$
Hence, %This in turn, 
by the Borel-Cantelli lemma, 
\begin{align*}%\label{BCcons}
\left\lbrace\begin{array}{ll} 
\sum_k\textrm{Leb}(E_k)< \infty\ & \Longrightarrow \textrm{Leb}\left(\DI^c_{m,n}(\psi)\right)=0;\\
\sum_k\textrm{Leb}(E_k)= \infty\ \&\ \textrm{``quasi-independence conditions''} & \Longrightarrow \textrm{Leb}\left(\DI^c_{m,n}(\psi)\right)=1.
\end{array}\right.
\end{align*}

We thus need to understand %In view of \eqref{BCcons}, our first step is to study 
when the sum $\sum_k\textrm{Leb}(E_k)$ diverges or converges, respectively. 
%For this, first we note that it is not difficult to see from Theorem \ref{thm:mainthm} that the series \eqref{equ:crticalseriesconj} diverges if and only if the series $\sum_k\mu_d\left(B_k\right)$ diverges, see Theorem \ref{thm:volumethickening} and Lemma \ref{lem:danicor}. 
It follows from our definitions that 
%In order to relate the sum $\sum_k\textrm{Leb}(I_k)$ with $\sum_k\mu_d\left(B_k\right)$, note that 
$$
\textrm{Leb}(E_k)=\int_{\scrY}\chi_{B_k}(a_k\Lambda_A)\,\d A.%=(a_k)_{\ast}\left(\textrm{Leb}\right)\left(a_k\scrY\cap B_k\right),
$$
%where $(a_k)_{\ast}\left(\textrm{Leb}\right)$ is the pushforward measure of $\textrm{Leb}$ under the map $a_k: \scrY\to a_k\scrY$. 

%\newpage

{It is well known that the $a_s$-translates $a_s\scrY$ equidistribute in $X_d$ as $s\to\infty$;
since our shrinking target $B_k$ varies in the parameter $k$, we require an \textit{effective} version of 
this fact. Such a result was first proved by Kleinbock and Margulis \cite[Proposition 2.4.8]{KleinbockMargulis1996} building on Margulis's original thickening argument \cite{Margulis2004} and the exponential mixing of the diagonal flow $\{a_s\}_{s\in \R}$.}
%Using Margulis's thickening argument \cite{Margulis2004}, it is well known that the $a_s$-translates $a_s\scrY$ equidistribute in $X_d$ as $s\to\infty$, that is, 
%\begin{align}\label{equidistr}
%\int_{\scrY}\chi_{B}(a_s\Lambda_A)\,\d A\to \mu_d(B),\qquad \textrm{as $s\to\infty$}
%\end{align}
%for any subset $B$ in $X_d$ with boundary of measure zero. 
%However, since our shrinking target $B_k$ varies in the parameter $k$, we need an \textit{effective} version of \eqref{equidistr}. 
%this equidistribution result. 
%Such a result was first proved by Kleinbock and Margulis \cite[Proposition 2.4.8]{KleinbockMargulis1996}; 
Here we use the following explicit version (see Corollary \ref{cor:effequhorsp} below): 
there exists $\delta>0$ 
such that for any $f\in C_c^{\infty}(X_d)$ and any $s>0$,
%\footnote{The explicit dependence on the Sobolev norm was not explicitly stated in only in the proof of \cite[Proposition 2.2.1]{KleinbockMargulis1996}. In fact, for our application, we will use a version proved by Bj\"orklund and Gorodnik \cite[Corollary 2.4]{BjorklundGorodnik2019} where the Sobolev norm $\scrN_{\ell}(\cdot)$ was explicitly given, see Theorem \ref{thm:higherordermixing}.} 
%{\color{red}{[Technically, the below formula was not exactly what proved in \cite[Proposition 2.2.1]{KleinbockMargulis1996}, maybe give a short explanation how \cite[Proposition 2.2.1]{KleinbockMargulis1996} implies the below formula]}}
\begin{align}\label{equ:effequidww}
\int_{\scrY}f(a_s\Lambda_A)\,\d A=\mu_d(f)+O\big(e^{-\delta s}\scrN(f)\big),
\end{align}
where the norm $\scrN(\cdot)$ is the maximum of a Lipschitz norm and a Sobolev $L^2$-norm (see Section \ref{sec:effequdomix}).
By approximating $\{\chi_{B_k}\}_{k>s_0}$ from above and below by smooth functions
and applying \eqref{equ:effequidww} together with an
explicit bound on the norm $\scrN(\cdot)$ (see Lemma \ref{lem:smapprox}),
it follows that (see Lemma~\ref{lem:auxest})
\begin{align*}
\sum_k\textrm{Leb}(E_k)=\infty\qquad\Longleftrightarrow\qquad \sum_k\mu_d\left(B_k\right)=\infty.
\end{align*}
Furthermore, it is not difficult to see from Theorem \ref{thm:mainthm} that the series 
$\sum_k\mu_d\left(B_k\right)$ diverges
if and only if the series in \eqref{equ:crticalseriesconj} diverges (see Theorem \ref{thm:volumethickening} and Lemma \ref{lem:danicor}). 
This in particular settles the convergence case of Theorem \ref{thm:improvedirichleconwei}.

For the divergence case, in addition to the assumption that the series in \eqref{equ:crticalseriesconj} diverges (which implies that $\sum_{k}\textrm{Leb}(E_k)=\infty$), one also needs to establish a certain \textit{quasi-independence} condition, see \eqref{equ:quasiinde}. Roughly speaking, we need to show that the quantities 
$$
\left|\textrm{Leb}\left(E_{i}\cap E_{j}\right)-\textrm{Leb}(E_{i})\textrm{Leb}(E_{j})\right|,\qquad i\neq j >s_0
$$ 
are small on average. Here note that
\begin{align*}
\textrm{Leb}\left(E_{i}\cap E_{j}\right)=\int_{\scrY}\chi_{B_i}(a_i\Lambda_A)\chi_{B_j}(a_j\Lambda_A)\,\d A.
\end{align*}
%and \begin{align*}
%\textrm{Leb}(I_{i})\textrm{Leb}(I_{j})\approx \mu_d(B_i)\mu_d(B_j)\quad (\textrm{by \eqref{equ:effequidww}}),
%\end{align*}
We now apply the \textit{effective doubly mixing} for the $a_s$-translates $\{a_s\scrY\}_{s>0}$.
This result is due to Kleinbock-Shi-Weiss \cite[Theorem 1.2]{KleinbockShiWeiss2017}; we use a more explicit version due to
Bj\"orklund-Gorodnik \cite[Corollary 2.4]{BjorklundGorodnik2019} which 
%(cf.\ also \cite[Theorem 2]{Dolgopyat2004} {\color{red}{[Check more]}}),
states that for any $f_1,f_2\in C_c^{\infty}(X_d)$ and any $s_1,s_2>0$,
\begin{align}\label{equ:effedobmixdww}
\int_{\scrY}f_1(a_{s_1}\Lambda_A)f_2(a_{s_2}\Lambda_A)\,\d A=\mu_d(f_1)\mu_d(f_2)
+O\left(e^{-\delta D(s_1,s_2)}\cN(f_1)\scrN(f_2)\right),
\end{align}
where $D(s_1,s_2):=\min\{s_1,s_2, |s_1-s_2|\}$. 
Combining this result with \eqref{equ:effequidww} we get
\begin{align*}
\left|\int_{\scrY}f_1(a_{s_1}\Lambda_A)f_2(a_{s_2}\Lambda_A)\,\d A-\textrm{Leb}(f_1)\textrm{Leb}(f_2)\right|\ll
e^{-\delta D(s_1,s_2)}\prod_{i=1}^2\max\left\{\cN(f_i),\mu_d(f_i)\right\}.%\max\left\{\scrN(f_2),\mu_d(f_2)\right\}.
\end{align*}
Finally, by approximating $\{\chi_{B_k}\}_{k>s_0}$ from below by smooth functions, applying the above estimate (together with a trivial estimate when $D(s_1,s_2)$ is small, see \eqref{equ:trivialest}) and the bounds on the norm $\cN(\cdot)$ (see Lemma \ref{lem:smapprox}), 
we show that the divergence of the series in \eqref{equ:crticalseriesconj}
together with the additional technical assumption \eqref{equ:condipsi}, implies that 
the required quasi-independence condition \eqref{equ:quasiinde} is satisfied,
thus concluding the proof of 
the divergence case of Theorem \ref{thm:improvedirichleconwei}. 
%
%, which then by the Borel-Cantelli lemma (Lemma \ref{lem:borcandiv}) concludes the divergent case.  
%
%Finally, we note that this is exactly where we need to assume the technical assumption \eqref{equ:condipsi}; moreover, in terms of the shrinking targets, \eqref{equ:condipsi} is equivalent to the assumption
%\begin{align*}
%\lim_{k\to\infty}\frac{\sum_{s_0<i\leq k}\mu_d(\widetilde{B}_i)\log\left(\tfrac{1}{\mu_d(\widetilde{B}_i)}\right)}{\left(\sum_{s_0<i\leq k}\mu_n(\widetilde{B}_i)\right)^2}=0.
%\end{align*}

We end our discussion {with} a few remarks. % regarding our proof of Theorem \ref{thm:improvedirichleconwei}.

\setlength{\parskip}{2pt}

\begin{remark}
Our argument should be compared to that of Kleinbock and Margulis \cite{KleinbockMargulis1999},
where the shrinking targets are certain cusp neighborhoods: 
In \cite{KleinbockMargulis1999} the relevant shrinking target problem 
is first solved for the case of $a_s$-orbits starting at $\mu_d$-generic 
points in the ambient space $X_d$;
for this task it suffices to use,
in place of \eqref{equ:effequidww} and \eqref{equ:effedobmixdww}
respectively, the invariance of the measure $\mu_d$ 
and the exponential mixing of the $a_s$-flow.
Then by an approximation argument \cite[\S 8.7]{KleinbockMargulis1999},
the shrinking target property for $\mu_d$-generic points in $X_d$
is shown to imply the same property for generic points in the submanifold $\scrY$.
A key observation in this approximation step
is that \textit{all} shrinking targets, by virtue of being cusp neighborhoods,
remain essentially unaffected by perturbations 
from a \textit{fixed} neighborhood of the identity in the
neutral leaf of the $a_s$-flow, i.e.\ the centralizer of the $a_s$-flow in $\SL_d(\R)$. 
This, however, is no longer
the case in our setting, with the shrinking targets being compact sets. %We note that while such a reduction argument is still possible for our setting when $m=n=1$ (see \cite{KleinbockYu2020,KleinbockRao2019}), it falls short for higher dimensions. 
This is why we take the more direct approach using effective equidistribution and doubly mixing of the $a_s$-translates 
of $\scrY$, %$\{a_s\scrY\}_{s>0}$, 
that is, \eqref{equ:effequidww} and \eqref{equ:effedobmixdww}.

One potential advantage of this more direct approach is that 
if \eqref{equ:effequidww} %and \eqref{equ:effedobmixdww} 
could be refined by
replacing the measure $\text{Leb}$ by 
%a positive absolutely continuous meausure -- notation from Katok & Hasselblatt 1995, Def 5.1.1.
% - but we DON'T NEED this; we can be more vague here, since we anyway say ``generic'' a bit later; 
% clearly this means ``generic wrt the given measure''.
a natural measure on some \textit{submanifold} of $\scrY$,
then by mimicking our analysis,
one could establish the $\psi$-Dirichlet property for generic points in that submanifold,
% Say around here: ``REFINE our main theorem (in the convergent case)''!?
for any $\psi$ such that %the series in 
\eqref{equ:crticalseriesconj} converges.
See Remark \ref{rmk:samlei} below for a discussion
of the application along these lines
of a recent effective equidistribution result obtained by Chow and Yang \cite{ChowYang2019}.
%To the best of our knowledge, the only {\color{red}{[Would like to discuss the consequence of \cite[Theorem 1.1]{ChowYang2019} somewhere (in their setting $(m,n)=(2,1)$ and thus $d=3$): Following their notation, let $(a,b)$ be the Diophantine pair and let $c>0$ be the constant in their theorem. Fix an interval $J\subset \R$ and fix $0<c'\leq c$. Take $s=c't$ in \cite[(1.1)]{ChowYang2019}. Note that their norm is some higher degree $L^{\infty}$-Sobolev norm (see line 3 of page 3); for this norm we can have similar estimates as in our case using Lemma \ref{lem:smappest}. To conclude, using \cite[Theorem 1.1]{ChowYang2019} (with $s=c't$) together with our analysis and measure estimates, we can conclude that if \eqref{equ:crticalseriesconj} converges, then for a.e. $x\in J$, the column vector $(ax+b, x)^t$ is $\psi_{\bm{\alpha},1}$-Dirichlet with $\bm{\alpha}=(\frac{c'}{1+c'}, \frac{1}{1+c'})$. Since $J$ can be taken arbitrarily, we can also conclude that for a.e. $x\in \R$, $(ax+b,x)^t$ is $\psi_{\bm{\alpha},1}$-Dirichlet.]}}%see Remark \ref{rmk:samlei} for an example.

We note that the use of equidistribution of $a_s$-translates of $\scrY$
in the study of the Dirichlet improvability problem is not new;
%indeed 
it has been applied several times in the more well-studied setting of
Dirichlet improvable vectors and matrices.
%i.e. when $\psi=c\psi_1$ for some $c\in (0,1)$. 
For $\min\{m,n\}=1$ and $\scrI\subset \scrY$ being an analytic curve in $\scrY$ satisfying certain explicit conditions, 
Shah \cite[Theorem 1.2]{Shah2009} proved that the $a_s$-translates of $\scrI$ %$\{a_s\scrI\}_{s>0}$ 
equidistribute in $X_d$ with respect to $\mu_d$ 
as $s\to\infty$.
%This result is ineffective
Shah's proof relies on Ratner's classification of measures invariant under unipotent flows
\cite{Ratner1991},
and his equidistribution theorem is not effective;
still it suffices for the deduction of the fact that generic points on the curve $\scrI$
are Dirichlet non-improvable, that is, lie outside of the set \eqref{equ:DI}.
%$\DI_{m,n}=\bigcup_{0<c<1}\DI_{m,n}(c\psi_1)$.
(This is so since in this case, the relevant ``shrinking'' target is in fact a \textit{fixed} set of positive measure.)
Shah's results have been generalized and strengthened in various directions \cite{Shah2010,ShahYang2020,Yang2020,KleinbockSaxceShahYang2021}. In a recent breakthrough of Khalil and Luethi \cite{KhalilLuethi2021}, the authors refined \eqref{equ:effequidww} (for the case when $n=1$) by replacing $\text{Leb}$ {with} a certain fractal measure, from which they deduced a complete analogue of Khintchine's theorem with respect to this fractal measure. 
%as well as to other homogeneous spaces \cite{Shah2009b,Shah2009c,Yang2016,Yang2017,Yang2020}.
%There have been a series of papers by Shah and his students proving In \cite{Shah2009,ShahYang2020}, using Ratner's theorems
\end{remark}

\setlength{\parskip}{6pt}

\begin{remark}\label{rmk:teassu}
Another difficulty, which also stems from the fact that our targets are shrinking compact sets,
is the fact that the norm $\scrN(\cdot)$
unavoidably {grows (polynomially)} for the smooth functions
approximating the shrinking targets from above and below
%(in comparison to the case of cusp neighborhoods where one can show the relevant norms are comparable to the $L^2$-norms, which in particular, decay to zero as the measure of the shrinking targets decays to zero), 
(see Lemma \ref{lem:smapprox}).
While the impact of this {polynomial growth} of the norm can be eliminated in the convergence case due to the exponential decay in the parameter $s$ (i.e.\ the factor $e^{-\delta s}$ in the error term in \eqref{equ:effequidww}), it causes serious problems in the divergence case,
and this is exactly why we need to impose the extra assumption \eqref{equ:condipsi}. 
%If one can show the norms in our setting are also comparable to the $L^2$-norms (as in the cusp neighborhoods case), we can then remove this condition. 
Let us here also note that this assumption \eqref{equ:condipsi} can be rephrased in terms of the measure of the shrinking targets 
as follows:
\begin{align*}
\liminf_{s_1\to\infty}\frac{\sum_{s_0<k\leq s_1}\mu_d(B_k)\log\left(\tfrac{1}{\mu_d(B_k)}\right)}{\left(\sum_{s_0<k\leq s_1}\mu_d(B_k)\right)^2}=0.
\end{align*}

\end{remark}

\begin{remark}\label{rmk:effequdoumixww}
In order to extend the argument outlined above to the case of
general weight vectors $\bm{\alpha}$ and $\bm{\beta}$, we have to consider a more general one-parameter flow 
$\{g_s\}_{s>0}\subset \SL_d(\R)$ associated to $\bm{\alpha}$ and $\bm{\beta}$ (see \eqref{equ:geneweflow}),
and use a dynamical interpretation of $\psi_{\bm{\alpha},\bm{\beta}}$-Dirichlet matrices 
which involves this $g_s$-flow
and %which 
generalizes \eqref{equ:dyintkw};
see Proposition \ref{prop:dyint} and Remark \ref{rmk:gen}.
We therefore need %the 
analogous effective equidistribution and doubly mixing results for the $g_s$-translates of $\scrY$. 
Fortunately, such more general (and considerably more difficult) effective results are known to hold, thanks to the work of Kleinbock-Margulis \cite[Theorem 1.3]{KleinbockMargulis2012} and, again, Kleinbock-Shi-Weiss \cite[Theorem 1.2]{KleinbockShiWeiss2017} %We note that the dependence on the norms were in fact not explicitly stated in \cite{KleinbockMargulis2012,KleinbockShiWeiss2017} (although clear from the proofs); 
%*** Moreover, for the weighted case, one also needs to modify the (thickened) shrinking targets for both the convergent and divergent cases accordingly, see Proposition \ref{prop:dyint} and the paragraph after it.
and Bj\"orklund-Gorodnik \cite[Corollary 2.4]{BjorklundGorodnik2019}
(see Theorem \ref{thm:higherordermixing} below).
%because of the explicit format of the error term which they give.
In fact in \cite{BjorklundGorodnik2019} a uniform treatment was given proving effective mixing of arbitrary order for 
the $g_s$-translates of $\scrY$;
however we will not make use of this.
\end{remark}

\subsection*{Notation and conventions}
Throughout the paper, the notation $\|\cdot\|$ denotes the supremum norm on various Euclidean spaces or matrix spaces (which can be viewed as Euclidean spaces on the matrix entries). Let $I\subset \R$ be an interval (not necessarily bounded). A function $f : I\to \R$ is called \textit{increasing} (resp.\ \textit{decreasing}) if $f(t_1)\leq f(t_2)$ (resp.\ $f(t_1)\geq f(t_2)$) whenever $t_1<t_2$. All the vectors in this paper are column vectors. For two positive quantities $A$ and $B$, we will use the notation $A\ll B$  {or $A=O(B)$} to mean that there is a constant $c>0$ such that $A\leq cB$, and we will use subscripts to indicate the dependence of the constant on parameters. We will write $A\asymp B$ for $A\ll B\ll A$. 

\noindent\textbf{Acknowledgements}
We are grateful to the anonymous referee for useful comments.

\section{Some preliminaries for Theorem \ref*{thm:mainthm}}\label{sec:premest}
Fix  an integer $d\geq 2$. In what follows we always denote $G=\SL_d(\R)$, $\G=\SL_d(\Z)$ and $X_d=G/\G$ the space of unimodular lattices in $\R^d$. Let $\mu_d$ be the unique $G$-invariant probability measure on $X_d$. Let $\Delta: X_d\to [0,\infty)$ be the function on $X_d$ defined as in \eqref{def:delta}. In this section, we collect some preliminary results for our proof of Theorem \ref{thm:mainthm}. In fact, for simplicity of presentation we will prove an equivalent measure estimate result. For any $r\in [0,1)$ let $\mathcal{C}_r\subset \R^d$ be the open ``${(1-r)}$-cube'', i.e. 
$$\mathcal{C}_r:=(r-1,1-r)^d.$$
Let $K_r\subset X_d$ be the set of unimodular lattices having no nonzero points in $\mathcal{C}_r$, i.e.
$$K_r:=\big\{\Lambda\in X_d\col \Lambda\cap \mathcal{C}_r=\{\bm{0}\}\big\}.$$
We note that by definition of $\Delta$, $K_r=\Delta^{-1}[0, -\log\left(1-r\right)]$, or equivalently, $\Delta^{-1}[0, r]=K_{1-e^{-r}}$. Since $1-e^{-r}=r+O(r^2)\asymp r$ for all $r\in (0, 1)$, Theorem \ref{thm:mainthm} 
can be equivalently restated as follows. %is equivalent with the following theorem.
\begin{thm}\label{thm:measestmainaux}
Let $\varkappa_d=\frac{d^2+d-4}{2}$ and $\lambda_d=\frac{d(d-1)}{2}$ be as in Theorem \ref{thm:improvedirichleconwei}. Then
\begin{align*}
\mu_d\left(K_r\right)\asymp_d %r^{\frac{(d-1)(d+2)}{2}}\log^{\frac{d(d-1)}{2}}\Bigl(\frac{1}{r}\Bigr)=
r^{\varkappa_d+1}\log^{\lambda_d}\Bigl(\frac{1}{r}\Bigr),\qquad \textrm{as $r\to 0^+$}.
\end{align*}
\end{thm}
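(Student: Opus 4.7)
\textbf{Proof plan for Theorem~\ref{thm:measestmainaux}.}
My plan is to establish matching upper and lower bounds of order $r^{\varkappa_d+1}\log^{\lambda_d}(1/r)$ for $\mu_d(K_r)$ via an explicit parametrization of a neighborhood of the critical locus $K_0$. By Haj\'os's theorem (Theorem~\ref{thm:hajosmin}), $K_0$ is a finite union of compact smooth submanifolds of dimension $\lambda_d$, namely the orbits $\sigma N^-\Z^d\subset X_d$, where $\sigma$ ranges over signed permutations of the coordinates and $N^-\subset G$ is the subgroup of lower-triangular unipotent matrices. A small neighborhood of $K_0$ admits a corresponding parametrization by triples $(\sigma, n, X)$, with $n\in N^-$ in a fundamental domain for $N^-(\Z)\backslash N^-$ and $X$ a small element of the transverse subspace $\ia\oplus\nn^+\subset\lsl_d(\R)$ of dimension $\varkappa_d+1$; the associated lattice is $\Lambda=\sigma\exp(X)n\Z^d$. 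The condition $\Lambda\in K_r$ becomes a collection of inequalities $\|\exp(X)nk\|_\infty\geq 1-r$ for $k\in\Z^d\smallsetminus\{\bn\}$, only finitely many of which are binding for any fixed $(\sigma,n)$ with $X$ small.

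For the \emph{lower bound}, I would fix $n$ in a generic region of the fundamental domain and determine which small $X$ yield $\exp(X)n\Z^d\in K_r$. The boundary-hitting vectors of the critical lattice $n\Z^d$, that is, the short vectors on $\partial[-1,1]^d$, are the columns $ne_j$ of $n$, each carrying a $1$ in its $j$-th coordinate. The requirement $\|\exp(X)ne_j\|_\infty\geq 1-r$ for every $j=1,\ldots,d$ linearizes (after imposing the trace-zero relation) to a system of $d$ inequalities $L_j(X)\geq -r$, with $L_j$ a linear functional depending on $n$; these cut out a polytope in the $(\varkappa_d+1)$-dimensional transversal whose Lebesgue measure one can compute to be of order $r^{\varkappa_d+1}\,\phi(n)$ for a rational function $\phi$ on $N^-$. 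In the test case $d=2$ one finds $\phi(n_a)\asymp 1/(a(1-a))$, and integrating over $a\in[r,1-r]$ (beyond which the linearization fails and a separate argument is required) produces a factor $\log(1/r)$, matching $\lambda_2=1$. For general $d$, the same integration strategy over the $\lambda_d$-dimensional fundamental domain of $N^-$ produces an overall factor of $\log^{\lambda_d}(1/r)$ after a cutoff at scale $r$ near the loci where $\phi$ has logarithmic singularities, giving the desired lower bound.

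For the \emph{upper bound}, I would first apply Haj\'os's theorem together with the compactness argument furnished by Lemma~\ref{MinkowskiHajosCor2} to guarantee that, for all sufficiently small $r$, every $\Lambda\in K_r$ admits a basis of ``near-Haj\'os'' form, and hence can be written $\Lambda=\sigma\exp(X)n\Z^d$ with $(\sigma,n,X)$ as above and $\|X\|$ small. For such $\Lambda$ to lie in $K_r$, the constraints $\|\exp(X)ne_j\|_\infty\geq 1-r$ must in particular hold, confining $X$ to the polytope analyzed in the lower bound (or to a subset of it); integrating over $n$ in the fundamental domain then yields the matching upper bound. The main obstacle will be handling the behavior near the singular strata of the critical locus --- the loci where $n$ approaches a more degenerate critical lattice such as $\Z^d$ itself. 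At such strata, additional integer vectors $k\in\Z^d$ land on $\partial[-1,1]^d$ under $n$, activating additional constraints on $X$; one must verify that these additional constraints only tighten, rather than qualitatively reshape, the admissible region, so that the bound $r^{\varkappa_d+1}\log^{\lambda_d}(1/r)$ survives uniformly in $n$. It is precisely for this uniform control that the full strength of Haj\'os's theorem is required in the upper-bound argument.
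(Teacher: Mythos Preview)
Your proposal contains a genuine dimensional gap that the test case $d=2$ happens to conceal. You parametrize the transversal to the critical locus by $X\in\ia\oplus\nn^+$, a space of dimension $\varkappa_d+1=\tfrac{(d-1)(d+2)}2$, and then claim that the $d$ column constraints $\|\exp(X)ne_j\|_\infty\geq 1-r$ cut out a polytope of volume $\asymp r^{\varkappa_d+1}\phi(n)$. But $d$ half-space constraints in a $(\varkappa_d+1)$-dimensional space (intersected with a fixed ball) bound a region of volume $\asymp r^d$, not $r^{\varkappa_d+1}$; for $d\geq3$ one has $\varkappa_d+1>d$ and the two do not agree. Your $d=2$ check succeeds only because $\varkappa_2+1=2=d$. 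Moreover, for the lower bound the column constraints alone do not even ensure membership in $K_r$: once $X\neq0$, combinations such as $\exp(X)n(e_i\pm e_j)$ can slip into $\scrC_r$, and these must be ruled out explicitly.

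The paper's mechanism is different and does not pass through a linear polytope in the transversal at all. Working directly in the matrix entries $(g_{ij})$, the paper shows that membership in $K_r$ forces, up to a permutation,
\[
|g_{ii}-1|\ll r\quad(1\le i\le d)\qquad\text{and}\qquad |g_{ij}\,g_{ji}|\ll r\quad(1\le i<j\le d).
\]
The $\lambda_d$ \emph{hyperbolic} constraints $|g_{ij}g_{ji}|\ll r$ are the source of the logarithms: each contributes a factor $\int\!\!\int_{|xy|<Cr}\,dx\,dy\asymp r\log(1/r)$, while the $d-1$ independent diagonal constraints each contribute $r$, giving $r^{d-1}\cdot\bigl(r\log(1/r)\bigr)^{\lambda_d}=r^{\varkappa_d+1}\log^{\lambda_d}(1/r)$. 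Proving the pair bound $|g_{ij}g_{ji}|\ll r$ is the technical heart of the upper bound (Lemma~\ref{secondkeyboundLEM}); it is deduced from a packing--volume estimate (Lemma~\ref{fCkeyLEM}) showing that any box avoiding the cube packing $\fC_{\Lambda,r}$ has volume $<dr$, applied to carefully constructed boxes interpolating between the cubes centered at $\bn$, $g\vece_i$, $g\vece_j$. For the lower bound, the paper engineers an explicit region in the $(g_{ij})$-coordinates (Proposition~\ref{prop:regulargen}) where specific sign and ordering conditions on the sub-diagonal entries force \emph{all} vectors $\sum m_j g\vece_j$ with $\vecm\in\{-1,0,1\}^d\smallsetminus\{\bn\}$ out of $\scrC_r$, not just the columns; the same product-type structure then yields the matching lower bound. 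Your proposed route could perhaps be salvaged, but only after recognizing that the correct transversal constraints are hyperbolic in the pairs $(g_{ij},g_{ji})$ rather than linear in $X$, and that there are $\lambda_d+(d-1)=\varkappa_d+1$ of them rather than $d$.
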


We will prove Theorem \ref{thm:measestmainaux} by proving a lower bound and an upper bound separately. %The main goal of this section is to prove the lower bound in Theorem \ref{thm:measestmainaux}. Before proceeding to the proof, let us first 

\subsection{Haar measure and coordinates}\label{sec:haar}
%%In this subsection, we fix the coordinates for the measure $\mu_d$ that we will work with throughout this section.
%Keep the notation and assumptions as above. We note that there exists a unique Haar measure on $G$, which we denote by $\nu$, normalized so that $\nu$ locally agrees with $\mu_d$. %when identifying $X_d\cong \SL_n(\Z)\backslash \SL_n(\R)$ with one of its fundamental domains. \smallsetminus
%In this subsection we fix some coordinates for the Haar measure $\nu$ that we will work with for our measure computations. 

Let $P< G$ be the maximal parabolic subgroup fixing the line spanned by $\bm{e}_d\in \R^d$, and let $N<G$ be the transpose of the unipotent radical of $P$. Here and {hereafter}, $\{\vece_i: 1\leq i\leq d\}$ denotes the standard orthonormal basis of $\R^d$. Explicitly, 
\begin{align*}
P=\{p\in G\col 
p\bm{e}_d=t\bm{e}_d\ \textrm{for some $t\neq 0$}\},
\end{align*}
and
\begin{align*}
N=\left\{u_{\bm{x}}:=\left(\begin{smallmatrix}
I_{d-1} & \bm{x}\\
\bm{0}^t & 1\end{smallmatrix}\right)\col \bm{x}\in \R^{d-1}\right\}.
\end{align*}
For any $p\in P$, let $\vecb_1:=p\bm{e}_1,\ldots, \bm{b}_{d-1}:=p\bm{e}_{d-1}$ be the first $d-1$ column vectors of $p$. We note that $p$ is uniquely determined by $\bm{b}_1,\ldots, \bm{b}_{d-1}$; we will sometimes denote $p\in P$ by $p_{\bm{b}_1,\ldots, \bm{b}_{d-1}}$ to indicate this dependence.
For any $g\in G$, let us denote by $\tilde{g}\in M_{d-1,d-1}(\R)$ the top left $(d-1)\times (d-1)$ block of $g$. If $\det \tilde{g} \neq 0$, then $g$ can be written uniquely as a product
%A Zariski dense subset of $G$ (more precisely, the set of matrices in $G$ with the top left $(d-1)\times (d-1)$ block having nonzero determinant) can be written uniquely as a product
\begin{align}\label{puCOORD}
g=p_{\bm{b}_1,\ldots, \bm{b}_{d-1}}u_{\bm{x}}\quad{\text{for some $p_{\bm{b}_1,\ldots, \bm{b}_{d-1}}\in P$ and }u_{\bm{x}}\in N}.
\end{align}
%for some $p_{\bm{b}_1,\ldots, \bm{b}_{d-1}}\in P$ and $u_{\bm{x}}\in N$. 
%We note that $g$ and $p$ share the same first $d-1$ row vectors, and $p$ is uniquely determined by its first $d-1$ row vectors.
%$\bm{b}_1,\ldots, \bm{b}_{d-1}$ (with the bottom row equalling $\det(\tilde{p})^{-1}\bm{e}_d$, where $\tilde{p}\in \GL_{d-1}(\R)$ is the top left $(d-1)\times  (d-1)$ block of $p$). 
%Let $\bm{b}_1,\ldots,\bm{b}_{d-1}$ be the first $d-1$ row vectors of $p$ (and hence also of $g$). We will sometimes denote $p\in P$ by $p_{\bm{b}_1,\ldots, \bm{b}_{d-1}}$ to indicate its dependence on its row vectors. %For each $1\leq i\leq d-1$, let $\bm{y}_i\in\R^d$ be the $i$-th row of $p$. 

Let $\nu$ be the (left and right) Haar measure on $G$, normalized so that it agrees locally with $\mu_d$.
In terms of the coordinates in \eqref{puCOORD}, %the normalized Haar measure 
$\nu$ is given by 
\begin{equation}\label{equ:haar}
\d\nu(g)=\frac{1}{\zeta(2)\cdots \zeta(d)}\, \d \vecx\prod_{1\leq i\leq d-1}\d\vecb_i,
\end{equation}
where $\zeta(\cdot)$ is the Riemann zeta function,
and where $\d\vecx$ and $\d\vecb_i$ denote Lebesgue measure on $\R^{d-1}$ and $\R^d$,
respectively.
For later purpose, we also note that the lattice $\Lambda$ represented by $p_{\bm{b}_1,\ldots, \bm{b}_{d-1}}u_{\bm{x}}$, i.e. $\Lambda=p_{\bm{b}_1,\ldots, \bm{b}_{d-1}}u_{\bm{x}}\Z^d$, has a basis %i.e. $\Lambda:=\Z^d u_{\bm{x}}p_{\bm{b}_1,\ldots, \bm{b}_{d-1}}$, is of the form
$$
\Lambda=\Z\bm{b}_1\oplus  \cdots\oplus \Z\bm{b}_{d-1}\oplus \Z\bm{b}_d,
$$
where $\bm{b}_d:=\sum_{j=1}^{d-1}x_j\bm{b}_j+(\det \tilde{p})^{-1}\bm{e}_d$ is the $d$-th column  vector of the matrix $p_{\bm{b}_1,\ldots, \bm{b}_{d-1}}u_{\bm{x}}$. Here $\tilde{p}$ is the top left $(d-1)\times (d-1)$ block of $p_{\bm{b}_1,\ldots, \bm{b}_{d-1}}$. 

For our computation of the upper bounds, it will be more convenient to use another set of coordinates: For any $g=(g_{ij})_{1\leq i,j\leq d}\in G$ with $\det \tilde{g} \neq 0$, as mentioned above, we can write $g$ uniquely as {in \eqref{puCOORD}}.
%$g=p_{\bm{b}_1,\ldots,\bm{b}_{d-1}}u_{\bm{x}}$ for some $p_{\bm{b}_1,\ldots,\bm{b}_{d-1}}\in P$ and $u_{\bm{x}}\in N$. 
It is clear from this relation that $g$ and $p_{\bm{b}_1,\ldots,\bm{b}_{d-1}}$ share the same first $d-1$ column vectors, i.e. $g\bm{e}_j=\bm{b}_j$ for all $1\leq j\leq d-1$. Moreover, as noted above, for the $d$-th column vector we have
\begin{align*}
g\bm{e}_d=\sum_{j=1}^{d-1}x_j\bm{b}_j+(\det \tilde{p})^{-1}\bm{e}_d=\sum_{j=1}^{d-1}x_j\left(g\bm{e}_j\right)+ (\det \tilde{g})^{-1}\bm{e}_d.
\end{align*}
%$\bm{e}_dg$, of $g$ equals $\sum_{i=1}^{d-1}x_i\bm{b}_i+\det(\tilde{p})^{-1}\bm{e}_d$. In particular, this implies $(g_{d,1},\ldots, g_{d-1,d})=\bm{x}\tilde{g}$. 
In particular, we have $(g_{1d},\ldots, g_{d-1,d})^t=\tilde{g}\bm{x}$, which further implies 
$$
\d\vecx=(\det \tilde{g})^{-1}\prod_{1\leq i\leq d-1}\d g_{id}.
$$
This relation, together with the relations $g\bm{e}_j=\bm{b}_j, 1\leq j\leq d-1$ and the Haar measure description \eqref{equ:haar}, 
immediately implies the following:
\begin{lem}\label{lem:haar2}
For any (Borel) subset $\cK$ of %$G$ contained in 
$\bigl\{g\in G\col|\det \tilde{g} -1|<\frac12\bigr\}$,
we have
\begin{align}\label{equ:haar2}
\nu(\cK)\asymp_d\int_{\cK}\prod_{\substack{1\leq i,j\leq d\\ (i,j)\neq (d,d)}}\d g_{ij}.
\end{align}
\end{lem}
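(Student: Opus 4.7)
The plan is to perform an explicit change of variables from the $(p,u)$-coordinates used in \eqref{equ:haar} to the matrix entry coordinates $(g_{ij})_{(i,j)\neq(d,d)}$, and then to observe that the resulting Jacobian factor is uniformly bounded above and below on the region $\bigl\{|\det\tilde g-1|<\tfrac12\bigr\}$ containing $\cK$.

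To execute this, first recall from the discussion immediately preceding the lemma that any $g\in G$ with $\det\tilde g\neq 0$ admits a unique decomposition $g=p_{\vecb_1,\ldots,\vecb_{d-1}}u_\vecx$, and the unipotent factor $u_\vecx$ leaves the first $d-1$ columns of $p$ unchanged. Hence $g\vece_j=\vecb_j$ for $1\leq j\leq d-1$, and each entry of $\vecb_j$ is simply the corresponding entry $g_{ij}$; this identifies
\[
\prod_{i=1}^{d-1}\d\vecb_i=\prod_{1\leq j\leq d-1}\prod_{1\leq i\leq d}\d g_{ij}.
\]
Next, the identity $(g_{1d},\ldots,g_{d-1,d})^t=\tilde g\vecx$ recalled just above the lemma statement yields, by the standard change-of-variables formula on $\R^{d-1}$,
\[
\d\vecx=(\det\tilde g)^{-1}\prod_{1\leq i\leq d-1}\d g_{id}.
\]
Plugging these two identities into \eqref{equ:haar} gives
\[
\d\nu(g)=\frac{(\det\tilde g)^{-1}}{\zeta(2)\cdots\zeta(d)}\prod_{\substack{1\leq i,j\leq d\\(i,j)\neq(d,d)}}\d g_{ij}.
\]

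On the hypothesis region $|\det\tilde g-1|<\tfrac12$ one has $\det\tilde g\in(\tfrac12,\tfrac32)$, so the factor $(\det\tilde g)^{-1}$ is bounded above and below by positive absolute constants. Integrating the displayed density over $\cK$ then yields the asserted two-sided bound \eqref{equ:haar2}. There is no real obstacle: the $PN$-decomposition is well-defined throughout $\cK$ since $|\det\tilde g-1|<\tfrac12$ forces $\det\tilde g\neq 0$, so the whole proof reduces to reading off the Jacobian of the change of variables from identities already established in the preceding paragraphs.
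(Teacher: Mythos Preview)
Your proof is correct and follows essentially the same approach as the paper: the paper carries out exactly this change of variables in the paragraph immediately preceding the lemma, deriving $\d\vecx=(\det\tilde g)^{-1}\prod_{1\leq i\leq d-1}\d g_{id}$ and then stating that this together with $g\vece_j=\vecb_j$ and \eqref{equ:haar} ``immediately implies'' the lemma. Your write-up just makes the final density formula and the bounding of $(\det\tilde g)^{-1}$ on the region $|\det\tilde g-1|<\tfrac12$ explicit.
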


\subsection{A small parameter for the lower bound}\label{sec:smpara}
%In this subsection we fix a small parameter (only depending on the dimension $n$) which we will use for the lower computations.
To prove the lower bound, we will construct a subset {of} $K_r$ whose measure is of the same magnitude as $K_r$. For a lattice $\Lambda=g\Z^d\in X_d$, to show $\Lambda\in K_r$, by definition one needs to show $g\bm{m}\notin \mathcal{C}_r$ for all nonzero $\bm{m}\in \Z^d$. If $g\in G$ is sufficiently close to the identity element $I_d\in G$, so that $\Lambda$ has a basis close to the standard basis $\{\bm{e}_i: 1\leq i\leq d\}$, then one only needs to consider vectors $\bm{m}\in \Z^d$ with small supremum norms. For this reason, we will only focus on lattices that are close to $\Z^d$. 
Recall that the set $K_r$ certainly does not get concentrated near the lattice $\Z^d$ as $r\to0^+$;
indeed, we have $\cap_{r>0}K_r=K_0=\Delta^{-1}\{0\}$,
which as we have mentioned is a finite union of compact submanifolds of positive codimension $\varkappa_d+1$ in $X_d$
(see also Section \ref{Hajossec}).
The fact that it still suffices to consider a small neighborhood of $\Z^d$ when proving the \textit{lower} bound
in Theorem \ref{thm:measestmainaux}
is related to the fact that the \textit{mass}
of $K_r$ (with respect to $\mu_d$)
becomes concentrated near the lattice $\Z^d$ as $r\to0^+$;
see Remark \ref{bulkinZdnbhdREM}.

Explicitly, we fix a small norm ball in $X_d$ around $\Z^d$ as follows: 
For any $c>0$, let
\begin{align}\label{equ:normball}
G_c:=\left\{g\in G\col \|g-I_d\|< c\right\}
\end{align}
be the open ball in $G$ of radius $c$, centered at $I_d$ with respect to the supremum norm on $M_{d, d}(\R)$. %that is, for any $g=(g_{ij})_{1\leq i,j\leq d}\in M_{d, d}(\R)$
%$$\|g\|:=\max\{|g_{ij}|: 1\leq i,j\leq d\}.$$
Let $\pi: G\to X_d$ be the natural projection from $G$ to $X_d$. 
We fix a parameter $c_0\in(0,1)$ (which only depends on $d$) 
so small that $\pi|_{G_{c_0}}$ is injective 
and, for any vectors $\vecb_1,\ldots,\vecb_d\in\R^d$ satisfying 
$\|\vecb_i-\vece_i\|<c_0$ for all $1\leq i\leq d$,
every hyperplane of the form
\begin{align*}
m\vecb_j+\sum_{\substack{1\leq\ell\leq d\\ \ell\neq j}}\R\vecb_{\ell}
\end{align*}
with $m\in\R$, $|m|\geq2$ and $1\leq j\leq d$,
is disjoint from the cube $[-1,1]^d$.
In particular, if $\Lambda=g\Z^d$ for some $g\in G_{c_0}$, then $\Lambda$ has a basis $\{g\bm{e}_i: 1\leq i\leq d\}$ satisfying $\|g\bm{e}_i-\bm{e}_i\|\leq c_0$ for all $1\leq i\leq d$.
It follows that in order to prove that $\Lambda\in K_r$ for a given $r\in(0,1)$,
it suffices to verify that $\sum_{1\leq i\leq d}m_i\left(g\bm{e}_i\right)\notin \mathcal{C}_r$ for all 
$\bm{m}=(m_1,\ldots, m_d)\in \{-1,0,1\}^d\smallsetminus \{\bm{0}\}$.

%Thus in view of the choice of $c_0$ and the fact that $\mathcal{C}_r\subset [-1,1]^d$ for all $r\in (0,1)$, for such a lattice $\Lambda$, to show $\Lambda\in K_r$, it suffices to show $\sum_{1\leq i\leq d}m_i\left(g\bm{e}_i\right)\notin \mathcal{C}_r$ for all $\bm{m}=(m_1,\ldots, m_d)\in \{-1,0,1\}^d\smallsetminus \{\bm{0}\}$.

%since for any $g\in G_{c_0}$, the lattice $\Lambda=g\Z^d$ represented by $g$ has a basis $\{g\bm{e}_i: 1\leq i\leq d\}$ satisfying $\|g\bm{e}_i-\bm{e}_i\|\leq c_0$ for all $1\leq i\leq d$, in view of the choice of $c_0$ and the fact that $\mathcal{C}_r\subset [-1,1]^d$ for all $r\in (0,1)$, to show $\Lambda\in K_r$, it suffices to show $\sum_{1\leq i\leq d}m_i\left(g\bm{e}_i\right)\notin \mathcal{C}_r$ for all $\bm{m}=(m_1,\ldots, m_d)\in \{-1,0,1\}^d\smallsetminus \{\bm{0}\}$.

\subsection{Haj\'os's Theorem and its consequences}\label{Hajossec}
Recall that
%For the upper bound we can no longer only work with lattices close to $\Z^d$ since for small $r>0$, $K_r$ is a compact neighborhood of the set 
$$
K_0=\left\{\Lambda\in X_d\col \Lambda\cap (-1,1)^d=\{\bm{0}\}\right\}.
$$ 
%which contains many other lattices besides $\Z^d$. 
% ok language? Support: https://dictionary.cambridge.org/grammar/british-grammar/beside-or-besides
% and   https://newsroom.unl.edu/announce/snr/3668/20793   (besides = ``in addition to'' or ``apart from'', exactly what we want!)
As we mentioned in the introduction, %Section \ref{MAINRESsec},
the explicit description of $K_0$ was conjectured 
(and proved in two and three dimensions) by Minkowski,
and proved in full generality by Haj\'os in 1941 \cite{Hajos1941}:
%\cite[\href{http://file://T:stein/steinszabo1994.djvu}{Ch.\ 1}]{SteinSzabo1994}.
\begin{thm}[Haj\'os]\label{thm:hajosmin}
Let $U$ be the subgroup of upper triangular unipotent matrices in $G$. Let $W$ be the subgroup of permutation matrices in $\GL_d(\Z)$. Then
\begin{align*}
K_0=\bigcup_{w\in W}\left(w Uw^{-1}\right)\Z^d.
\end{align*}
\end{thm}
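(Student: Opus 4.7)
My plan is to split Theorem~\ref{thm:hajosmin} into the easy inclusion $\bigcup_{w\in W}(wUw^{-1})\Z^d \subseteq K_0$ and the hard inclusion $K_0 \subseteq \bigcup_{w\in W}(wUw^{-1})\Z^d$. The first is a short direct verification; the second is Minkowski's classical conjecture, and requires Haj\'os's 1941 argument (or a modern reformulation of it).

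For the easy inclusion I would note that since $w\Z^d = \Z^d$ for every permutation matrix $w$, we have $(wUw^{-1})\Z^d = wU\Z^d$. Given any nonzero $v = wUm$ with $m \in \Z^d \smallsetminus \{\bm{0}\}$, let $k$ be the largest index with $m_k \neq 0$. Since $U$ is upper-triangular unipotent, the $k$-th coordinate of $Um$ equals $m_k + \sum_{j>k} U_{kj}m_j = m_k$, a nonzero integer. Permuting coordinates by $w$ preserves the fact that at least one entry has absolute value $\geq 1$, so $v \notin (-1,1)^d$; hence $wU\Z^d \in K_0$.

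For the hard inclusion, the plan is to follow Haj\'os's strategy via a geometric reformulation. If $\Lambda \in K_0$, then two $2\Lambda$-translates of $[-1,1]^d$ have overlapping interiors only if their centers differ by an element of $\Lambda \cap (-1,1)^d = \{\bm{0}\}$; combined with the volume identity $\vol([-1,1]^d) = 2^d = \det(2\Lambda)$, this shows that $[-1,1]^d + 2\Lambda$ is a lattice tiling of $\R^d$. Minkowski's conjecture (now Haj\'os's theorem) asserts that in any such lattice cube tiling some pair of adjacent cubes shares a complete $(d-1)$-facet, equivalently $\Lambda$ contains some standard basis vector $\pm e_j$. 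Given this, I would induct on $d$: after permuting coordinates by some $w_0 \in W$ we may assume $e_d \in \Lambda$; the projection of $\Lambda / \Z e_d$ to $\R^{d-1}$ is a unimodular lattice $\Lambda'$ still lying in the $(d-1)$-dimensional analogue of $K_0$, and the inductive hypothesis yields $\Lambda' = w'U'(w')^{-1}\Z^{d-1}$. Lifting $w'$ and $U'$ to $\SL_d$ and absorbing the shear between consecutive sheets of $\Lambda$ into the last column of the upper-triangular factor produces the required presentation $\Lambda = w U w^{-1}\Z^d$.

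The main obstacle is Minkowski's conjecture itself, which does not admit a purely geometric proof. Haj\'os's resolution rescales a rational approximation of $\Lambda$ to embed the tiling into $(\Z/N\Z)^d$ for large $N$, and then establishes the purely algebraic statement that in any factorization of a finite abelian group as a product of \emph{simplices} $\{e, a, a^2, \ldots, a^{k-1}\}$, at least one factor must be a subgroup. That group-theoretic theorem is proved by an intricate induction on $|G|$ with substantial case analysis; rather than reproducing Haj\'os's original treatment, I would follow the streamlined exposition of Stein and Szabo \cite{SteinSzabo1994}, and then translate the conclusion back to the cube-tiling setting to obtain the facet-sharing property used in the inductive step above.
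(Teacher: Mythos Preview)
The paper does not give its own proof of Theorem~\ref{thm:hajosmin}; it simply records the statement and attributes it to Haj\'os \cite{Hajos1941}, treating the result as a black box to be used later (e.g.\ in Corollary~\ref{MinkowskiHajosThm} and Lemma~\ref{MinkowskiHajosCor2}). So there is no ``paper's own proof'' to compare against; your proposal goes beyond what the paper does.

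As a sketch of the standard proof, your outline is essentially correct. The easy inclusion is fine (and matches the tiling interpretation the paper records just after the theorem). For the hard inclusion you have correctly identified the structure: reformulate $\Lambda\in K_0$ as a lattice cube tiling, invoke the facet-sharing statement (Minkowski's conjecture), and then induct on $d$ by quotienting out the standard basis vector that lies in $\Lambda$. You are also right that the real content is the facet-sharing step, which reduces to Haj\'os's algebraic theorem on factorizations of finite abelian groups into cyclic ``simplices''; citing \cite{SteinSzabo1994} for that argument is exactly what the paper itself does when discussing the difficulty of Haj\'os's proof in the introduction. One small slip: in your tiling paragraph, the centers of two overlapping $2\Lambda$-translates of $[-1,1]^d$ differ by an element of $2\Lambda$, not of $\Lambda$; what you want is that this difference equals $2\lambda$ for some $\lambda\in\Lambda\cap(-1,1)^d=\{\bn\}$. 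This does not affect the argument.
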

%For any $u\in U$, we will use the notation $u=(u_{ij})_{1\leq i<j\leq d}$ to denote the $d$ by $d$ upper triangular unipotent matrix with the $(i,j)$-th entry equaling $u_{ij}$ ($1\leq i<j\leq d$); we will drop the subscript when there is no confusion. Now 
If we set
\begin{align}\label{equ:ufudo}
U_0=\left\{(u_{ij})\in U\col -\tfrac12<u_{ij}\leq\tfrac12\text{ for all }1\leq i<j\leq d\right\}
\end{align}
so that $U_0$ is a fundamental domain for $U/(\Gamma\cap U)$, 
then we have the  following immediate corollary of Haj\'os's Theorem.
\begin{cor}\label{MinkowskiHajosThm}
Given any $\Lambda\in K_0$, %i.e. $\Lambda\cap(-1,1)^d=\{\bn\}$,
there exist $w\in W$ and $u\in U_0$ such that $\Lambda=wu\Z^d=wuw^{-1}\Z^d$.
\end{cor}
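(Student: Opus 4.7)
The statement is a fairly direct bookkeeping consequence of Haj\'os's Theorem (Theorem \ref{thm:hajosmin}) together with the fact that $U_0$ is a fundamental domain for $U/(\Gamma\cap U)$. My plan is the following.

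First, given $\Lambda\in K_0$, I would apply Theorem \ref{thm:hajosmin} to obtain some $w\in W$ and some $u'\in U$ (not yet required to lie in $U_0$) such that $\Lambda=(wu'w^{-1})\Z^d$. The key observation at this point is that $W\subset\GL_d(\Z)$, so in particular $w^{-1}\in\GL_d(\Z)$, which gives $w^{-1}\Z^d=\Z^d$. Therefore
\begin{align*}
\Lambda=wu'w^{-1}\Z^d=wu'\Z^d.
\end{align*}

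Next, I would use the fact that $U_0$, as defined in \eqref{equ:ufudo}, is a fundamental domain for the action of $\Gamma\cap U$ on $U$ by right translation. Hence there exist $u\in U_0$ and $\gamma\in\Gamma\cap U$ with $u'=u\gamma$, and then $u'\Z^d=u\gamma\Z^d=u\Z^d$. Combining with the previous step yields $\Lambda=wu\Z^d$.

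Finally, applying $w^{-1}\Z^d=\Z^d$ once more in reverse gives
\begin{align*}
wu\Z^d=wuw^{-1}\Z^d,
\end{align*}
so that both equalities $\Lambda=wu\Z^d=wuw^{-1}\Z^d$ hold simultaneously for this choice of $w$ and $u$. Since each step is immediate, there is no real obstacle here; the only thing to verify carefully is that the fundamental-domain description \eqref{equ:ufudo} of $U_0$ is indeed the correct one (which is a standard computation: the entries of $\Gamma\cap U$ above the diagonal are arbitrary integers, so reducing each strictly upper-triangular entry into $(-\tfrac12,\tfrac12]$ by right-multiplication by an element of $\Gamma\cap U$ puts $u'$ into $U_0$).
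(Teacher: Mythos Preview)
Your proposal is correct and follows essentially the same argument as the paper: apply Haj\'os's Theorem to write $\Lambda=wu'w^{-1}\Z^d$, use $w^{-1}\Z^d=\Z^d$ to get $\Lambda=wu'\Z^d$, then reduce $u'$ into $U_0$ via the fundamental domain property, and finally reinsert the $w^{-1}$. The only cosmetic difference is that the paper phrases the fundamental-domain step as ``find $u\in U_0$ with $u\Z^d=u'\Z^d$'' rather than explicitly writing $u'=u\gamma$, but this is the same thing.
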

\begin{proof}
Since $\Lambda\in K_0$, by Theorem \ref{thm:hajosmin} we can find $u'\in U$ and $w\in W$ such that
$\Lambda=wu'w^{-1}\Z^d$; but since $w^{-1}\Z^d=\Z^d$, we have
$\Lambda=wu'\Z^d$. Now using the fact that $U_0$ is a 
fundamental domain for 
$U/(\Gamma\cap U)$,
we can then find $u\in U_0$ such that
$u\Z^d=u'\Z^d$. Thus $\Lambda=wu'\Z^d=wu\Z^d=wuw^{-1}\Z^d$, finishing the proof.
\end{proof}
%As a direct consequence of Corollary \ref{MinkowskiHajosThm} and a standard compactness argument, we have the following crude description of lattices in $K_r$ which will be the starting point of our proof of the upper bound. We omit the proof here and refer the reader to the proof of Lemma \ref{MinkowskiHajosCor2} where a similar compactness argument is given in a slightly different setting. We note that the argument there can be easily modified to give a proof here.
%\begin{cor}\label{MinkowskiHajosCor}
%For any $c_0>0$ there exists $r>0$ such that
%for every $\Lambda\in K_r$, i.e. 
%$\Lambda\cap \scrC_r=\{\bn\}$,
%there exist $g\in G$,
%$w\in W$, $u\in U_0$ such that
%$\Lambda=\Z^dg$ and $\|g-w^{-1}uw\|<{c_0}$.
%\end{cor}
%{\color{red}{[Corollary \ref{MinkowskiHajosCor} is indeed not used for the proof (we used its modified version); maybe we can just remove it (and of course also the above short discussion on it)]}}
%\subsection{An auxiliary lemma}
%We will also need the geometric interpretation of the Haj\'os-Minkowski Theorem. 
There is a geometric interpretation of %lattices in 
$K_0$ in terms of \textit{lattice tilings by unit cubes}
\cite[Ch.\ 1.4]{SteinSzabo1994}:
Let us write $\tfrac12\mathcal{C}_0=(-\tfrac12,\tfrac12)^d$ 
for the unit cube obtained by dilating $\mathcal{C}_0$ by a factor $\tfrac12$.
Then for any $\Lambda\in X_d$, 
the family of cubes
$\bm{v}+\tfrac12\mathcal{C}_0$, with $\bm{v}$ running through the lattice $\Lambda$,
forms a tiling of $\R^d$ (modulo a null set) if and only if $\Lambda\in K_0$.
More generally, for any $r\in[0,1)$ we write $\frac12\scrC_r=\bigl(\frac12(r-1),\frac12(1-r)\bigr)^d$.
Then for any $\Lambda\in X_d$,
the condition $\Lambda\in K_r$, i.e.\ $\Lambda\cap \mathcal{C}_r=\{\bn\}$,
is equivalent {to} the condition that the cubes 
$\vecv+\tfrac12\scrC_r$ ($\bm{v}\in\Lambda$)
are \textit{pairwise disjoint}.
When this holds, we write
%$$
%\fC_{\Lambda,0}:=\Lambda+\tfrac12\mathcal{C}_0
%$$ 
%be the union of all cubes $\bm{v}+\tfrac12\mathcal{C}_0$ as $\bm{v}$ runs through the lattice $\Lambda$. Here $\tfrac12\mathcal{C}_0=(-\tfrac12,\tfrac12)^d$ is the unit cube obtained by dilating $\mathcal{C}_0$ by a factor $\tfrac12$. Then $\fC_{\Lambda,0}$ forms a tiling of $\R^d$ (modulo a null set) if and only if $\Lambda\in K_0$. More generally, for any $\Lambda\in X_d$ and $r\in(0,1)$
%we define
\begin{align*}
\fC_{\Lambda,r}:=\Lambda+\tfrac12\scrC_r
\end{align*}
for the union of these disjoint cubes.
%to be the union of all cubes $\bm{v}+\tfrac12\mathcal{C}_r$ as $\bm{v}$ runs through the lattice $\Lambda$.
%(Here $\tfrac12\scrC_r$ is the cube $\scrC_r$ dilated by a factor $\frac12$.)
%i.e.\ $\tfrac12\scrC_r=\bigl(\tfrac12(r-1),\tfrac12(1-r)\bigr)^n$.)
%Thus $\fC_{\Lambda,r}$ is the union of all the cubes $\vecv+\tfrac12\scrC_r$ as
%$\vecv$ runs through the lattice $\Lambda$.{\color{red}{[changed $\vecb$ to $\vecv$]}}
%Then similarly, the assumption that $\Lambda\in K_r$, i.e.\ $\Lambda\cap \mathcal{C}_r=\{\bn\}$ implies that these cubes 
%$\vecv+\tfrac12\scrC_r$ ($\bm{v}\in\Lambda$)
%are \textit{pairwise disjoint}. 
This set is used in the statement of %to formulate 
the following simple bound,
%Using this geometric interpretation, we have the following simple bound 
which is of crucial importance in our proof of the upper bound
in Theorem \ref{thm:measestmainaux}.

\begin{lem}\label{fCkeyLEM}
Let $\Lambda\in X_d$ and $r\in(0,\frac14)$ be such that $\Lambda\cap\scrC_r=\{\bn\}$,
and let $\scrU$ be a Borel subset of $\R^d$
which is disjoint from $\fC_{\Lambda,r}$ and
which is contained in some translate of the cube $(0,\frac34)^d$.
Then $\vol(\scrU)<dr$.
\end{lem}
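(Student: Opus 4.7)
The plan is to project $\scrU$ into the unit-covolume torus $\R^d/\Lambda$ and use the fact that $\scrU$ must then inject into the complement of the image of the cube packing $\fC_{\Lambda,r}$, a complement of measure $1-(1-r)^d$. Since $1 - (1-r)^d < dr$ for $d \geq 2$ and $r \in (0,1)$, the lemma will follow.

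First, I would show that the projection $\scrU \to \R^d/\Lambda$ is injective. Take any $\vecu_1, \vecu_2 \in \scrU$ with $\vecu_1 - \vecu_2 \in \Lambda$. Since $\scrU$ lies in a translate of $(0, \tfrac{3}{4})^d$, we have $\vecu_1 - \vecu_2 \in (-\tfrac{3}{4}, \tfrac{3}{4})^d$. Because $r < \tfrac{1}{4}$ forces $1 - r > \tfrac{3}{4}$, this difference lies in $\scrC_r$, and the hypothesis $\Lambda \cap \scrC_r = \{\bn\}$ then gives $\vecu_1 = \vecu_2$.

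Next, I would observe that the same hypothesis $\Lambda \cap \scrC_r = \{\bn\}$ shows that the cube $\tfrac{1}{2}\scrC_r$ also injects into the torus (its self-difference set is contained in $\scrC_r$), so the image of $\fC_{\Lambda,r} = \Lambda + \tfrac{1}{2}\scrC_r$ in $\R^d/\Lambda$ has measure exactly $\vol\bigl(\tfrac{1}{2}\scrC_r\bigr) = (1-r)^d$. Combined with the disjointness assumption $\scrU \cap \fC_{\Lambda,r} = \emptyset$ and the injectivity just established, this gives $\vol(\scrU) \leq 1 - (1-r)^d$.

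Finally, writing $f(r) := dr - 1 + (1-r)^d$, one has $f(0) = 0$ and $f'(r) = d\bigl(1 - (1-r)^{d-1}\bigr) > 0$ for $r \in (0,1)$ when $d \geq 2$, yielding the strict inequality $1 - (1-r)^d < dr$. I do not foresee any real obstacle here; the lemma is essentially a packing-in-the-torus statement, and the numerical threshold $r < \tfrac{1}{4}$ is precisely what the injectivity argument for $\scrU$ requires.
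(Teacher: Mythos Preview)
Your proof is correct and follows essentially the same route as the paper: both arguments reduce to the facts that $\scrU$ injects into $\R^d/\Lambda$ (via the $\tfrac34$-cube containment and $\Lambda\cap\scrC_r=\{\bn\}$), that the image of $\fC_{\Lambda,r}$ in the torus has measure $(1-r)^d$, and that $1-(1-r)^d<dr$. The only cosmetic difference is that the paper phrases the injectivity step by explicitly replacing a fundamental domain $\scrF$ with one containing $\scrU$, whereas you work directly with the quotient map; the content is the same.
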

\begin{proof}
The set $\fC_{\Lambda,r}$ is invariant under translation by any vector in $\Lambda$,
and if $\scrF\subset\R^d$ is any fundamental domain for $\R^d/\Lambda$,
then 
\begin{align*}
\vol(\scrF\cap\fC_{\Lambda,r})=\sum_{\vecv\in\Lambda}\vol\left(\scrF\cap(\vecv+\tfrac12\scrC_r)\right)
=\sum_{\vecv\in\Lambda}\vol\left((\scrF-\vecv)+\tfrac12\scrC_r\right)=\vol(\tfrac12\scrC_r)=(1-r)^d,
\end{align*} 
where the first equality holds since the cubes $\vecv+\frac12\scrC_r$ ($\vecv\in \Lambda$) are pairwise disjoint,
and the third %second last 
equality holds since the sets $\scrF-\vecv$ ($\vecv\in\Lambda$)
form a partition of $\R^d$.
%\footnote{Detailed proof:
%$\vol(F\cap\fC_{\Lambda,r})=\sum_{\vecb\in\Lambda}\vol(F\cap(\vecb+\frac12\scrC_r))
%=\sum_{\vecb\in\Lambda}\vol((F-\vecb)+\frac12\scrC_r)=\vol(\frac12\scrC_r)$,
%where the last equality holds since the sets $F-\vecb$ ($\vecb\in\Lambda$)
%form a partition of $\R^d$.}
Hence
\begin{align*}
\vol(\scrF\smallsetminus\fC_{\Lambda,r})=1-(1-r)^d<dr.
\end{align*}
Next, since $\scrU$ is contained in a translate of $(0,\frac34)^d$,
the difference between any two vectors in $\scrU$ lies in $(-\frac34,\frac34)^d=\scrC_{1/4}\subset\scrC_r$,
and since $\Lambda\cap\scrC_r=\{\bn\}$, 
this implies that the points in $\scrU$ are pairwise inequivalent modulo $\Lambda$.
Hence the set $\big(\scrF\smallsetminus(\Lambda+\scrU)\big)\cup \scrU$ is another fundamental domain for $\R^d/\Lambda$, and it contains $\scrU$. After replacing $\scrF$ by this set, we have $\scrU\subset\scrF$; %Up to replacing $\scrF$ by this set, we may assume $\scrF$ contains $\scrU$. 
%(For example, if $\scrF'$ is an
%arbitrary fundamental domain for $\R^d/\Lambda$ then the set $\scrF:=(\scrF'\smallsetminus(\Lambda+\scrU))\cup \scrU$ is also
%a fundamental domain for $\R^d/\Lambda$, and it contains $\scrU$.)
thus $\scrU\subset \scrF\smallsetminus\fC_{\Lambda,r}$,
and hence $\vol(\scrU)\leq\vol(\scrF\smallsetminus\fC_{\Lambda,r})<dr$.
\end{proof}

%\newpage
%As a direct consequence of the Hajos-Minkowski's Theorem and  a standard compactness argument, we get the following crude description of lattices in $K_r$ when $r>0$ is small.
%\begin{lem}\label{lem:crudedesuppbd}
%Let $c_0>0$ be the small number fixed as above. Then there exists $r_0>0$ (which may depend on $c_0$) such that for all $0<r<r_0$, any lattice $\Lambda\in K_r$ has, after permuting $\{\vece_i: 1\leq i\leq d\}$, a basis $\{\vecb_i: 1\leq i\leq d\}$ such that $\|\vece_1-\vecb_1\|<c_0$ and for all $2\leq i\leq n$, $\|\sum_{j=1}^{i-1}u_{ij}\vece_j+\vece_i-\vecb_i\|<c_0$ for some $u_{ij}\in [-\frac12, \frac12)$.
%
%\end{lem}

\section{Proof of the lower bound}\label{sec:lowerbound}

We keep the notation introduced in  Section \ref{sec:premest}.
In this section we prove the lower bound in Theorem \ref{thm:measestmainaux}.
%Keep the assumptions and notation as in Section \ref{sec:smpara}. In this section we prove the lower bound in Theorem \ref{thm:measestmainaux}. Let $c_0>0$ be the fixed small parameter as in Section \ref{sec:smpara} so that for any $g\in G_{c_0}$, to show $g\Z^d \in K_r$, it suffices to show $\sum_{1\leq j\leq d}m_j\left(g\bm{e}_j\right)\notin \mathcal{C}_r$ for all $\bm{m}=(m_1,\ldots, m_d)^t\in \{-1,0,1\}^d\smallsetminus \{\bm{0}\}$.
We will do this %prove this %the desired lower bound 
by constructing, for every sufficiently small $r$,
an explicit subset $\underline{K}_r\subset X_d$ which we will show satisfies 
\begin{align*}
\underline{K}_r\subset K_r\qquad \textrm{and}\qquad \mu_d(\underline{K}_r)\gg_d 
r^{\varkappa_d+1}\log^{\lambda_d}\Bigl(\frac{1}{r}\Bigr).
\end{align*}
%To prove a lower bound for $F(s)=\mu\left(K_s\right)$, it suffices to find a ``nice'' subset in $K_s$ whose measure is of the same magnitude as $K_s$. It seems that the mass of $K_s$ is concentrated around $\Z^d$, the integral lattice. In fact, we suspect that for any small $c>0$, 
%$$F(s)\asymp \mu\left(K_s\cap \pi(B_c)\right),\  \forall\ 0<s<s_0.$$
%Recall that we have fixed a parameter $c_0>0$ such that 

We start by giving 
a family of conditions which ensures that a given lattice is contained in $K_r$.
%some sufficient conditions for lattices in $\pi(G_{c_0})$ to be contained in $K_r$. %For simplicity of notation, we will also denote $d:=n-1$ for the remaining of this section.
Recall from Section \ref{sec:smpara} that
$c_0\in(0,1)$ is a small fixed parameter 
with the property that for any 
$g\in G_{c_0}$ and $0<r<1$, we have $g\Z^d \in K_r$ if and only if
$\sum_{1\leq j\leq d}m_j\left(g\bm{e}_j\right)\notin \mathcal{C}_r$ for all $\bm{m}=(m_1,\ldots, m_d)^t\in \{-1,0,1\}^d\smallsetminus \{\bm{0}\}$.
\begin{lem}\label{Lem:general}
Let $c_0\in(0,1)$ be as above,
let $r\in (0,c_0/d)$ and let $\Lambda=p_{\bm{b}_1,\dots,\bm{b}_{d-1}}u_{\bm{x}}\Z^d\in X_d$ with 
$\bm{b}_j=(b_{1j},\ldots,b_{dj})^t\in\R^d$ ($j=1,\ldots,d-1$)
%$\bm{b}_1,\ldots,\bm{b}_{d-1}\in\R^d$
and $\bm{x}\in (0,c_0/d)^{d-1}$. 
Let $\tilde{p}=(b_{ij})_{1\leq i,j\leq d-1}$ be the top left $(d-1)\times (d-1)$ block of $p_{\bm{b}_1,\dots,\bm{b}_{d-1}}$ as before. %\footnote{In \eqref{Fs3dimpfLEM1ass1}, note that $b_1b_2'-b_2b_1'\approx 1$ since 
%$\|\bm{b}-\bm{e}_1\|\leq c_0$ and $\|\bm{b}'-\bm{e}_2\|\leq c_0$.
%In particular we certainly have $b_1b_2'-b_2b_1'>0$.}
Suppose $\bm{b}_1,\ldots,\bm{b}_{d-1}, \bm{x}$ further satisfies
\begin{align}\label{Fs3dimpfLEM1ass1gen}
b_{ij}, -b_{ji}\in (-c_0, 0),\ \forall\ 1\leq j< i\leq d-1; \ b_{d\ell}\in (-c_0, 0), b_{\ell \ell}\in (1-r,1), \  \forall\ 1\leq \ell\leq d-1, %\quad\text{and}\quad
%\det(\tilde{A})^{-1}>1-s
\end{align}
%and %\footnote{{\color{red}{Here we deleted the original assumption $b_2'>1-s+b_2$.}}}
\begin{align}\label{Fs3dimpfLEM1ass2gen}
b_{ij}<\sum_{k=1}^{j-1} b_{ik},\qquad \forall\ 2\leq j< i\leq d,
\end{align}
%Let $\bm{x}$ be a point in $(0,\tfrac{c_0}{2})^2$ which satisfies
\begin{align}\label{Fs3dimpf5gen}
1-r<\det \tilde{p}<(1-r)^{-1}  %\in (1-r, (1-r)^{-1})
\qquad \textrm{and}\qquad \sum_{j=1}^{d-1}|b_{dj}|x_j<(\det \tilde{p})^{-1}-(1-r),
\end{align}
and
\begin{align}\label{equ:newresgen}
b_{ii}+\sum_{j=1}^{d-1}b_{ij}x_j>1-r,\qquad \forall\ 1\leq i\leq d-1.
\end{align}
Then $\Lambda\in K_r$.
\end{lem}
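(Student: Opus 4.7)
The plan is to verify the criterion from Section~\ref{sec:smpara}: once we confirm that $g := p_{\vecb_1, \ldots, \vecb_{d-1}} u_{\vecx}$ lies in $G_{c_0}$, it suffices to show that for every $\vecm = (m_1, \ldots, m_d)^t \in \{-1,0,1\}^d \setminus \{\vec0\}$, the vector $\sum_{j=1}^d m_j \vecb_j$ has some coordinate with absolute value at least $1-r$. The containment $g \in G_{c_0}$ is a routine entry-by-entry size check: by \eqref{Fs3dimpfLEM1ass1gen} every off-diagonal entry of $\tilde{p}$ and every entry $b_{d\ell}$ ($\ell < d$) lies in $(-c_0, c_0)$, each $b_{\ell\ell}$ differs from $1$ by at most $r < c_0/d$, and the last column $\vecb_d = \sum_{j<d} x_j \vecb_j + (\det\tilde{p})^{-1}\vece_d$ is close to $\vece_d$ because $\vecx \in (0, c_0/d)^{d-1}$ and $\det\tilde{p} \in (1-r, (1-r)^{-1})$ by \eqref{Fs3dimpf5gen}. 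After an innocuous shrinking of the absolute constant $c_0$, these bounds give $\|g - I_d\| < c_0$.

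A direct computation in the basis $\{\vecb_1, \ldots, \vecb_{d-1}, \vecb_d\}$ yields, for $1 \leq i \leq d-1$, an $i$-th coordinate of $\sum_j m_j \vecb_j$ equal to $\sum_{j<d}(m_j + m_d x_j)\,b_{ij}$, while the $d$-th coordinate equals $\sum_{j<d}(m_j + m_d x_j)\,b_{dj} + m_d/\det\tilde{p}$. Since the condition is invariant under $\vecm \mapsto -\vecm$, we normalise the sign pattern and split the argument on the value of $m_d$. In every sub-case we exhibit an index $i$ for which $|(i\text{-th coord})| \geq 1-r$.

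For $m_d = 0$, normalise so that $m_{k^*} = 1$ where $k^* = \max\{j : m_j \neq 0\}$. If all nonzero $m_j$ are $+1$, take $i_1 = \min\{i : m_i = 1\}$: the $i_1$-th coordinate equals $b_{i_1, i_1}$ plus a sum of non-negative above-diagonal entries (by \eqref{Fs3dimpfLEM1ass1gen}), so exceeds $b_{i_1, i_1} > 1-r$. If some $m_j = -1$, set $\ell^* = \max\{j : m_j = -1\}$ (so $\ell^* < k^*$) and take $i^* = \min\{i > \ell^* : m_i = 1\}$; the $i^*$-th coordinate is $b_{i^*, i^*}$ plus contributions that are $\geq 0$ for $j > i^*$ (as before), plus a contribution from $j \leq \ell^*$ which equals $|b_{i^*, \ell^*}|$ (arising from $m_{\ell^*} = -1$) minus corrections bounded in absolute value by $\sum_{j<\ell^*}|b_{i^*, j}|$, which is strictly dominated by $|b_{i^*, \ell^*}|$ via the row-dominance condition \eqref{Fs3dimpfLEM1ass2gen}. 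Hence the $i^*$-th coordinate exceeds $1-r$. For $m_d = \pm 1$ (WLOG $+1$), either the $d$-th coordinate or an $i^*$-th coordinate with $i^* \leq d-1$ selected by exactly the same $\ell^*$-based prescription works: in the former case \eqref{Fs3dimpf5gen} makes $\sum_j x_j b_{dj} + 1/\det\tilde{p}$ exceed $1-r$ and \eqref{Fs3dimpfLEM1ass2gen} gives positivity of $\sum_j m_j b_{dj}$; in the latter \eqref{equ:newresgen} applied to row $i^*$ gives $b_{i^*, i^*}(1+x_{i^*}) + \sum_{j \neq i^*} x_j b_{i^*, j} > 1-r$, and the same sign-and-dominance argument controls the residual $\sum_{j \neq i^*} m_j b_{i^*, j}$.

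The principal obstacle is the combinatorial bookkeeping. The four families of assumptions \eqref{Fs3dimpfLEM1ass1gen}--\eqref{equ:newresgen} are engineered so that every sign pattern of $\vecm$ falls into one of the uniform recipes outlined above: sign-uniform patterns use only the diagonal lower bound and the signs of the off-diagonal entries; sign-mixed patterns additionally require the row-dominance condition \eqref{Fs3dimpfLEM1ass2gen} to absorb the negative cross-cancellations coming from $j < \ell^*$; and the case $m_d \neq 0$ invokes \eqref{Fs3dimpf5gen} or \eqref{equ:newresgen} to upgrade the plain diagonal inequality $b_{ii} > 1-r$ to the sharper $b_{ii}(1+x_i) + \sum_j b_{ij}x_j > 1-r$ (resp.\ the $\det\tilde p$-adjusted inequality used in the $d$-th coordinate). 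Verifying that these recipes cover all $(3^d-1)/2$ sign patterns (up to $\vecm \mapsto -\vecm$) is essentially mechanical but technically dense, and is the source of the specific form of the conditions \eqref{Fs3dimpfLEM1ass1gen}--\eqref{equ:newresgen}.
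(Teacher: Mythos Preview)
Your approach is correct and essentially the same as the paper's. The only real difference is organizational: you split first on whether $m_d=0$ or $m_d\neq 0$ and then on the sign pattern of the remaining entries, whereas the paper splits first into ``all nonzero entries have the same sign'' versus ``mixed signs'' and handles the possible $m_d$-values inside each case. In both arguments the selected coordinate is the same --- the smallest nonzero index in the same-sign case, and the index $i^*$ immediately following the \emph{last} sign change in the mixed case --- and the hypotheses are used identically: \eqref{Fs3dimpfLEM1ass1gen} for signs, \eqref{Fs3dimpfLEM1ass2gen} to make the sub-diagonal residual $-b_{i^*,\ell^*}+\sum_{j<\ell^*}m_jb_{i^*,j}$ positive, and \eqref{equ:newresgen} (equivalently $b_{kk}+b_{kd}>1-r$) to absorb the contribution of $\vecb_d$ when $m_d\neq 0$.

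One small point: the ``innocuous shrinking of $c_0$'' is neither needed nor permitted, since $c_0$ is fixed in Section~\ref{sec:smpara} and the lemma is stated for that $c_0$. In fact the entry bounds go through directly: for $i<d$ one has $|b_{id}|\leq\sum_{j<d}|x_j||b_{ij}|<(d-1)\cdot\frac{c_0}{d}<c_0$, while $b_{dd}=\sum_j x_jb_{dj}+(\det\tilde p)^{-1}$ satisfies $1-r<b_{dd}<(\det\tilde p)^{-1}<(1-r)^{-1}<1+2r\leq 1+c_0$ by \eqref{Fs3dimpf5gen} and $r<c_0/d\leq c_0/2$. So $g\in G_{c_0}$ without any adjustment.
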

%\begin{remark}\label{rmk:conseine}
%Before giving the proof, we 
%\end{remark}
\begin{proof}
Let us set $g:=p_{\bm{b}_1,\dots,\bm{b}_{d-1}}u_{\bm{x}}$
and $\vecb_d=(b_{1d},\ldots,b_{dd})^t:=g\vece_d$;
then $\vecb_j=g\vece_j$ for all $1\leq j\leq d$,
and $\bm{b}_d:=\sum_{j=1}^{d-1}x_j\bm{b}_j+(\det \tilde{p})^{-1}\bm{e}_d$;
in particular $b_{id}=\sum_{j=1}^{d-1}b_{ij}x_j$ for $1\leq i\leq d-1$.
%
%We start by noticing that
%%assumption \eqref{equ:newresgen} is then equivalent to that 
%\begin{equation}\label{equ:newresgenb}
%\min\left\{b_{id}+b_{ii}, b_{ii}\right\}>1-r,\qquad \forall\ 1\leq i\leq d-1.
%\end{equation}
%Indeed, we have $b_{id}+b_{ii}=\sum_{j=1}^{d-1}b_{ij}x_j+b_{ii}>1-r$ by \eqref{equ:newresgen},
%and $b_{ii}>1-r$ by \eqref{Fs3dimpfLEM1ass1gen}.
%
%where the inequalities $b_{ii}>1-r$ for all $1\leq i\leq d-1$ are assumed in \eqref{Fs3dimpfLEM1ass1gen}; the inequalities $b_{id}+b_{ii}>1-r$ for all $2\leq i\leq d-1$ are just the assumption \eqref{equ:newresgen} (noting that the relation $\bm{b}_d=\sum_{j=1}^{d-1}x_j\bm{b}_j+\det(\tilde{p})^{-1}\bm{e}_d$ implies that $b_{id}=\sum_{j=1}^{d-1}b_{ij}x_j$ for $1\leq i\leq d-1$); the inequality $b_{1d}+b_{11}>1-r$ is true since $b_{11}>1-r$ and $b_{1d}=\sum_{j=1}^{d-1}b_{1j}x_j>0$ (which, by \eqref{Fs3dimpfLEM1ass1gen}, follows from the inequalities $b_{1j}>0$ and $x_j>0$ for all $1\leq j\leq d-1$). 
%
% and 
%$$\Lambda=\Z^dg=\Z\bm{b}_1\oplus\cdots\oplus \Z\bm{b}_{d-1}\oplus \Z\bm{b}_d.$$
We will start by proving that $g\in G_{c_0}$, i.e. $\|\bm{b}_j-\bm{e}_j\|<c_0$ for all $1\leq j\leq d$. 
In fact, if $1\leq j\leq d-1$ then $\|\bm{b}_j-\bm{e}_j\|<c_0$
is an immediate consequence of
\eqref{Fs3dimpfLEM1ass1gen} and $0<r<c_0/d<c_0$;
thus it remains to show $\|\bm{b}_d-\bm{e}_d\|<c_0$. 
For each $1\leq i\leq d-1$ we have
\begin{align*}
\left|b_{id}\right|=\Biggl|\sum_{j=1}^{d-1}b_{ij}x_j\Biggr|
\leq \sum_{j=1}^{d-1}\left|b_{ij}\right|\left|x_j\right|<\sum_{j=1}^{d-1}\frac{c_0}{d}<c_0,
\end{align*}
where for the second inequality we used 
the assumption that $\bm{x}\in (0,c_0/d)^{d-1}$
and the fact that
$|b_{ij}|<1$ for all $1\leq i,j\leq d-1$, %($\forall i,j\in\{1,\ldots,d-1\}$),
which is immediate from \eqref{Fs3dimpfLEM1ass1gen}.
It remains to prove that $1-c_0<b_{dd}<1+c_0$. 
In fact, we have the following stronger bound:
\begin{align}\label{equ:nn}
1-r< b_{dd}<1+c_0.
\end{align}
Indeed, using $b_{dj}<0$ and $x_j>0$ ($1\leq j\leq d-1$)
in combination with \eqref{Fs3dimpf5gen}, we get
\begin{align*}
b_{dd}=\sum_{j=1}^{d-1}b_{dj}x_j+(\det \tilde{p})^{-1}=(\det \tilde{p})^{-1}-\sum_{j=1}^{d-1}\left|b_{dj}\right|x_j>1-r
\end{align*}
as well as
%For the right inequality, using the first assumption in \eqref{Fs3dimpf5gen} and again that $-b_{dj},x_j>0$ for each $1\leq j\leq d-1$ we have
\begin{align*}
b_{dd}=\sum_{j=1}^{d-1}b_{dj}x_j+(\det \tilde{p})^{-1}<(\det \tilde{p})^{-1}<(1-r)^{-1}<1+2r\leq 1+c_0.
\end{align*}
(For the second {to} last inequality
we used the fact that $0<r<c_0/d<1/2$.)
This finishes the proof of \eqref{equ:nn}, and hence $g\in G_{c_0}$. 

Because of $g\in G_{c_0}$,
in order to show $\Lambda\in K_r$,
%Thus in view of the choice of $c_0$, to show $\Lambda\in K_r$ 
it suffices to prove that $g\bm{m}=\sum_{1\leq j\leq d}m_j\bm{b}_j\notin \mathcal{C}_r$ for all $\bm{m}\in \{-1,0,1\}^d\smallsetminus\{\bm{0}\}$.
Thus we now let the vector $\bm{m}=(m_1,\ldots, m_d)^t\in \{-1,0,1\}^d\smallsetminus\{\bm{0}\}$ be given,
and %we are going to prove 
{show} that $g\bm{m}\notin \mathcal{C}_r$. 

First assume that all the nonzero entries of $\bm{m}$ are of the same sign. 
After %Up to 
replacing $\bm{m}$ by $-\bm{m}$ if necessary, we may assume $m_j\geq 0$ for all $1\leq j\leq d$. 
Let $k\in \{1,\ldots, d\}$ be the smallest integer such that $m_k=1$. 
If $k=d$ then 
%Note that we may assume $k<d$ since otherwise 
$\bm{m}=\bm{e}_d$, and thus $g\bm{m}=\bm{b}_d\notin \mathcal{C}_r$ by \eqref{equ:nn}. 
In the remaining case when $k<d$, 
the $k$-th coordinate of $g\bm{m}$ satisfies
\begin{align}\label{allsamesignpf1}
m_kb_{kk}+\sum_{j=k+1}^dm_jb_{kj}\geq b_{kk}+m_db_{kd}\geq \min\{b_{kk}, b_{kk}+b_{kd}\},
\end{align}
where we used the fact that, by \eqref{Fs3dimpfLEM1ass1gen}, $b_{kj}>0$ for each $k<j\leq d-1$. 
Furthermore,
\begin{align}\label{allsamesignpf2}
\min\{b_{kk}, b_{kk}+b_{kd}\}>1-r,
\end{align}
since $b_{kk}>1-r$ by \eqref{Fs3dimpfLEM1ass1gen} 
and $b_{kk}+b_{kd}=b_{kk}+\sum_{j=1}^{d-1}b_{kj}x_j>1-r$ by \eqref{equ:newresgen}.
It follows from \eqref{allsamesignpf1} and \eqref{allsamesignpf2}
that the $k$-th coordinate of $g\bm{m}$ is larger than $1-r$,
and so $g\bm{m}\notin\scrC_r$.
This completes the proof in the case when 
all the nonzero entries of $\bm{m}$ are of the same sign.

It remains to treat the case when %We now treat the remaining case, i.e.\ when 
$\{-1,1\}\subset \{m_j: 1\leq j\leq d\}$.
Then let $1\leq i_1< i_2\leq d$ be the indices which record the %last time
{latest instance}
when the signs of the entries of $\bm{m}$ change,
i.e.\ the unique indices such that
$m_{i_1}m_{i_2}=-1$, $m_j=0$ for $i_1<j<i_2$ and $m_j\in \{m_{i_2}, 0\}$ for $i_2<j\leq d$
(the last two conditions are void if $i_1+1=i_2$ or $i_2=d$, respectively).
%
% Note that in this case there exist some $1\leq i_1< i_2\leq d$ such that $m_{i_1}m_{i_2}=-1$, $m_j=0$ for all $i_1<j<i_2$ and $m_j\in \{m_{i_2}, 0\}$ for all $i_2<j\leq d$. %\footnote{Here the condition that $m_j=0$ for all $i_1<j<i_2$ is void if $i_1+1=i_2$; similarly, the condition $m_j\in \{m_{i_2},0\}$ for all $i_2<j\leq d$ is void if $i_2=d$. Note that the pair $(i_1, i_2)$ records the last time when the signs of the entries of $\bm{m}$ change, e.g. if $\bm{m}=(0,1,0,-1,-1,-1,-1)^t$, then $(i_1,i_2)=(2,4)$; if $\bm{m}=(0,1,-1,1,0,1,-1)^t$, then $(i_1,i_2)=(6, 7)$. The existence of such a pair is ensured by the assumption that $\{-1,1\}\subset\{m_j: 1\leq j\leq d\}$.}. 
%Here the condition that $m_j=0$ for all $i_1<j<i_2$ is void if $i_1+1=i_2$; similarly, the condition $m_j\in \{m_{i_2},0\}$ for all $i_2<j\leq d$ is void if $i_2=d$. Note that the pair $(i_1, i_2)$ records the last time when the signs of the entries of $\bm{m}$ change; the existence of such a pair is ensured by the assumption that $\{-1,1\}\subset\{m_j: 1\leq j\leq d\}$.
%
Again after replacing $\bm{m}$ by $-\bm{m}$ if necessary, we may assume 
$m_{i_1}=-1$, $m_{i_2}=1$, and thus $m_j\geq 0$ for all $i_2<j\leq d$. 
Now consider the $i_2$-th coordinate of $g\bm{m}$ which is
%$$\left\lbrace\begin{array}{ll} 
%y_{1,j_2}-y_{j_2,j_2}+\sum_{j_2<i\leq n}m_iy_{i,j_2}& j_1=1,\\
%\sum_{k\leq i<j_1}m_iy_{i, j_2}+y_{j_1,j_2}-y_{j_2,j_2}+\sum_{j_2<i\leq n}m_iy_{i,j_2}& j_1\geq 2,
%\end{array}\right.$$
%<-y_{j_2,j_2}+\sum_{j_2<i\leq n}m_iy_{i,j_2}.
%$$
$$\sum_{1\leq j<i_1}m_jb_{i_2,j}-b_{i_2,i_1}+b_{i_2,i_2}+\sum_{i_2<j\leq d}m_jb_{i_2,j}.$$%=\cS_1+y_{j_1,j_2}-y_{j_2,j_2}+\cS_2.$$
Here we have
\begin{align}\label{nonsamesignpf1}
\sum_{1\leq j<i_1}m_jb_{i_2,j}-b_{i_2,i_1}\geq
\sum_{1\leq j<i_1}b_{i_2,j}-b_{i_2,i_1}>0,
\end{align}
where the first inequality holds since $|m_j|\leq 1$ and $b_{i_2,j}<0$ for all $j<i_1$,
and the second inequality holds by \eqref{Fs3dimpfLEM1ass2gen}
(except if $i_1=1$; in that case 
\eqref{nonsamesignpf1} simply says that $b_{i_2,i_1}<0$,
which holds by \eqref{Fs3dimpfLEM1ass1gen}).
%
%If $i_1=1$, then the above first sum is void and we have $b_{i_2,i_1}=b_{i_2,1}<0$ (by \eqref{Fs3dimpfLEM1ass1gen}). If $i_1\geq 2$, then for each $1\leq j<i_1$, since $b_{i_2,j}<0$ (by \eqref{Fs3dimpfLEM1ass1gen} and that $j<i_1<i_2$) and $m_j\in \{-1,0,1\}$, we have
%$$\sum_{1\leq j<i_1}m_jb_{i_2,j}\leq \sum_{1\leq j<i_1}|b_{i_2,j}|= -\sum_{1\leq j<i_1}b_{i_2,j},$$
%implying that
%\begin{align*}
%sum_{1\leq j<i_1}m_jb_{i_2,j}+b_{i_2,i_1}\leq -\sum_{1\leq j<i_1}b_{i_2,j}+b_{i_2,i_1}<0.
%%\sum_{k\leq i<j_1}m_iy_{i, j_2}+y_{j_1,j_2}-y_{j_2,j_2}+\sum_{j_2<i\leq n}m_iy_{i,j_2}&\leq -\sum_{1\leq i<j_1}y_{i, j_2}+y_{j_1,j_2}-y_{j_2,j_2}+\sum_{j_2<i\leq n}m_iy_{i,j_2}\\
%&<-y_{j_2,j_2}+\sum_{j_2<i\leq n}m_iy_{i,j_2}.
%\end{align*}
%Here for the last inequality we used the assumption \eqref{Fs3dimpfLEM1ass2gen} (by taking $(i,j)=(i_1,i_2)$). Thus in both cases we have
%\begin{align*}
%\sum_{1\leq j<i_1}m_jb_{i_2,j}+b_{i_2,i_1}-b_{i_2,i_2}+\sum_{i_2<j\leq d}m_jb_{i_2,j}<-b_{i_2,i_2}+\sum_{i_2<j\leq d}m_jb_{j,i_2}.
%\end{align*}
Furthermore,
%second sum in the right hand side of the above inequality is void. Using \eqref{equ:nn} we then have
%\begin{align*}
%\sum_{1\leq j<i_1}m_jb_{i_2,j}+b_{i_2,i_1}-b_{i_2,i_2}+\sum_{i_2<j\leq d}m_jb_{i_2,j}<-b_{dd}<-(1-r).
%\end{align*}
%If $i_2<d$, then %using the assumptions that $m_j\leq 0$ and $b_{i_2,j}>0$ for all $i_2<j\leq d-1$ (by \eqref{Fs3dimpfLEM1ass1gen}), we have
\begin{align}\label{nonsamesignpf2}
b_{i_2,i_2}+\sum_{i_2<j\leq d}m_jb_{i_2,j}
\quad\begin{cases}
=b_{dd}>1-r&\text{if }\: i_2=d,
\\[2pt]
\geq b_{i_2,i_2}+m_db_{i_2,d}>1-r &\text{if }\: i_2<d,
\end{cases}
\end{align}
where we used \eqref{equ:nn} in the case $i_2=d$,
and in the case $i_2<d$ we used the fact that
$m_j\geq0$ and $b_{i_2,j}>0$ for all $i_2<j\leq d-1$ (if any),
and then applied \eqref{allsamesignpf2} with $k=i_2$.
Adding the inequalities in \eqref{nonsamesignpf1} and \eqref{nonsamesignpf2},
we conclude that the $i_2$-th coordinate of $g\bm{m}$ is larger than $1-r$; hence $g\bm{m}\notin\mathcal{C}_r$. 
This concludes the proof of the lemma.
%implying that $\bm{m}A\notin C_s$. Here for the first inequality we used the assumptions that $m_k=1$, $m_i\geq 0, y_{ik}>0$ for each $k<i\leq n$.
\end{proof}

We next give another family of conditions,
which implies the conditions in Lemma \ref{Lem:general},
and which is more suitable for the measure computations which
we are going to carry out.
%While the conditions in Lemma \ref{Lem:general} are sufficient to produce lattices in $K_r$, they are not very suitable for measure computations. In the following proposition we give some even stronger conditions which imply the conditions in Lemma \ref{Lem:general} while also are more suitable for measure computations.
\begin{prop}\label{prop:regulargen}
Let $c_0\in(0,1)$ be as above,
let $r\in (0,c_0/d)$ and let $\Lambda=p_{\bm{b}_1,\dots,\bm{b}_{d-1}}u_{\bm{x}}\Z^d\in X_d$ with 
$\bm{b}_j=(b_{1j},\ldots,b_{dj})^t\in\R^d$ ($j=1,\ldots,d-1$)
%$\bm{b}_1,\ldots,\bm{b}_{d-1}\in\R^d$
and $\bm{x}\in (0,c_0/d)^{d-1}$. 
%Let $c_0>0$ and let $r\in (0,c_0/d)$ be as in Lemma \ref{Lem:general}. Let $\Lambda=p_{\bm{b}_1,\dots,\bm{b}_{d-1}}u_{\bm{x}}\Z^d$ with $(\bm{b}_1,\ldots, \bm{b}_{d-1},\bm{x})\in (\R^d)^{d-1}\times (0,c_0/d)^{d-1}$ be such that
Assume that 
\begin{align}\label{equ:con1}
b_{ij}, -b_{ji}\in (-c_0, 0),\ \forall\ 1\leq j< i\leq d-1; \ b_{d\ell}\in (-c_0, 0), b_{\ell \ell}\in (1-\tfrac{r}{2d},1), \  \forall\ 1\leq \ell\leq d-1,%\quad\text{and}\quad
%\det(\tilde{A})^{-1}>1-s
\end{align}
\begin{align}\label{equ:con2}
b_{ij}<db_{i,j-1}\ (\Leftrightarrow |b_{ij}|>d|b_{i,j-1}|),\qquad \forall\ 2\leq j<i\leq d,
\end{align}
\begin{align}\label{equ:con3}
|b_{ij}b_{ji}|<\frac{r}{d!},\quad \forall 1\leq j<i\leq d-1,\qquad \textrm{and}\qquad \sum_{j=1}^{d-1}|b_{dj}|x_j<\frac{r}{2},
\end{align}
and
\begin{align}\label{equ:con4}
b_{kj}>b_{ij}\ (\Leftrightarrow |b_{kj}|<|b_{ij}|),\qquad \forall 1\leq j<k<i\leq d.
\end{align}
Then $\Lambda\in K_r$.
\end{prop}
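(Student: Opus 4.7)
My strategy is to show that the assumptions \eqref{equ:con1}--\eqref{equ:con4} imply each of the four hypotheses \eqref{Fs3dimpfLEM1ass1gen}, \eqref{Fs3dimpfLEM1ass2gen}, \eqref{Fs3dimpf5gen}, \eqref{equ:newresgen} of \lemref{Lem:general}, and then invoke that lemma to conclude $\Lambda\in K_r$.

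Condition \eqref{Fs3dimpfLEM1ass1gen} is immediate from \eqref{equ:con1} since $r/(2d)<r$. For \eqref{Fs3dimpfLEM1ass2gen}, using the signs fixed by \eqref{equ:con1} I rewrite it as $|b_{ij}|>\sum_{k=1}^{j-1}|b_{ik}|$; iterating \eqref{equ:con2} yields $|b_{ik}|<d^{-(j-1-k)}|b_{i,j-1}|$, and summing the resulting geometric series gives $\sum_{k=1}^{j-1}|b_{ik}|<\tfrac{d}{d-1}|b_{i,j-1}|\leq d|b_{i,j-1}|<|b_{ij}|$. For \eqref{equ:newresgen}, the only potentially negative contributions to $b_{ii}+\sum_{j=1}^{d-1}b_{ij}x_j$ come from indices $j<i$; for these \eqref{equ:con4} gives $|b_{ij}|\leq|b_{dj}|$, so $\sum_{j<i}|b_{ij}|x_j\leq\sum_{j=1}^{d-1}|b_{dj}|x_j<r/2$ by the second part of \eqref{equ:con3}, and combined with $b_{ii}>1-r/(2d)$ this yields $b_{ii}+\sum_j b_{ij}x_j>1-r/(2d)-r/2>1-r$ for $d\geq 2$.

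The main obstacle will be the bound $1-r<\det\tilde p<(1-r)^{-1}$ in \eqref{Fs3dimpf5gen}. I plan to expand $\det\tilde p=\prod_i b_{ii}+R$ via the Leibniz formula, with $R$ summing over non-identity permutations of $\{1,\dots,d-1\}$. Since $\prod_i b_{ii}\in((1-r/(2d))^{d-1},1)\subset(1-r/2,1)$, it suffices to show $|R|<r/2$, and I will prove the sharper claim that $|\prod_i b_{i,\sigma(i)}|<r/d!$ for every non-identity $\sigma$ (whence $|R|\leq(d-1)!\cdot r/d!=r/d\leq r/2$). For a $2$-cycle this is \eqref{equ:con3} verbatim. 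For a cycle $(a_1,\dots,a_k)$ of length $k\geq 3$, I let $M$ be the cycle's maximum index and $a_i$ be the predecessor of $M$ in the cycle (so $\sigma(a_i)=M$, $a_i<M\leq d-1$, and $b_{a_i,M}$ appears in the product); for the terminal descent factor $b_{a_k,a_1}$ I apply \eqref{equ:con4} to substitute $|b_{a_k,a_1}|\leq|b_{M,a_1}|$ (trivial when $a_k=M$) and then chain within row $M$ via \eqref{equ:con2} to get $|b_{M,a_1}|\leq d^{-(a_i-a_1)}|b_{M,a_i}|$; the resulting factor pairs with $|b_{a_i,M}|$ already present in the product and is controlled by \eqref{equ:con3} as $|b_{a_i,M}b_{M,a_i}|<r/d!$, while the remaining $k-2$ off-diagonal factors are each $<c_0<1$, yielding $|\prod|<c_0^{k-2}r/d!<r/d!$. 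The main technical effort is making this substitute-and-pair procedure uniform across all cycle types and combining it across cycles inside a multi-cycle $\sigma$. Once the determinant bound is in place, $(\det\tilde p)^{-1}>1-r/d$ gives $(\det\tilde p)^{-1}-(1-r)>r(d-1)/d\geq r/2>\sum_j|b_{dj}|x_j$ by \eqref{equ:con3}, verifying the second half of \eqref{Fs3dimpf5gen}; applying \lemref{Lem:general} then gives $\Lambda\in K_r$.
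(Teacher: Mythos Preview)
Your approach is correct and essentially the same as the paper's: both reduce to \lemref{Lem:general} and verify \eqref{Fs3dimpfLEM1ass1gen}--\eqref{equ:newresgen} in the same way, with the main work going into bounding each non-identity Leibniz term by $r/d!$. Two minor remarks: first, your cycle argument tacitly requires labeling the cycle so that $a_1$ is its \emph{minimum} element (otherwise the application of \eqref{equ:con4} to $b_{a_k,a_1}$ and the chaining via \eqref{equ:con2} to column $a_i$ may fail); this should be stated. Second, the multi-cycle case you flag as ``the main technical effort'' is in fact trivial---just apply your single-cycle bound to one nontrivial cycle of $\sigma$ and bound every remaining factor (diagonal or off-diagonal) by $1$. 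The paper streamlines your two-step substitution (\eqref{equ:con4} then \eqref{equ:con2}) by first recording the combined monotonicity $|b_{i'j'}|\leq|b_{ij}|$ whenever $j'<i'$, $j<i$, $i'\leq i$, $j'\leq j$, and then pairing the factor at the minimum column with the factor at the maximum column directly; but the underlying inequalities are identical to yours.
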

%\begin{remark}
%We note that condition \eqref{equ:con2} gives a horizontal ordering (with absolute values ascending from left to right with factors) for the bottom left off-diagonal entries of $p_{\bm{b}_1,\ldots, \bm{b}_{d-1}}$; condition \eqref{equ:con4} gives a vertical ordering (with absolute values ascending from top to bottom) for the bottom left off-diagonal entries of $p_{\bm{b}_1,\ldots, \bm{b}_{d-1}}$.
%\end{remark}
\begin{proof}%[Proof of Proposition \ref{prop:regulargen}]
In view of Lemma \ref{Lem:general}, it suffices to show that 
the conditions 
%conditions \eqref{equ:con1}, \eqref{equ:con2}, \eqref{equ:con3} and \eqref{equ:con4} imply conditions 
\eqref{Fs3dimpfLEM1ass1gen}, \eqref{Fs3dimpfLEM1ass2gen}, \eqref{Fs3dimpf5gen} and \eqref{equ:newresgen}
are fulfilled.
Among these, \eqref{Fs3dimpfLEM1ass1gen} is an immediate consequence of \eqref{equ:con1}.
Furthermore, \eqref{Fs3dimpfLEM1ass2gen} follows from \eqref{equ:con1} and \eqref{equ:con2}
by the following computation,
valid for any $2\leq j<i\leq d$:
\begin{align*}
b_{ij}<db_{i,j-1}<(j-1)b_{i,j-1}\leq\sum_{k=1}^{j-1}b_{ik}
\quad(<0),
%\qquad \forall\ 2\leq j<i\leq d,
\end{align*}
where the last relation is an equality when $j=2$,
while for $j\geq3$ it is a strict inequality which 
holds since \eqref{equ:con2} forces
$b_{i,j-1}<b_{i,j-2}<\cdots<b_{i,1}$.
Also \eqref{equ:newresgen} is easily proved:
Let $1\leq i\leq d-1$.
Then for every $1\leq j\leq d-1$ we have $x_j>0$
and $b_{ij}>b_{dj}$
(the latter holds by \eqref{equ:con4} if $j<i$ and by \eqref{equ:con1} if $j\geq i$).
%where the latter inequality follows from \eqref{equ:con4} if $j<i$ and from \eqref{equ:con1} if $j\geq i$.
Hence
$$b_{ii}+\sum_{j=1}^{d-1}b_{ij}x_j>b_{ii}+\sum_{j=1}^{d-1}b_{dj}x_j>1-\frac{r}{2d}-\frac{r}{2}>1-r,$$ 
where for the second last inequality we used
\eqref{equ:con1} and the second part of %inequality in 
\eqref{equ:con3}.

%For \eqref{equ:newresgen}, for each $2\leq i\leq d-1$, using the assumptions $x_j>0$, $b_{ij}>b_{dj}$ for each $1\leq i\leq d-1$ (this follows from \eqref{equ:con4} if $j<i$; if $j\geq i$, then by \eqref{equ:con1}, $b_{ij}>0>b_{dj}$), $b_{ii}>1-\frac{r}{2d}$ (by  \eqref{equ:con1}), and the second assumption in \eqref{equ:con3} (which is equivalent to $\sum_{j=1}^{d-1}b_{dj}x_j>-\tfrac{r}{2}$),  we have
%$$\sum_{j=1}^{d-1}b_{ij}x_j+b_{ii}>\sum_{j=1}^{d-1}b_{dj}x_j+b_{ii}>-\frac{r}{2}+1-\frac{r}{2d}>1-r.$$ 

It remains to prove %Finally, we show that conditions \eqref{equ:con1}, \eqref{equ:con2}, \eqref{equ:con3} and \eqref{equ:con4}  imply 
\eqref{Fs3dimpf5gen}. 
We first note that the ordering assumptions in \eqref{equ:con2} and \eqref{equ:con4} imply
\begin{align}\label{equ:orderingcon}
|b_{i',j'}|\leq |b_{ij}|\qquad \textrm{whenever 
$1\leq j'<i'\leq d$, $1\leq j<i\leq d$
and 
$i'\leq i$ and $j'\leq j$.}
\end{align}
Now let $\tilde{p}=(b_{ij})_{1\leq i,j\leq d-1}$ be as in Lemma \ref{Lem:general}. %We note that by \eqref{equ:con1} each entry of $\tilde{A}$ has absolute value smaller than one. 
Define
$$\varphi_{\tilde{p}}:=\sum_{\substack{\sigma\in S_{d-1}\\ \sigma\neq \id}}\prod_{1\leq i\leq d-1}\left|b_{\sigma(i)i}\right|,$$
where $S_{d-1}$ is the symmetric group over the finite set $\{1,2,\ldots, d-1\}$ 
and $\id\in S_{d-1}$ is its identity element.
Then
\begin{align}\label{varphitildeppf2}
\prod_{i=1}^{d-1}b_{ii}-\varphi_{\tilde{p}}\leq \det \tilde{p}\leq \prod_{i=1}^{d-1}b_{ii}+\varphi_{\tilde{p}}.
\end{align}

If $d=2$ then $S_{d-1}=\{\id\}$ and $\varphi_{\tilde{p}}=0$.
%$\varphi_{\tilde{p}}=0$ since $S_{d-1}\smallsetminus\{\id\}=\varnothing$.
Now assume $d\geq3$, and consider an arbitrary permutation
$\sigma\in S_{d-1}\smallsetminus\{\id\}$.
Let $(i_1\ldots i_{\ell})$ be a cycle of $\sigma$ of length $\ell\geq 2$, meaning that 
$\sigma(i_j)=i_{j+1}$ for all $1\leq j\leq \ell-1$ and $\sigma(i_{\ell})=i_1$. 
Without loss of generality, we may assume $i_{\ell}=\max\{i_j: 1\leq j\leq \ell\}$. 
Let $i_{j_0}:=\min\left\{i_j: 1\leq j\leq \ell\right\}$. 
%Then clearly $i_{j_0}< \sigma\left(i_{j_0}\right)$, $i_1=\sigma(i_{\ell})<i_{\ell}$, $\sigma(i_{j_0})\leq i_{\ell}$ and $i_{j_0}\leq i_1$. 
%In view of these estimates we can apply 
Then \eqref{equ:orderingcon} applies for the pairs 
$(i',j')=(\sigma(i_{j_0}), i_{j_0})$ and $(i,j)=(i_{\ell}, i_1)$,
so that $\bigl|b_{\sigma(i_{j_0}),i_{j_0}}\bigr|\leq \bigl|b_{i_{\ell},i_1}\bigr|$.
%Define a subset $F_{\sigma}\subset \{i_1,\ldots, i_{\ell}\}$ such that
%$$F_{\sigma}:=\{i_j: \sigma(i_j)> i_j,\ j=1,\ldots, \ell\}.$$
%Since $\sigma(i_{\ell-1})=i_{\ell}>i_{\ell-1}$ and $\sigma(i_{\ell})=i_1<i_{\ell}$, we have $i_{\ell-1}\in F_{\sigma}$ and $i_{\ell}\notin F_{\sigma}$. In particular, $F_{\sigma}$ is nonempty. Let $i_{j_0}:=\min\{i_j: i_j\in F_{\sigma}\}$ and we claim that $i_{j_0}\leq i_1=\sigma(i_{\ell})$. Suppose not, then we have $i_1<i_{j_0}$. In particular, in view of the minimality of $i_{j_0}$, this implies $i_1\notin F_{\sigma}$. Thus $\sigma(i_1)\leq i_1<i_{j_0}$, which in turn implies $\sigma(i_1)\notin F_{\sigma}$. Inductively, we have $\sigma^k(i_1)\notin F_{\sigma}$ for all $k\geq 0$, which is absurd since
% $$\{\sigma^k(i_1): k\geq 0\}=\{i_1,\ldots, i_{\ell}\}\supseteq F_{\sigma}.$$
%We have thus proved the claim, that is, $i_{j_0}\leq i_1$. On the other hand, by the definitions of $i_{j_0}$ and $F_{\sigma}$, we have $i_{j_0}<\sigma(i_{j_0})$. In particular, using the relations $i_{\ell}\geq \sigma(i_{j_0})>i_{j_0}$ and \eqref{equ:con4} and the relations $i_{\ell}>i_1\geq i_{j_0}$ and \eqref{equ:con2} we get
%\begin{align*}
%\left|b_{\sigma(i_{j_0}),i_{j_0}}\right|\leq \left|b_{i_{\ell},i_{j_0}}\right|\leq \left|b_{i_{\ell},i_1}\right|.
%\end{align*} 
%Hence noting that $i_{j_0}\neq i_{\ell}$ (since $i_{\ell}\notin F_{\sigma}$) and applying the trivial estimates $|b_{\sigma(i)i}|<1$ for $i\neq i_{j_0}, i_{\ell}$ we get
We also have $|b_{ij}|<1$ for all $1\leq i,j\leq d-1$, 
by \eqref{equ:con1}.
Hence
$$
\prod_{1\leq i\leq d-1}\left|b_{\sigma(i)i}\right|\leq\left|b_{\sigma(i_{j_0}),i_{j_0}}b_{i_1,i_{\ell}}\right|\leq \left|b_{i_{\ell},i_1}b_{i_1, i_{\ell}} \right|<\frac{r}{d!},
$$
where the last inequality holds by \eqref{equ:con3}.
%Here for the first inequality we used the facts that the entries of $\tilde{p}$ all have absolute value smaller than one and that $i_{j_0}\neq i_{\ell}$ (since $i_{\ell}\notin F_{\sigma}$); the second inequality follows from the assumptions \eqref{equ:con2} and \eqref{equ:con4} and the following three inequalities that $i_{j_0}<\sigma(i_{j_0})$, $i_{j_0}\leq i_1$ and $\sigma(i_{j_0})\leq i_{\ell}$\footnote{Here the first inequality $i_{j_0}<\sigma(i_{j_0})$ is needed so that assumptions \eqref{equ:con2} and \eqref{equ:con4} are applicable to the entry $b_{i_{j_0},\sigma(i_{j_0})}$ and it follows from the definition of $F_{\sigma}$; the second inequality is proved in the above claim; the third inequality holds trivially since $i_{\ell}=\max\{i_j: 1\leq j\leq \ell\}$.};
%%is trivially true (being strictly an equality) if $\ell=2$, i.e. $i_2=i_{\ell}$; if $\ell>2$, then $i_2<i_{\ell}$ and we applied \eqref{equ:con4} (for the pair $(i,k,j)=(i_1,i_2,i_{\ell})$)
%for the last inequality we applied the first assumption in \eqref{equ:con3} (for the pair $(i,j)=(i_1,i_{\ell})$). 
The above holds for every $\sigma\in S_{d-1}\smallsetminus\{\id\}$; hence
\begin{align}\label{varphitildeppf1}
\varphi_{\tilde{p}}<(d-1)!\frac{r}{d!}=\frac{r}{d}.
\end{align}
Note that \eqref{varphitildeppf1} also holds when $d=2$, trivially.

Using \eqref{varphitildeppf2}, \eqref{varphitildeppf1},
and the fact that $1-\frac r{2d}<b_{ii}<1$ for all $1\leq i\leq d-1$ (cf. \eqref{equ:con1}), we get
\begin{align}\label{equ:assump}
\det \tilde{p} <\prod_{i=1}^{d-1}b_{ii}+\frac{r}{d}<1+\frac{r}{d}<(1-r)^{-1},
\end{align}
and
\begin{align*}
\det \tilde{p} > \prod_{i=1}^{d-1}b_{ii}-\frac{r}{d}>\left(1-\frac{r}{2d}\right)^{d-1}-\frac{r}{d}>1-\frac{r}{2}-\frac{r}{d}\geq 1-r.
\end{align*}
Hence we have proved the first condition in \eqref{Fs3dimpf5gen}. 
For the second condition in \eqref{Fs3dimpf5gen}, in view of the second condition in \eqref{equ:con3} it suffices to show $\tfrac{r}{2}<(\det \tilde{p})^{-1}-(1-r)$, or equivalently, {that $\det \tilde{p}$ is smaller than} $(1-\tfrac{r}{2})^{-1}$. But this is true since by \eqref{equ:assump}, $\det \tilde{p} <1+\tfrac{r}{d}< (1-\tfrac{r}{2})^{-1}$. This finishes the proof.
%For each tuple $(i_1,\ldots, i_{n-1})\neq (1,\ldots, n-1)$ occurring in the definition of $\varphi_{\tilde{A}}$, let $1\leq k\leq n-2$ be the first integer such that $i_k\neq k$. Thus $(i_1,\ldots, i_{n-1})$ is of the form $(1,\ldots, k-1, i_k, \ldots, i_{n-1})$, implying that 
%\begin{align*}
%i_k>k\ \textrm{and}\ \{i_k,\ldots, i_{n-1}\}=\{k,\ldots, n-1\}.
%\end{align*}
%Let $k+1\leq j\leq n-1$ such that $i_{j}=k$ ($j\neq k$ since $i_k>k$). %Moreover, let $k\leq \ell\leq n-1$ such that $i_{\ell}=j$. Note that $\ell\neq j$ since otherwise 
%$$j=i_{\ell}=i_j=k,$$
%contradicting the fact that $j\neq k$. 
%Hence we have
%%$$\left|y_{1 i_1}y_{2 i_2}\cdots y_{n-1, i_{n-1}}\right|<\left|y_{k i_k}y_{j,i_j}\right|=|y_{ki_k}y_{jk}|<|y_{kj}y_<\frac{s}{d!},$$
%where for the second inequality we used that $|y_{\ell j}|$
\end{proof}

%\begin{thm}
%Keep the notation as above and let $c_0>0$ be fixed as above. Then for any $s\in (0,c_0/d^d)$ we have
%$$F(s)\gg_{c_0,n} s^{\frac{(d-1)(d+2)}{2}}\log^{\frac{d(d-1)}{2}}\left(\tfrac{1}{s}\right).$$
%\end{thm}

We can now give the
\begin{proof}[Proof of the lower bound in Theorem \ref{thm:measestmainaux}]
Keep $0<r<c_0/d$, and define
%\begin{align*}
%\scrK_r:=\left\{(\bm{b}_1,\ldots, \bm{b}_{d-1},\bm{x})\in  (\R^d)^{d-1}\times (0,\tfrac{c_0}{d})^{d-1}: (\bm{b}_1,\ldots, \bm{b}_{d-1},\bm{x})\ \textrm{satisfies}\ \eqref{equ:con1}, \eqref{equ:con2}, \eqref{equ:con3},\eqref{equ:con4}\right\}.
%\end{align*}
\begin{align}\label{def:urlowerbd}
\underline{K}_r:=\left\{p_{\bm{b}_1,\dots,\bm{b}_{d-1}}u_{\bm{x}}\Z^d\in X_d\col \begin{array}{ll}
(\bm{b}_1,\ldots, \bm{b}_{d-1},\bm{x})\in  (\R^d)^{d-1}\times (0,c_0/d)^{d-1}\\
\textrm{\ \ satisfies}\ \eqref{equ:con1}, \eqref{equ:con2}, \eqref{equ:con3},\eqref{equ:con4}
\end{array}\right\}. 
\end{align} 
%be the subset of $X_n$ consisting of lattices of the form $\Lambda=\Z^du_{\bm{x}}p_{\bm{b}_1,\dots,\bm{b}_{n-1}}$ with $\bm{x}\in (0, c_0/d)^{n-1}$ and $(\bm{x},\bm{b}_1,\ldots, \bm{b}_{n-1})$ further satisfying \eqref{equ:con1}, \eqref{equ:con2}, \eqref{equ:con3} and \eqref{equ:con4}. 
By Proposition \ref{prop:regulargen} we have $\underline{K}_r\subset K_r$. 
It remains to bound $\mu_d\left(\underline{K}_r\right)$
from below.
Set
%To compute $\mu_d\left(\underline{K}_r\right)$, let us define for each $r\in (0, c_0/d)$
\begin{align*}
{\underline\cK_r}:=\left\{p_{\bm{b}_1,\dots,\bm{b}_{d-1}}u_{\bm{x}}\in G\col \begin{array}{ll}
(\bm{b}_1,\ldots, \bm{b}_{d-1},\bm{x})\in  (\R^d)^{d-1}\times (0,c_0/d)^{d-1}\\
\textrm{\ \ satisfies}\ \eqref{equ:con1}, \eqref{equ:con2}, \eqref{equ:con3},\eqref{equ:con4}
\end{array}\right\},
\end{align*}
so that $\pi({\underline\cK_r})=\underline{K}_r$. 
Here $\pi: G\to X_d$ is the natural projection as before. 
By immediate inspection of the proofs of Proposition \ref{prop:regulargen}
and Lemma \ref{Lem:general},
we have ${\underline\cK_r}\subset G_{c_0}$,
and by our choice of $c_0$, $\pi|_{G_{c_0}}$ is injective
(see Section \ref{sec:smpara}).
%As seen from the proof of Proposition \ref{prop:regulargen}, for $g=p_{\bm{b}_1,\dots,\bm{b}_{d-1}}u_{\bm{x}}\in \underline{K}_r'$, $(\bm{b}_1,\ldots, \bm{b}_{d-1}, \bm{x})$ also satisfies all the assumptions in Lemma \ref{Lem:general}. In particular, from the proof of Lemma \ref{Lem:general} we know that $g\in G_{c_0}$ with $G_{c_0}$ the small norm ball fixed as before. Thus $\pi|_{G_{c_0}}$ is injective and we have
%we first note that we have shown in the proof of Lemma \ref{Lem:general} that $U_r\subset \pi(B_{c_0})$, where $\pi$ and $B_{c_0}\subset G$ is as before. Since $\pi|_{B_{c_0}}$ is injective, we have
Hence $\mu_d\left(\underline{K}_r\right)=\nu\left({\underline\cK_r}\right)$, 
%where $\nu$ is the normalized Haar measure of $G$ which we fixed in Section \ref{sec:haar};
and by \eqref{equ:haar} we have 
%, where $\underline{U}'_s$ is the subset of $G$ consisting of matrices of the form $u_{\bm{x}}p_{\bm{b}_1,\dots,\bm{b}_{d-1}}$ with $\bm{x}\in (0, c_0/d)^{d-1}$, $\|\bm{b}_i-\bm{e}_i\|<c_0$ for all $1\leq i\leq d$ and satisfying \eqref{equ:con1}, \eqref{equ:con2}, \eqref{equ:con3} and \eqref{equ:con4}. 
%In view of the measure description \eqref{equ:haar} we have 
\begin{align*}
\nu\left({\underline\cK_r}\right)&\asymp_d \prod_{1\leq k\leq d-1}\left(\int_{1-\frac{r}{2d}}^1\d b_{kk}\right)
\int_{\cR}\delta\bigl(\,(b_{ij})_{1\leq j<i\leq d}\,\bigr) \prod_{1\leq j<i\leq d}\d b_{ij},
\end{align*}
%\begin{align*}
%\nu\left(\underline{K}'_r\right)&\asymp_d \prod_{1\leq k\leq d-1}\left(\int_{1-\frac{r}{2d}}^1db_{kk}\right)\int_{\cR}\delta(b_{1d}, \ldots, b_{d-1,d}) \left(\prod_{1\leq i<j\leq d-1}\left(\int_0^{\min\left\{c_0,\frac{r}{d!|b_{ij}|}\right\}}db_{ji}\right)db_{ij}\right)\prod_{1\leq \ell\leq d-1}db_{\ell d},
%%&\asymp_{c_0,d} %s^{\frac{(d-1)(n-2)}{2}+d-1}
%%r^{d-1}\int_{\cR}\delta(b_{1d}, \ldots, b_{d-1,d}) \left(\prod_{1\leq i<j\leq d-1}\min\left\{1, \frac{r}{|b_{ij}|}\right\}db_{ij}\right)\prod_{1\leq \ell\leq d-1}db_{\ell d},
%%&=%s^{\frac{d(d-1)}{2}}
%%\int_{\cR}\delta(y_{1n}, \ldots, y_{n-1,n}) \left(\prod_{1\leq i<j\leq d-1}\min\left\{1, \frac{s}{|y_{ij}|}\right\}dy_{ij}\right)\prod_{1\leq\ell\leq n-1}dy_{\ell n},
%%\int_{1-\frac{s}{2n}}^1dy_{ii}\int_{-c_0}^{0}\left(\int_{\{(y_{13},y_{23}): -c_0<y_{13}<y_{23}<y_{21}\}}\delta(y_{13},y_{23})dy_{13}dy_{23}\int_0^{\frac{s}{2|y_{21}|}}dy_{12}\right)dy_{21}\\
%%&\asymp s^{n-1} \int_{-c_0}^{0}\left(\int_{\{(y_{13},y_{23}): -c_0<y_{13}<y_{23}<y_{21}\}}\delta(y_{13},y_{23})dy_{13}dy_{23}\frac{1}{|y_{21}|}\right)dy_{21}
%\end{align*}
where
$$
\cR:=\left\{(b_{ij})_{1\leq j<i\leq d}\in (-c_0, 0)^{d(d-1)/2} \col \textrm{$(b_{ij})$ satisfies \eqref{equ:con2} and \eqref{equ:con4}}\right\},
$$
and
\begin{align*}
\delta\bigl(\,(b_{ij})_{1\leq j<i\leq d}\,\bigr)
:&=\prod_{1\leq j<i\leq d-1}\int_0^{\min\left\{c_0,\frac{r}{d!|b_{ij}|}\right\}}\d b_{ji}\times \int_{\left\{\bm{x}\in \left(0,\tfrac{c_0}{d}\right)^{d-1}\col \sum_{j=1}^{d-1}|b_{dj}|x_j<\frac{r}{2}\right\}}\prod_{1\leq j\leq d-1}\d x_j\\
&\asymp_{d,c_0}\prod_{1\leq j<i\leq d-1}\min\left\{1,\frac{r}{|b_{ij}|}\right\}\times \prod_{1\leq j\leq d-1}\min\left\{1,\frac{r}{|b_{dj}|}\right\}\\
&=\prod_{1\leq j<i\leq d}\min\left\{1,\frac{r}{|b_{ij}|}\right\}.
\end{align*}
%\begin{align*}
%\delta(b_{1d},\ldots, b_{d-1,d}):&=\int_{\left\{\bm{x}\in \left(0,\tfrac{c_0}{d}\right)^{d-1}: \sum_{i=1}^{d-1}|b_{id}|x_i<\frac{r}{2}\right\}}\prod_{1\leq i\leq d-1}dx_i\\
%&\asymp_{c_0,d} \prod_{1\leq i\leq d-1}\min\left\{1,\frac{r}{|b_{id}|}\right\}.
%\end{align*}
%(The implied constant in the above estimate depends both on $d$ and $c_0$, but $c_0$ depends only on $d$; hence the ``$\asymp_d$'' is correct.)
Hence 
\begin{align}\label{muduKrlb}
\mu_d\left(\underline{K}_r\right)=\nu\left({\underline\cK_r}\right)\asymp_{d, c_0}r^{d-1}\int_{\cR}\left(\prod_{1\leq j<i\leq d} \min\left\{1, \frac{r}{|b_{ij}|}\right\}\d b_{ij}\right).
\end{align}

Now for each $1\leq j<i\leq d$, we make a change of variable, $b_{ij}=-d^{j-1}z_{ij}$,
to simplify %kill off the factor $d$ in 
the ordering condition \eqref{equ:con2}. 
Then all the $z_{ij}$'s are positive, and the conditions \eqref{equ:con2} and \eqref{equ:con4} become
\begin{align}\label{equ:ordering}
z_{i'j'}<z_{ij} \quad \textrm{whenever 
$1\leq j'<i'\leq d$, $\:1\leq j<i\leq d$,
$\:i'\leq i$, $\:j'\leq j$ and $(i',j')\neq (i, j)$}.
%
%\forall\ 1\leq j<i\leq d,\ 1\leq j'<i'\leq d,\quad |z_{ij}|< |z_{i'j'}|\quad  \textrm{whenever $i'\leq i$, $j'\leq j$ and $(i,j)\neq (i', j')$}.
\end{align}
Moreover, for any $j<i$, {the condition} $b_{ij}\in (-c_0,0)$ corresponds to $z_{ij}\in (0,c_0/d^{j-1})$,
and we note that each of these intervals contains the fixed interval
$(0,c_0/d^{d-1})$.
In fact, let us restrict each $z_{ij}$ to the even smaller interval
$(r,c_0/d^{d-1})$,
and assume $r<c_0/d^{d-1}$ so that this interval is non-empty;
then $r/|b_{ij}|=r d^{1-j}/z_{ij}<d^{1-j}\leq 1$,
so that $\min\left\{1,r/|b_{ij}|\right\}=rd^{1-j}/z_{ij}\asymp_d r/z_{ij}$.
Note also that $|\d b_{ij}|=d^{j-1}\,\d z_{ij}\asymp_d \d z_{ij}$;
hence we get from \eqref{muduKrlb}:
\begin{align*}
\mu_d(\underline{K}_r)&\gg_{d,c_0} 
r^{d-1+\frac{d(d-1)}{2}}\int_{\cR'}\prod_{1\leq j<i\leq d}\frac{\d z_{ij}}{z_{ij}}
=r^{\frac{(d-1)(d+2)}{2}}\int_{\cR'}\prod_{1\leq j<i\leq d}\frac{\d z_{ij}}{z_{ij}},
\end{align*}
where 
\begin{align*}
\cR':=\left\{(z_{ij})_{1\leq j<i\leq d}\in (r,c_0/d^{d-1})^{d(d-1)/2}\col (z_{ij})\ \textrm{satisfies \eqref{equ:ordering}}\right\}.
\end{align*}
The last integrand is invariant under every permutation of the variables $(z_{ij})_{1\leq j<i\leq d}$,
and the integration regions $\sigma(\cR')$ with $\sigma$ running through all these permutations cover (modulo a null set) the cube $(r, c_0/d^{d-1})^{d(d-1)/2}$;
%Then we can apply the symmetry (permutations in $(z_{ij})_{1\leq i<j\leq d}$)
%\footnote{Explicitly, let $S$ be the symmetric group on $(z_{ij})_{1\leq i<j\leq d}$. Since the function $\prod_{1\leq i<j\leq d}\frac{1}{|z_ij}|$ is invariant under $\sigma\in S$, we have } %and also negating $(z_{ij})_{1\leq i<j\leq d}$ 
hence we obtain
\begin{align}\label{uKrlb}
\mu_d\left(\underline{K}_r\right)&\gg_{d, c_0} r^{\frac{(d-1)(d+2)}{2}}\prod_{1\leq j<i\leq d}\left(\int_r^{c_0/d^{d-1}}\frac{\d z_{ij}}{z_{ij}}\right)
\asymp_{d, c_0} r^{\frac{(d-1)(d+2)}{2}}\log^{\frac{d(d-1)}{2}}\Bigl(\frac{1}{r}\Bigr),
\end{align}
where the last estimate is valid e.g.\ for all $0<r<c_0/(2d^{d-1})$.
Recalling %that $\varkappa_d+1=\frac{(d-1)(d+2)}{2}$ and $\lambda_d=\frac{d(d-1)}{2}$, and 
that $c_0$ only depends on $d$,
we see that 
the lower bound in Theorem \ref{thm:measestmainaux} is now proved.
\end{proof}

\section{Proof of the upper bound}\label{upperboundsec}
We keep the notation introduced in Section \ref{sec:premest}. 
In this section we prove the upper bound in Theorem \ref{thm:measestmainaux}.
The main step in the proof is to show that for $r$ small,
$K_r$ is contained in a certain set of more explicit nature; see Proposition \ref{prop:increlimub} below.
To prepare for the statement of this result, %be able to formulate this result,
we start by introducing the following set,
for any $r\in(0,1)$ and $C>0$:
%For any $r\in(0,1)$ and $C>0$, let us define %{\color{red}{[changed $\tK_r$ to ${\overline{\cK}_{r,C}}$; $b$ to $g$ (but kept $b_{ij}$ as the entries); $g'$ to $\tilde{g}$]}}
\begin{align*}
{\overline{\cK}_{r,C}}:=\left\{g=(g_{ij})\in G\col 
\begin{array}{ll}
1-r\leq g_{ii}\leq 1+Cr,\ \forall\ 1\leq i\leq d\\
|g_{ij}|<1,\; |g_{ij}g_{ji}|\leq Cr,\ \forall\ 1\leq i\neq j\leq d\\
|\det \tilde{g}-1|<\tfrac12
\end{array}\right\}.
%,\:
% %\tfrac12+2Ac_0
%\text{ and }
%|b_{ij}b_{ji}|\leq Cr\:\:
%\text{for all $i\neq j$},  % in } \{1,\ldots,n\}\Bigr\}.
%\hspace{60pt}
%\\
%\text{and }\: |\det g'-1|<\tfrac12\Bigr\},
\end{align*}
Here $\tilde{g}$ denotes the top left $(d-1)\times(d-1)$ block of $g$ as before. In view of the Haar measure description \eqref{equ:haar2}, it is not difficult to compute the measure of ${\overline{\cK}_{r,C}}$:
\begin{lem}\label{lem:measuk}
For any $r\in (0,\frac12)$ and $C>0$ we have
\begin{align*}
\nu\bigl({\overline{\cK}_{r,C}}\bigr)\ll_{d, C} r^{\varkappa_d+1}\log^{\lambda_d}\Bigl(\frac{1}{r}\Bigr).
\end{align*}
\end{lem}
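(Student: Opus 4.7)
The plan is to apply Lemma \ref{lem:haar2} directly: the defining condition $|\det\tilde{g}-1|<\tfrac12$ of $\overline{\cK}_{r,C}$ is exactly the hypothesis of that lemma, so
\begin{align*}
\nu\bigl(\overline{\cK}_{r,C}\bigr)\asymp_d\int_{\overline{\cK}_{r,C}}\prod_{(i,j)\neq(d,d)}\d g_{ij}.
\end{align*}
After this reduction I would simply enlarge the integration domain by dropping the determinant constraint (this can only make the integral larger), at which point the remaining restrictions decouple into constraints on the individual diagonal entries $g_{ii}$ for $1\leq i\leq d-1$, and on the pairs of off-diagonal entries $(g_{ij},g_{ji})$ for $1\leq i<j\leq d$.

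Each of the $d-1$ diagonal integrals contributes a factor of at most $(1+C)r$, using $g_{ii}\in[1-r,1+Cr]$, so the diagonal block contributes $O_{d,C}(r^{d-1})$. For each of the $d(d-1)/2$ off-diagonal pairs, I would fix $g_{ij}\in(-1,1)$ and observe that the inner integral in $g_{ji}$ has length at most $2\min(1,Cr/|g_{ij}|)$; integrating this over $g_{ij}\in(-1,1)$ gives the standard bound
\begin{align*}
\int_{-1}^{1}\min\!\left(1,\tfrac{Cr}{|g_{ij}|}\right)\d g_{ij}\;\ll_{C}\; r\log(1/r),
\end{align*}
where the logarithm comes from the integration of $1/x$ between $Cr$ and $1$.

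Multiplying the $d(d-1)/2$ pair contributions together with the diagonal block yields
\begin{align*}
\nu\bigl(\overline{\cK}_{r,C}\bigr)\ll_{d,C}r^{d-1}\bigl(r\log(1/r)\bigr)^{d(d-1)/2}=r^{(d-1)(d+2)/2}\log^{d(d-1)/2}(1/r),
\end{align*}
and the identities $(d-1)(d+2)/2=\varkappa_d+1$ and $d(d-1)/2=\lambda_d$ recover the claim. No step appears to present a serious obstacle; the main content of the computation is the paired integral producing the logarithm, and everything else is a straightforward product of one-variable estimates once one observes that the constraints defining $\overline{\cK}_{r,C}$ cleanly decouple into single-entry and paired-entry conditions.
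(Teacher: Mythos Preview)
Your argument is correct and matches the paper's proof: apply Lemma~\ref{lem:haar2}, enlarge the domain to a product set by discarding the coupling constraints (the condition $|\det\tilde g-1|<\tfrac12$ and, implicitly, the restriction on $g_{dd}$, which via $\det g=1$ is a constraint on the remaining entries), and then compute the $d-1$ diagonal factors and the $\lambda_d$ paired off-diagonal integrals exactly as you do. The paper makes the dropping of the $g_{dd}$ restriction explicit by passing to an intermediate superset $\overline{\cK}'_{r,C}$, but otherwise the computation is identical.
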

\begin{proof}
%For any $g\in {\overline{\cK}_{r,C}}$, let $\bm{b}_1,\ldots, \bm{b}_d$ be the row vectors of $g$. Note that since $|\det(\tilde{g})-1|<\tfrac12$, we have $\det(\tilde{g})\in (\tfrac12, \tfrac32)$ is nonzero; thus we can write $g=u_{\bm{x}}p_{\bm{b}_1,\ldots,\bm{b}_{d-1}}$ with $\bm{x}\in \R^{d-1}$ uniquely determined by the relation $\bm{b}_d=\sum_{i=1}^{d-1}x_i\bm{b}_i+\det(\tilde{g})^{-1}\bm{e}_d$. 
%In particular, we note that 
%$\bm{b}_d':=(b_{d1},\ldots,b_{n,d-1})=\bm{x}\tilde{g}$. 
%and $b_{dd}$ is uniquely determined by $\bm{b}_1,\ldots,\bm{b}_{d-1}, \bm{x}$ (hence also by $\bm{b}_1,\ldots,\bm{b}_{n-1}, \bm{b}_d'$ since $\bm{x}$ and $\bm{b}_d'$ uniquely determine each other via the relation $\bm{b}_d'=\bm{x}\tilde{g}$) via $b_{dd}=\sum_{i=1}^{n-1}b_{id}+\det(\tilde{g})^{-1}$. We also note that this relation comes from the assumption that $g\in G$, i.e. $\deg(g)=1$.
%and $p_{\bm{b}_1,\ldots,\bm{b}_{d-1}}\in P$. Here for each $1\leq i\leq d-1$, $\bm{b}_i=(b_{i1},\ldots, b_{id})$ is the $i$-th row of $g$. We note that if we set $\bm{b}_d':=(b_{d1},\ldots,b_{n,n-1})$, then $\bm{b}_d'=\bm{x}g'$. 
Let us define
\begin{align*}
{\overline{\cK}'_{r,C}}:=\left\{g=(g_{ij})\in G\col 
\begin{array}{ll}
1-r\leq g_{ii}\leq 1+Cr,\ \forall\ 1\leq i\leq d-1\\
|g_{ij}|<1,\; |g_{ij}g_{ji}|\leq Cr,\ \forall\ 1\leq i\neq j\leq d\\
|\det \tilde{g}-1|<\tfrac12
\end{array}\right\}
\end{align*}
by disregarding the restriction on $g_{dd}$. Then clearly ${\overline{\cK}_{r,C}}\subset {\overline{\cK}'_{r,C}}$. 
%Now for each $(\bm{b}_1,\ldots,\bm_{n-1})\in (\R^n)^{n-1}$ with $\det(g')\in (\tfrac12, \tfrac32)$ let us define
%\begin{align*}
%{\overline{\cK}'_{r,C}}(\bm{b}_1,\ldots, \bm{b}_{n-1}):=\left\{\bm{x}\in \R^{d-1}: u_{\bm{x}}p_{\bm{b}_1,\ldots,\bm{b}_{d-1}}\in {\overline{\cK}'_{r,C}}\right\}
%\end{align*}
Moreover, in view of the Haar measure description \eqref{equ:haar2} we have (noting also that $\varkappa_d+1=\lambda_d+d-1$)
\begin{align*}
\nu\left({\overline{\cK}'_{r,C}}\right)&\ll_d \prod_{1\leq i\leq d-1}\int_{1-r}^{1+Cr}\d g_{ii}\prod_{1\leq j<i\leq d}\int_{-1}^1\int_{-1}^1{\chi_{\left\{(g_{ij},g_{ji})\col |g_{ij}g_{ji}|<Cr\right\}}}\,\d g_{ij}\d g_{ji}\\
&\asymp_C r^{d-1}\left(r\log\Bigl(\frac{1}{r}\Bigr)\right)^{\lambda_d}
=r^{\varkappa_d+1}\log^{\lambda_d}\Bigl(\frac{1}{r}\Bigr).
\end{align*}
%\comm{I believe the notation $I(\cdot)$ hasn't been introduced, what is it? the characteristic function? then maybe better to denote it by $\chi_{\{|g_{ij}g_{ji}|<Cr\}}$, like you are doing at other places in the paper?} \\
Thus
\begin{align*}
\nu\left({\overline{\cK}_{r,C}}\right)\leq \nu\left({\overline{\cK}'_{r,C}}\right)\ll_{d,C} r^{\varkappa_d+1}\log^{\lambda_d}\Bigl(\frac{1}{r}\Bigr),
\end{align*}
finishing the proof.
%\begin{align*}
%\nu\left({\overline{\cK}_{r,C}}\right)&\leq\nu\left({\overline{\cK}'_{r,C}}\right)\asymp_d \int_{\substack{(\R^d)^{d-1}\\ \det(\tilde{g})\neq 0}}\int_{\R^{d-1}}I\left(\bm{x}: u_{\bm{x}}p_{\bm{b}_1,\ldots,\bm{b}_{d-1}}\in {\overline{\cK}'_{r,C}}\right)d\bm{x}\prod_{j=1}^{d-1}d\bm{b}_j.
%\end{align*}
%Now doing a change of variable $\bm{x}\mapsto \bm{b}_d'=\bm{x}\tilde{g}$ and using that fact that $\det(\tilde{g})\in (\tfrac12,\tfrac32)$ for $g\in {\overline{\cK}'_{r,C}}$ we have
%\begin{align*}
%\nu\left({\overline{\cK}_{r,C}}\right)&\asymp_d \int_{\substack{(\R^d)^{d-1}\\ \det(\tilde{g})\neq 0}}\int_{\R^{d-1}}I\left(\bm{b}_d': u_{\bm{b}_d'(\tilde{g})^{-1}}p_{\bm{b}_1,\ldots,\bm{b}_{d-1}}\in {\overline{\cK}'_{r,C}}\right)d\bm{b}_d'\prod_{j=1}^{d-1}d\bm{b}_j\\
%&\leq \prod_{i=1}^{d-1}\int_{1-r}^{1+Cr}\,db_{ii}
%\prod_{1\leq i<j\leq d}\int_{-1}^1\int_{-1}^1I\Bigl(|b_{ij}b_{ji}|<Cr\Bigr)\,db_{ij}\,db_{ji}
%\\
%&\ll_C r^{d-1}\left(r\log\left(\tfrac{1}{r}\right)\right)^{d(d-1)/2}
%=r^{(d-1)(d+2)/2}\log^{d(d-1)/2}\left(\tfrac{1}{r}\right),
%\end{align*}
%finishing the proof.
\end{proof}
%The following is the main result of the present section.
Recall that 
$\pi: G\to X_d$ is the natural projection from $G$ to $X_d$. 
%In view of Lemma \ref{lem:measuk}, to prove \eqref{equ:upperbound} it suffices to show the following
\begin{prop}\label{prop:increlimub}
There exist $r_0>0$ and $C>0$ (depending only on $d$) such that for all $r\in (0, r_0)$
\begin{align}\label{equ:ubrelation}
K_r\subset\bigcup_{w\in W}\pi\bigl(w\,{\overline{\cK}_{r,C}}\,w^{-1}\bigr).
\end{align}
\end{prop}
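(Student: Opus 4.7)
My plan is a two-stage argument. Stage~1 invokes the initial coarse restriction from Lemma~\ref{MinkowskiHajosCor2}, which combines Haj\'os's theorem (Corollary~\ref{MinkowskiHajosThm}) with a compactness argument---exploiting that $K_r$ is compact for each $r$ and that $K_r\searrow K_0$ as $r\to 0^+$---to produce constants $r_0>0$ and $\epsilon_0>0$, depending only on $d$, such that for every $r\in(0,r_0)$ and every $\Lambda\in K_r$, there exist $w\in W$, $u\in U_0$, and $h\in G$ with $\|h-I_d\|<\epsilon_0$ satisfying $\Lambda=whuw^{-1}\Z^d$. Since the cube $\scrC_r$ is invariant under coordinate permutations, $K_r$ is invariant under the $W$-conjugation action on $X_d$; it therefore suffices to show that any $\Lambda=hu\Z^d$ of the above form belongs to $\pi(\overline{\cK}_{r,C})$ for a constant $C=C(d)$, since conjugating the representation back by $w$ then produces an element of $w\overline{\cK}_{r,C}w^{-1}$.

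Stage~2 takes $g:=hu$ as the candidate basis matrix and verifies the conditions defining $\overline{\cK}_{r,C}$. A direct computation using $\|h-I_d\|<\epsilon_0$ and $u\in U_0$ yields $g_{ii}=1+O(\epsilon_0)$, $g_{ij}=u_{ij}+O(\epsilon_0)$ for $i<j$, and $g_{ij}=O(\epsilon_0)$ for $i>j$; so once $\epsilon_0$ is fixed sufficiently small (depending only on $d$), the conditions $|g_{ij}|<1$ and $|\det\tilde g-1|<\tfrac12$ are automatic. The lower bound $g_{ii}\geq 1-r$ follows because $v_i:=ge_i$ is a nonzero lattice vector and must therefore avoid $\scrC_r$, combined with the observation that for $\epsilon_0$ small, $g_{ii}$ is the only coordinate of $v_i$ whose absolute value can be as large as $1-r$.

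The sharp quantitative bounds---the upper bound $g_{ii}\leq 1+Cr$ and the product bound $|g_{ij}g_{ji}|\leq Cr$---are the main content, and both are obtained via Lemma~\ref{fCkeyLEM}. For the diagonal upper bound, consider the slab
\begin{align*}
\scrU_i:=\Bigl\{x\in\R^d\col \tfrac{1-r}{2}<x_i<g_{ii}-\tfrac{1-r}{2},\ 0<x_k<\tfrac{1-r}{2}\text{ for }k\neq i\Bigr\},
\end{align*}
which has volume $(g_{ii}-(1-r))(\tfrac{1-r}{2})^{d-1}$ and is contained in a translate of $(0,\tfrac34)^d$. A cube $v+\tfrac12\scrC_r$ with $v=gm\in\Lambda$ meets $\scrU_i$ only if $v_i\in(0,g_{ii})$ and $v_k\in(-\tfrac{1-r}{2},1-r)$ for $k\neq i$; using Cramer's rule to bound $\|m\|_\infty$ by a $d$-dependent constant and then inducting on the coordinate index from $d$ down to $1$---using $g_{\ell\ell}=1+O(\epsilon_0)$, $g_{\ell j}=O(\epsilon_0)$ for $\ell>j$, and that $0$ is the only integer in the approximate range $(-\tfrac{1-r}{2},1-r)$---forces $m_\ell=0$ for $\ell\neq i$ and $m_i\in\{0,1\}$. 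The remaining candidate cubes at $\bn$ and $v_i$ are already disjoint from $\scrU_i$ by construction. Lemma~\ref{fCkeyLEM} then yields $(g_{ii}-(1-r))(\tfrac{1-r}{2})^{d-1}<dr$, giving $g_{ii}-1\leq Cr$ with $C=O_d(1)$. The product bound is obtained by a parallel two-dimensional construction: one builds a region $\scrU_{ij}$ in the $(x_i,x_j)$-plane, thickened in the transverse directions, whose area in that plane is $\gg_d|g_{ij}g_{ji}|$ arising from the ``shear'' between the cubes at $\bn$, $v_i$, $v_j$ and $v_i+v_j$, and again applies Lemma~\ref{fCkeyLEM}.

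The main obstacle is the precise construction and disjointness verification for the two-dimensional region used in the product bound, since the cubes at $v_i$ and $v_j$ are tilted in the $(x_i,x_j)$-plane both by the Haj\'os shear parameter $u_{ij}$ and by the sub-diagonal entry $g_{ji}$. A case analysis in the sign patterns of $g_{ij},g_{ji}$ and $u_{ij}$---potentially complemented by direct arguments using that $v_i-v_j\notin\scrC_r$ when $g_{ij}$ and $g_{ji}$ have the same sign, or by introducing auxiliary integer translates of $v_k$ ($k\neq i,j$) to cancel transverse coordinates---should handle all cases. Crucially, because the initial restriction from Stage~1 is uniform in $r$ with $\epsilon_0$ and $r_0$ depending only on $d$, the final constant $C$ in the statement of Proposition~\ref{prop:increlimub} can likewise be taken to depend only on $d$.
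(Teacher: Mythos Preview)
Your two-stage architecture matches the paper's: first invoke Lemma~\ref{MinkowskiHajosCor2}, then extract the quantitative bounds via Lemma~\ref{fCkeyLEM}. But there is a genuine gap in Stage~2. You state the output of Lemma~\ref{MinkowskiHajosCor2} as $u\in U_0$, whereas the lemma actually produces $u$ in a \emph{shifted} fundamental domain $U_{B\ve_0}$ (constructed via the auxiliary Lemma~\ref{fdwiggleLEM}); this shift is the whole point, as Remark~\ref{MinkowskiHajosCor2REM} explains. Its role is to guarantee that for the resulting representative $g=(g_{ij})$ one has $|g_{ki}-g_{kj}|<1-B\ve_0<1-r$ for every $k\notin\{i,j\}$. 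That is exactly what makes your proposed ``$v_i-v_j\notin\scrC_r$'' step work in the same-sign case (the paper's Case~I of Lemma~\ref{secondkeyboundLEM}): it pins the escaping coordinate to be $i$ or $j$, whence $\min(|g_{ij}|,|g_{ji}|)\ll r$ follows from the diagonal bound. With only $u\in U_0$, two super-diagonal entries $u_{ki},u_{kj}$ can differ by nearly $1$, so for small $r$ the vector $g\vece_i-g\vece_j$ may leave $\scrC_r$ through a transverse coordinate and yield no information; your ``auxiliary integer translates of $v_k$'' fix does not obviously terminate, since adding $\pm g\vece_k$ perturbs the remaining transverse coordinates by quantities again near $\pm\tfrac12$.

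Two smaller points. First, even granting the same-sign case, the remaining sign patterns are substantially harder than your sketch suggests: the paper's Case~II (both negative) requires \emph{two} chained applications of Lemma~\ref{fCkeyLEM}---the first to locate an auxiliary lattice vector $\vecv$ near $g\vece_j$, the second (using $\vecv$) to reach a contradiction---so a single region $\scrU_{ij}$ of area $\asymp|g_{ij}g_{ji}|$ will not do. Second, your diagonal box $\scrU_i$ with transverse intervals $(0,\tfrac{1-r}{2})$ and the Cramer--induction disjointness check does not cleanly force $m_\ell=0$: the constraint $v_\ell\in(-\tfrac{1-r}{2},1-r)$ together with $g_{\ell\ell}\geq1-r$ still permits $m_\ell=1$ whenever $g_{\ell\ell}-(1-r)$ happens to be $O(\ve_0)$, and the induction then cascades. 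The paper's Lemma~\ref{firstkeyboundLEM} avoids this entirely by taking transverse intervals contained in $(g_{ki}+\scrJ_r)\cap\scrJ_r$ and proving disjointness from \emph{every} cube other than those at $\bn$ and $g\vece_i$ by a short sliding argument in the $i$th coordinate, never solving for $m$.
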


Let us first give a quick
\begin{proof}[Proof of the upper bound in Theorem \ref{thm:measestmainaux} assuming Proposition \ref{prop:increlimub}]
The Haar measure $\nu$ on $G$ is preserved by conjugation by any element $w\in W$ 
(even though $w$ may be outside $G$).
Hence it follows from \eqref{equ:ubrelation} that, for any $r\in(0,r_0)$,
\begin{align*}
\mu_d(K_r)&\leq \sum_{w\in W}\mu_d\bigl(\pi(w\,{\overline{\cK}_{r,C}}\,w^{-1})\bigr)
\leq \sum_{w\in W}\nu(w\,{\overline{\cK}_{r,C}}\,w^{-1})
=d!\,\nu({\overline{\cK}_{r,C}}).
\end{align*}
Using this inequality, 
the upper bound in Theorem \ref{thm:measestmainaux} now follows from Lemma \ref{lem:measuk}.
\end{proof}

\begin{remark}\label{bulkinZdnbhdREM}
Proposition \ref{prop:increlimub},
in combination with the lower bound
in Theorem \ref{thm:measestmainaux},
also implies that
as $r\to0^+$, the mass of $K_r$ with respect to $\mu_d$
becomes concentrated near the lattice $\Z^d$.
In precise terms, if 
$\scrO$ is any fixed neighborhood of $\Z^d$ in $X_d$,
then 
\begin{align}\label{bulkinZdnbhdREMres}
\mu_d\left(K_r\smallsetminus\scrO\right)\ll_d r^{\varkappa_d+1}\log^{\lambda_d-1}\Bigl(\frac{1}{r}\Bigr),
\quad\text{and thus }\quad
\frac{\mu_d(K_r\cap\scrO)}{\mu_d(K_r)}\to1
\qquad \textrm{as $r\to 0^+$}.
\end{align}
Indeed, we can fix $\ve>0$ so that $\scrO$ contains the set
$\pi(G_{\ve})$ with $G_{\ve}\subset G$ the norm ball defined as in \eqref{equ:normball};
then by arguing along the same lines as above, 
the first relation in \eqref{bulkinZdnbhdREMres}
will follow from the following bound:
\begin{align*}
\nu\left({\overline{\cK}_{r,C}}\smallsetminus G_{\ve}\right)
\ll_{d, C} r^{\varkappa_d+1}\log^{\lambda_d-1}\Bigl(\frac{1}{r}\Bigr),
\end{align*}
for $C>0$ and $r$ small.
However, for $r<\ve/C$, $g=(g_{ij})\in{\overline{\cK}_{r,C}}$ forces $|g_{ii}-1|<\ve$ for all $i$,
and so the set
${\overline{\cK}_{r,C}}\smallsetminus G_{\ve}$ is contained in the union
$\cup_{i'\neq j'}\bigl\{g=(g_{ij})\in{\overline{\cK}_{r,C}}\col |g_{i'j'}|\geq\ve\bigr\}$.
Therefore, it suffices to prove that for any given
$1\leq i'\neq j'\leq d$ we have
\begin{align*}
\nu\bigl(\bigl\{g=(g_{ij})\in{\overline{\cK}_{r,C}}\col |g_{i'j'}|\geq\ve\bigr\}\bigr)\ll_{d,C}
r^{\varkappa_d+1}\log^{\lambda_d-1}\Bigl(\frac{1}{r}\Bigr).
\end{align*}
This is shown by following the proof of Lemma \ref{lem:measuk}
and using $$\int_{\ve\leq|x|\leq1}\int_{-1}^1{\chi_{\{(x,y)\col |xy|<Cr\}}}\,\d y\,\d x\ll_{\ve,C} r.$$ % as $r\to 0$.
Finally, the second relation in \eqref{bulkinZdnbhdREMres} 
follows from the first relation 
combined with the lower bound 
in Theorem \ref{thm:measestmainaux}.
\end{remark}

The remainder of this section is devoted to the proof of Proposition \ref{prop:increlimub}.

\subsection{Bounds on diagonal entries}
Recall that $U_0$ is the fixed fundamental domain for ${U/(\G\cap U)}$ given in \eqref{equ:ufudo}. The next lemma shows that if a lattice $\Lambda\in K_r$ has a representative sufficiently close to some element in $U_0$, then the diagonal entries of such a representative satisfy the desired bounds. 
\begin{lem}\label{firstkeyboundLEM}
%There exist $c_0>0$ and $r_0>0$ such that for every $b\in G$ and $r\in(0,r_0)$ satisfying $\Z^dg\cap\scrC_r=\{\bn\}$ 
%and $\|b-u\|<c_0$ for some $u\in U_0$, we have
Let $g=(g_{ij})\in G$ and $r\in(0,\frac18)$,
and assume that $g\Z^d\cap\scrC_r=\{\bn\}$ 
and $\|g-u\|<\frac18$ for some $u\in U_0$.
Then
\begin{align}\label{firstkeyboundLEMres}
0\leq g_{ii}-(1-r)\ll_d r,\qquad\forall\: 1\leq i\leq d.
\end{align}
\end{lem}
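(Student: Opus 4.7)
For the lower bound $g_{ii}\geq 1-r$, I would consider the nonzero lattice vector $g\vece_i$, whose $k$-th coordinate is $g_{ki}$. Since $u\in U_0$ is upper-triangular unipotent with $u_{ji}=0$ for $j>i$ and $|u_{ji}|\leq\tfrac12$ for $j<i$, the hypothesis $\|g-u\|<\tfrac18$ forces $|g_{ji}|<\tfrac58$ for every $j\neq i$; since $r<\tfrac18$ gives $\tfrac58<1-r$, the assumption $g\Z^d\cap\scrC_r=\{\bn\}$ forces $|g_{ii}|\geq 1-r$. Combined with $g_{ii}>\tfrac78>0$ (from $|g_{ii}-1|<\tfrac18$), this yields $g_{ii}\geq 1-r$.

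For the upper bound $g_{ii}\leq 1-r+O_d(r)$, the plan is to exploit the tiling interpretation from \S\ref{Hajossec} and apply Lemma~\ref{fCkeyLEM}. Writing $\Lambda:=g\Z^d$, the condition $\Lambda\cap\scrC_r=\{\bn\}$ makes the cubes $\vecv+\tfrac12\scrC_r$ ($\vecv\in\Lambda$) pairwise disjoint, and the analysis in the previous paragraph shows that the cubes at $\bn$ and at $g\vece_i$ overlap in every coordinate direction except $x_i$. This carves out a ``gap slab''
\begin{align*}
\scrU_0:=\bigl[\tfrac{1-r}2,\,g_{ii}-\tfrac{1-r}2\bigr]\times\prod_{k\neq i}\Bigl(\bigl(-\tfrac{1-r}2,\tfrac{1-r}2\bigr)\cap\bigl(g_{ki}-\tfrac{1-r}2,g_{ki}+\tfrac{1-r}2\bigr)\Bigr),
\end{align*}
disjoint from the two cubes at $\bn$ and $g\vece_i$, with $x_i$-width $g_{ii}-(1-r)$ and cross-sectional volume $\gg_d 1$ (each non-$i$ factor has width $(1-r)-|g_{ki}|\geq\tfrac14$, using $|g_{ki}|<\tfrac58$ and $r<\tfrac18$). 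I would then select a subregion $\scrU\subset\scrU_0$ fitting inside a translate of $(0,\tfrac34)^d$ and disjoint from \emph{every} cube of the tiling $\fC_{\Lambda,r}$; Lemma~\ref{fCkeyLEM} would then yield $\vol(\scrU)<dr$, which combined with $\vol(\scrU)\gg_d g_{ii}-(1-r)$ gives the desired bound $g_{ii}-(1-r)\ll_d r$.

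The main obstacle is arranging $\scrU$ to be disjoint from cubes centered at \emph{all} other lattice points, not just the two at $\bn$ and $g\vece_i$. The plan is to force each non-$i$ coordinate of $\scrU$ to be of absolute value at most $\tfrac14$: then a cube at another lattice point $g\vecm$ ($\vecm\in\Z^d\smallsetminus\{\bn,\vece_i\}$) intersecting $\scrU$ would force $|(g\vecm)_k|<\tfrac34$ for every $k\neq i$, which via $\|g-u\|<\tfrac18$ and the upper-triangular unipotent structure of $u$ restricts $\vecm$ to a finite list depending only on $d$. Each offending $\vecm$ is then excluded by an additional half-space cut on $\scrU$ in a well-chosen coordinate direction; the upper-triangular shape of $u$ makes these cuts mutually compatible, and each cut costs only a $d$-dependent constant factor of $\vol(\scrU)$. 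The resulting $\scrU$ satisfies both conditions with $\vol(\scrU)\gg_d g_{ii}-(1-r)$, and the argument is repeated for every $i\in\{1,\ldots,d\}$ to finish the lemma.
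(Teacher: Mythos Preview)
Your lower bound argument is correct and matches the paper's exactly.

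For the upper bound, your overall strategy---build a box $\scrU$ wedged in the $i$-direction between the cubes at $\bn$ and at $g\vece_i$, then invoke Lemma~\ref{fCkeyLEM}---is the same as the paper's. The difference lies in how you handle disjointness from the \emph{other} cubes $g\vecm+\tfrac12\scrC_r$. Your plan (shrink the non-$i$ factors to $[-\tfrac14,\tfrac14]$, enumerate a finite list of offending $\vecm$, then make half-space cuts) is considerably more involved than necessary, and the ``cuts are mutually compatible'' step is not justified: after your shrinking, a third cube can cover almost all of $[-\tfrac14,\tfrac14]$ in the separating direction, and coordinating cuts across several $\vecm$'s (potentially in the same coordinate, on opposite sides) is delicate.

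The paper avoids all of this with one clean observation. It takes, for each $k\neq i$,
\[
\scrI_k=(g_{ki}^+-\tfrac38,\,g_{ki}^-+\tfrac38),
\]
which has length in $(\tfrac18,\tfrac34]$ \emph{and} is contained in \emph{both} $(-\tfrac{1-r}2,\tfrac{1-r}2)$ and $(g_{ki}-\tfrac{1-r}2,g_{ki}+\tfrac{1-r}2)$. (Your $\scrU_0$ already has the latter containment; you lose it when you pass to $[-\tfrac14,\tfrac14]$.) With this choice, disjointness from \emph{every} third cube follows automatically: if $\vecx\in\scrU\cap(\vecv+\tfrac12\scrC_r)$ for some $\vecv\in g\Z^d\smallsetminus\{\bn,g\vece_i\}$, then since $\tfrac12(1-r)>\tfrac14$ one of $\vecx\pm\tfrac14\vece_i$ also lies in $\vecv+\tfrac12\scrC_r$; but one checks directly that $\vecx-\tfrac14\vece_i\in\tfrac12\scrC_r$ and $\vecx+\tfrac14\vece_i\in g\vece_i+\tfrac12\scrC_r$, contradicting the pairwise disjointness of the cubes. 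No finite list, no cuts.

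In fact, your original $\scrU_0$ (before shrinking) is already disjoint from all lattice cubes by exactly this reasoning; the only thing you need is to shrink each non-$i$ factor to length $\leq\tfrac34$ \emph{while staying inside the intersection}. That is the missing idea; once you see it, the cut machinery becomes unnecessary.
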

\begin{proof}
%Let $b,r,u$ be as in the statement of the lemma.
%Let $\vecb_1,\ldots,\vecb_d$ be the row vectors of $g$;
%these vectors form a basis of $\Z^dg$.
Note that it follows from 
$\|g-u\|<\frac18$ and $u\in U_0$
that $|g_{ij}|<\frac58$ for all $1\leq i<j\leq d$,
$|g_{ii}-1|<\frac18$ for all $1\leq i\leq d$,
and $|g_{ij}|<\frac18$ for all $1\leq j<i\leq d$.
For each $i$ we have $g\vece_i\notin\scrC_r$, 
since $g\Z^d\cap\scrC_r=\{\bn\}$.
Combining this with the fact that 
$|g_{ji}|<\frac58<1-r$ for all $j\neq i$,
we conclude that $|g_{ii}|\geq 1-r$.
Since also $|g_{ii}-1|<\frac18$,
we must in fact have $g_{ii}\geq 1-r$,
i.e.\ we have proved the left inequality in \eqref{firstkeyboundLEMres}.

Next, let $i\in\{1,\ldots,d\}$ be given,
and let $\scrU$ be the open box 
\begin{align*}
\scrU=\scrI_1\times \cdots\times \scrI_d,
\quad\text{with }\:\left\lbrace\hspace{-4pt}\begin{array}{ll} 
\scrI_i=\bigl(\frac12(1-r),g_{ii}-\frac12(1-r)\bigr),
\\[5pt]
\scrI_j=\bigl(g_{ji}^+-\frac38,g_{ji}^-+\frac38\bigr)\  (j\neq i),
\end{array}\right.
%\:
%\begin{cases}
%I_i=\bigl(\frac12(1-r),b_{ii}-\frac12(1-r)\bigr),
%\\[3pt]
%I_j=\bigl(b_{ij}^+-\frac38,b_{ij}^-+\frac38\bigr), & j\neq i,
%\end{cases}
\end{align*}
where $g_{ji}^+:=\max\{g_{ji},0\}$ and $g_{ji}^-:=\min\{g_{ji},0\}$.
Note that each interval $\scrI_j$ ($j\neq i$) has length $|\scrI_j|>\frac18$, since $|g_{ji}|<\frac58$;
furthermore $|\scrI_i|=g_{ii}-(1-r)\geq0$
(thus $\scrI_i$ and $\scrU$ are empty if $g_{ii}=1-r$, but otherwise non-empty).

We claim that $\scrU$ is disjoint from $\fC_{g\Z^d,r}:=g\Z^d+\tfrac12\scrC_r$.
Indeed, assume the opposite; then there is some $\vecv\in g\Z^d$ 
such that $\scrU\cap(\vecv+\frac12\scrC_r)\neq\varnothing$.
We must have $\vecv\neq\bn$ and $\vecv\neq g\vece_i$,
since $\scrU$ is, by construction, disjoint {both} from $\frac12\scrC_r$ and from $g\vece_i+\frac12\scrC_r$.
Pick a point 
$\vecx\in \scrU\cap(\vecv+\frac12\scrC_r)$.
It follows from 
$\vecx\in \vecv+\frac12\scrC_r$
and $\frac12(1-r)>\frac14$
that at least one of the points $\vecx-\frac14\vece_i$ or $\vecx+\frac14\vece_i$
\textit{also} lies in 
$\vecv+\frac12\scrC_r$.
But we have 
\begin{align*}
\vecx-\tfrac14\vece_i\in\tfrac12\scrC_r,
\end{align*}
since $x_j\in \scrI_j\subset\big(-\frac12(1-r),\frac12(1-r)\big)$ for all $j\neq i$
and $x_i-\frac14<g_{ii}-\frac12(1-r)-\frac14 %<\frac98-\frac7{16}-\frac14=\frac7{16}
<\frac12(1-r)$.
We also have
\begin{align*}
\vecx+\tfrac14\vece_i\in g\vece_i+\tfrac12\scrC_r,
\end{align*}
since $x_j\in \scrI_j\subset g_{ji}+\big(-\frac12(1-r),\frac12(1-r)\big)$ for all $j\neq i$
and
$x_i+\frac14>\frac12(1-r)+\frac14>g_{ii}-\frac12(1-r)$. %\frac12(1-r)+\frac14>\frac7{16}+\frac14$
Hence we have arrived at a contradiction against the fact that 
$\vecv+\frac12\scrC_r$ is disjoint from both
$\frac12\scrC_r$ and $g\vece_i+\frac12\scrC_r$.
This completes the proof of the fact that 
$\scrU$ is disjoint from $\fC_{g\Z^d,r}$.

Note also that $\scrU$ is contained in a translate of the cube $(0,\frac34)^d$,
since each interval $\scrI_j$ has length at most $\frac34$.
Hence Lemma \ref{fCkeyLEM} applies,
and yields that $\vol(\scrU)<dr$.
But we have noted that $|\scrI_j|>\frac18$
for each $j\neq i$, and $|\scrI_i|=g_{ii}-(1-r)$;
hence $\vol(\scrU)\geq 8^{1-d}(g_{ii}-(1-r))$.
Combining these facts, we obtain the right bound in \eqref{firstkeyboundLEMres} 
with the implied constant $8^{d-1}d$.
\end{proof}

\subsection{A technical choice of lattice representatives}

For any $\ve>0$ let us write
\begin{align*}
U_{\ve}=\bigl\{(u_{ij})\in U\col -\tfrac12+4\ve<u_{ij}\leq\tfrac12+\ve\text{ for all }1\leq i<j\leq d\bigr\}.
\end{align*}

\begin{lem}\label{fdwiggleLEM}
There exist constants $0<a<1$ and $A>1$, which only depend on $d$,
such that the following holds:
given any $u\in U$ and $\ve\in(0,a)$, there
exist $\gamma\in \Gamma\cap U$ and  
$B\in[1,A]$ such that $u\gamma\in U_{B\ve}$.
\end{lem}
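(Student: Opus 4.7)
My plan is to prove this by an explicit construction that restricts $B$ to a finite discrete set and invokes a pigeonhole argument. Since $U_0$ is a fundamental domain for $U/(\Gamma\cap U)$, I first replace $u$ by $u\gamma_0$ for a suitable $\gamma_0\in\Gamma\cap U$ to reduce to the case $u\in U_0$, so that $u_{ij}\in(-\tfrac12,\tfrac12]$ for all $1\le i<j\le d$. The task then becomes: find $\tau\in\Gamma\cap U$ and $B\in[1,A]$ with $(u\tau)_{ij}\in(-\tfrac12+4B\ve,\tfrac12+B\ve]$ for all $i<j$, since the original $\gamma:=\gamma_0\tau$ will then do.

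The strategy is to restrict $B$ to the discrete set $\{4^k : k\in\{0,1,\ldots,N\}\}$ for some $N=N(d)$ to be chosen, and, for each such $k$, to build $\tau$ one super-diagonal at a time. The product formula $(u\tau)_{ij}=u_{ij}+\tau_{ij}+\sum_{i<i'<j}u_{ii'}\tau_{i'j}$ shows that the entry $(i,j)$ depends only on $\tau$-entries at strictly lower super-diagonals (in the same column), so the iteration is well-defined. At the entry $(i,j)$, set $s_{ij}:=u_{ij}+\sum_{i<i'<j}u_{ii'}\tau_{i'j}$ and $z_{ij}:=\{s_{ij}+\tfrac12\}\in[0,1)$. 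A direct case analysis shows that an integer $\tau_{ij}$ placing $(u\tau)_{ij}=s_{ij}+\tau_{ij}$ inside $(-\tfrac12+4B\ve,\tfrac12+B\ve]$ exists if and only if $z_{ij}\notin(B\ve,4B\ve]$; then the required $\tau_{ij}$ is forced, taking the translate of $s_{ij}$ into $(-\tfrac12,\tfrac12]$ when $z_{ij}>4B\ve$, and that translate shifted by $+1$ when $z_{ij}\le B\ve$. I call a value of $k$ \emph{bad} if this procedure fails at some entry.

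The core of the argument is a pigeonhole bound on the number of bad $k$'s. Fix an entry $(i,j)$ at super-diagonal $m=j-i$. As $k$ varies over $\{0,\ldots,N\}$, the lower-super-diagonal $\tau$-values $\tau_{i'j}\in\{0,1\}$ (for $i<i'<j$) range over at most $2^{m-1}$ configurations, each determining a unique value of $z_{ij}$. For any such $z_{ij}$, the set of $k$ for which $4^k$ lies in the forbidden interval $[z_{ij}/(4\ve),z_{ij}/\ve)$ contains at most one element, since this interval has multiplicative length exactly $4$ and hence contains at most one power of $4$. Thus each entry contributes at most $2^{m-1}$ bad $k$'s, and summing over all entries yields a total of at most $C_d:=\sum_{m=1}^{d-1}(d-m)2^{m-1}$, a constant depending only on $d$. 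Choosing $N=C_d$ gives $N+1>C_d$ candidate values of $k$, so at least one is not bad, furnishing the desired pair $(\tau,B)$. Taking $A=4^{C_d}$ and $a=4^{-C_d-1}$ (so that for $\ve\in(0,a)$, every relevant bad power $4^k$ in fact lies in $\{4^0,\ldots,4^N\}\subset[1,A]$) completes the argument.

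The main obstacle is the cascading dependence in the construction: the ``bad $k$'' associated with an entry $(i,j)$ is not a single fixed number but depends on the lower-super-diagonal $\tau$-values, which are themselves functions of $k$. The device to handle this is simply to enumerate the at most $2^{m-1}$ possible lower-super-diagonal configurations and to use the multiplicative-ratio-$4$ structure of the forbidden interval to bound the bad $k$-contribution per configuration by one, so that the total count of bad $k$'s stays bounded by a constant depending only on $d$.
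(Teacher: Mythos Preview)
Your strategy is sound, but the specific claim that $\tau_{i'j}\in\{0,1\}$ for all $i<i'<j$ is false beyond the first super-diagonal. For instance, with $d=3$, $u_{12}=u_{13}=0.45$, $u_{23}=-0.499$ and $B\ve=0.001$, one finds $\tau_{23}=1$ and then $s_{13}=u_{13}+u_{12}\tau_{23}=0.9$, so $z_{13}=\{1.4\}=0.4>4B\ve$ and $\tau_{13}=-\lfloor 1.4\rfloor=-1$. The point is that once some $\tau_{i'j}=1$, the sum $s_{ij}$ can exceed $\tfrac12$, forcing a negative $\tau_{ij}$. This is not fatal to your approach: since $(u\tau)_{i'j}\in(-\tfrac12,\tfrac34)$ whenever the procedure succeeds, an easy induction on the super-diagonal level $m'$ shows $|\tau_{i'j}|\leq T_{m'}$ with $T_1=1$ and $T_m\leq\lfloor\tfrac54+\tfrac12\sum_{m'<m}T_{m'}\rfloor$. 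Replacing $2^{m-1}$ by $\prod_{m'=1}^{m-1}(2T_{m'}+1)$ yields a valid (larger) bound $C_d$ on the number of bad $k$'s, and the rest of your argument goes through with suitably enlarged $A$ and shrunk $a$.

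The paper's proof is genuinely different. Rather than building $\tau$ entry by entry and counting failures, it works with the translated fundamental domains $U[t]:=\{(u_{ij})\in U:-\tfrac12+t<u_{ij}\leq\tfrac12+t\}$ for $U/(\Gamma\cap U)$, so that for every $k$ there is a unique representative $u^{(k)}\in u\Gamma\cap U[4^k\ve]$; thus no ``bad $k$'' ever arises. It then introduces the potential $P^{(k)}=\sum_{j=2}^d\sum_{i<j}2^{i-1}\chi_{(0,\infty)}(u^{(k)}_{ij})$ and shows that whenever $u^{(k)}\notin U_{4^k\ve}$, passing from $u^{(k)}$ to $u^{(k+1)}$ forces a column shift that strictly increases $P^{(k)}$; since $0\leq P^{(k)}<2^d$, some $k\in\{0,\ldots,2^d-1\}$ must satisfy $u^{(k)}\in U_{4^k\ve}$. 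Your pigeonhole-on-configurations argument is more direct and avoids the potential-function device, at the price of messier (and, as stated, incorrect) constants; the paper's monotonicity argument gives cleaner control and the explicit values $A=4^{2^d}$, $a=2^{-(3+2^{d+1})}$.
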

\begin{proof}
We will show that the statement of this lemma holds with 
$A:=4^{2^d}$ and $a:=2^{-(3+2^{d+1})}$.
Let us set
\begin{align*}
U[t]:=\bigl\{(u_{ij})\in U\col -\tfrac12+t<u_{ij}\leq\tfrac12+t\text{ for all }1\leq i<j\leq d\bigr\}
\qquad(t\in\R).
\end{align*}
Note that $U_0=U[0]$, 
and for each $t$, $U[t]$ is a fundamental domain for $U/(\Gamma\cap U)$.

For the given $u\in U$ and $\ve\in(0,a)$, 
and for each $%=0,1,2,3,\ldots
k\in\Z_{\geq0}$,
we let $u^{(k)}$ be the unique element in
$u\Gamma\cap U[4^k\ve]$,
and set
\begin{align*}
P^{(k)}=\sum_{j=2}^dP^{(k)}_j,
\quad\text{where}\quad
P^{(k)}_j:=\sum_{i=1}^{j-1}2^{i-1}\cdot {\chi_{(0,\infty)}(u_{ij}^{(k)})}.
\end{align*}
%{Here $I(u_{ij}^{(k)}>0)$ equals one if $u_{ij}^{(k)}>0$ and equals zero otherwise.} 
We now claim:
\begin{align}\label{fdwiggleLEMpf1}
\forall\, k\in\Z_{\geq0}:\quad
k<\log_4(\tfrac1{8\ve})\text{ and }u^{(k)}\notin U_{4^k\ve}
\quad\Longrightarrow\quad
P^{(k)}<P^{(k+1)}.
\end{align}

To prove \eqref{fdwiggleLEMpf1}, 
assume $0\leq k<\log_4(\frac1{8\ve})$ and $u^{(k)}\notin U_{4^k\ve}$.
Then $u^{(k+1)}\neq u^{(k)}$,
%since $U[4^k\ve]\smallsetminus U_{4^k\ve}$ is disjoint from $U[4^{k+1}\ve]$.
since otherwise $u^{(k)}$
would lie in the intersection $U[4^k\ve]\cap U[4^{k+1}\ve]=U_{4^k\ve}$.
Let us denote by $\vecu_j^{(k)}$ the $j$th column vector of $u^{(k)}$.
It follows that $\vecu_j^{(k+1)}\neq\vecu_j^{(k)}$ for at least one $j\in\{2,\ldots,d\}$,
and for each such index $j$ we can argue as follows:
Since $u^{(k)}, u^{(k+1)}\in u\G\cap U=u(\G\cap U)$, we have
$\bm{u}_j^{(k+1)}=\bm{u}_j^{(k)}+\sum_{1\leq i<j}m_i\bm{u}_i^{(k)}$ for some $m_1,\ldots, m_{j-1}\in \Z$. 
Because of $\vecu_j^{(k+1)}\neq\vecu_j^{(k)}$,
there exists $i\in\{1,\ldots,j\}$ such that 
$m_i\neq0$;
let us fix $i$ to be the \textit{largest} such index.
Thus $u_{ij}^{(k+1)}=u_{ij}^{(k)}+m_i$; in particular $\left|u_{ij}^{(k+1)}-u_{ij}^{(k)}\right|\geq 1$.  
On the other hand, note that $u_{ij}^{(k)}\in \left(-\frac12+4^k\ve, \frac12+4^k\ve\right]$, $u_{ij}^{(k+1)}\in \left(-\frac12+4^{k+1}\ve, \frac12+4^{k+1}\ve\right]$ and $\left(\frac12+4^{k+1}\ve)-(-\frac12+4^k\ve\right)=1+3\cdot 4^k\ve<2$ (since $k< \log_4(\frac{1}{8\ve})$).
Hence we must have $m_i=1$, implying that $$\vecu_j^{(k+1)}\in\vecu_j^{(k)}+\vecu_i^{(k)}+\Z\vecu_{i-1}^{(k)}+\cdots+\Z\vecu_1^{(k)}.$$
Thus 
$u_{i'j}^{(k+1)}=u_{i'j}^{(k)}$ for all $i'>i$,
and $u_{ij}^{(k+1)}=u_{ij}^{(k)}+1>0$
while $u_{ij}^{(k)}\leq-\frac12+4^{k+1}\ve<0$
(again since $k<\log_4(\frac1{8\ve})$). 
%i.e. $\bm{u}_j^{(k+1)}\in \bm{u}_j^{(k)}+m_i\bm{u}_i+\vecu_i^{(k)}+\Z\vecu_{i-1}^{(k)}+\cdots+\Z\vecu_1^{(k)}$. Thus $u_{ij}^{(k+1)}=u_{ij}^{(k)}+m_i\in (-\frac12+4^{k+1}\ve, \frac12+4^{k+1}\ve]$ 
It follows that $P_j^{(k+1)}-P_j^{(k)}\geq 2^{i-1}-\sum_{i'<i}2^{i'-1}=1$.
On the other hand we clearly have $P_j^{(k+1)}=P_j^{(k)}$
for each $j\in\{2,\ldots,d\}$ such that 
$\vecu_j^{(k+1)}=\vecu_j^{(k)}$.
Hence $\sum_{j=2}^dP^{(k+1)}_j>\sum_{j=2}^dP^{(k)}_j$,
i.e.\ $P^{(k+1)}>P^{(k)}$. This finishes the proof of \eqref{fdwiggleLEMpf1}.

%Using \eqref{fdwiggleLEMpf1} we will now prove that the statement of the lemma holds with
%$A=4^{2^d}$ and $a=2^{-(3+2^{d+1})}$.
%Thus we now assume $0<\ve<2^{-(3+2^{d+1})}$.
Next note that by definition,
for each $k$,
$P^{(k)}$ is a non-negative integer
satisfying $$P^{(k)}\leq\sum_{j=2}^d\sum_{i=0}^{j-2}2^i<2^d.$$
%for any $k\in\Z_{\geq 0}$.
This implies that we cannot have $P^{(k)}<P^{(k+1)}$ for all
$k\in\{0,1,\ldots,2^d-1\}$.
But our assumption on $\ve$ implies that 
$\log_4(\frac1{8\ve})>2^d$ 
(recall that $0<\ve<a=2^{-(3+2^{d+1})}$);
hence it now follows from \eqref{fdwiggleLEMpf1} that 
$u^{(k)}\in U_{4^k\ve}$ for at least one $k\in\{0,1,\ldots,2^d-1\}$.
Furthermore, for such $k$ we have $4^k<4^{2^d}=A$ 
and $u^{(k)}\in u(\Gamma\cap U)$ by construction.
Hence the lemma is proved.
\end{proof}

Using Lemma \ref{fdwiggleLEM}, Corollary \ref{MinkowskiHajosThm} and a compactness argument we have the following technical lemma, which gives us a good choice of lattice representatives for lattices in $K_{r_0}$ for some small $r_0>0$.
%Corollary \ref{MinkowskiHajosCor} to obtain the following.
%*** Below: Change ``$g$'' to ``$b$''!?
\begin{lem}\label{MinkowskiHajosCor2}
There exist constants $0<a<1$ and $A>1$, which only depend on $d$,
such that the following holds:
for any ${\ve_0}\in(0,a)$ there exists $r_0>0$ such that
for every $\Lambda\in X_d$ satisfying 
$\Lambda\cap \scrC_{r_0}=\{\bn\}$,
there exist $g\in G$,
$w\in W$, $B\in[1,A]$ and $u\in U_{B{\ve_0}}$ such that
$\Lambda=g\Z^d$ and $\|g-wuw^{-1}\|<{\ve_0}$.
\end{lem}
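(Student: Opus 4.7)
\textbf{Proof proposal for Lemma \ref{MinkowskiHajosCor2}.}
The plan is to take $a$ and $A$ exactly as in Lemma \ref{fdwiggleLEM} and to reduce the statement to a Hausdorff-convergence assertion $K_r\to K_0$ as $r\to0^+$, after which the conclusion will follow by combining Corollary \ref{MinkowskiHajosThm}, Lemma \ref{fdwiggleLEM}, and a routine lifting from $X_d$ to $G$.

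First I would establish the following key topological fact: for every open neighborhood $\scrV$ of $K_0$ in $X_d$ there exists $r_1>0$ such that $K_r\subset\scrV$ for all $r\in(0,r_1)$. Two ingredients enter here. On the one hand, the sets $K_r$ are nested: $K_r\subset K_{r'}$ whenever $0<r<r'<1$, since $\scrC_{r'}\subset\scrC_r$. On the other hand, by Mahler's compactness criterion $K_r$ is a compact subset of $X_d$ (any nonzero vector in a lattice $\Lambda\in K_r$ has sup-norm at least $1-r$), and $\bigcap_{r>0}K_r=K_0$ by the very definition of $K_0$. If the claim failed there would be $r_n\downarrow0$ and $\Lambda_n\in K_{r_n}\smallsetminus\scrV$; since $\Lambda_n$ all lie in the compact set $K_{r_1}$, a subsequence converges to some $\Lambda\in X_d\smallsetminus\scrV$, and $\Lambda\in K_r$ for every $r>0$ (because $K_r$ is closed and $\Lambda_n\in K_r$ for $n$ large), hence $\Lambda\in K_0\subset\scrV$, a contradiction.

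Next, given $\ve_0\in(0,a)$, I would fix the following $G$-side constants. For any $\Lambda_0\in K_0$, Corollary \ref{MinkowskiHajosThm} supplies $w\in W$ and $u'\in U_0$ with $\Lambda_0=wu'w^{-1}\Z^d$; applying Lemma \ref{fdwiggleLEM} to $u'$ and $\ve_0$ produces $\gamma\in\G\cap U$ and $B\in[1,A]$ with $u:=u'\gamma\in U_{B\ve_0}$. Since $\gamma\Z^d=\Z^d$ and $w^{-1}\Z^d=\Z^d$ (as $w$ is a permutation matrix) we still have $wuw^{-1}\Z^d=\Lambda_0$. Moreover, every $u\in U_{B\ve_0}$ with $B\in[1,A]$ has entries bounded by a constant $M_d$ depending only on $d$ (recall $\ve_0<a$), so $\|wuw^{-1}\|\leq M_d'$ for a constant $M_d'$ depending only on $d$.

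Finally, I would lift proximity from $X_d$ back to $G$. Set $\delta_0:=\ve_0/(dM_d')$ and apply the first step with $\scrV$ a $\delta_0$-neighborhood of $K_0$ in $X_d$ (in the metric induced by the sup-norm on $G$), obtaining the desired $r_0$. Then for any $\Lambda\in X_d$ with $\Lambda\cap\scrC_{r_0}=\{\bn\}$, pick $\Lambda_0\in K_0$ at $X_d$-distance less than $\delta_0$ from $\Lambda$; write $\Lambda=g'\Lambda_0$ with $g'\in G$, $\|g'-I_d\|<\delta_0$; and realize $\Lambda_0$ as $wuw^{-1}\Z^d$ with $w,u,B$ as above. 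Setting $g:=g'wuw^{-1}$, the bound
\begin{align*}
\|g-wuw^{-1}\|=\|(g'-I_d)wuw^{-1}\|\leq d\,\|g'-I_d\|\cdot\|wuw^{-1}\|<dM_d'\delta_0=\ve_0
\end{align*}
completes the proof.

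The main obstacle here is really just the Hausdorff convergence $K_r\to K_0$: once that is in hand (which requires only Mahler plus nestedness), the combinatorial content of the lemma is packaged entirely inside Haj\'os's theorem and the wiggle Lemma \ref{fdwiggleLEM}, and what remains is the straightforward lift from the quotient $X_d$ to $G$ using that the conjugates $wuw^{-1}$ have bounded norm.
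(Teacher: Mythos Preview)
Your proof is correct and uses the same three ingredients as the paper's (Mahler compactness, Haj\'os via Corollary \ref{MinkowskiHajosThm}, and Lemma \ref{fdwiggleLEM}), but packaged as a direct argument rather than the paper's proof by contradiction: the paper assumes the conclusion fails along a sequence $\Lambda_j$ with $r_j\to0$, extracts a limit $\Lambda_0\in K_0$ via Mahler, and derives a contradiction for large $j$, whereas you isolate the Hausdorff convergence $K_r\to K_0$ up front and then lift uniformly. Your organization is arguably cleaner, at the cost of having to produce the explicit bound $M_d'$ on $\|wuw^{-1}\|$ (which the contradiction argument sidesteps by simply letting $j\to\infty$). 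One small imprecision worth tightening: the sup-norm on $G$ is neither left- nor right-invariant, so ``the metric induced by the sup-norm on $G$'' on $X_d$ needs interpretation; the cleanest fix is to use the right-invariant Riemannian metric the paper uses, note that near the identity it is bi-Lipschitz equivalent to $g\mapsto\|g-I_d\|$, and absorb the Lipschitz constant into your choice of $\delta_0$.
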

\begin{proof}
Let $a$ and $A$ be as in Lemma \ref{fdwiggleLEM};
we will prove that the statement of the lemma holds with these $a,A$.
The proof is by contradiction;
thus we assume that the statement of the lemma is false,
i.e.\ we assume that 
there exist some ${\ve_0}\in(0,a)$ and a sequence
$r_1>r_2>\cdots$ in $(0,1)$ with $r_j\to0$,
and a corresponding sequence $\Lambda_1,\Lambda_2,\ldots$ in $X_d$,
such that $\Lambda_j\cap \scrC_{r_j}=\{\bn\}$ for each $j$,
and furthermore, for each $j$ we have that there do
\textit{not} exist any 
$g\in G$,
$w\in W$, $B\in[1,A]$ and $u\in U_{B{\ve_0}}$ satisfying
$\Lambda_j=g\Z^d$ and $\|g-wuw^{-1}\|<{\ve_0}$.
Now for every $j$ we have
$\scrC_{r_1}\subset \scrC_{r_j}$, and thus $\Lambda_j\cap \scrC_{r_1}=\{\bn\}$.
Hence by Mahler's Compactness Theorem,
after passing to a subsequence we may assume that 
$\Lambda_j$ tends to a limit point in $X_d$.
Let us call this limit point $\Lambda_0$;
thus $\Lambda_j\to \Lambda_0$ in $X_d$ as $j\to\infty$.
Let us also fix a representative $g_0\in G$ such that $\Lambda_0=g_0\Z^d$.

Recall that the standard topology on $X_d=G/\Gamma$
is given by the metric
\begin{align*}
\dist(g\Gamma ,g'\Gamma ):=\inf\{d(g\gamma,g')\col\gamma\in\Gamma\}
\qquad (g,g'\in G),
\end{align*}
where $d(\cdot,\cdot)$ is any fixed right $G$-invariant Riemannian metric on $G$.
Hence the fact that $\Lambda_j$ {converges to} $\Lambda_0=g_0\Z^d$
implies that there exist $g_1,g_2,\ldots\in G$ such that
$\Lambda_j=g_j\Z^d$ for each $j$ and $d(g_j,g_0)\to0$ as $j\to\infty$.

Using $\Lambda_j\cap \scrC_{r_j}=\{\bn\}$ for each $j$, and $r_j\to0$,
we claim that 
\begin{align}\label{MinkowskiHajosCorpf2}
\Lambda_0\cap(-1,1)^d=\{\bn\}.
\end{align}
Indeed, assume the opposite;
this means that there exists some $\vecm\in\Z^d\smallsetminus\{\bm{0}\}$
such that $g_0\vecm$ belongs to $(-1,1)^d$,
i.e.\ $\|g_0\vecm\|<1$.
Set $r:=\frac12(1-\|g_0\vecm\|)$; then $g_0\vecm\in\scrC_r$.
We have $g_j\vecm\to g_0\vecm$ as $j\to\infty$, 
since $d(g_j,g_0)\to0$;
hence for all sufficiently large $j$ we have $g_j\vecm\in \scrC_r$ (since $\scrC_r$ is open).
Also for all sufficiently large $j$ we have $r_j<r$.
Hence there exists some $j$ for which $r_j<r$ and 
$g_j\vecm\in \scrC_r\subset \scrC_{r_j}$.
This contradicts the fact that $\Lambda_j\cap \scrC_{r_j}=\{\bn\}$ for all $j$.
Hence \eqref{MinkowskiHajosCorpf2} is proved.

It follows from \eqref{MinkowskiHajosCorpf2} and Corollary \ref{MinkowskiHajosThm}
that %there exist 
{$\Lambda_0=wu'w^{-1}\Z^d$ for some
$w\in W$ and $u'\in U_0$}. %such that .
Next, by Lemma \ref{fdwiggleLEM} 
(and since ${\ve_0}<a$),
there exist %some 
$\gamma\in\Gamma\cap U$ and $B\in[1,A]$
such that $u:=u'\gamma\in U_{B{\ve_0}}$.
Using $w^{-1}\Z^d=\Z^d$ and $\gamma^{-1}\Z^d=\Z^d$,
we then have $\Lambda_0=wuw^{-1}\Z^d$.
Hence $wuw^{-1}=g_0\gamma_0$ for some $\gamma_0\in\Gamma$.
Now $d(g_j\gamma_0 ,wuw^{-1})
=d(g_j\gamma_0, g_0\gamma_0)
=d(g_j,g_0)\to0$ as $j\to\infty$.
This implies that every matrix entry of $g_j\gamma_0$ tends to the corresponding entry of $wuw^{-1}$,
i.e.\ we have $\|g_j\gamma_0-wuw^{-1}\|\to0$ as $j\to\infty$.
In particular there exists some $j$ such that
$\|g_j\gamma_0-wuw^{-1}\|<{\ve_0}$.
Now we have a contradiction against our previous assumption;
namely for our chosen $j$,
if we set $g:=g_j\gamma_0$ then $\Lambda_j=g\Z^d$
and $\|g-wuw^{-1}\|<{\ve_0}$.
This completes the proof of Lemma \ref{MinkowskiHajosCor2}.
\end{proof}

\begin{remark}\label{MinkowskiHajosCor2REM}
By a similar compactness argument as in 
%the above proof,
the proof of Lemma \ref{MinkowskiHajosCor2},
one can also prove a more basic statement:
for any $\ve_0>0$ there exists $r_0>0$ such that every $\Lambda\in K_{r_0}$ has a representative 
$g\in G$ (i.e.\ $\Lambda=g\Z^d$) satisfying $\|g-wuw^{-1}\|<\ve_0$ for some $w\in W$ and $u\in U_0$.
The purpose of the choice of the more technical lattice representatives in Lemma \ref{MinkowskiHajosCor2} 
(with $U_{B\ve_0}$ in place of $U_0$)
is to ensure the following
property, which is a key 
ingredient in the proof of the important Lemma \ref{secondkeyboundLEM} below:
for any $g=(g_{ij})\in G$ satisfying $\|g-u\|<\ve_0$ for some $u\in U_{B\ve_0}$, 
and  any $1\leq i<j\leq d$,
we have (cf. \eqref{secondkeyboundLEMpf1})
\begin{align*}%\label{equ:keybdstc}
|g_{ki}-g_{kj}|<1-B\ve_0\quad \textrm{for all $k\notin\{i,j\}$}.
\end{align*}
A crucial consequence of this is that if $g\vece_i-g\vece_j\notin\cC_r$ for some $r<B\ve_0$, 
then %using \eqref{equ:keybdstc} we conclude that 
either
$|g_{ii}-g_{ij}|>1-r$ or $|g_{ji}-g_{jj}|>1-r$ must hold.
%Namely, we want these quantities $|g_{ik}-g_{jk}|$ to be uniformly bounded away from (and smaller than) $1$. 
\end{remark}

\subsection{Bounds on off-diagonal symmetric pairs}

The next lemma shows that if a lattice $\Lambda$ {in} $K_r$ has a representative as in Lemma \ref{MinkowskiHajosCor2}, then its entries satisfy the desired bounds for proving Proposition \ref{prop:increlimub}. 

\begin{lem}\label{secondkeyboundLEM}
Let $A>1$,
$0<\ve_0<(16A)^{-1}$ and $0<r_0<\tfrac12 \ve_0$.
Let $g=(g_{ij})\in G$ and $r\in(0,r_0)$,
and assume that 
$g\Z^d\cap\scrC_r=\{\bn\}$ 
and that there exist $B\in[1,A]$ and $u\in U_{B\ve_0}$ such that 
$\|g-u\|<\ve_0$.
Then
\begin{align}\label{secondkeyboundLEMpf2}
0\leq g_{ii}-(1-r)\ll_d r\quad\text{for all }\: 1\leq i\leq d,
\end{align}
and
\begin{align}\label{secondkeyboundLEMres}
|g_{ij}g_{ji}|\ll_{d,\ve_0} r\quad\text{for all }\: 1\leq i<j\leq d.
\end{align}
%where the implied constants depend only on $d$ and $\ve_0$.
\end{lem}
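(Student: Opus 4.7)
The plan is to treat the two conclusions separately. For \eqref{secondkeyboundLEMpf2}, the approach is to reduce to Lemma \ref{firstkeyboundLEM}. The hypotheses $u\in U_{B\ve_0}$, $B\leq A$, and $\ve_0<(16A)^{-1}$ together imply that $u_{ii}=1$, $u_{ij}=0$ for $j<i$, and $|u_{ij}|<\tfrac12+B\ve_0\leq\tfrac12+\tfrac1{16}$ for $i<j$. Combined with $\|g-u\|<\ve_0<\tfrac1{16}$, this yields exactly the entrywise bounds used inside the proof of Lemma \ref{firstkeyboundLEM}, namely $|g_{ii}-1|<\tfrac18$, $|g_{ij}|<\tfrac58$ for $i<j$, and $|g_{ij}|<\tfrac18$ for $j<i$. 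One then repeats the one-dimensional box construction from that proof verbatim for each $i\in\{1,\ldots,d\}$ and applies Lemma \ref{fCkeyLEM} to conclude \eqref{secondkeyboundLEMpf2}.

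For \eqref{secondkeyboundLEMres} I would follow the strategy announced in Remark \ref{MinkowskiHajosCor2REM}. Fix $1\leq i<j\leq d$ and consider the non-zero lattice vector $g(\vece_i-\vece_j)\in g\Z^d\smallsetminus\{\bn\}$, which by hypothesis lies outside $\scrC_r$. The first step is to show that for every $k\notin\{i,j\}$, $|g_{ki}-g_{kj}|<1-B\ve_0$. I would verify this in three subcases: (a) $k<i$, where both $u_{ki},u_{kj}$ lie in the narrow band $(-\tfrac12+4B\ve_0,\tfrac12+B\ve_0]$ of width $1-3B\ve_0$, forcing $|u_{ki}-u_{kj}|<1-3B\ve_0$ and hence $|g_{ki}-g_{kj}|<1-3B\ve_0+2\ve_0\leq 1-B\ve_0$ (using $B\geq 1$); (b) $i<k<j$, where $u_{ki}=0$ and $u_{kj}$ lies in that band, giving $|g_{ki}-g_{kj}|<\tfrac12+(B+2)\ve_0\leq 1-B\ve_0$ (using $\ve_0<1/(16A)$); and (c) $k>j$, where $u_{ki}=u_{kj}=0$, giving $|g_{ki}-g_{kj}|<2\ve_0\leq 1-B\ve_0$. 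Since $r<r_0<\ve_0/2\leq B\ve_0$ one has $1-B\ve_0\leq 1-r$, so the violation of $g(\vece_i-\vece_j)\notin\scrC_r$ must be realized at the $i$th or $j$th coordinate. The entrywise bounds $|g_{ij}|<\tfrac58$, $|g_{ji}|<\ve_0$, together with $g_{ii},g_{jj}\in[1-r,1+C_1r]$ from \eqref{secondkeyboundLEMpf2}, exclude the ``wrong sign'' subcase in each alternative, leaving either $g_{ij}\leq g_{ii}-(1-r)\leq C_1 r$ or $g_{ji}\leq g_{jj}-(1-r)\leq C_1 r$.

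The main obstacle is to upgrade these one-sided scalar bounds into a bound on the full product $|g_{ij}g_{ji}|$: in the first alternative $g_{ij}$ could still be large and negative (as large in absolute value as $\tfrac12-4B\ve_0$), and then the trivial estimate $|g_{ij}g_{ji}|\leq|g_{ij}|\ve_0$ is only a constant. To handle this residual situation I would apply Lemma \ref{fCkeyLEM} a second time, this time to a carefully chosen box $\scrU\subset\R^d$ whose cross-section in the $(x_i,x_j)$-plane is a rectangle of area comparable to $|g_{ij}g_{ji}|$ fitted into the ``skew gap'' between the four cubes centered at $\bn$, $g\vece_i$, $g\vece_j$, and $g(\vece_i+\vece_j)$ (with the specific corner selected according to the signs of $g_{ij}$ and $g_{ji}$), while in the remaining $d-2$ directions $\scrU$ is taken exactly in the style of the proof of Lemma \ref{firstkeyboundLEM}, with each side of length roughly $\tfrac34$ and centered so as to make $\scrU$ disjoint from every other cube in $\fC_{g\Z^d,r}$ and to fit inside a translate of $(0,\tfrac34)^d$. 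The delicate verification of this disjointness—which is the technical heart of the argument and leans on the preliminary entrywise bounds from the first paragraph, on $|g_{ji}|<\ve_0$, and on the already-established diagonal bounds—yields $\vol(\scrU)\asymp_{d,\ve_0}|g_{ij}g_{ji}|$, and Lemma \ref{fCkeyLEM} then gives $|g_{ij}g_{ji}|\ll_{d,\ve_0}r$, completing the proof.
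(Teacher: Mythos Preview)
Your treatment of \eqref{secondkeyboundLEMpf2} is correct and matches the paper's: both reduce to Lemma \ref{firstkeyboundLEM} after checking that the entrywise bounds on $g$ put it within $\tfrac18$ of some element of $U_0$.

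For \eqref{secondkeyboundLEMres}, your opening move via $g(\vece_i-\vece_j)\notin\scrC_r$ is exactly the paper's Case~I (both $g_{ij},g_{ji}>0$), and your three-subcase verification of $|g_{ki}-g_{kj}|<1-B\ve_0$ is correct. The difficulty, as you recognise, is the residual situation where the one-sided bound leaves (say) $g_{ij}$ large and negative. Here your proposal has a genuine gap.

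Your plan is a single application of Lemma \ref{fCkeyLEM} to a box whose $(x_i,x_j)$-cross-section has area $\asymp|g_{ij}g_{ji}|$. This is essentially what the paper does in its Case~III ($g_{ij}g_{ji}<0$), although two details of your sketch are off: the remaining $d-2$ sides cannot be taken of length $\asymp\tfrac34$ but must be confined to intervals of the form $(g_{ki}+\scrJ_r)\cap(g_{kj}+\scrJ_r)$, which can be as short as $\asymp\ve_0$ (this is the source of the $\ve_0$-dependence in the implied constant); and showing that the box is disjoint from \emph{every} cube $\vecv+\tfrac12\scrC_r$, not just the four named ones, requires a separate sliding argument in the $x_i$-direction.

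The real problem is Case~II, where both $g_{ij}<0$ and $g_{ji}<0$. In this configuration no single box with $(x_i,x_j)$-area $\asymp|g_{ij}g_{ji}|$ can be made disjoint from $\fC_{g\Z^d,r}$: the ``skew gap'' between the cubes at $\bn,g\vece_i,g\vece_j,g(\vece_i+\vece_j)$ does not contain such a rectangle, because the projections of the cubes at $\bn$ and $g\vece_j$ overlap in the $x_i$-direction (and similarly $\bn$ and $g\vece_i$ in $x_j$). The paper instead proves the stronger assertion $\min\{|g_{ij}|,|g_{ji}|\}\ll_{d,\ve_0}r$ by a genuinely two-step argument: a first box of $x_i$-width $|g_{ij}|$ is used not to derive a volume bound but, via the contrapositive of Lemma \ref{fCkeyLEM}, to \emph{locate} an auxiliary lattice point $\vecv$ satisfying sharp inequalities on $v_i,v_j$; then a second box of $x_j$-width $\asymp|g_{ji}|$, built using $\vecv$, leads to a third lattice point $\vecv'$ whose existence is incompatible with the pairwise disjointness of the cubes. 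Your proposal does not anticipate this two-box structure, and without it Case~II is not covered.
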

\begin{proof}
Let us write $\ve=B\ve_0$.
Note that $\|g-u\|<\ve_0\leq \ve$ and $u\in U_{\ve}$ together imply that
\begin{align}\label{secondkeyboundLEMpf1}
-\tfrac12+3\ve<g_{ij}<\tfrac12+2\ve
\quad\text{ and }\quad 
|g_{ji}|<\ve,
\quad\forall 1\leq i<j\leq d.
\end{align}
{It also follows that 
$|g_{ii}-1|<\ve$ for all $i$;
however these inequalities may be 
sharpened using Lemma~\ref{firstkeyboundLEM}.}
%as well as
%$|g_{ii}-1|<\ve$ for all $i$.
%The last inequality can be sharpened using Lemma \ref{firstkeyboundLEM}.
Indeed, we have $\ve<\frac1{16}$ since
$A\ve_0<\frac1{16}$;
hence the above inequalities imply that $\|g-u'\|<\frac18$ for some 
$u'\in U_0$. %{\color{red}{[Before it was $u\in U_0$; I guess it's a typo?]}}.
Hence Lemma \ref{firstkeyboundLEM} applies, yielding that 
\eqref{secondkeyboundLEMpf2} holds.

Now let $1\leq i<j\leq d$ be given.
We separate the proof of \eqref{secondkeyboundLEMres}
into three cases.
(Note that \eqref{secondkeyboundLEMres} holds trivially if
$g_{ij}=0$ or $g_{ji}=0$;
hence we may without loss of generality assume 
$g_{ij}g_{ji}\neq0$.) %$g_{ij}\neq0$ and $g_{ji}\neq0$.) %{\color{red}{Below we denote by $\bm{b}_1,\ldots,\bm{b}_d$ the row vectors of $g$.}}

\textbf{Case I:} $g_{ij}>0$ and $g_{ji}>0$.
In this case we will build the proof on the fact that
$g\vece_i-g\vece_j\notin\scrC_r$,
which holds since 
$g\Z^d\cap\scrC_r=\{\bn\}$.
Using \eqref{secondkeyboundLEMpf1} and $r<r_0<2r_0\leq \ve<\frac1{16}$,
it follows that
$|g_{ki}-g_{kj}|<1-\ve<1-r$
for all $k\notin\{i,j\}$.
Hence we must have 
either
$|g_{ii}-g_{ij}|\geq1-r$ 
or $|g_{ji}-g_{jj}|\geq1-r$.
If $|g_{ii}-g_{ij}|\geq1-r$, then because of $g_{ii}\geq 1-r$ and $0<g_{ij}<\frac12+2\ve$
it follows that $g_{ii}-g_{ij}\geq 1-r$,
and so by \eqref{secondkeyboundLEMpf2},
$0<g_{ij}\ll r$.
Similarly if $|g_{ji}-g_{jj}|\geq1-r$
then $0<g_{ji}\ll r$.
In both cases, it follows that 
\eqref{secondkeyboundLEMres} holds for our $i,j$.

\textbf{Case II:} $g_{ij}<0$ and $g_{ji}<0$.
In this case we will prove the desired bound by proving the stronger assertion that either
%$|b_{ji}|\leq 8n(c_0/4)^{2-n}r$ or $|b_{ij}|\leq 12n(c_0/8)^{2-n}r$.
$|g_{ij}|\leq Cr$ or $|g_{ji}|\leq Cr$, with $C:=12d(\ve_0/8)^{2-d}$. %{\color{red}{[changed $K$ to $C$; later we also used $C$, but I guess it's okay since they appear in different places?]}}
Assume the opposite, i.e.\ assume that
%this of course implies that \eqref{secondkeyboundLEMres} holds for our $i,j$. The proof is by contradiction; thus we assume that the opposite is true,
%i.e.\ we assume that
\begin{align}\label{secondkeyboundLEMpf5}
g_{ij}<-Cr %-8n(c_0/4)^{2-n}r
\qquad\text{and}\qquad
g_{ji}<-Cr   %-12n(c_0/8)^{2-n}r.
\qquad (C:=12d(\ve_0/8)^{2-d}).
\end{align}
We will prove that this leads to a contradiction.

Set $\scrJ_r:=\bigl(-\tfrac12(1-r),\tfrac12(1-r)\bigr)$,
so that $\frac12\scrC_r=\scrJ_r^d$.
For each $k\notin\{i,j\}$ we introduce the following open interval:
\begin{align}\label{tIkdef}
\widetilde{\scrI}_k:=\scrJ_r\cap(g_{ki}+\scrJ_r)\cap(g_{kj}+\scrJ_r).
\end{align}
Using $|g_{ki}|,|g_{kj}|,|g_{ki}-g_{kj}|<1-\ve$
(see \eqref{secondkeyboundLEMpf1})
it follows that $\widetilde{\scrI}_k$ has length
$|\widetilde{\scrI}_k|>(1-r)-(1-\ve)>\tfrac12\ve_0$.
Define $\scrI_k\subset \widetilde{\scrI}_k$ to be the open interval of length $\frac14\ve_0$ with the same center as $\widetilde{\scrI}_k$.
Let us also set $\scrI_i=\bigl(\frac12(1-r)+g_{ij},\frac12(1-r)\bigr)$
and $\scrI_j=\bigl(g_{jj}-\frac12(1-r),\frac34\bigr)$.
Then by construction,
\begin{align}\label{secondkeyboundLEMpf3}
\scrI_i\subset \scrJ_r,
\qquad
\scrI_i\cap(g_{ii}+\scrJ_r)=\varnothing,
\qquad
%\text{and }\:
\scrI_i\cap(g_{ij}+\scrJ_r)=\varnothing,
\end{align}
and 
%\footnote{Details for ``$I_j\subset b_{jj}+J_r$'' in \eqref{secondkeyboundLEMpf4}:
%This requires $\frac34\leq b_{jj}+\frac12(1-r)$; which holds FLYING COLORS...}
\begin{align}\label{secondkeyboundLEMpf4}
\scrI_j\subset g_{jj}+\scrJ_r,
\qquad
%\text{and}\qquad
\scrI_j\cap \scrJ_r=\varnothing.
\end{align}
Furthermore, 
%regarding the lengths of $I_i$ and $I_j$ we have
$|\scrI_i|=|g_{ij}|$ and $\frac18<|\scrI_j|<\frac12$.
%where for the lower bound one uses the fact that $b_{jj}<1+c$.
Now let $\scrU$ be the open box
$\scrU=\scrI_1\times\cdots\times \scrI_d$.
Then $\vol(\scrU)>\frac18|g_{ij}|\cdot(\frac14\ve_0)^{d-2}>dr$,
where we used the first part of our assumption \eqref{secondkeyboundLEMpf5}.
Note also that $|\scrI_k|<\frac34$ for all $k$.
Hence by Lemma \ref{fCkeyLEM},
$\scrU\cap\fC_{g\Z^d,r}\neq\varnothing$,
i.e.\ there exists some $\vecv\in g\Z^d$
such that 
\begin{align*}
\scrU\cap(\vecv+\tfrac12\scrC_r)\neq\varnothing.
\end{align*}

It follows from the disjointness relations in \eqref{secondkeyboundLEMpf3}
and \eqref{secondkeyboundLEMpf4}
that $\scrU$ is disjoint from the three cubes
$g\vece_i+\tfrac12\scrC_r$, $g\vece_j+\tfrac12\scrC_r$
and $\tfrac12\scrC_r$;
hence $\vecv\notin\{\bn,g\vece_j,g\vece_i\}$.
Let us fix a point $\vecy\in \scrU\cap(\vecv+\frac12\scrC_r)$.
Then the line $\vecy+\R\vece_i$ goes through both the cubes
$\vecv+\frac12\scrC_r$
and $g\vece_j+\frac12\scrC_r$
(the latter holds since $\scrI_k\subset g_{kj}+\scrJ_r$ for all $k\neq i$;
see \eqref{tIkdef} and \eqref{secondkeyboundLEMpf4});
hence since these two cubes are disjoint,
we must have $v_i\geq g_{ij}+1-r$.
It also follows from $y_i\in v_i+\scrJ_r$ and $y_i\in \scrI_i$
that $v_i<y_i+\frac12(1-r)<1-r$.
In summary:
\begin{align}\label{secondkeyboundLEMpf6}
g_{ij}+1-r\leq v_i<1-r.
\end{align}
Similarly, using the fact that the line $\vecy+\R\vece_j$ 
goes through the two disjoint cubes
$\vecv+\frac12\scrC_r$
and $\frac12\scrC_r$,
%(the latter holds since $I_k\subset J_r$ for all $k\neq j$; cf.\ \eqref{tIkdef} and \eqref{secondkeyboundLEMpf3});
%hence since these two cubes are disjoint,
%we must have $b_j'\geq 1-r$.
and also using $y_j\in v_j+\scrJ_r$ and $y_j\in \scrI_j$,
it follows that
\begin{align}\label{secondkeyboundLEMpf7}
1-r\leq v_j<y_j+\tfrac12(1-r)%<\frac34+\frac12(1-r)<\tfrac54-\tfrac12r.
<\tfrac54.
\end{align}

Next, for each $k\notin\{i,j\}$,
since $y_k\in \scrI_k$ and $y_k\in v_k+\scrJ_r$, %and $1-r>\frac18c_0$
we have that both of the intervals 
$(y_k-\tfrac18\ve_0,y_k]$ and $[y_k,y_k+\tfrac18\ve_0)$
are contained in $\widetilde{\scrI}_k$,
and at least one of them is contained in $v_k+\scrJ_r$;
hence there exists an open subinterval $\scrI_k'$ of
$\widetilde{\scrI}_k\cap (v_k+\scrJ_r)$ of length $\frac18 \ve_0$.
% It seems we cannot always take $I_k'=\tI_k\cap (b_k'+J_r)$, since we need it to have length $<3/4$.
Let us also set $\scrI_i'=(\tfrac23,\tfrac34)$ and $\scrI_j'=\bigl(g_{ji}+\tfrac12(1-r),v_j-\tfrac12(1-r)\bigr)$,
and then let $\scrU'$ be the open box
$\scrU'=\scrI_1'\times\cdots\times \scrI_d'$.
Using \eqref{secondkeyboundLEMpf7}
we have
$|\scrI_j'|\geq -g_{ji}$;
hence
$\vol(\scrU')\geq(\frac18\ve_0)^{d-2}\cdot\frac1{12}\cdot|g_{ji}|>dr$,
where we used the second part of our assumption
\eqref{secondkeyboundLEMpf5}.
Note also that $|\scrI_k'|<\frac34$ for all $k$;
for $k=j$ this uses $v_j<\frac54$ 
(see \eqref{secondkeyboundLEMpf7})
and $g_{ji}>-\ve$ (see \eqref{secondkeyboundLEMpf1}).
Hence, by Lemma \ref{fCkeyLEM},
$\scrU'\cap\fC_{g\Z^d,r}\neq\varnothing$,
i.e.\ there exists some $\vecv'\in g\Z^d$
such that 
\begin{align*}
\scrU'\cap(\vecv'+\tfrac12\scrC_r)\neq\varnothing.
\end{align*}
Choose a point $\vecz\in \scrU'\cap(\vecv'+\tfrac12\scrC_r)$.
Note that for every $k\notin\{i,j\}$
we have $\scrI_k'\subset(g_{ki}+\scrJ_r)\cap(v_{k}+\scrJ_r)$ by construction.
Furthermore, using $1-r\leq g_{ii}<1+\ve$
we have $\scrI_i'\subset g_{ii}+\scrJ_r$,
and using \eqref{secondkeyboundLEMpf6}
we have $\scrI_i'\subset v_{i}+\scrJ_r$.
Hence the line $\vecz+\R\vece_j$ goes through  of
the cubes $g\vece_i+\frac12\scrC_r$
and $\vecv+\frac12\scrC_r$.
Of course this line also goes through the cube
$\vecv'+\tfrac12\scrC_r$.
Note also that $z_j\in \scrI_j'$,
and by construction,
%the interval 
$\scrI_j'$ is disjoint from and lies between the two intervals
$g_{ji}+\scrJ_r$ and $v_j+\scrJ_r$.
Hence we must have $\vecv'\notin\{g\vece_i,\vecv\}$;
thus the three cubes
$g\vece_i+\frac12\scrC_r$,
$\vecv+\frac12\scrC_r$
and $\vecv'+\frac12\scrC_r$
are pairwise disjoint,
and the two intervals
$g_{ji}+\scrJ_r$ and $v_j+\scrJ_r$ must lie at a distance $\geq1-r$ from each other.
However, this is impossible,
since $v_j-g_{ji}<\frac54+\ve<2(1-r)$.
% Here we used (agaid!) \eqref{secondkeyboundLEMpf7}; b_j'<5/4; no need to say I think.
This completes the proof in Case II.

\textbf{Case III:}
$g_{ij}g_{ji}<0$.
If $g_{ji}<0$ then let us swap the values of $i$ and $j$;
thus from now on we have $g_{ji}>0$ and $g_{ij}<0$,
but %we may have
either $i<j$ or $i>j$.
If $g_{ji}\leq g_{jj}-(1-r)$, then 
$g_{ji}\ll r$ by \eqref{secondkeyboundLEMpf2},
and so \eqref{secondkeyboundLEMres} holds for our $i,j$.
Hence from now on we may assume 
$g_{ji}>g_{jj}-(1-r)$.
Now set:
%For each $k\notin\{i,j\}$ we set
\begin{align*}
&\scrI_{i}=\bigl(g_{ij}+\tfrac12(1-r),g_{ii}-\tfrac12(1-r)\bigr);
\\[2pt]
&\scrI_{j}=\bigl(g_{jj}-\tfrac12(1-r),g_{ji}+\tfrac12(1-r)\bigr);
\\[2pt]
&\text{and }\: \widetilde{\scrI}_k=(g_{ki}+\scrJ_r)\cap(g_{kj}+\scrJ_r)
\quad\text{for $k\notin\{i,j\}$}.
\end{align*}
These are non-empty intervals.
Indeed, 
$\scrI_i$ is non-empty since $g_{ii}\geq1-r$ and $g_{ij}<0$;
$\scrI_j$ is non-empty because of our assumption 
$g_{ji}>g_{jj}-(1-r)$,
and for each $k\notin\{i,j\}$,
it follows from $|g_{ki}-g_{kj}|<1-\ve$
(see \eqref{secondkeyboundLEMpf1})
that $\widetilde{\scrI}_k$ is non-empty with $|\widetilde{\scrI}_k|>(1-r)-(1-\ve)\geq\tfrac12\ve_0$.
Now for each $k\notin\{i,j\}$
we choose an open subinterval $\scrI_k$ of $\widetilde{\scrI}_k$
of length $\min\big\{\frac34,|\widetilde{\scrI}_k|\big\}$,
and then define $\scrU$ to be the open box
$\scrU=\scrI_1\times\cdots\times \scrI_d$.
We claim that
\begin{align}\label{secondkeyboundLEMpf10}
\scrU\cap\fC_{g\Z^d,r}=\varnothing.
\end{align}
Indeed, assume the opposite;
then %this means that 
there is some
$\vecv\in g\Z^d$ with
$\scrU\cap\left(\vecv+\frac12\scrC_r\right)\neq\varnothing$.
By construction,
$\scrI_i$ is disjoint from both the intervals
$g_{ij}+\scrJ_r$ and $g_{ii}+\scrJ_r$;
%$I_i\cap (\vecb_j+J_r)=\varnothing$ and $I_i\cap(\vecb_i+J_r)=\varnothing$,
%and $I_j$ is disjoint from $J_r$
%$I_j\cap J_r=\varnothing$ 
%since $b_{jj}\geq1-r$.
hence $\scrU$ is disjoint from the two cubes
$g\vece_j+\frac12\scrC_r$,
$g\vece_i+\frac12\scrC_r$,
%and $\frac12\scrC_r$,
and thus $\vecv\notin\{g\vece_j,g\vece_i\}$.
%[*** don't care about $\vecb'\neq\bn$?]
Let $\vecx$ be a point in 
$\scrU\cap\left(\vecv+\frac12\scrC_r\right)$.
Then $x_i\in \scrI_i\cap(v_i+\scrJ_r)$.
Note also that the three intervals $g_{ij}+\scrJ_r$, $\scrI_i$, $g_{ii}+\scrJ_r$
are adjacent to each other in this order
along the real line,
with the length of $\scrI_i$ being
\begin{align*}
|\scrI_i|=g_{ii}-g_{ij}-(1-r)<1+\ve+(\tfrac12-3\ve)-(1-r)=\tfrac12-2\ve+r<\tfrac12<1-r.
\end{align*}
But $v_i+\scrJ_r$ has length $1-r$;
hence there exists a number $x_i'\in v_i+\scrJ_r$
lying either in $g_{ii}+\scrJ_r$ or $g_{ij}+\scrJ_r$.
Noticing also that $x_k\in \scrI_k\subset (g_{ki}+\scrJ_r)\cap (g_{kj}+\scrJ_r)$
for all $k\neq i$, 
%\footnote{Details: For $k\notin\{i,j\}$ this holds by construction,
%and for $k=j$ the task is to prove:
%$I_j\subset (g_{ji}+J_r)\cap (g_{jj}+J_r)$,
%i.e.\ $g_{jj}-\frac12(1-r)\geq \max\{g_{ji}-\frac12(1-r),g_{jj}-\frac12(1-r)\}$
%and $g_{ji}+\frac12(1-r)\leq \min\{g_{ji}+\frac12(1-r),g_{jj}+\frac12(1-r)\}$,
%and these inequalities are immediate consequences of the fact that 
%$g_{jj}\geq g_{ji}$
%(which holds since $g_{jj}\geq 1-r>\frac12+2c>g_{ji}$).}
it now follows that the point 
$\vecx+(x_i'-x_i)\vece_i$ lies in the cube $\vecv+\frac12\scrC_r$
and also in one of the two cubes
$g\vece_i+\frac12\scrC_r$
or $g\vece_j+\frac12\scrC_r$.
This is a contradiction against the fact that 
$\vecv+\frac12\scrC_r$
is disjoint from both
$g\vece_i+\frac12\scrC_r$
and $g\vece_j+\frac12\scrC_r$;
hence we have completed the proof of \eqref{secondkeyboundLEMpf10}. %$U\cap\fC_{\Z^dg,r}=\varnothing$.

We have $|\scrI_k|<\frac34$ for all $1\leq k\leq d$;
hence Lemma \ref{fCkeyLEM} applies,
giving $\vol(\scrU)<dr$.
But $|\scrI_k|\geq\frac12\ve_0$ for all $k\notin\{i,j\}$,
and $|\scrI_i|\geq|g_{ij}|$;
thus %$\vol(U)\gg |b_{ji}||I_j|=|b_{ji}|\bigl(b_{ij}-b_{jj}+1-r\bigr)$,
\begin{align*}
|g_{ij}|\bigl(g_{ji}-g_{jj}+1-r\bigr)
=|g_{ij}||\scrI_j|\ll\vol(\scrU)\ll r.
\end{align*}
%|b_{ji}|\bigl(b_{ij}-b_{jj}+1-r\bigr)\ll r.
Furthermore, $|g_{ij}|\big(g_{jj}-(1-r)\big)\ll r$ by \eqref{secondkeyboundLEMpf2}.
Adding the last two bounds,
we conclude that 
\eqref{secondkeyboundLEMres} holds for our $i,j$.
\end{proof}

\subsection{Proof of Proposition \ref*{prop:increlimub}}
Finally, we can give the 

%Now for any $r\in(0,r_0)$, we set
%\begin{align*}
%\tK_r=\Bigl\{b\in G\col 1-r\leq b_{ii}\leq 1+Cr,\:
%|b_{ij}|<1 %\tfrac12+2Ac_0
%\text{ and }
%|b_{ij}b_{ji}|\leq Cr\:\:
%\text{for all $i\neq j$},  % in } \{1,\ldots,n\}\Bigr\}.
%\hspace{60pt}
%\\
%\text{and }\: |\det b'-1|<\tfrac12\Bigr\},
%\end{align*}
%
%Recall that 
%$\pi: \SL_n(\R)\to X_n$ is the natural projection from $\SL_n(\R)$ to $X_n$.
%We claim that
%\begin{align}
%K_r\subset\bigcup_{w\in W}\pi(w^{-1}\tK_r w).
%\end{align}
\begin{proof}[Proof of Proposition \ref{prop:increlimub}] 
Choose $0<a<1$ and $A>1$ as in Lemma~\ref{MinkowskiHajosCor2}.
Fix a number $0<\beta<\frac1{16}$
so small that for every matrix $g\in G$ which has distance
(w.r.t.\ $\|\cdot\|$) less than $\beta$ to a matrix in $U_0$,
we have $|\det \tilde{g}-1|<\frac12$,
where $\tilde{g}$ is the top left $(d-1)\times(d-1)$ block of $g$.
Fix a number $0<\ve_0<\min\bigl\{a,\beta/(2A)\bigr\}$,
and for this $\ve_0$, take $r_0>0$ as in Lemma \ref{MinkowskiHajosCor2}.
Note that the defining property of $r_0$
trivially remains valid if we decrease $r_0$;
hence we may assume that $0<r_0<\frac12\ve_0$.
Let $C>0$ be the maximum of the implied constants in the two
``$\ll$'' bounds in Lemma \ref{secondkeyboundLEM},
for our fixed $d$ and $\ve_0$. 
%Note that the constants $a, \beta, A$ all only depend on $d$, thus $\ve_0$ also only depends on $d$. 
%Hence %this bounding constant 
%$C$ also only depends on $d$.

Now let $r\in(0,r_0)$ and $\Lambda\in K_r$ be given.
This means that $\Lambda\cap\scrC_r=\{\bn\}$,
and a fortiori,
$\Lambda\cap\scrC_{r_0}=\{\bn\}$.
Hence by our choice of
$A,a,\ve_0,r_0$ (see the statement of Lemma \ref{MinkowskiHajosCor2}),
there exist $g'\in G$,
$w\in W$, $B\in[1,A]$ and $u\in U_{B{\ve_0}}$ such that
$\Lambda=g'\Z^d$ and $\|g'-wuw^{-1}\|<{\ve_0}$.
Note that the norm $\|\cdot\|$ is preserved by left and right multiplication by elements from $W$;
hence letting $g=(g_{ij}):=w^{-1}g'w$ we have $\|g-u\|<\ve_0$, %{\color{red}{[changed $g$ to $g'$ and $b$ to $g$]}}
and also $g\Z^d\cap\scrC_r=w^{-1}(g'\Z^d\cap\scrC_r)=\{\bn\}$ 
(this is true since $w\Z^d=\Z^d$ and $\scrC_r=w^{-1}\scrC_r$).
Hence by Lemma \ref{secondkeyboundLEM},
and by our choice of $C$,
we have $0\leq g_{ii}-(1-r)\leq Cr$ for all $i$ %$1\leq i\leq d$
and $|g_{ij}g_{ji}|\leq Cr$ for all $i\neq j$. % in $\{1,\ldots,n\}$.
Furthermore, it follows from $u\in U_{B{\ve_0}}$ and $\|g-u\|<\ve_0$
that $\|g-u'\|<(B+1)\ve_0$ for some $u'\in U_0$;
hence a fortiori
$\|g-u'\|<2A\ve_0<\beta$,
which implies that $|\det \tilde{g}-1|<\frac12$
by our choice of $\beta$.
It also follows that 
$|g_{ij}|<\frac12+2A\ve_0<1$ for all $i\neq j$. % in $\{1,\ldots,n\}$.
Hence $g\in{\overline{\cK}_{r,C}}$, 
and thus $g'=wgw^{-1}\in w\,{\overline{\cK}_{r,C}}\,w^{-1}$
and $\Lambda=g'\Z^d\in\pi(w\,{\overline{\cK}_{r,C}}\,w^{-1})$, finishing the proof.
\end{proof}

%\vspace{20pt}
%
%%*** Now prove $\mu_d(\tK_r)\ll r^{\frac{(d-1)(d+2)}{2}}\log^{\frac{d(d-1)}{2}}\left(\tfrac{1}{r}\right)$,
%%following Section \ref{upperboundproof}!
%
%\newpage

\section{Measure estimates of the thickenings}
%Fix $m, n\in \N$ and let $d=m+n$. Let $\bm{\alpha}\in \R^m$ and $\bm{\beta}\in \R^n$ be two \textit{weights}, that is 
%\begin{align*}
%\bm{\alpha}=(\alpha_1,\ldots, \alpha_m)\in \R^m,\quad\textrm{and}\quad \bm{\beta}=(\beta_1,\ldots, \beta_n)\in \R^n
%\end{align*}
%with $\alpha_i, \beta_j>0,\ \forall\ 1\leq i\leq m, 1\leq j\leq n$ and $\sum_i\alpha_i=\sum_j\beta_j=1$. 
%For any $t\in \R$ let us denote
%\begin{align*}
%g_t=g^{\bm{\alpha},\bm{\beta}}_t:=\diag(e^{-\alpha_1t},\ldots, e^{-\alpha_mt}, e^{\beta_1t},\ldots, e^{\beta_lt}),
%\end{align*}
%which acts on $X_n$ via the right regular action: $g_t\cdot \Lambda=\Z^d hg_t^{-1}$ for any $\Lambda=\Z^d h\in X_n$.
%The main result of this section is an asymptotic formula for the measure of the following thickening set
%\begin{align*}
%\bigcup_{0\leq t<1}g_{-t}\cdot K_s,\ \textrm{as $s\to 0^+$}.
%\end{align*}
%%where for any $t\in \R$
%%\begin{align*}
%%g_t:=\diag(e^{-t/m}I_m, e^{t/l}I_l)
%%\end{align*}
% We now state our measure asymptotic formula.
% \begin{thm}\label{thm:volumethickening}
%Let $s_0>0$ be as in Theorem \ref{thm:mainthm}. Then for any $s\in (0, s_0/2)$
%\begin{align}\label{equ:volumethickening}
%\mu\left(\bigcup_{0\leq t<1}g_{-t}\cdot K_{s}\right)\asymp_d s^{\frac{(d-1)(d+2)}{2}-1}\log^{\frac{d(d-1)}{2}}\left(\tfrac{1}{s}\right).
%\end{align}
%\end{thm}
%Similar as before, we prove Theorem \ref{thm:volumethickening} by proving an upper bound and a lower bound separately. 
Fix $m,n\in \N$ and let $d=m+n$. Let $\bm{\alpha}\in \R^m$ and $\bm{\beta}\in \R^n$ be two fixed {weight vectors} as in Theorem~\ref{thm:improvedirichleconwei}.
%, that is 
%\begin{align*}
%\bm{\alpha}=(\alpha_1,\ldots, \alpha_m)\in \R^m,\quad\textrm{and}\quad \bm{\beta}=(\beta_1,\ldots, \beta_n)\in \R^n
%\end{align*}
%with $\alpha_i, \beta_j>0,\ \forall\ 1\leq i\leq m, 1\leq j\leq n$ and $\sum_i\alpha_i=\sum_j\beta_j=1$. 
As mentioned in Remark \ref{rmk:effequdoumixww}, in order to incorporate the case of general weights, 
we need to consider a more general one-parameter subgroup of $G$ associated to $\bm{\alpha}$ and $\bm{\beta}$. 
Explicitly, for any $s\in \R$ let us define
\begin{align}\label{equ:geneweflow}
g_s=g_{s}^{\bm{\alpha},\bm{\beta}}:=\diag(e^{\alpha_1s},\ldots, e^{\alpha_ms}, e^{-\beta_1s},\ldots, e^{-\beta_ns}) \in G.
\end{align}
%which acts on $X_d=G/\G$ via the left regular action. 
Let $\Delta: X_d\to [0,\infty)$ be the function defined in \eqref{def:delta}. 
  The main result of this section is an asymptotic estimate for the measure of the thickened set
\begin{align*}
\widetilde{\Delta}_r:=\bigcup_{0\leq s<1}g_{-s}\,\Delta^{-1}[0, r],
\end{align*}
when $r>0$ is small.
%where for any $t\in \R$
%\begin{align*}
%g_t:=\diag(e^{-t/m}I_m, e^{t/l}I_l)
%\end{align*}
% We now state our measure asymptotic estimate.
\begin{thm}\label{thm:volumethickening}
Let $\varkappa_d=\frac{d^2+d-4}{2}$ and $\lambda_d=\frac{d(d-1)}{2}$ be as in Theorem \ref{thm:improvedirichleconwei}. Then %there exists $r_0>0$ such that %for any $r\in (0, r_0)$
\begin{align*}%\label{equ:volumethickening}
\mu_d\bigl(\widetilde{\Delta}_r\bigr)\asymp_{d} r^{\varkappa_d}\log^{\lambda_d}\Bigl(\frac{1}{r}\Bigr),\qquad \textrm{as $r\to 0^+$},
\end{align*}
{where the implicit constant is independent of $r$ and the two weight vectors $\bm{\alpha}$ and $\bm{\beta}$.}
\end{thm}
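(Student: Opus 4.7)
The plan is to deduce the estimate from Theorem \ref{thm:mainthm} via a Fubini argument. Define the ``time-spent'' function
$$\phi(\Lambda):=\int_0^1\chi_{\Delta^{-1}[0,r]}(g_s\Lambda)\,ds,$$
so that $\widetilde{\Delta}_r=\{\phi>0\}$, and by $g_s$-invariance of $\mu_d$ and Fubini,
$$\int_{X_d}\phi\,d\mu_d=\int_0^1\mu_d\bigl(g_{-s}\Delta^{-1}[0,r]\bigr)\,ds=\mu_d(\Delta^{-1}[0,r])\asymp_d r^{\varkappa_d+1}\log^{\lambda_d}\Bigl(\frac{1}{r}\Bigr).$$
The theorem then follows once I show that $\phi\asymp r$ on $\widetilde{\Delta}_r$ in the appropriate senses for each direction.

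For the lower bound $\mu_d(\widetilde{\Delta}_r)\gg r^{\varkappa_d}\log^{\lambda_d}(1/r)$, it is enough to prove $\phi(\Lambda)\ll_d r$ for $\mu_d$-almost every $\Lambda$. The complement $\{s\in[0,1):\Delta(g_s\Lambda)>r\}$ decomposes as $\bigcup_{v\in\Lambda\setminus\{\bn\}}F_v(r)$, where
$F_v(r):=\{s:|v_i|<e^{-\alpha_is-r}\text{ for all }i\}$
(using the convention $\alpha_{m+j}=-\beta_j$). Each $F_v(r)$ is an open subinterval of $F_v(0)$, with both endpoints shifted inward at rate $1/|\alpha_i|$ as $r$ grows from $0$. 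Since $\mu_d$-almost every lattice $\Lambda$ has $|\{s\in[0,1):g_s\Lambda\in K_0\}|=0$, and only boundedly many $v\in\Lambda$ have $F_v(0)\cap[0,1)\ne\varnothing$ by a Minkowski-type count in a bounded box (whose size is uniform since $|\alpha_i|\le 1$), one concludes $\phi(\Lambda)\ll_d r$ a.e., and the Fubini identity yields the lower bound.

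For the upper bound $\mu_d(\widetilde{\Delta}_r)\ll r^{\varkappa_d}\log^{\lambda_d}(1/r)$, I would invoke Proposition \ref{prop:increlimub}: since $\Delta^{-1}[0,r]=K_{\tilde r}\subset\bigcup_{w\in W}\pi(w\,\overline{\cK}_{\tilde r,C}\,w^{-1})$ with $\tilde r=1-e^{-r}\asymp r$, we have
$$\widetilde{\Delta}_r\subset\bigcup_{w\in W}\pi\Bigl(\bigcup_{0\le s<1}g_{-s}w\,\overline{\cK}_{\tilde r,C}\,w^{-1}\Bigr).$$
For each $w$, set $\cL_w:=\bigcup_{s\in[0,1)}g_{-s}w\,\overline{\cK}_{\tilde r,C}\,w^{-1}$. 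The key claim is that if $g_{s^*}g\in w\,\overline{\cK}_{\tilde r,C}\,w^{-1}$ for some $s^*\in[0,1)$, then $g_s g\in w\,\overline{\cK}_{2\tilde r,2C}\,w^{-1}$ for all $s\in[s^*-cr,s^*+cr]\cap[0,1)$, an interval of length $\gg r$. This holds because the defining inequalities of $\overline{\cK}_{\tilde r,C}$ have widths $\asymp r$, and shifting $s$ by $O(r)$ perturbs the diagonal entries and off-diagonal products of $g_s g$ only by $O(r)$ (the rates being controlled by $e^{\pm\alpha_i s}\le e$ for $|s|\le 1$), which is absorbed by doubling the constants. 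Applying Fubini with the indicator of $w\,\overline{\cK}_{2\tilde r,2C}\,w^{-1}$ gives $\nu(\cL_w)\ll\nu(\overline{\cK}_{2\tilde r,2C})/r$; combining with Lemma \ref{lem:measuk} and summing over the $d!$ elements of $W$ yields the required upper bound.

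The main obstacle will be making both ``time-spent'' estimates precise with constants uniform in the weight vectors $\bm{\alpha},\bm{\beta}$, as the theorem claims. In particular, establishing $\phi(\Lambda)\ll_d r$ uniformly is delicate when some weights are small, since the rate of change of $\Delta(g_s\Lambda)$ along an orbit degenerates near such weights; this should be handled by exploiting that $\sum\alpha_i=\sum\beta_j=1$ forces at least one $\alpha_i$ and one $\beta_j$ to be $\ge 1/d$, so that a ``transversal'' direction to the critical locus always has controlled slope. For the upper bound, one must carefully verify that the off-diagonal product constraints $|g_{ij}g_{ji}|\le C\tilde r$, whose scaling under the $g_s$-action involves factors $e^{(\alpha_i+\alpha_j)s}$ of either sign, remain stable under $s$-perturbations of size $\asymp r$ with the doubled constants.
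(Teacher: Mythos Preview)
Your lower bound argument has a genuine gap: the pointwise bound $\phi(\Lambda)\ll_d r$ is \emph{false} uniformly in the weight vectors, and this is precisely what the theorem demands. Here is a concrete counterexample. Take $d=3$, $m=1$, $n=2$, $\alpha_1=1$, $\beta_1=\epsilon$, $\beta_2=1-\epsilon$, and let $\Lambda$ be the lattice generated by $e_1$, $e_2$, $e_3+\tfrac12 e_1$. Then $\Lambda\in K_0\subset\Delta^{-1}[0,r]$. For $s>0$ the only lattice vectors that can become short are $\pm e_2$ (norm $e^{-\epsilon s}$) and $\pm 2e_3$ (norm $2e^{-(1-\epsilon)s}$); a direct check shows every other nonzero vector keeps a coordinate of size $\geq\tfrac12$. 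Hence $g_s\Lambda\in\Delta^{-1}[0,r]$ for all $0\le s\le r/\epsilon$, so $\phi(\Lambda)\ge r/\epsilon$. Your suggested fix---that \emph{some} $\alpha_i$ and \emph{some} $\beta_j$ are $\ge 1/d$---does not help: what governs the exit time is the weight attached to the \emph{specific} critical vector, here $e_2$ with weight $\beta_1=\epsilon$. Thus $\sup_{\widetilde\Delta_r}\phi\gg r/\epsilon$, and the inequality $\mu_d(\widetilde\Delta_r)\ge\int\phi\,/\sup\phi$ yields only a weight-dependent lower bound.

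The paper circumvents this by not attempting to bound $\phi$ at all. Instead it reuses the explicit set $\underline{K}_r$ from Section~\ref{sec:lowerbound} and proves (Lemma~\ref{crucialdisjLEM}) that $\underline{K}_r\cap g_{-s}\underline{K}_r=\varnothing$ for every $s\in[r,1)$, uniformly in the weights. The proof tracks only the $i$th coordinate for an index $i$ with $\alpha_i\ge 1/m$; since every $\Lambda\in\underline{K}_r$ has a basis vector close to $e_i$, this coordinate moves at a controlled rate, and that is enough. One then gets $\asymp 1/r$ pairwise disjoint translates $g_{-kr}\underline{K}_r$ inside $\widetilde\Delta_r$, and the lower bound follows from $\mu_d(\underline{K}_r)\gg r^{\varkappa_d+1}\log^{\lambda_d}(1/r)$.

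For the upper bound your route via Proposition~\ref{prop:increlimub} can probably be made to work, but the paper's argument is far simpler and you are essentially reproving part of it. Since every weight is at most $1$, one has $|\Delta(g_{-s}\Lambda)-\Delta(\Lambda)|<|s|$, hence $g_{-s}\Delta^{-1}[0,r]\subset\Delta^{-1}[0,r+|s|]$. Splitting $[0,1)$ into $q=\lceil 1/r\rceil$ subintervals of length $\le r$ gives $\widetilde\Delta_r\subset\bigcup_{k=0}^{q-1}g_{-k/q}\Delta^{-1}[0,2r]$, and $G$-invariance of $\mu_d$ together with Theorem~\ref{thm:mainthm} immediately yields $\mu_d(\widetilde\Delta_r)\ll r^{-1}\cdot r^{\varkappa_d+1}\log^{\lambda_d}(1/r)$, with no reference to $\overline{\cK}_{r,C}$ needed.
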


%\comm{Maybe we should comment explicitly that the implicit constants in $\asymp$ do not depend on weights?}

Just as for Theorem \ref{thm:measestmainaux}, %Similar as before, 
we prove Theorem \ref{thm:volumethickening} by proving the upper and lower bounds separately. 

\subsection{Proof of the upper bound}\label{sec:upper}
%We first give the proof of the upper bound. 
Note that for any $(\bm{x},\bm{y})\in \R^m\times \R^n$, 
$$
g_{-s}\left(\begin{smallmatrix}
\bm{x} \\
\bm{y}\end{smallmatrix}\right)=(e^{-\alpha_1s}x_1,\ldots, e^{-\alpha_ms}x_m, e^{\beta_1s}y_1,\ldots, e^{\beta_ns}y_n)^t.
$$
This implies that
\begin{align*}
|\Delta(g_{-s} \Lambda)-\Delta(\Lambda)|\leq \max %\left\{\alpha_i, \beta_j: 1\leq i\leq m, 1\leq j\leq n\right\}
\bigl\{\alpha_1,\ldots,\alpha_m,\beta_1,\ldots,\beta_n\bigr\}|s|< |s|,\qquad \forall\ s\in \R,\  \Lambda\in X_d.
\end{align*}
Hence
\begin{align}\label{equ:increla}
g_{-s} \Delta^{-1}\big[0,r]\subset \Delta^{-1}[0, r+|s|\big],\qquad \forall\ s\in \R.
\end{align}
Given any $r\in (0,1)$, let $q=\left \lceil{1/r}\right \rceil$. %Thus for any $0\leq t<\frac{1}{N}$ we have $g_{-t}B_r\subset B_{\frac{1}{N}+r}\subset B_{2r}$. 
Using \eqref{equ:increla} and the fact that $1/q\leq r$, we have %can estimate
\begin{align*}
\widetilde{\Delta}_r=\bigcup_{0\leq s<1}g_{-s}\Delta^{-1}[0, r]
=\bigcup_{k=0}^{q-1}\,\bigcup_{0\leq s<1/q}g_{-k/q}\,g_{-s}\, \Delta^{-1}[0, r]\subset \bigcup_{k=0}^{q-1}g_{-k/q}\, \Delta^{-1}[0, 2r],
\end{align*}
implying that (using the $G$-invariance of $\mu_d$ and $q %=\left \lceil{\frac{1}{r}}\right \rceil
\asymp r^{-1}$)
\begin{align*}
\mu_d\bigl(\widetilde{\Delta}_r\bigr)\leq 
\sum_{k=0}^{q-1}\mu_d\bigl(g_{-k/q}\, \Delta^{-1}[0, 2r]\bigr)\asymp r^{-1}\mu_d\big(\Delta^{-1}[0, 2r]\big).
\end{align*}
Finally, by Theorem \ref{thm:mainthm} we get%there exists $r_0>0$ sufficiently small such that
%\begin{align*}
%\mu_d\left(\Delta^{-1}[0, 2r]\right)\asymp_d r^{\varkappa_d+1}\log^{\lambda_d}\left(\tfrac{1}{r}\right),\qquad \textrm{as $r\to 0^+$},
%\end{align*}
%implying that %(noting that $\varkappa_d=\frac{(d-1)(d+2)}{2}-1$)
\begin{align*}
\mu_d\bigl(\widetilde{\Delta}_r\bigr)\ll_d r^{-1}r^{\varkappa_d+1}\log^{\lambda_d}\Bigl(\frac{1}{r}\Bigr)=r^{\varkappa_d}\log^{\lambda_d}\Bigl(\frac{1}{r}\Bigr),\qquad \textrm{as $r\to 0^+$}.
\end{align*}
This finishes the proof of the upper bound in Theorem \ref{thm:volumethickening}.
\subsection{Proof of the lower bound}
In this subsection we prove the lower bound in Theorem \ref{thm:volumethickening}. 
By the discussion in the beginning of Section \ref{sec:premest}, 
we may replace the set $\Delta^{-1}[0,r]$ by $K_r$, that is, it suffices to prove the following lower bound
\begin{align}\label{equ:allowerbdtick}
\mu_d\Biggl(\,\bigcup_{0\leq s<1}g_{-s}\, K_{r}\Biggr)\gg_d r^{\varkappa_d}\log^{\lambda_d}\Bigl(\frac{1}{r}\Bigr),\qquad \text{as $r\to 0^+$}.
\end{align}

The following lemma is the crucial ingredient in our proof of \eqref{equ:allowerbdtick}.
Let $c_0$ be the small parameter which we fixed in Section \ref{sec:smpara};
after possibly shrinking $c_0$, we may without loss of generality assume that $0<c_0<(3e)^{-1}$.
For $r\in (0, c_0/d)$, let $\underline{K}_r\subset K_r$ be as in \eqref{def:urlowerbd}.
\begin{lem}\label{crucialdisjLEM}
For any $r\in (0,c_0/d)$ and $s\in[r,1)$,
the two sets 
$\underline{K}_r$ and $g_{-s}\,\underline{K}_r$ are disjoint.
\end{lem}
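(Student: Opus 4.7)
The plan is to argue by contradiction: assume $\Lambda\in\underline{K}_r\cap g_{-s}\underline{K}_r$, use the fact that lattices in $\underline{K}_r$ have uniquely determined explicit representatives in $\underline{\cK}_r\subset G_{c_0}$, and extract enough information from the resulting relation in $\Gamma$ to contradict the hypotheses $r<c_0/d$ and $s\geq r$.

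First I would recall from the proofs of Proposition~\ref{prop:regulargen} and Lemma~\ref{Lem:general} that $\underline{\cK}_r\subset G_{c_0}$. Combined with the injectivity of $\pi|_{G_{c_0}}$, this yields a unique $g\in\underline{\cK}_r$ with $\Lambda=g\Z^d$ and a unique $h\in\underline{\cK}_r$ with $g_s\Lambda=h\Z^d$. Setting $\gamma:=h^{-1}g_sg\in\Gamma$, the goal is to show that $\gamma=I_d$ and then extract a contradiction from $g_sg=h$.

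The main step---and the main obstacle---is showing $\gamma=I_d$. The key points are: for $0\leq s<1$ and positive weights with $\sum\alpha_i=\sum\beta_j=1$, the diagonal matrix $g_s$ has entries in $[e^{-1},e]$; and since $g,h\in G_{c_0}$ with $c_0<(3e)^{-1}$ (after further shrinking $c_0$ in a $d$-dependent way if necessary), a Neumann-series bound yields $\|h^{-1}-I_d\|\ll_d c_0$ in the supremum norm. Starting from the decomposition
\begin{align*}
\gamma-g_s=h^{-1}(g_sg-g_s)+(h^{-1}-I_d)g_s,
\end{align*}
together with $\|g_sg-g_s\|\leq ec_0$ (a consequence of the row-scaling action of $g_s$ combined with $\|g-I_d\|<c_0$), I would deduce $\|\gamma-g_s\|\ll_d c_0$. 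Provided $c_0$ is chosen small enough (depending only on $d$) so that this bound is strictly less than $\min\{1,\,3-e\}$, every off-diagonal entry of $\gamma$ lies strictly between $-1$ and $1$ and hence vanishes, and every diagonal entry of $\gamma$ is an integer lying in $(0,3)$, hence in $\{1,2\}$. The constraint $\det\gamma=1$ then forces all diagonal entries to equal $1$, so $\gamma=I_d$.

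Finally, from $g_sg=h$ I would read off the $(i,i)$-entry for each $1\leq i\leq m$: $e^{\alpha_is}g_{ii}=h_{ii}$. Since both $g_{ii},h_{ii}\in(1-\tfrac{r}{2d},1)$ by \eqref{equ:con1}, this forces $e^{\alpha_is}<(1-\tfrac{r}{2d})^{-1}$; for $r<c_0/d$ sufficiently small this yields $\alpha_is<r/d$, and combining with $s\geq r$ we get $\alpha_i<1/d$ for every $1\leq i\leq m$. Summing over $i$ gives $1=\sum_{i=1}^m\alpha_i<m/d\leq(d-1)/d<1$---a contradiction (using $m\leq d-1$, which holds since $n\geq 1$). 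Thus $\underline{K}_r$ and $g_{-s}\underline{K}_r$ are disjoint. The hard part will be the entry-wise analysis in the third paragraph; in particular, fixing the exact $d$-dependent threshold on $c_0$ that simultaneously controls the Neumann series, the off-diagonal vanishing, and the restriction of the diagonal to $\{1,2\}$.
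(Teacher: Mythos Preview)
Your proof is correct, but the route differs from the paper's.  The paper argues directly with lattice vectors rather than with matrix representatives: it fixes a single index $i$ with $\alpha_i\ge 1/m$, considers the vector $\vecy:=g_{-s}\vecb_i\in g_{-s}\Lambda$, and shows (using $g_{-s}\Lambda\in K_r$) that $1-r\le y_i<1-\frac{r}{2d}$; then $\vecb_i'-\vecy$ is a \emph{nonzero} element of $g_{-s}\Lambda$ lying in $\scrC_r$, contradicting $g_{-s}\Lambda\in K_r$.  By contrast, you first force the relation $g_sg=h$ in $G$ by proving $\gamma=I_d$, and then read off the diagonal entries to get $\alpha_i<1/d$ for every $i\le m$, violating $\sum_i\alpha_i=1$.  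Your argument is clean once $\gamma=I_d$ is known, but that step requires shrinking $c_0$ further (to roughly $(3-e)/(4ed^2)$) so that $\|\gamma-g_s\|<3-e$; the paper's approach only needs the milder $c_0<(3e)^{-1}$ already stated before the lemma, and never identifies the two representatives at the level of matrices.  Two minor remarks: the implication $e^{\alpha_is}<(1-\tfrac{r}{2d})^{-1}\Rightarrow\alpha_is<r/d$ holds for all $r\in(0,c_0/d)$ (via $1+\alpha_is\le e^{\alpha_is}$ and $r<d$), so the phrase ``for $r$ sufficiently small'' can be dropped; and to rule out $\gamma_{ii}=0$ you implicitly also need $\|\gamma-g_s\|<e^{-1}$, but since $3-e<e^{-1}$ your stated threshold already covers this.
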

\begin{proof}
{Assume the opposite; then there exist
$r\in (0,c_0/d)$, $s\in[r,1)$
such that $\underline{K}_r\cap g_{-s}\underline{K}_r\neq\varnothing$,
or equivalently $g_s\underline{K}_r\cap \underline{K}_r\neq\varnothing$.
Pick a lattice $\Lambda\in g_s\underline{K}_r\cap \underline{K}_r$;
thus $\Lambda\in\uK_r$ and $g_{-s}\Lambda\in\uK_r$.
%***Also write in ``response'' that we have clarified!***
}
By the definition of $\uK_r$ in \eqref{def:urlowerbd},
we now have $\Lambda=p_{\bm{b}_1,\dots,\bm{b}_{d-1}}u_{\bm{x}}\Z^d$ for some
vectors $\bm{b}_j=(b_{1j},\ldots,b_{dj})^t\in\R^d$ ($j=1,\ldots,d-1$)
and $\bm{x}\in (0,c_0/d)^{d-1}$
satisfying 
\eqref{equ:con1}, \eqref{equ:con2}, \eqref{equ:con3} {and} \eqref{equ:con4};
{moreover}, we also have
$g_{-s}\Lambda=p_{\bm{b}'_1,\dots,\bm{b}'_{d-1}}u_{\bm{x}'}\Z^d$ for some
vectors $\bm{b}'_j=(b_{1j}',\ldots,b_{dj}')^t\in\R^d$ ($j=1,\ldots,d-1$)
and $\bm{x}'\in (0,c_0/d)^{d-1}$
which again satisfy
\eqref{equ:con1}, \eqref{equ:con2}, \eqref{equ:con3} {and} \eqref{equ:con4}.

Because of $\vecalf\in(\R_{>0})^m$ and $\sum_{i=1}^m\alpha_i=1$,
there exists an index $1\leq i\leq m$ such that $\alpha_i\geq \tfrac{1}{m}$. 
Fixing such an $i$, we consider the vector 
\begin{align*}
\vecy=(y_1,\ldots,y_d)^t:=g_{-s}\vecb_i
=\bigl(e^{-\alpha_1s}b_{1i},\ldots,e^{-\alpha_ms}b_{mi},e^{\beta_1s}b_{m+1,i},\ldots,e^{\beta_ns}b_{d,i}\bigr)^t.
\end{align*}
By \eqref{equ:con1} we have 
$|b_{ji}|<c_0$ for all $j\neq i$, and 
$1-\frac r{2d}<b_{ii}<1$.
Hence $|y_j|<ec_0$ for all $j\neq i$ (since $0<s<1$ and $\beta_\ell\leq1$ for all $1\leq\ell\leq n$),
and $0<y_i<e^{-\alpha_is}<e^{-s/m}<1-\frac s{2m}<1-\frac r{2d}$.
We have $\vecb_i\in\Lambda\smallsetminus\{\bn\}$
and thus $\vecy=g_{-s}\vecb_i\in g_{-s}\Lambda\smallsetminus\{\bn\}$;
also $g_{-s}\Lambda\in\uK_r\subset K_r$,
and hence $\vecy\notin\scrC_r$.
But for all $j\neq i$ we have $|y_j|<ec_0<\frac13<1-r$
(indeed, recall that %we are assuming that 
$0<c_0<(3e)^{-1}$);
also $y_i>0$; hence $\vecy\notin\scrC_r$ implies $y_i\geq 1-r$.
In summary:  %Combining this with our previous bound on $y_i$, we have
\begin{align*}%\label{crucialdisjLEMpf1}
|y_j|<ec_0\quad(\forall\ j\neq i)
\qquad\text{and}\quad
1-r\leq y_i<1-\tfrac r{2d}.
\end{align*}
Furthermore, $\vecb_i'\in g_{-s}\Lambda$,
since $g_{-s}\Lambda=p_{\bm{b}'_1,\dots,\bm{b}'_{d-1}}u_{\bm{x}'}\Z^d$;
and 
by \eqref{equ:con1} we have 
$|b'_{ji}|<c_0$ for all $j\neq i$ and 
$1-\frac r{2d}<b'_{ii}<1$.
It follows that $|b'_{ji}-y_j|<(e+1)c_0<(e+1)(3e)^{-1}<\frac12<1-r$ for all $j\neq i$,
and $|b'_{ii}-y_i|<r<1-r$;
hence $\vecb'_i-\vecy\in\scrC_r$.
Note also that $\vecb_i'-\vecy\in g_{-s}\Lambda$,
and $\vecb_i'-\vecy\neq\bn$, since $y_i<1-\frac r{2d}<b'_{ii}$;
hence we have obtained a contradiction against $g_{-s}\Lambda\in\uK_r\subset K_r$.
This completes the proof of the lemma.
\end{proof}

It follows from Lemma \ref{crucialdisjLEM} that 
for any $r\in(0,c_0/d)$,
the sets $g_{-kr}\,\uK_r$, 
for $k$ running through the integers in the interval
$0\leq k<1/r$,
are pairwise disjoint.
(Indeed, if $g_{-kr}\,\uK_r\cap g_{-k'r}\,\uK_r\neq\varnothing$
for some $0\leq k<k'<1/r$ then $\uK_r\cap g_{(k-k')r}\,\uK_r\neq\varnothing$,
contradicting Lemma \ref{crucialdisjLEM}.)
Hence, using also $K_r\supset\uK_r$,
we have
\begin{align*}
\mu_d\Biggl(\,\bigcup_{0\leq s<1}g_{-s}\, K_{r}\Biggr)
\geq\mu_d\Biggl(\,\bigcup_{0\leq k<1/r}g_{-kr}\, \uK_{r}\Biggr)
=\sum_{0\leq k<1/r}\mu_d\bigl(g_{-kr}\, \uK_{r}\bigr)
=\#\bigl(\Z\cap[0,1/r)\bigr)\cdot\mu_d\bigl(\uK_r\bigr),
\end{align*}
Here $\#\big(\Z\cap[0,1/r)\big)\gg r^{-1}$,
and for $r$ sufficiently small we have
$\mu_d\bigl(\uK_r\bigr)\gg r^{\varkappa_d+1}\log^{\lambda_d}\bigl(\frac{1}{r}\bigr)$
by \eqref{uKrlb}.
Hence we obtain the lower bound \eqref{equ:allowerbdtick},
and the proof of Theorem \ref{thm:volumethickening} is complete.
\hfill$\square$

%\pagebreak[4]

%\newpage

\section{Some preliminaries for Theorem \ref*{thm:improvedirichleconwei}}

In this section we collect some preliminary results for our proof of Theorem \ref{thm:improvedirichleconwei}.
%Fix $m,n\in\N$. Let $d=m+n$ and let $X_d=G/\G$ be the space of unimodular lattices in $\R^d$ as before. Let $\mu_d$ and $\Delta: X_d\to [0,\infty)$ be as before. Fix two weights $\bm{\alpha}\in \R^m$ and $\bm{\beta}\in \R^n$. Let
%\begin{align*}
%g_s=\diag(e^{-\alpha_1s},\ldots, e^{-\alpha_ms}, e^{\beta_1s},\ldots, e^{\beta_ls}),\ (s\in \R)
%\end{align*}
%be the one-parameter flow attached to $\bm{\alpha}$ and $\bm{\beta}$ as before. Recall that $g_s$ acts on $X_d$ via the left regular action. For any $A\in M_{m,n}(\R)$ let us denote 
%\begin{align*}
%\Lambda_A:=\Z^d\left(\begin{smallmatrix}
%I_m & A\\
%0 & I_n\end{smallmatrix}\right)\in X_d.
%\end{align*}

\subsection{Dynamical interpretation of weighted $\psi$-Dirichlet matrices}
Let $m,n\in\N$ and let $\bm{\alpha}\in\R^m$ and $\bm{\beta}\in \R^n$ be two fixed weight vectors as before.
Let $t_0>0$ and let $\psi: [t_0,\infty)\to (0,\infty)$ be a continuous %and 
decreasing function which tends to zero at infinity. 
%In \cite[Proposition 4.5]{KleinbockWadleigh2018} using the Dani Correspondence (\cite[Lemma 8.3]{KleinbockMargulis1999}) the authors gave a dynamical interpretation of $\psi$-Dirichlet matrices. 
In this subsection we give a dynamical interpretation of $\psi_{\bm{\alpha},\bm{\beta}}$-Dirichlet matrices which generalizes \cite[Proposition 4.5]{KleinbockWadleigh2018}; see Proposition \ref{prop:dyint}. Let us first introduce the following modified Dani Correspondence which is a special case of \cite[Lemma 8.3]{KleinbockMargulis1999}.
\begin{lem}\label{lem:danicor}
Fix $m, n\in \N$ and let $d=m+n$. Let $t_0>0$, and let $\psi:[t_0,\infty)\to (0,\infty)$ be a continuous, decreasing function satisfying \eqref{equ:con1psi} and \eqref{equ:con2psi}. Then there exists a unique continuous, decreasing function %\footnote{There is some conflict of notation here: the number $s_0$ is not the same (small) $s_0$ in the previous sections; we will fix it later.}
$$r=r_{\psi}: [s_0,\infty)\to (0,\infty),\quad \textrm{where $s_0=\frac{m}{d}\log t_0-\frac{n}{d}\log \psi(t_0)$},$$
such that 
%\begin{equation}\label{equ:con1}
%\textrm{$r(s)$ is positive and decreasing,}
%\end{equation}
%and
\begin{equation}\label{equ:con2id}
\textrm{the function $s\mapsto s+mr(s)$ is increasing,}
\end{equation}
and
\begin{equation}\label{equ:con3id}
\psi\left(e^{s-nr(s)}\right)=e^{-s-mr(s)}\quad \textrm{for all $s\geq s_0$}.
\end{equation}
Conversely, given $s_0\in\R$ and a continuous, decreasing function $r: [s_0,\infty)\to (0,\infty)$ satisfying \eqref{equ:con2id}, there exists a unique continuous, decreasing function $\psi=\psi_r: [t_0,\infty)\to (0,\infty)$ with $t_0=e^{s_0-nr(s_0)}$ satisfying \eqref{equ:con1psi}, \eqref{equ:con2psi} and \eqref{equ:con3id}. Furthermore, %if we assume $\lim_{t\to\infty}t\psi(t)=1$ $($or equivalently, $\lim_{s\to\infty}r(s)=0$$)$, then 
for any fixed $\alpha, \beta> 0$ the series 
\begin{align}\label{equ:crticalseries}
\sum_{k\geq t_0}\frac{\big(1-k\psi(k)\big)^{\alpha}\big(-\log\left(1-k\psi(k)\right)\big)^{\beta}}{k}
\end{align}
diverges if and only if the series
\begin{align}\label{equ:series}
\sum_{k\geq s_0}r(k)^{\alpha}\log^{\beta}\Bigl(1+\frac{1}{r(k)}\Bigr)
\end{align}
diverges.
\end{lem}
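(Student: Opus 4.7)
The plan is to prove the lemma in three steps: construct $r_\psi$ explicitly from $\psi$, verify uniqueness and build the converse symmetrically, and finally establish the series equivalence by comparing both sides to common integrals.

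For the first step, I would define, for $t\geq t_0$,
\[
r(t):=-\tfrac{1}{d}\log(t\psi(t))\quad\text{and}\quad s(t):=\log t+nr(t)=\tfrac{m}{d}\log t-\tfrac{n}{d}\log\psi(t).
\]
By \eqref{equ:con1psi} and \eqref{equ:con2psi}, $t\psi(t)\in(0,1)$ is weakly increasing, so $r$ is continuous, positive and decreasing; meanwhile $s$ is continuous and strictly increasing (since $\log t$ is strictly and $-\log\psi(t)$ weakly increasing), with $s(t_0)=s_0$ and $s(t)\to\infty$. Hence $t\mapsto s(t)$ is a continuous bijection $[t_0,\infty)\to[s_0,\infty)$. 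Letting $t(\cdot)$ denote its inverse and setting $r_\psi(s):=r(t(s))$, direct substitution gives $e^{s-nr_\psi(s)}=t(s)$ and, using $t\psi(t)=e^{-dr(t)}$, also $e^{-s-mr_\psi(s)}=\psi(t(s))$, which is \eqref{equ:con3id}. Furthermore, $s+mr_\psi(s)=-\log\psi(t(s))$ is weakly increasing, giving \eqref{equ:con2id}.

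For uniqueness, I would observe that for fixed $s$ the map $r\mapsto \psi(e^{s-nr})e^{s+mr}$ is the product of a weakly increasing factor (by $\psi$ decreasing and $e^{s-nr}$ decreasing in $r$) and a strictly increasing factor of $r$, hence strictly increasing; so \eqref{equ:con3id} pins down $r_\psi(s)$ uniquely. The converse direction is symmetric: given $r:[s_0,\infty)\to(0,\infty)$ continuous decreasing with \eqref{equ:con2id}, set $t(s):=e^{s-nr(s)}$. From \eqref{equ:con2id} combined with $r$ decreasing one deduces $r(s_2)-r(s_1)\in[-(s_2-s_1)/m,0]$ for $s_1<s_2$, so $s-nr(s)$ is strictly increasing and Lipschitz with slope in $[1,d/m]$; hence $t(\cdot)$ is a continuous bijection $[s_0,\infty)\to[t_0,\infty)$ for $t_0:=e^{s_0-nr(s_0)}$. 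Defining $\psi_r(t):=e^{-s(t)-mr(s(t))}$ produces the required function, and the properties \eqref{equ:con1psi}, \eqref{equ:con2psi}, \eqref{equ:con3id} and the uniqueness of $\psi_r$ follow by symmetric computation.

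For the series equivalence, I would compare both sums to their integral counterparts
\[
I_\psi:=\int_{t_0}^\infty\tfrac{F_\psi(t)^\alpha(-\log F_\psi(t))^\beta}{t}\,dt,\qquad I_r:=\int_{s_0}^\infty r(s)^\alpha\log^\beta\!\bigl(1+\tfrac{1}{r(s)}\bigr)\,ds,
\]
the comparison being standard given that $F_\psi$ and $r$ are monotone and the remaining factors vary sub-polynomially on unit intervals (this is handled by a dyadic decomposition when the product summand is not itself monotone). Changing variables $t=t(s)$ in $I_\psi$, the identity $\log t=s-nr(s)$ together with the Lipschitz bound above shows that $\tfrac{dt}{t}$ and $ds$ are comparable as measures with constants in $[1,d/m]$. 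Moreover $F_\psi(t(s))=1-e^{-dr(s)}$, and the elementary estimates $1-e^{-x}\asymp\min(x,1)$ and $-\log(1-e^{-x})\asymp\log(1+1/x)$ for $x>0$ yield
\[
F_\psi(t(s))^\alpha\bigl(-\log F_\psi(t(s))\bigr)^\beta\asymp r(s)^\alpha\log^\beta\!\bigl(1+\tfrac{1}{r(s)}\bigr),
\]
with constants depending only on $d,\alpha,\beta,r(s_0)$. Combining these, $I_\psi\asymp I_r$, and hence the two series converge or diverge together.

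The main obstacle will be the sum-to-integral comparison on each side: the summands are products of monotone factors in opposite directions, so the integral test does not apply directly, and one must instead check that $F_\psi(k)$ (respectively $r(k)$) does not oscillate wildly on unit intervals, which is a consequence of \eqref{equ:con1psi} (respectively the Lipschitz bound derived from \eqref{equ:con2id}). The change of variable itself, which requires absolute continuity of $\log t(s)=s-nr(s)$ despite $r$ being only continuous, is handled cleanly by the same Lipschitz bound.
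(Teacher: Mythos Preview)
Your construction of $r_\psi$ and the converse are correct and coincide with the paper's approach (which simply cites \cite[Lemma 8.3]{KleinbockMargulis1999} and checks that the extra assumptions \eqref{equ:con1psi}, \eqref{equ:con2psi} correspond to $r$ being decreasing and positive); you have written out the details more fully, which is fine.

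For the series equivalence you take a genuinely different route. The paper argues directly with sums: it first disposes of the case $\lim_{t\to\infty}t\psi(t)<1$ (where both series trivially diverge), then in the remaining case uses the relations $F_\psi(t)\asymp_d r(s)$ and $e^{s-1}<t(s)<e^s$ together with monotonicity of $F_\psi$ to sandwich each term $r(k)^\alpha\log^\beta(1+1/r(k))$ between blocks $\sum_{e^{k-2}\leq j<e^{k-1}}$ and $\sum_{e^k\leq j<e^{k+1}}$ of the $F_\psi$-series. Your approach instead passes through the integrals $I_\psi$ and $I_r$ and compares them via the change of variable $t=t(s)$; the Lipschitz bound you extract from \eqref{equ:con2id} is exactly what makes $d(\log t)\asymp ds$ and is a nice observation. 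Both arguments are valid and rest on the same two estimates $F_\psi(t(s))\asymp r(s)$ and $\log t(s)\asymp s$; the paper's block decomposition is essentially a discrete version of your change of variables.

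One point to tighten: your description of the sum-to-integral obstacle on the $\psi$-side is slightly off. The issue is not that $F_\psi$ ``does not oscillate wildly on unit intervals'' --- indeed \eqref{equ:con1psi} gives no Lipschitz control on $F_\psi$, and $F_\psi$ can drop by any amount on $[k,k+1]$. What you actually need is that the full summand $t^{-1}F_\psi(t)^\alpha(-\log F_\psi(t))^\beta$ is eventually monotone. When $\lim F_\psi=0$ this holds because $x\mapsto x^\alpha(-\log x)^\beta$ is increasing on $(0,e^{-\beta/\alpha})$, so the composition with the decreasing $F_\psi$ is eventually decreasing; when $\lim F_\psi>0$ both series diverge trivially. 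You should make this case distinction explicit rather than appeal to bounded variation of $F_\psi$ on unit intervals.
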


%\begin{remark}
%Lemma \ref{lem:danicor} is a higher dimensional generalization of \cite[Lemma 4.1]{KleinbockYu2020} which deals with the case when $m=n=1$. %We note that with some obvious modifications, the arguments there can be carried over without difficulties; we thus only give a sketch of the proof here.
%\end{remark}
\begin{proof}%[Proof of Lemma \ref{lem:danicor}]
The $\psi$ and $r$-functions determine each other uniquely via the %following 
relation
\begin{align}\label{equ:keyrelation}
\psi(t)^{1/m}e^{s/m}=t^{1/n}e^{-s/n}=e^{-r(s)},
\end{align}
which captures the moment when the $a_s$-flow transforms the long {and} thin `rectangle'
$$
\big\{(\bm{x}, \bm{y})\in \R^m\times \R^n\col \|\bm{x}\|^m<\psi(t),\ \|\bm{y}\|^n<t\big\}
$$ 
determined by \eqref{equ:dirichlet} into a cube (with side length $2e^{-r(s)}$). Here $a_s=\diag(e^{s/m}I_m,e^{-s/n}I_n)$, as defined in the introduction. 
This correspondence between $\psi(\cdot)$ and $r(\cdot)$ is a special case of 
\cite[Lemma 8.3]{KleinbockMargulis1999},
as here we assume that $\psi(\cdot)$ additionally satisfies \eqref{equ:con1psi} and \eqref{equ:con2psi}, which on the $r$-function side corresponds respectively to the assumptions that $r(\cdot)$ is decreasing and $r(s)>0$ for all $s\geq s_0$. 
The equivalence of these additional assumptions is easily checked using the following three relations,
which follow from \eqref{equ:keyrelation}:
\begin{align}\label{equ:keyrel}
e^{-dr(s)}=t\psi(t),\qquad s=\frac{m}{d}\log t-\frac{n}{d}\log\psi(t),\qquad \textrm{and}\qquad t=e^{s-nr(s)}.
\end{align}

Finally we prove the equivalence of the divergence of the two series.
%in \eqref{equ:crticalseries} and \eqref{equ:series}.
If $\lim_{t\to\infty}t\psi(t)<1$ then both the functions $1-t\psi(t)$ and $r(s)$ are bounded away from zero
(and positive), which implies that the two series in
\eqref{equ:crticalseries} and \eqref{equ:series} are divergent.
Hence from now on we may assume that 
$\lim_{t\to\infty}t\psi(t)=1$.
Then $\lim_{s\to\infty}r(s)=0$
(by \eqref{equ:keyrel}),
and %we also have that 
$F_{\psi}(t):=1-t\psi(t)$
is a decreasing function taking values in the interval $(0,1)$ and satisfying $\lim_{t\to\infty}F_{\psi}(t)=0$.
After enlarging $s_0$ (thus also enlarging $t_0$)
we may assume that $0<r(s)<1/d$ for all $s\geq s_0$.
Then by \eqref{equ:keyrel} we have,
with $t=t(s)=e^{s-nr(s)}$:
\begin{align}\label{divequivpf1}
\frac d2r(s)< F_{\psi}(t) <d\,r(s)\quad\text{and }\quad e^{s-1}<t<e^s,\qquad\forall\ s\geq s_0.
\end{align}
It follows that
$r(s)^\alpha\log^\beta\bigl(1+\frac1{r(s)}\bigr)\asymp_{d,\alpha,\beta} 
F_{\psi}(t)^{\alpha}\log^{\beta}\bigl(\frac{1}{F_{\psi}(t)}\bigr)$
for all $s\geq s_0$.
Hence, using also $e^{k-1}<t(k)<e^k$ (see \eqref{divequivpf1}),
the fact that $F_\psi(t)$ is decreasing, 
and $\sum_{e^k\leq j<e^{k+1}}\frac1j\asymp1$ ($\forall k\geq1$),
we have for all sufficiently large integers $k$:
% $k\geq \max(s_0,1)$: BUT NEED MORE TO ENSURE $F_\psi(j)$ def???
\begin{align*}
r(k)^\alpha\log^\beta\Bigl(1+\frac1{r(k)}\Bigr)
\ll_{d,\alpha,\beta} F_\psi(e^{k-1})^\alpha\log^\beta\Bigl(\frac1{F_\psi(e^{k-1})}\Bigr)
\ll_{\alpha,\beta}\sum_{e^{k-2}\leq j<e^{k-1}}\frac1jF_\psi(j)^\alpha\log^\beta\Bigl(\frac1{F_\psi(j)}\Bigr),
\end{align*}
and similarly
\begin{align*}
r(k)^\alpha\log^\beta\Bigl(1+\frac1{r(k)}\Bigr)
\gg_{d,\alpha,\beta} F_\psi(e^k)^\alpha\log^\beta\Bigl(\frac1{F_\psi(e^k)}\Bigr)
\gg_{\alpha,\beta}\sum_{e^k\leq j<e^{k+1}}\frac1jF_\psi(j)^\alpha\log^\beta\Bigl(\frac1{F_\psi(j)}\Bigr).
\end{align*}
It follows that the series in \eqref{equ:series} diverges if and only if %the series
$\sum_{j} j^{-1}F_\psi(j)^\alpha\log^\beta\bigl(\frac1{F_\psi(j)}\bigr)$
diverges,
that is, if and only if the series in \eqref{equ:crticalseries} diverges.
\end{proof}
\begin{remark}\label{rmk:equivinte}
Let $\psi$ and $r$ be as in Lemma \ref{lem:danicor} with $\lim_{t\to\infty}t\psi(t)=1$. Let $F_{\psi}(t)=1-t\psi(t)$ be as above. 
Assume that the series \eqref{equ:crticalseries} 
(and thus also the series \eqref{equ:series})
diverges for some $\alpha,\beta>0$.
%(so that in view of the last part of Lemma \ref{lem:danicor}, the series \eqref{equ:series} also diverges). 
It is then not difficult to see from the proof of Lemma~\ref{lem:danicor} 
that for any $\beta'\geq \beta$ and for all large $s_1>s_0$,
\begin{align*}
\sum_{s_0\leq k\leq s_1}r(k)^{\alpha}\log^{\beta'}\Bigl(1+\frac{1}{r(k)}\Bigr)\asymp_{d,\alpha,\beta'}
\sum_{t_0\leq k\leq e^{s_1}}k^{-1}F_{\psi}(k)^{\alpha}\log^{\beta'}\Bigl(\frac{1}{F_{\psi}(k)}\Bigr).
\end{align*}
In particular it follows that for any $\beta'>\beta$ we have the following equivalence:
%, this, together with the divergence of the series \eqref{equ:crticalseries} and \eqref{equ:series}, implies for any $\beta'>\beta$,
\begin{align*}
\liminf_{s_1\to\infty}
\frac{\sum_{s_0<k\leq s_1}r(k)^{\alpha}\log^{\beta'}\bigl(1+\frac1{r(k)}\bigr)}
{\Bigl(\sum_{s_0<k\leq s_1}r(k)^{\alpha}\log^{\beta}\bigl(1+\frac1{r(k)}\bigr)\Bigr)^2}
=0
\hspace{180pt}
\\
\Longleftrightarrow
\quad\liminf_{t_1\to\infty}\frac{\sum_{t_0\leq k\leq t_1}k^{-1}F_{\psi}(k)^{\alpha}\log^{\beta'}\left(\frac{1}{F_{\psi}(k)}\right)}
{\left(\sum_{t_0\leq k\leq t_1}k^{-1}F_{\psi}(k)^{\alpha}\log^{\beta}\left(\frac{1}{F_{\psi}(k)}\right)\right)^2}=0.
\end{align*}
Similarly, the above two limits inferior remain bounded simultaneously.
%then we also have
%\begin{align*}
%\lim_{k\to\infty}
%\frac{\sum_{s_0<k\leq s_1}r(k)^{\alpha}\log^{\beta'}\bigl(\frac1{r(k)}\bigr)}
%{\Bigl(\sum_{s_0<k\leq s_1}r(k)^{\alpha}\log^{\beta}\bigl(\frac1{r(k)}\bigr)\Bigr)^2}
%=0.
%\end{align*}
%If we set for any $t>t_0, s>s_0$ and $\alpha,\beta>0$
%\begin{align*}
%I_{\psi}(t,\alpha,\beta):=\int_{t_0}^{t}\frac{(1-t\psi(t))^{\alpha}(-\log(1-t\psi(t)))^{\beta}}{t}dt\ \textrm{and}\ I_r(s,\alpha,\beta):=\int_{s_0}^{s}r(s)^{\alpha}\log^{\beta}\left(\tfrac{1}{r(s)}\right)ds.
%\end{align*}
%Then one can see from the above proof that $I_{\psi}(t,\alpha,\beta)\asymp_d I_r(s,\alpha,\beta)$ with $s, t$ satisfying $s=\frac{m}{d}\log t-\frac{n}{d}\log \psi(t)$. In particular, if $I_{\psi}(\infty;\alpha,\beta)=\infty$ for some $\alpha,\beta>0$ and $\lim_{t\to\infty}\frac{I_{\psi}(t;\alpha,\beta')}{I_{\psi}(t;\alpha,\beta)^{\gamma}}=0$ for some $\beta'>\beta$ and $\gamma>1$, then we also have $\lim_{s\to\infty}\frac{I_{r}(s;\alpha,\beta')}{I_{\psi}(s;\alpha,\beta)^{\gamma}}=0$. Moreover, since the function $s\mapsto r(s)^{\alpha}\log^{\beta}\left(\tfrac{1}{r(s)}\right)$ is eventually decreasing in $s$, the latter condition is equivalent to 
%$$
%\lim_{k\to\infty}\frac{\sum_{s_0<i\leq k}r(i)^{\alpha}\log^{\beta'}\left(\tfrac{1}{r(i)}\right)}{\left(\sum_{s_0<i\leq k}r(i)^{\alpha}\log^{\beta}\left(\tfrac{1}{r(i)}\right)\right)^{\gamma}}=0.
%$$
\end{remark}
%\begin{lem}{}$($\cite[Lemma 8.3]{KleinbockMargulis1999}$)$\label{lem:danicor}
%Fix $m, n\in \N$ and $t_0\geq 1$ and let $\psi : [t_0,\infty)\to (0,\infty)$ be a continuous, decreasing function. Then there exists a unique continuous function
%\begin{align*}
%r=r_{\psi}: [s_0,\infty)\to \R,\ \textrm{where $s_0=\frac{m}{m+l}\log t_0-\frac{l}{m+l}\log \psi(t_0)$},
%\end{align*}
%such that the function $s\mapsto s-nr(s)$ is strictly increasing and unbounded, the function $s\mapsto s+mr(s)$ is increasing, and
%\begin{align}\label{equ:danicorrela}
%\psi\left(e^{s-nr(s)}\right)=e^{-s-mr(s)}\ \textrm{for all $s\geq s_0$}.
%\end{align}
%\end{lem}
%Now let $d=m+n$, $X_d=\G\backslash G$ and let $\Delta: X_d\to [0,\infty)$ be as before. %be as before with $d=m+n$. Let $\bm{\alpha}\in \R^m$ and $\bm{\beta}\in \R^n$ be two weights and let 
%Let us also recall the flow
%\begin{align*}
%g_s=\diag(e^{-\alpha_1s},\ldots, e^{-\alpha_ms}, e^{\beta_1s},\ldots, e^{\beta_ls}), (s\in \R)
%\end{align*}
%attached to the two weights $\bm{\alpha}, \bm{\beta}$. %which acts on $X_d$ via the right regular action.
%%be the corresponding one-parameter flow defined as before with the right regular action on $X_d$. 

%Let $\{g_s\}_{s\in\R}$ be the one-parameter subgroup of $G$ given as in \eqref{equ:geneweflow}. Recall that
%\begin{align*}
%\Lambda_A= \left(\begin{smallmatrix}
%I_m & A\\
%0 & I_n\end{smallmatrix}\right)\Z^d\in X_d,\qquad \forall\ A\in M_{m,n}(\R).
%\end{align*}
We now state the dynamical interpretation of $\psi_{\bm{\alpha},\bm{\beta}}$-Dirichlet matrices. 
\begin{prop}%{}$($\cite[Proposition 4.5]{KleinbockWadleigh2018}$)$
\label{prop:dyint}
Let $\psi$ be as in Theorem \ref{thm:improvedirichleconwei}, and let $r=r_{\psi}$ be as in Lemma \ref{lem:danicor}. Let $\{g_s\}_{s\in\R}$ be the one-parameter subgroup associated to the two fixed weight vectors $\bm{\alpha}$ and $\bm{\beta}$ as in \eqref{equ:geneweflow}.
%Let $\bm{\alpha}\in \R^m$ and $\bm{\beta}\in \R^n$ be two fixed weights. 
Set
$$\omega_1:=\max\{m\alpha_i, n\beta_j\col 1\leq i\leq m, 1\leq j\leq n\}\quad\text{and}\quad\omega_2:=\min\{m\alpha_i, n\beta_j\col 1\leq i\leq m, 1\leq j\leq n\}.$$
Then for any $A\in M_{m,n}(\R)$ we have,
with $\Lambda_A$ as in \eqref{equ:submainfold}:
\begin{itemize}
\item[(1)] if $\Delta(g_s\Lambda_A)>\omega_1r(s)$ for all sufficiently large $s$, then $A$ is $\psi_{\bm{\alpha},\bm{\beta}}$-Dirichlet;
\item[(2)] if $\Delta(g_s\Lambda_A)\leq \omega_2r(s)$ for an unbounded set of $s$, then $A$ is not $\psi_{\bm{\alpha},\bm{\beta}}$-Dirichlet.
\end{itemize} 
\end{prop}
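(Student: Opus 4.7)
The plan is to reduce both parts to an elementary translation, via the parametrization $t=t(s):=e^{s-nr(s)}$ from Lemma \ref{lem:danicor}, between the $\psi_{\bm{\alpha},\bm{\beta}}$-Dirichlet inequalities and a size condition on a vector of the lattice $g_s\Lambda_A$. A generic vector of $\Lambda_A$ has the form $(A\vecq-\vecp,\vecq)^t$ with $(\vecp,\vecq)\in\Z^m\times\Z^n$, and after applying $g_s$ its $i$th top coordinate is $e^{\alpha_i s}(A\vecq-\vecp)_i$ while its $j$th bottom coordinate is $e^{-\beta_j s}q_j$. Using $\psi(t)=e^{-s-mr(s)}$ and $t=e^{s-nr(s)}$ (the relations \eqref{equ:keyrel}), the Dirichlet inequalities $|(A\vecq-\vecp)_i|<\psi(t)^{\alpha_i}$ and $|q_j|<t^{\beta_j}$ become $e^{\alpha_i s}|(A\vecq-\vecp)_i|<e^{-m\alpha_i r(s)}$ and $e^{-\beta_j s}|q_j|<e^{-n\beta_j r(s)}$. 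Because $\omega_2\leq m\alpha_i,\ n\beta_j\leq\omega_1$, the solution set in $\R^d$ sits between the cubes of sides $2e^{-\omega_1 r(s)}$ and $2e^{-\omega_2 r(s)}$ centred at the origin; this sandwich is precisely the source of the asymmetry in the two conclusions.

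For part (2), if $\Delta(g_s\Lambda_A)\leq\omega_2 r(s)$ then every nonzero $\vecv\in g_s\Lambda_A$ satisfies $\|\vecv\|\geq e^{-\omega_2 r(s)}$, so no $(\vecp,\vecq)$ with $\vecq\neq\bn$ can meet the rewritten Dirichlet inequalities at $t=t(s)$: such a solution would produce a nonzero lattice vector of supremum norm strictly less than $e^{-\omega_2 r(s)}$. Since $r$ is bounded and $s\mapsto s+mr(s)$ is increasing (by \eqref{equ:con2id}), the argument in the proof of Lemma \ref{lem:danicor} shows that $s\mapsto t(s)$ is a continuous, strictly increasing bijection from $[s_0,\infty)$ onto $[t_0,\infty)$ tending to $\infty$. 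Hence an unbounded set of $s$ with $\Delta(g_s\Lambda_A)\leq\omega_2 r(s)$ produces an unbounded set of $t$'s at which the Dirichlet system is unsolvable, so $A$ fails to be $\psi_{\bm{\alpha},\bm{\beta}}$-Dirichlet.

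For part (1), if $\Delta(g_s\Lambda_A)>\omega_1 r(s)$ for every $s\geq s_1$, then for each such $s$ there is a nonzero $\vecv\in g_s\Lambda_A$ with $\|\vecv\|<e^{-\omega_1 r(s)}$; the corresponding $(\vecp,\vecq)$ automatically satisfies the Dirichlet inequalities at $t(s)$. The one genuine check is that $\vecq\neq\bn$ for all sufficiently large $s$: if $\vecq=\bn$ then $|p_i|<e^{-\alpha_i s-m\alpha_i r(s)}<1$ forces $\vecp=\bn$ once $s$ is large, contradicting $\vecv\neq\bn$. Since $s\mapsto t(s)$ is a homeomorphism of $[s_1,\infty)$ onto $[t(s_1),\infty)$, every sufficiently large $t$ is realised as $t(s)$ for some $s\geq s_1$, so the Dirichlet system is solvable for all such $t$ and $A$ is $\psi_{\bm{\alpha},\bm{\beta}}$-Dirichlet. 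I do not anticipate any substantive obstacle: the whole argument is a weighted bookkeeping variant of the Dani Correspondence packaged in Lemma \ref{lem:danicor}, with the gap $\omega_1-\omega_2$ measuring exactly how far a rectangle deviates from a cube under $g_s$.
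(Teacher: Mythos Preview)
Your proof is correct and follows essentially the same approach as the paper's own proof: both unwind the Dirichlet inequalities via the parametrization $t=e^{s-nr(s)}$, $\psi(t)=e^{-s-mr(s)}$ to obtain a solution region for $g_s\Lambda_A$ sandwiched between the open cubes of half-side $e^{-\omega_1 r(s)}$ and $e^{-\omega_2 r(s)}$, and both handle the $\vecq\neq\bn$ verification in the same way. The only difference is notational: the paper packages the computation through the sets $\mathcal{R}_t$ and $\mathcal{E}_s=g_s\mathcal{R}_t$ using the quasi-norms $\|\cdot\|_{\bm{\alpha}}$, $\|\cdot\|_{\bm{\beta}}$, while you work directly coordinate-by-coordinate.
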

\begin{remark}\label{rmk:gen}
When $\bm{\alpha}=(\tfrac1m,\ldots, \tfrac1m)\in \R^m$ and $\bm{\beta}=(\tfrac1n,\ldots,\tfrac1n)\in\R^n$, then $\omega_1=\omega_2=1$ and Proposition \ref{prop:dyint} recovers \cite[Proposition 4.5]{KleinbockWadleigh2018}.
\end{remark}
\begin{proof}[Proof of Proposition \ref{prop:dyint}]
For any $t>\max\{t_0,1\}$, define
\begin{align*}
\mathcal{R}_t=\mathcal{R}^{\bm{\alpha},\bm{\beta}}_t
:=\big\{(\bm{x},\bm{y})\in \R^m\times \R^n\col \|\bm{x}\|_{\bm{\alpha}}<\psi(t), \|\bm{y}\|_{\bm{\beta}}<t\big\},
\end{align*}
 so that $(\bm{p},\bm{q})\in \Z^m\times (\Z^n\smallsetminus \{\bm{0}\})$ is a solution to \eqref{equ:dirichletwei} if and only if 
 $\left(A\bm{q}-\bm{p}, \bm{q}\right)\in \mathcal{R}_t$. 
On the other hand, the lattice $\Lambda_A$ consists exactly of the points
\begin{align*}
\left(\begin{matrix}
I_m & A\\
0 & I_n\end{matrix}\right)
\left(\begin{matrix}
-\bm{p} \\
\bm{q}\end{matrix}\right)
=\left(\begin{matrix}
A\bm{q}-\bm{p} \\
\bm{q}\end{matrix}\right) \qquad \textrm{for}\ (\bm{p},\bm{q})\in \Z^m\times \Z^n.
\end{align*}
Moreover, if $(A\bm{q}-\bm{p},\bm{q})\in \Lambda_A\cap \scrR_t$ is nonzero for some $(\bm{p},\bm{q})\in \Z^m\times \Z^n$, then we must have $\bm{q}\neq \bm{0}$. 
Indeed, otherwise we would have $\|A\bm{q}-\bm{p}\|_{\bm{\alpha}}=\|\bm{p}\|_{\bm{\alpha}}\geq 1$, but $(A\bm{q}-\bm{p},\bm{q})\in \scrR_t$ implies that $\|A\bm{q}-\bm{p}\|_{\bm{\alpha}}<\psi(t)<1/t<1$
(since $t>\max\{t_0,1\}$), contradicting $\|A\bm{q}-\bm{p}\|_{\bm{\alpha}}\geq 1$.
Thus there exists a solution $(\bm{p},\bm{q})\in \Z^m\times (\Z^n\smallsetminus \{\bm{0}\})$ to \eqref{equ:dirichletwei} if and only if $\Lambda_A\cap \mathcal{R}_t\neq \{\bm{0}\}$,
%\footnote{Clearly, if $(\bm{p},\bm{q})\in \Lambda_A\cap \cR_t$ is a nonzero vector, then $\bm{q}\neq \bm{0}$ (since otherwise $\bm{p}$ is nonzero, implying that $\|A\bm{q}-\bm{p}\|_{\bm{\alpha}}=\|\bm{p}\|_{\bm{\alpha}}\geq 1$, which contradicts the condition $\|A\bm{q}-\bm{p}\|_{\bm{\alpha}}<\psi(t)<1/t\leq 1/t_0\leq 1$.}
implying that $A\in M_{m,n}(\R)$ is $\psi_{\bm{\alpha},\bm{\beta}}$-Dirichlet if and only if $\Lambda_A\cap \mathcal{R}_t\neq \{\bm{0}\}$ for all 
sufficiently large $t$. 
Now let $s=s(t)=\frac{m}{d}\log t-\frac{n}{d}\log \psi(t)$;
then $s\to\infty$ if and only if $t\to\infty$, and by \eqref{equ:keyrelation} we have
%, and we note that $s\to\infty$ if and only if $t\to\infty$. Moreover, the relation \eqref{equ:keyrelation} implies that
\begin{align*}
g_s \mathcal{R}_t=\left\{(\bm{x}', \bm{y}')\in \R^m\times \R^n\col \|\bm{x}'\|_{\bm{\alpha}}<e^{-mr(s)},\ \|\bm{y}'\|_{\bm{\beta}}<e^{-nr(s)}\right\}=:\mathcal{E}_s.
\end{align*}
It follows that $A$ is $\psi_{\bm{\alpha},\bm{\beta}}$-Dirichlet if and only if $g_s\Lambda_A\cap \mathcal{E}_s\neq \{\bm{0}\}$ for all sufficiently large $s$. Next, note that we have the following simple relation:
\begin{align}\label{equ:relatcube}
\bigl(-e^{-\omega_1r(s)}, e^{-\omega_1r(s)}\bigr)^d\subset \mathcal{E}_s\subset \bigl(-e^{-\omega_2r(s)}, e^{-\omega_2r(s)}\bigr)^d,
\end{align}
with $\omega_1,\omega_2$ defined as in the statement of the proposition. Note also that $\Delta(g_s\Lambda_A)>\omega_1r(s)$ is equivalent with $g_s\Lambda_A\cap \left(-e^{-\omega_1r(s)}, e^{-\omega_1r(s)}\right)^d\neq \{\bm{0}\}$. 
Hence, using the first inclusion relation in \eqref{equ:relatcube} we have
$$
\Delta(g_s\Lambda_A)>\omega_1r(s)\ \textrm{for all sufficiently large $s$}\ \Rightarrow g_s\Lambda_A\cap \mathcal{E}_s\neq\{\bm{0}\}\ \textrm{for all sufficiently large $s$},
$$ 
and the latter condition implies that $A$ is $\psi_{\bm{\alpha},\bm{\beta}}$-Dirichlet.
We have thus proved part (1) of the proposition. %Hence we have proved (1). % the first statement of the proposition.
Similarly, part (2) follows using the second inclusion relation in \eqref{equ:relatcube}.
%The second statement of the proposition
%follows similarly using the second inclusion relation in \eqref{equ:relatcube}.
\end{proof}
Let $\psi$ and $r=r_{\psi}$ be as above. 
For any integer $k>s_0$, let us define
\begin{align*}
\underline{B}_k:=\bigcup_{0\leq s<1}g_{-s} \Delta^{-1}[0, \omega_2r(k+s)]\quad \textrm{and}\quad \overline{B}_k:=\bigcup_{0\leq s<1}g_{-s} \Delta^{-1}[0, \omega_1r(k+s)].
\end{align*}
It follows %from these definitions
that for any $\Lambda\in X_d$, we have $g_k \Lambda\in \underline{B}_k$ 
(respectively $g_k \Lambda\in \overline{B}_k$)
if and only if
there is some $k\leq s<k+1$ such that
$g_s \Lambda\in \Delta^{-1}[0, \omega_2r(s)]$
(respectively $g_s\Lambda\in \Delta^{-1}[0, \omega_1r(s)]$).
In particular, in view of Proposition \ref{prop:dyint}, 
a given matrix $A\in M_{m,n}(\R)$ is $\psi_{\bm{\alpha},\bm{\beta}}$-Dirichlet if $g_k \Lambda_A\notin \overline{B}_k$ for all sufficiently large $k$, or equivalently, if $g_k\Lambda_A\in \overline{B}_k$ holds only finitely often.
Similarly, $A$ is not $\psi_{\bm{\alpha},\bm{\beta}}$-Dirichlet if $g_k \Lambda_A\in \underline{B}_k$ holds infinitely often.

\subsection{Effective equidistribution and doubly mixing for certain $g_s$-translates}\label{sec:effequdomix}
%Keep the notation as before.
%Let $U:=\left\{\left(\begin{smallmatrix}
%I_m & 0\\
%A & I_n\end{smallmatrix}\right): A\in M_{m,n}(\R)\right\}$ and 
Let $m,n\in\N$ and $d=m+n$ be as before.
Let
$$
\mathcal{Y} = \left\{\Lambda_A: A\in M_{m,n}(\R)\right\}\cong M_{m,n}(\R/\Z)
$$
%\G U\cong \G\cap U\backslash U\subset X_d$. 
be defined as in \eqref{equ:submainfold}, and recall that $\scrY$ is equipped with the probability Lebesgue measure, $\textrm{Leb}$. 

As mentioned in Remark \ref{rmk:effequdoumixww}, we will need an effective equidistribution and doubly mixing result for the $g_s$-translates $\{g_s\scrY\}_{s>0}$ which is analogous to \eqref{equ:effequidww} and \eqref{equ:effedobmixdww} respectively. In fact, we will state a corollary of a more general effective mixing result of arbitrary order proved by Bj\"orklund and Gorodnik \cite[Theorem 2.2]{BjorklundGorodnik2019}. To state their result, let us first fix some notation.

Let $\mathfrak{g}=\mathfrak{sl}_d(\R)$ be the Lie algebra of $G$. For each $Y\in \mathfrak{g}$, let us denote by $\mathcal{D}_Y$ the corresponding Lie derivative (a first order differential operator) on $C^{\infty}(G)$ defined by%\footnote{This definition naturally extends to the function space $C_c^{\infty}(X_d)$ since we can view elements in $C_c^{\infty}(X_d)$ as right $\G$-invariant smooth functions on $G$.}
\begin{align*}
\mathcal{D}_Y(f)(g):=\frac{\text{d} }{\d t}f(\exp(tY)g)\big|_{t=0},\qquad  f\in C^{\infty}(G).
\end{align*}
Here $\exp: \mathfrak{g}\to G$ denotes the usual exponential map from $\mathfrak{g}$ to $G$. Note that this definition naturally extends to the function space $C_c^{\infty}(X_d)$ since we can view elements in $C_c^{\infty}(X_d)$ as right $\G$-invariant smooth functions on $G$.
Fix an ordered basis $\{Y_1,\ldots, Y_{a}\}$ of $\mathfrak{g}$. 
Then every monomial $Z=Y_1^{\ell_1}\cdots Y_{a}^{\ell_{a}}$ defines a 
differential operator of degree $\deg(Z):=\ell_1+\cdots+\ell_{a}$, via
\begin{align*}
\mathcal{D}_Z:=\mathcal{D}_{Y_1}^{\ell_1}\circ\cdots\circ\mathcal{D}_{Y_{a}}^{\ell_{a}}.
\end{align*}
Now for each $\ell\in \N$ we define the ``$L^2$, degree $\ell$'' Sobolev norm on $C_c^{\infty}(X_d)$ by
\begin{align*}
\|f\|_{{H^{\ell}}}:=\biggl(\sum_{\deg(Z)\leq \ell}\int_{X_d}|\mathcal{D}_Z(f)|^2\,\d\mu_d\biggr)^{1/2},
\end{align*}
where the summation is over all the monomials $Z$ in $\{Y_1,\ldots, Y_a\}$ with degree no greater than $\ell$. 
Fix a metric $\textrm{dist}(\cdot,\cdot)$ on $X_d= G/\G$ which is induced from a right $G$-invariant
Riemannian metric on $G$. We also define the following Lipschitz (semi-)norm on $C_c^{\infty}(X_d)$ with respect to this metric:
\begin{align*}
\|f\|_{\Lip}:=\sup\left\{\frac{|f(x_1)-f(x_2)|}{\textrm{dist}(x_1,x_2)}\col x_1, x_2\in X_d,\ x_1\neq x_2\right\},\qquad f\in C_c^{\infty}(X_d).
\end{align*} 
Let us also write $\|\cdot\|_{C^0}$ for the uniform norm on $C_c(X_d)$.
Finally, for any $f\in C_c^{\infty}(X_d)$ we define
\begin{align*}
\mathcal{N}_{\ell}(f):=\max\left\{\|f\|_{C^0},\|f\|_{\Lip},\|f\|_{{H^{\ell}}}\right\}.
\end{align*}
We can now state the result which we need from %effective higher-order mixing proved in 
\cite{BjorklundGorodnik2019}.
\begin{thm}[{{\cite[Corollary 2.4]{BjorklundGorodnik2019}}}]\label{thm:higherordermixing}
There exist $\ell\in\N$ and $\delta>0$ such that for every $b\in \N$ and any 
$f_0\in C^{\infty}(\mathcal{Y})$, $f_1,\ldots, f_b\in C_c^{\infty}(X_d)$ and $s_1,\ldots, s_b>0$, we have
\begin{align*}
\int_{\mathcal{Y}}f_0(\Lambda_A)\left(\prod_{i=1}^bf_i(g_{s_i}\Lambda_A)\right)\,\d A&=\textrm{Leb}(f_0)\prod_{i=1}^b\mu_d(f_i)+O_{b,f_0}\left(e^{-\delta D(s_1,\ldots, s_b)}\prod_{i=1}^b\mathcal{N}_\ell(f_i)\right),
\end{align*}
where $D(s_1,\ldots, s_b):=\min\left\{s_i, |s_i-s_j|\col 1\leq i\neq j\leq b\right\}$.
\end{thm}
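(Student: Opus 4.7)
The plan is to prove Theorem \ref{thm:higherordermixing} by combining Margulis's thickening trick with effective exponential mixing of the $g_s$-action on $X_d$, the latter being a consequence of Howe--Moore decay together with the spectral gap for $\SL_d(\R)$ (property (T) when $d \geq 3$, uniform spectral gap for $d=2$).

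First I would establish the $b=1$ case (effective equidistribution of horospherical translates). Let $H\subset G$ be the stable horospherical subgroup complementary to $\mathcal{Y}$ under $g_{s_1}$, and pick a smooth bump $\phi_\rho$ on $H$ of width $\rho$, normalized so that the thickened measure $\nu_\rho$ on $X_d$ is a probability measure obtained by convolving $\mathrm{Leb}$ on $\mathcal{Y}$ with $\phi_\rho\, dh$. Writing
$$\int_{\mathcal{Y}} f_0(\Lambda_A) f_1(g_{s_1}\Lambda_A)\,dA = \int_{X_d} f_0 \cdot (f_1\circ g_{s_1})\, d\nu_\rho + O\bigl(\rho\, \mathcal{N}_\ell(f_0)\mathcal{N}_\ell(f_1)\bigr),$$
effective mixing of $g_{s_1}$ against the smooth density of $\nu_\rho$ yields the main term plus an error $\ll e^{-\delta s_1}\rho^{-\kappa}\mathcal{N}_\ell(f_0)\mathcal{N}_\ell(f_1)$; balancing $\rho = e^{-\eta s_1}$ produces exponential decay in $s_1$. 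For general $b$, I would induct. After reordering so that $s_1\leq\cdots\leq s_b$, the quantity $D(s_1,\ldots,s_b)$ equals one of the individual $s_1$ or one of the gaps $s_{i+1}-s_i$. If $s_1$ realizes the minimum, apply the $b=1$ argument with test function $F_0:=f_0\cdot\prod_{i\geq 2}(f_i\circ g_{s_i-s_1})$, regarded as a function on $X_d$ after pullback. If a gap $s_{i+1}-s_i$ realizes the minimum, pair $f_i\circ g_{s_i}$ with $f_{i+1}\circ g_{s_{i+1}}$ through effective mixing of $g_{s_{i+1}-s_i}$, splitting the integrand and recursing on $b-1$ factors.

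The hard part will be controlling the Sobolev norms of the iterated products $\prod_i f_i\circ g_{s_i}$ appearing as auxiliary test functions during the induction: each $g_s$-translate inflates $\|\cdot\|_{H^\ell}$ by at most a polynomial factor in $e^s$, and pointwise multiplication costs a bounded loss of derivatives. These losses are absorbed by taking $\ell$ sufficiently large (depending on $b$) and $\delta$ sufficiently small so that the exponential decay $e^{-\delta D}$ dominates the polynomial inflations $e^{Cs_i}$. The fact that the constant in the statement depends on $f_0$ (rather than through $\mathcal{N}_\ell(f_0)$) reflects that $f_0$ lives on the compact torus $\mathcal{Y}$, so its Sobolev seminorms are extracted once at the thickening step and hidden in the $O_{b,f_0}$ implicit constant, leaving a clean product of $\mathcal{N}_\ell(f_i)$ for $i\geq 1$.
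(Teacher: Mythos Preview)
The paper does not prove this theorem; it is quoted from Bj\"orklund--Gorodnik \cite[Corollary~2.4]{BjorklundGorodnik2019} and only the cases $b=1,2$ are used (Corollary~\ref{cor:effequhorsp}). So there is no ``paper's proof'' to compare against. Your outline does reflect the broad architecture of the literature arguments (thickening \`a la Margulis, then exponential mixing, then induction on $b$), but the induction step as you describe it has a genuine gap.

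The problem is in your final paragraph. When you form an auxiliary test function involving $f_i\circ g_{t}$, the Sobolev norm inflates by a factor of order $e^{c\ell|t|}$ (with $c$ depending on the weights $\bm\alpha,\bm\beta$), and you propose to absorb this into the decay $e^{-\delta D}$. That cannot work: $D=\min\{s_i,|s_i-s_j|\}$ may be arbitrarily small compared to the individual $s_i$. For instance with $b=2$, $s_1=1$, $s_2=T$ large, one has $D=1$, while your scheme produces a factor $\mathcal{N}_\ell\bigl(f_1\cdot(f_2\circ g_{T-1})\bigr)$ of size $e^{c\ell(T-1)}$; no choice of $\delta>0$ makes $e^{-\delta}\cdot e^{c\ell(T-1)}$ bounded. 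Making $\delta$ smaller only hurts. Relatedly, you let $\ell$ depend on $b$, whereas the theorem asserts a single pair $(\ell,\delta)$ valid for \emph{all} $b$.

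Two further points your sketch elides. First, for general weights $\bm\alpha,\bm\beta$ the subgroup underlying $\mathcal{Y}$ is typically a \emph{proper} subgroup of the unstable horospherical subgroup of $g_s$, so the transversal used in the thickening step is larger than ``the stable horospherical subgroup''; handling this correctly is exactly the content of \cite{KleinbockMargulis2012} and is not a triviality. Second, the induction in \cite{BjorklundGorodnik2019} is organised so that one never needs the full $H^\ell$-norm of a long composite $\prod_i f_i\circ g_{t_i}$: one thickens once on $\mathcal{Y}$, reduces to effective multiple mixing on $X_d$, and there exploits that $\|\cdot\|_{C^0}$ is invariant under composition with $g_t$ while the Sobolev contribution is controlled via a mollification/interpolation step at each stage, so the inflation is always matched to the gap currently being exploited. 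Your sketch would need to be reworked along these lines to go through.
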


(In fact, in \cite[Corollary 2.4]{BjorklundGorodnik2019}, the error term is also explicit in terms of $f_0$.)

Taking {$f_0\equiv 1$} %to be the constant one function 
on $\mathcal{Y}$ and $b=1,2$, we get the following effective equidistribution and doubly mixing of the family of $g_s$-translates $\{g_s \mathcal{Y}\}_{s>0}$ in $X_d$.
\begin{cor}\label{cor:effequhorsp}
%Keep the assumptions and notation as in Theorem \ref{thm:higherordermixing}. 
Let $\ell\in\N$ and $\delta>0$ be as in Theorem \ref{thm:higherordermixing}. 
Then for any $f, f_1, f_2\in C^{\infty}_c(X_d)$ and $s, s_1, s_2>0$,
\begin{align}\label{equ:effequ}
\int_{\mathcal{Y}}f(g_s\Lambda_A)\,\d A=\mu_d(f)+O\left(e^{-\delta s}\mathcal{N}_\ell(f)\right),
\end{align}
and
\begin{align}\label{equ:effmixing}
\int_{\mathcal{Y}}f_1(g_{s_1}\Lambda_A)f_2(g_{s_2}\Lambda_A)\,\d A=\mu_d(f_1)\mu_d(f_2)+O\left(e^{-\delta\min\{s_1, s_2, |s_1-s_2|\}}\mathcal{N}_\ell(f_1)\mathcal{N}_\ell(f_2)\right).
\end{align}
\end{cor}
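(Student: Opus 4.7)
The plan is to derive Corollary \ref{cor:effequhorsp} as an immediate specialization of Theorem \ref{thm:higherordermixing}, choosing $f_0\equiv 1$ on $\scrY$ and taking $b=1$ and $b=2$ respectively. This works because the ``extra'' function $f_0$ in Theorem \ref{thm:higherordermixing} is evaluated at the untranslated point $\Lambda_A$, so setting it to the constant function simply drops it from the integrand while contributing a factor $\textrm{Leb}(f_0)=1$ to the main term; the implied constant in the error term, which is allowed to depend on $b$ and $f_0$, then becomes an absolute constant since $b\in\{1,2\}$ and $f_0\equiv1$ are fixed.

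First, for \eqref{equ:effequ}, I would apply Theorem \ref{thm:higherordermixing} with $b=1$, $f_0\equiv1$ and $f_1=f$, $s_1=s$. Since $D(s_1)=s_1=s$ when $b=1$ (the ``min over $i\neq j$'' part is vacuous, leaving only $s_1$), and $\textrm{Leb}(1)=1$ because $\textrm{Leb}$ is a probability measure on $\scrY$, the conclusion reads
\begin{align*}
\int_{\scrY}f(g_s\Lambda_A)\,\d A=\mu_d(f)+O\bigl(e^{-\delta s}\scrN_\ell(f)\bigr),
\end{align*}
which is exactly \eqref{equ:effequ}.

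Next, for \eqref{equ:effmixing}, I would apply Theorem \ref{thm:higherordermixing} with $b=2$, $f_0\equiv1$, $f_1,f_2$ and parameters $s_1,s_2$ as given. The quantity $D(s_1,s_2)=\min\{s_1,s_2,|s_1-s_2|\}$ matches the definition in the statement, and the main term becomes $\textrm{Leb}(1)\cdot\mu_d(f_1)\mu_d(f_2)=\mu_d(f_1)\mu_d(f_2)$, giving
\begin{align*}
\int_{\scrY}f_1(g_{s_1}\Lambda_A)f_2(g_{s_2}\Lambda_A)\,\d A=\mu_d(f_1)\mu_d(f_2)+O\bigl(e^{-\delta D(s_1,s_2)}\scrN_\ell(f_1)\scrN_\ell(f_2)\bigr),
\end{align*}
which is \eqref{equ:effmixing}.

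There is essentially no obstacle here: the only thing to check is that the constant function $f_0\equiv1$ is a legitimate input for Theorem \ref{thm:higherordermixing}, which it is since $\scrY$ is a compact manifold (identified with the torus $M_{m,n}(\R/\Z)$) and so $1\in C^\infty(\scrY)$. All other verifications are bookkeeping: matching the definition of $D(\cdot)$ in the two cases $b=1$ and $b=2$, and observing that with $f_0$ and $b$ fixed the implied constant depending on $b$ and $f_0$ becomes absolute.
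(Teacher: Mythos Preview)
Your proposal is correct and matches the paper's own argument exactly: the paper simply states ``Taking $f_0\equiv 1$ on $\mathcal{Y}$ and $b=1,2$'' immediately before the corollary, without further elaboration. Your additional remarks (that $D(s_1)=s_1$ when $b=1$, that $f_0\equiv1\in C^\infty(\scrY)$ since $\scrY$ is compact, and that fixing $b$ and $f_0$ makes the implied constant absolute) are all valid and spell out what the paper leaves implicit.
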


\subsection{Smooth approximations and estimates on norms}
%In view of Proposition \ref{prop:dyint}, to prove Theorem \ref{thm:improvedirichlecon} it suffices to show that if the series \eqref{equ:crticalseriesconj} converges then for almost every $A\in M_{m,n}(\R)$, $\Delta(g_s\Lambda_A)> r(s)$ for all sufficiently large $s$, or equivalently, $g_k \Lambda_A\in \tilde{B}_k$ for only finitely many times. We note that in view of the Dani Correspondence (Lemma \ref{lem:danicor}) and the Borel-Cantelli lemma, the conclusion of this statement follows readily for generic $\Lambda\in X_d$. To have the same result for generic $\Lambda_A$, we will need the following effective equidistribution result of expanding horospheres on $X_d$ which was first proved by Kleinbock and Margulis \cite{KleinbockMargulis2012}. Here for potential further applications to the divergent case, we use the following effective higher-order correlation estimates which was due to Bj\"{o}rklund and Gorodnik \cite{BjorklundGorodnik2019}. Let us first fix some notation. 
In this subsection we prove the existence of smooth functions
$\phi\in C_c^{\infty}(X_d)$
bounding our shrinking targets from above and below in an appropriate sense,
with control on the norm $\scrN_\ell(\phi)$. We follow the strategy {of} \cite[Theorem 1.1]{KleinbockMargulis2018} while allowing the small identity neighborhoods of $G$ (against which we convolve) to shrink. %(More precisely, we will convolve against smooth functions supported on these small identity neighborhoods.)
%\footnote{More precisely, we will convolve against smooth functions supported on these small identity neighborhoods.}

%{\color{red} *** Where in \cite{KleinbockMargulis1999} is this?***}

%derive some estimates on the norm $\mathcal{N}_{\ell}(\phi)$ when $\phi\in C_c^{\infty}(X_d)$ is a smooth approximating function to our shrinking targets. We summarize the results in the following lemma.
%
%To apply Corollary \ref{} we will approximate the shrinking targets $\tilde{B}_k$ from above by compactly supported smooth functions on $X_d$ and then apply the above estimate to these approximating functions. The following lemma gives us good estimates on the error term occurring in the above effective equidistribution result.

%{\color{red} Note: In both the lemmas \ref{lem:smapprox}
%and \ref{lem:smappest}
%I now take $0<r<1$ instead of $0<r<r_0$.
%(In the proof of Lemma \ref{lem:smappest}, this is achieved by taking $c$ sufficiently small in the construction.)
%Also, I've replaced ``$L$'' by an explicit exponent.}
Recall that
\begin{align*}
\widetilde{\Delta}_r:=\bigcup_{0\leq s<1}g_{-s} \Delta^{-1}[0,r]\qquad {(0<r<1)}.
\end{align*}

\begin{lem}\label{lem:smapprox}
Let $\ve>0$. For any $0<r<1$, there exists $\phi_r\in C^{\infty}_c(X_d)$ satisfying
$\chi_{\widetilde{\Delta}_r}\leq \phi_r\leq \chi_{\widetilde{\Delta}_{2r}}$
and, for any $\ell\in\N$:
\begin{align}\label{lem:smapproxres}
\mathcal{N}_\ell(\phi_r)\ll_{\ell,d,\ve} r^{-L},
\qquad\text{with }\:
L:=1+\max\left\{0,\ell+\ve-\tfrac{d^2}4-\tfrac d4\right\}.
\end{align}
\end{lem}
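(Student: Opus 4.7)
The plan is to construct $\phi_r$ by mollifying the indicator of an intermediate thickening $\widetilde{\Delta}_{3r/2}$ against a symmetric bump function on $G$ whose support shrinks at rate $r$. Fix a nonnegative symmetric $\psi\in C_c^\infty(G)$ with $\int_G\psi\,\d\nu=1$ supported in a small neighborhood of the identity, and for $\eta>0$ let $\psi_\eta$ be a rescaling supported in $G_{c\eta}$ (with $c$ absolute) satisfying $\int\psi_\eta=1$ and $\|\mathcal{D}_Z\psi_\eta\|_{L^1(G)}\ll_Z\eta^{-\deg Z}$ for every monomial $Z$. Taking $\eta=c_0r$ for a small absolute $c_0$ to be chosen, I would set
\begin{align*}
\phi_r(x):=\int_G\psi_\eta(g)\,\chi_{\widetilde{\Delta}_{3r/2}}(g^{-1}x)\,\d\nu(g).
\end{align*}
Since $\widetilde{\Delta}_{3r/2}$ has compact closure in $X_d$ by Mahler's compactness criterion, $\phi_r\in C_c^\infty(X_d)$.

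To secure the sandwich $\chi_{\widetilde{\Delta}_r}\leq\phi_r\leq\chi_{\widetilde{\Delta}_{2r}}$, I would first establish the one-sided Lipschitz bound $|\Delta(g\Lambda)-\Delta(\Lambda)|\ll\eta$ for all $g\in G_\eta$ and $\Lambda\in X_d$, which immediately gives $G_{c\eta}\cdot\widetilde{\Delta}_s\subset\widetilde{\Delta}_{s+Cc\eta}$ for some absolute $C$. Choosing $c_0$ so that $Ccc_0<\tfrac{1}{4}$ then yields both $G_{c\eta}^{-1}\widetilde{\Delta}_r\subset\widetilde{\Delta}_{3r/2}$ (forcing $\phi_r\equiv 1$ on $\widetilde{\Delta}_r$) and $G_{c\eta}\widetilde{\Delta}_{3r/2}\subset\widetilde{\Delta}_{2r}$ (forcing $\phi_r\equiv 0$ off $\widetilde{\Delta}_{2r}$).

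The key identity for the norm estimates, obtained by differentiating under the integral sign and exploiting unimodularity of $G$ to move the derivative onto the smooth factor, is
\begin{align*}
\mathcal{D}_Z\phi_r(x)=\int_G(\mathcal{D}_Z\psi_\eta)(g)\,\chi_{\widetilde{\Delta}_{3r/2}}(g^{-1}x)\,\d\nu(g).
\end{align*}
The $L^1$--$L^\infty$ form of Young's inequality then gives $\|\mathcal{D}_Z\phi_r\|_{C^0}\ll\eta^{-\deg Z}$, and in particular $\|\phi_r\|_{\Lip}\ll r^{-1}$. For the Sobolev norm, Cauchy--Schwarz applied under the integral combined with the $G$-invariance of $\mu_d$ yields the sharper bound
\begin{align*}
\|\mathcal{D}_Z\phi_r\|_{L^2(X_d)}\ll\eta^{-\deg Z}\,\mu_d(\widetilde{\Delta}_{3r/2})^{1/2}.
\end{align*}
Plugging in $\eta\asymp r$, applying Theorem~\ref{thm:volumethickening} for $\mu_d(\widetilde{\Delta}_{3r/2})\asymp r^{\varkappa_d}\log^{\lambda_d}(1/r)$, and using $\varkappa_d/2=d^2/4+d/4-1$, this gives $\|\phi_r\|_{H^\ell}\ll r^{-\ell-1+d^2/4+d/4}\log^{\lambda_d/2}(1/r)$. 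Absorbing the logarithmic factor into $r^{-\varepsilon}$ and taking the maximum with the Lipschitz bound produces $\mathcal{N}_\ell(\phi_r)\ll r^{-L}$ with exactly the claimed $L$.

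The main obstacle is to pass from the trivial $L^\infty$ convolution estimate, which would give only $r^{-(\ell+1)}$ and be far too weak for the Borel--Cantelli argument in the divergence case of Theorem~\ref{thm:improvedirichleconwei}, to the $L^2$ estimate so as to cash in the small measure of $\widetilde{\Delta}_{3r/2}$ supplied by Theorem~\ref{thm:volumethickening}; it is precisely the factor $\mu_d(\widetilde{\Delta}_{3r/2})^{1/2}$ that produces the crucial saving of $d^2/4+d/4$ in the exponent. A subsidiary technical point is the orientation of the convolution on $X_d=G/\Gamma$, which must be chosen so that the left-invariant Lie derivatives defining $\|\cdot\|_{H^\ell}$ transfer onto the smooth mollifier rather than onto the indicator, as in the displayed identity above.
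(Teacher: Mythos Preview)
Your construction is essentially the paper's: convolve $\chi_{\widetilde{\Delta}_{3r/2}}$ with a bump at scale $\asymp r$, transfer derivatives onto the mollifier, and exploit Theorem~\ref{thm:volumethickening} to gain the factor $\mu_d(\widetilde{\Delta}_{2r})^{1/2}\asymp r^{\varkappa_d/2}$ in the $H^\ell$ bound. The only point that needs more care is your claim that the Lipschitz bound on $\Delta$ ``immediately gives'' $G_{c\eta}\cdot\widetilde{\Delta}_s\subset\widetilde{\Delta}_{s+Cc\eta}$: since $\widetilde{\Delta}_s=\bigcup_{0\leq t<1}g_{-t}\Delta^{-1}[0,s]$, multiplying by $h\in G_{c\eta}$ gives $hg_{-t}\Lambda=g_{-t}(g_thg_{-t})\Lambda$, and one must check that the conjugate $g_thg_{-t}$ still lies in a neighborhood of size $O(\eta)$ for $|t|\leq1$ before the Lipschitz bound applies --- the paper spends a paragraph on precisely this step.
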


%\comm{In this lemma, as well as in the next one and subsequent proof, I strongly suggest to replace $r$ with another letter. Maybe $\rho$? Otherwise you have $\phi_r$ and $\psi_r$ in the same formula where the two $r$s mean different things. Hope there are still letters in the alphabet left!}
(Note that the implied constant in the bound in \eqref{lem:smapproxres} is independent of $r$.)

To prepare for the proof,
let us define, for any $r>0$,
\begin{align*}
\mathcal{O}_r:=\left\{g\in G\col \max\{\|g-I_d\|, \|g^{-1}-I_d\|\}<\frac{r}{10d}\right\}.
\end{align*}
Here the norm is the supremum norm on the matrix space $M_{d,d}(\R)$. Clearly, $\mathcal{O}_r$ is an open neighborhood of the identity element in $G$ and it is invariant under inversion.
Let $\nu$ be the normalized Haar measure of $G$ as in Section \ref{sec:haar};
recall that $\nu$ locally agrees with $\mu_d$. 

We will need the following auxiliary lemma. 
%We mention that the implied constant in the below bound on 
%$\|\mathcal{D}_Z(\psi_r)\|_{L^{\infty}(G)}$ depends only on 
%$\deg(Z)$ and $d$, and not on $r$.
\begin{lem}\label{lem:smappest}
For every $0<r<1$, there exists a
function ${\theta}_r\in C_c^\infty(G)$
satisfying 
${\theta}_r\geq0$,
$\supp({\theta}_r)\subset \mathcal{O}_{r/2}$,
$\int_{G}{\theta}_r(g)\,d\nu(g)=1$
and 
$\|\mathcal{D}_Z({\theta}_r)\|_{L^{\infty}(G)}\ll_{\ell,d} r^{1-d^2-\ell}$
for every monomial $Z=Y_1^{\ell_1}\cdots Y_a^{\ell_a}$, where $\ell=\deg(Z)=\ell_1+\cdots+\ell_a$.
\end{lem}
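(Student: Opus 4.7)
The plan is to build $\theta_r$ from a fixed nonnegative bump function on the Lie algebra $\mathfrak{g}$, rescaled by $r$ and transferred to $G$ via the exponential map. The dimension count drives everything: since $\dim G = d^2 - 1$, the neighborhood $\mathcal{O}_{r/2}$ has Haar mass $\nu(\mathcal{O}_{r/2}) \asymp r^{d^2-1}$, so a nonnegative smooth function of total integral $1$ supported in $\mathcal{O}_{r/2}$ must have supremum of order $r^{1-d^2}$. Each application of a Lie derivative $\mathcal{D}_{Y_i}$ will contribute an additional factor of $r^{-1}$, since the only genuine length scale of the bump is $r$; iterating $\ell$ times will reproduce the claimed exponent $1 - d^2 - \ell$.

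Concretely, I fix once and for all a nonnegative $\eta_0 \in C_c^{\infty}(\mathfrak{g})$ with $\int_{\mathfrak{g}} \eta_0(X)\, dX = 1$ and $\supp(\eta_0) \subset \{X \in \mathfrak{g} \col \|X\| \leq c\}$ for some small $c > 0$ on which $\exp$ is a diffeomorphism onto its image. Using the Taylor expansion $\exp(X) = I_d + X + O(\|X\|^2)$ and the analogous expansion for $\exp(-X)$, one can shrink $c$ so that $\exp(rX), \exp(-rX) \in \mathcal{O}_{r/2}$ for every $X \in \supp(\eta_0)$ and every $r \in (0,1)$. I set
\[
\tilde\theta_r(g) := r^{1-d^2}\, \eta_0\bigl(r^{-1}\log g\bigr)
\]
when $g$ lies in the exponential image of $\{\|X\| \leq cr\}$ and $\tilde\theta_r(g) := 0$ elsewhere, and then take $\theta_r := \tilde\theta_r \big/ \int_G \tilde\theta_r\, d\nu$. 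Pulling Haar measure back via $\exp$ writes $d\nu = J(X)\, dX$ with $J \in C^{\infty}$ and $J(0) = 1$; the substitution $Y = r^{-1}X$ then gives
\[
\int_G \tilde\theta_r\, d\nu = \int_{\mathfrak{g}} \eta_0(Y)\, J(rY)\, dY = 1 + O(r).
\]
Hence $\theta_r \geq 0$, $\supp(\theta_r) \subset \mathcal{O}_{r/2}$, and $\int_G \theta_r\, d\nu = 1$, as required.

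For the derivative bound, I would push each Lie derivative through the exponential chart. Setting $F_r(X) := \tilde\theta_r(\exp X) = r^{1-d^2}\eta_0(r^{-1}X)$, the Baker--Campbell--Hausdorff formula identifies $(\mathcal{D}_{Y_i} \tilde\theta_r) \circ \exp$ with a first-order differential operator in $X$ applied to $F_r$, whose coefficients are smooth functions of $X$ and in particular uniformly bounded on the fixed ball $\{\|X\| \leq c\}$. Iterating, for any degree-$\ell$ monomial $Z = Y_1^{\ell_1}\cdots Y_a^{\ell_a}$, the composition $(\mathcal{D}_Z \tilde\theta_r) \circ \exp$ is a differential operator of order at most $\ell$ in $X$, with coefficients bounded uniformly in $r$, applied to $F_r$. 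Since $\|\partial^{\alpha} F_r\|_\infty \ll r^{1-d^2-|\alpha|} \|\eta_0\|_{C^{|\alpha|}}$ for every multi-index $\alpha$ by the chain rule, this yields
\[
\|\mathcal{D}_Z \tilde\theta_r\|_{L^\infty(G)} \ll_{\ell,d} r^{1-d^2-\ell},
\]
and the normalization factor $1/(1+O(r))$ preserves the bound.

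The main obstacle is essentially bookkeeping: one must verify that the coefficients arising when the operators $\mathcal{D}_Z$ are conjugated through $\exp$ remain uniformly bounded on the fixed domain $\{\|X\| \leq c\}$ independently of $r$, which follows from smoothness of the BCH expansion on any relatively compact neighborhood of $0 \in \mathfrak{g}$. No additional ideas beyond the fact that $\exp$ is a bilipschitz local diffeomorphism near the identity are required.
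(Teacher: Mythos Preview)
Your proposal is correct and follows essentially the same approach as the paper: fix a bump function in a coordinate chart near the identity, rescale by $r$, and normalize. The paper uses an arbitrary smooth chart $\varphi:\Omega\to\R^{a}$ with $\varphi(I_d)=\bn$, whereas you specialize to $\varphi=\log$ and correspondingly invoke BCH to analyze the Lie derivatives; the paper's version leaves the derivative bound as ``straightforward to verify,'' while you spell out that each $\mathcal{D}_{Y_i}$ becomes a first-order operator with coefficients bounded on the fixed ball, so the two arguments coincide in substance.
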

(The implied constant in the bound on 
$\|\mathcal{D}_Z({\theta}_r)\|_{L^{\infty}(G)}$ depends only on 
$\ell$ and $d$, and not on $r$.)

\begin{proof}[Proof (sketch)]
Let $\varphi:\Omega\to\R^a$ ($a=d^2-1$) be an arbitrary $C^\infty$ coordinate chart of an open neighbourhood 
$\Omega$ of 
$I_d$ in $G$,
with $\varphi(I_d)=\bn$. %mapping $\id$ to the origin in $\R^a$. 
Let $\eta\in C_c^\infty(\R^a)$ be a fixed bump function in $\R^a$,
i.e.\  a function satisfying $\eta\geq0$ and
$\int_{\R^a}\eta\,\d \vecx=1$.
We may assume that the support of $\eta$ is contained in the unit ball centered at the origin,
$\scrB_1^a$.
For each $t>0$ define $\eta_t\in C_c^\infty(\R^a)$ through
\begin{align*}%\label{etatDEF}
\eta_t(\vecx)=t^{-a}\eta(t^{-1}\vecx)\qquad (\vecx\in\R^a),
\end{align*}
and note that $\supp(\eta_t)\subset\scrB_t^a$ and $\int_{\R^a}\eta_t\,\d\vecx=1$.
Let us choose the constant $c>0$ so small that $\scrB_c^a\subset\varphi(\Omega)$
and $\varphi^{-1}(\scrB_{cr}^a)\subset\scrO_{r/2}$ for all $0<r<1$.
This is possible since the matrix entries of $g$ and $g^{-1}$
are $C^\infty$ functions of $g\in G$.
%\footnote{{\color{red} Details:
%$\varphi(\Omega)$ is an open subset of $\R^a$ containing $\varphi(\id)=\bn$;
%hence for $c>0$ sufficiently small we indeed have $\scrB_c^a\subset\varphi(\Omega)$;
%in fact we may even assume $\overline{\scrB_c^a}\subset\varphi(\Omega)$.
%Next let $f_{ij}:\varphi(\Omega)\to\R$ 
%and $h_{ij}:\varphi(\Omega)\to\R$ 
%for $1\leq i,j\leq d$ be the matrix entry, resp.\ ``matrix entry of inverse'' 
%functions wrt the chart $\varphi$,
%i.e.\ so that $f_{ij}(\varphi(g))$ is the $i,j$th entry of $g$,
%and $h_{ij}(\varphi(g))$ is the $i,j$th entry of $g^{-1}$,
%for every $g\in\Omega$.
%These are $C^\infty$ functions with $f_{ij}(\bn)=h_{ij}(\bn)=\delta_{i,j}$;
%hence there exists a constant $K>0$ such that
%$|f_{ij}(\vecv)-f_{ij}(\bn)|\leq K\|\vecv\|$
%and $|h_{ij}(\vecv)-h_{ij}(\bn)|\leq K\|\vecv\|$
%for all $\vecv\in\overline{\scrB_c^a}$.
%This implies that 
%for every $g\in\varphi^{-1}(\overline{\scrB_c^a})$
%we have 
%$\max\{\|g-\id\|,\|g^{-1}-\id\|\}\leq K\|\varphi(g)\|$,
%and so $g\in\scrO_{20dK\|\varphi(g)\|}$ (say),
%except if $g=\id$.
%It follows that if we set $c':=\min\{c,\frac1{40dK}\}$
%then $\varphi^{-1}(\scrB_{c'r}^a)\subset\scrO_{r/2}$ for all $0<r<1$.}}
Now we may simply set, for each $0<r<1$,
\begin{align*}%\label{psirDEF}
{\theta}_r:=v_r\cdot (\eta_{cr}\circ \varphi),
\end{align*}
where $v_r>0$ is chosen so as to make 
$\int_{G}{\theta}_r\,\d\nu=1$.
One verifies that the limit $\lim_{r\to0^+}v_r$ exists and is a positive real number.
Using this fact, and recalling $a=d^2-1$, 
all the properties stated in the lemma are straightforward to verify.
\end{proof}

\begin{proof}[Proof of Lemma \ref{lem:smapprox}]
We claim that for any $r_1, r_2>0$,
\begin{align}\label{equ:increlathick}
\mathcal{O}_{r_1} \widetilde{\Delta}_{r_2}\subset \widetilde{\Delta}_{r_1+r_2}.
\end{align}
First we note that for any $h\in M_{d,d}(\R)$ and $\bm{v}\in \R^d$, $\|h \bm{v}\|\leq d\|h\|\|\bm{v}\|$. 
This implies that for any $r>0$ and any $g\in \mathcal{O}_{10r}$ and $\bm{v}\in \R^d$,
\begin{align*}
\|g \bm{v}\|\leq \|\bm{v}\|+\|(g-I_d)\bm{v}\|\leq (1+r)\|\bm{v}\|.
\end{align*}
Hence for all $r>0$,
%\footnote{{\color{red}I changed from ``$\Delta(g\cdot \Lambda)-\Delta(\Lambda)$''
%to ``$\Delta(\Lambda)-\Delta(g\cdot \Lambda)$'' here.
%Detailed proof, for ourselves: Pick $\vecv\in\Lambda\smallsetminus\{\bn\}$
%such that $\Delta(\Lambda)=\log\bigl(\frac1{\|\vecv\|}\bigr)$.
%Then $\Delta(g\cdot\Lambda)\geq\log\bigl(\frac1{\|g\vecv\|}\bigr)$
%and so 
%$\Delta(\Lambda)-\Delta(g\cdot \Lambda)\leq\log\bigl(\frac{\|g\vecv\|}{\|\vecv\|}\bigr)\leq\log(1+r)$.}}
\begin{align*}
\Delta(\Lambda)-\Delta(g\Lambda)\leq \log (1+r)< r,\qquad \forall\ g\in \mathcal{O}_{10r},\ \Lambda\in X_d.
\end{align*}
Similarly, since $\mathcal{O}_{10r}$ is invariant under inversion, we also have
\begin{align*}
\Delta(g \Lambda)-\Delta(\Lambda)=\Delta(g\Lambda)-\Delta(g^{-1}g \Lambda)< r,\qquad \forall\ g\in \mathcal{O}_{10r},\ \Lambda\in X_d.
\end{align*}
Thus 
\begin{align*}
\mathcal{O}_{10r_1} \Delta^{-1}[0, r_2]\subset \Delta^{-1}[0, r_1+r_2],
\qquad \forall\ r_1, r_2>0.
\end{align*}
Now to prove the relation \eqref{equ:increlathick}, in view of the definition of $\widetilde{\Delta}_r$, it suffices to show that for any \linebreak $g\in \mathcal{O}_{r_1}$, %and for any $g_{-s}\Lambda$ with 
$0\leq s<1$ and $\Lambda\in \Delta^{-1}[0, r_2]$  there exists some $0\leq s'<1$ such that $$gg_{-s}\Lambda\in g_{-s'} \Delta^{-1}[0, r_1+r_2],$$ or equivalently, $g_{s'}gg_{-s} \Lambda\in \Delta^{-1}[0, r_1+r_2]$. We take $s'=s$. By direct computation and using $\alpha_i,\beta_j\in (0, 1)$ for all $1\leq i\leq m, 1\leq j\leq n$, we have 
\begin{align*}
\max\big\{\|g_{s}gg_{-s}-I_d\|, \|(g_{s}gg_{-s})^{-1}-I_d\|\big\}&=\max\big\{\|g_s(g-I_d)g_{-s}\|, \|g_s(g^{-1}-I_d)g_{-s}\|\big\}\\
&<e^2\max\big\{\|g-I_d\|, \|g^{-1}-I_d\|\big\}<e^2\frac{r_1}{10d}<\frac{r_1}{d}.
\end{align*}
Thus $g_sgg_{-s}\in \mathcal{O}_{10r_1}$, implying that
\begin{align*}
g_sgg_{-s}\Lambda\in \mathcal{O}_{10r_1} \Delta^{-1}[0, r_2]\subset \Delta^{-1}[0, r_1+r_2].
\end{align*}
This finishes the proof of \eqref{equ:increlathick}. 

Given any $0<r<1$, we now 
choose ${\theta}_r$ as in Lemma \ref{lem:smappest},
and then define our approximating function $\phi_r\in C_c^{\infty}(X_d)$ as the convolution 
\begin{align*}
\phi_r(x):={\theta}_r\ast \chi_{\widetilde{\Delta}_{3r/2}}(x)=\int_{G}{\theta}_r(g)\chi_{\widetilde{\Delta}_{3r/2}}(g^{-1}x)\, \d\nu(g).
\end{align*}
It follows from ${\theta}_r\geq0$ and $\int_G{\theta}_r\,d\nu=1$
that $\phi_r$ takes values in $[0,1]$. 
Moreover, for any \linebreak $g\in \supp({\theta}_r)\subset \mathcal{O}_{r/2}$ (so that $g^{-1}$ is also contained in $\mathcal{O}_{r/2}$ since $\mathcal{O}_{r/2}$ is invariant under inversion) and for any $x\in \widetilde{\Delta}_r$, we have by \eqref{equ:increlathick}
\begin{align*}
g^{-1}x\in \mathcal{O}_{r/2} \widetilde{\Delta}_r\subset \widetilde{\Delta}_{3r/2},
\end{align*}
implying that for any $x\in \widetilde{\Delta}_r$,
\begin{align*}
\phi_r(x)=\int_{\supp({\theta}_r)}{\theta}_r(g)\chi_{\widetilde{\Delta}_{3r/2}}(g^{-1}x)\,\d\nu(g)=\int_{\supp({\theta}_r)}{\theta}_r(g)\,\d\nu(g)=1.
\end{align*}
Thus $\chi_{\widetilde{\Delta}_r}\leq \phi_r$. 
Next, we claim that $\supp(\phi_r)\subset \widetilde{\Delta}_{2r}$.
To prove this, note that since $\supp({\theta}_r)$ is compact and contained in $\scrO_{r/2}$,
there exists some ${\epsilon}\in(0,r/2)$ such that $\supp({\theta}_r)\subset\scrO_{r/2-{\epsilon}}$.
Now if $x\in\supp(\phi_r)$ then there exists some
$x'\in\scrO_{{\epsilon}}\, x$ with $\phi_r(x')>0$;
and by the definition of $\phi_r$ there then exists some $g\in\supp({\theta}_r)\subset\scrO_{r/2-{\epsilon}}$
such that $g^{-1}x'\in\tDelta_{3r/2}$.
Hence $x'\in\scrO_{r/2-{\epsilon}}\tDelta_{3r/2}\subset\tDelta_{2r-{\epsilon}}$,
and (since $\scrO_{{\epsilon}}$ is invariant under inversion) $x\in\scrO_{{\epsilon}}\,x'\subset\scrO_{{\epsilon}}\,\tDelta_{2r-{\epsilon}}\subset\tDelta_{2r}$.
We have thus proved that $\supp(\phi_r)\subset \widetilde{\Delta}_{2r}$.
Using this inclusion 
%Similarly, using the relation $\mathcal{O}_{r/2}\widetilde{\Delta}_{3r/2}\subset \widetilde{\Delta}_{2r}$, 
%one can show that for any $x\in X_d$, $\phi_r(x)>0$ implies $x\in\widetilde{\Delta}_{2r}$. 
together with the fact that $\phi_r$ takes values in $[0, 1]$, 
we conclude that $\phi_r\leq \chi_{\widetilde{\Delta}_{2r}}$. (Note that 
$\phi_r\leq \chi_{\widetilde{\Delta}_{2r}}$ follows already from the easier fact that 
for any $x'\in X_d$, $\phi_r(x')>0$ implies $x'\in\widetilde{\Delta}_{2r}$.
However we need some control on $\supp(\phi_r)$ below when we discuss derivatives of $\phi_r$.)

For the norm bounds, we first note that using the invariance of the Haar measure, for any $Y\in \mathfrak{g}$ we have $\mathcal{D}_Y(\phi_r)=\big(\mathcal{D}_Y({\theta}_r)\big)\ast \chi_{\widetilde{\Delta}_{3r/2}}$. More generally,
for any monomial $Z$ in $\{Y_1,\ldots, Y_a\}$,
\begin{align}\label{equ:diffrela}
\mathcal{D}_Z(\phi_r)=\big(\mathcal{D}_Z({\theta}_r)\big)\ast \chi_{\widetilde{\Delta}_{3r/2}}. %,\ \forall\ Z=Y_1^{\ell_1}\cdots Y_{a}^{\ell_a}.
\end{align}
%{\color{red} In the following, bounding the Sobolev norm.
%In the bound on $\scrD_Z(\phi_r)(x)$, 
%I have only restructured the presentation slightly.
%One thing here: I think that we can take ``$\supp(\mathcal{D}_Z\psi_r)\subset\supp(\psi_r)$'' as ``obvious'';
%thus I write ``$\supp(\psi_r)$'' below, which I think looks nicer.
%Next: I have changed the discussion bounding the $L^2$ norm a bit;
%it seems that by using more elementary steps we can prove a bound which is at least as good.}
Recall from Lemma \ref{lem:smappest}
that 
$\supp({\theta}_r)\subset \mathcal{O}_{r/2}$
and
$\|\mathcal{D}_Z({\theta}_r)\|_{L^{\infty}(G)}\ll_{\ell_Z,d} r^{1-d^2-\ell_Z}$,
where $\ell_Z$ is the degree of $Z$.
%{\color{red}{[Here I added a subscript to distinguish from $\ell$, for the degree of the Sobolev norm]}}
Furthermore, it is easily verified that
$\nu(\mathcal{O}_{r/2})\asymp_d r^{d^2-1}$.
Using these facts, we have, for every $x\in X_d$
and every monomial $Z$ of degree $\ell_Z\leq\ell$,
\begin{align}\label{lem:smapproxpf1}
|\scrD_Z(\phi_r)(x)|=\left|\int_{\supp({\theta}_r)}\mathcal{D}_Z({\theta}_r)(g)\chi_{\widetilde{\Delta}_{3r/2}}(g^{-1}x)\,\d\nu(g)\right|
\ll_{d,\ell}r^{1-d^2-\ell_Z}\nu(\mathcal{O}_{r/2})\ll_{d} r^{-\ell_Z}.
\end{align}
Hence, using also $\supp(\phi_r)\subset\tDelta_{2r}$ and %$\mu_d(\tDelta_{2r})$ is small: 
Theorem \ref{thm:volumethickening}, we get
\begin{align*}
\|\phi_r\|_{{H^{\ell}}}\ll_{d,\ell}\mu_d(\tDelta_{2r})^{1/2} r^{-\ell}
\ll_{d,\ve} r^{\frac{d^2}4+\frac d4-1-\ell-\ve}.
\end{align*}
%{\color{red}Details for ourselves for the first ``$\ll$'' above:
%By the definition of $\|f\|_{L_\ell^2}$, it suffices to verify that
%for every $f'\in C^\infty_c(X_d)$ we have:
%\begin{align*}
%\|f'\|_{L^2(X_d,\mu_d)}\leq\mu_d(\supp(f'))^{1/2}\|f'\|_{L^\infty(X_d,\mu_d)}
%\end{align*}
%This is easily proved:
%\begin{align*}
%\|f'\|_{L^2(X_d,\mu_d)}^2
%=\int_{\supp(f')}|f'(x)|^2\,d\mu_d(x)
%\leq
%\mu_d(\supp(f'))\|f'\|^2_{L^\infty(X_d,\mu_d)}.
%\end{align*}
%Done!}

Finally, using the fact that
for any $0<r<1$,
the support of $\phi_r$ is contained in the fixed precompact set $\tDelta_2$,
we have
$\|\phi_r\|_{\Lip}\ll_d\sup_{x\in X_d}\sup_{j\in\{1,\ldots,a\}}\left|\scrD_{Y_j}(\phi_r)(x)\right|$, %\footnote{{\color{red} We should perhaps write out a proof for 
%ourself of this fact, as a double check... Anyway, it is quite similar to some thing
%near the end of the file dirichlet\_details.pdf.}}
and hence by \eqref{lem:smapproxpf1},
\begin{align*}%\label{equ:lipest}
\|\phi_r\|_{\Lip}\ll_{d} r^{-1}.
\end{align*}
Now the bound in \eqref{lem:smapproxres} follows, via the definition of the norm $\mathcal{N}_\ell$.
\end{proof}

\section{Proof of Theorem \ref*{thm:improvedirichleconwei}}
In this section, building on the analysis developed in the previous section, 
we give the proof of Theorem \ref{thm:improvedirichleconwei}. 
We keep the notation as in the previous section. In particular, throughout this section, 
we fix constants $\delta>0$ and $\ell\in\N$ as in Corollary \ref{cor:effequhorsp},
and for each $0<r<1$ we fix a 
function $\phi_r\in C^{\infty}_c(X_d)$ as in Lemma \ref{lem:smapprox}.
Taking $\varepsilon=1$ in Lemma \ref{lem:smapprox} 
we have that the norm bound in \eqref{lem:smapproxres} holds for 
$L:=1+\max\left\{0,\ell+1-\frac{d^2}{4}-\frac{d}{4}\right\}$. %and Theorem \ref{thm:improvedirichlediv}

\subsection{Application of effective equidistribution}
For any $0<r<1$, taking $f=\phi_r$ in the effective equidistribution result
\eqref{equ:effequ} 
%As mentioned before, a key step in the proof of Theorem \ref{thm:improvedirichleconwei} for both the convergent and divergent cases, we will apply 
%the effective equidistribution \eqref{equ:effequ} for the smooth approximating functions $\phi_r$.
%% We said ``in Lemma \ref{lem:smapprox}'' a few lines up, and that we FIX these \phi_r; it is important that the reader
%% keeps this in mind; therefore best to NOT repeat ``in Lemma \ref{lem:smapprox}'' here...
%(We will also need the effective doubly mixing \eqref{equ:effmixing} for the divergent case.) 
%In particular, for any $0<r<1$, taking $f=\phi_r$ in \eqref{equ:effequ} 
and applying the norm estimate \eqref{lem:smapproxres}, we get for any $s>0$,
\begin{align}\label{equ:mainest}
\int_{\scrY}\phi_r(g_s\Lambda_A)\,\d A=\mu_d(\phi_r)+O_d\left(e^{-\delta s}r^{-L}\right).
\end{align}
When $r$ is small, the above integral should be expected to be small as well; 
however,  the error term in \eqref{equ:mainest} blows up  as $r\to 0^+$. % due to the quantity $r^{-L}$. 
To remedy this issue,
for $r$ very small we will instead prove an upper bound on the integral,
obtained by applying \eqref{equ:mainest} for a suitable enlargened $r$-value.
%we will use another auxiliary estimate by enlarging $r$ appropriately. 
%Explicitly, we have the following:
The result is as follows:
%In this way we obtain:
\begin{lem}\label{lem:auxest}
Let $\eta:=\frac{\delta}{\varkappa_d+L}$. There exists $r_0\in (0, \frac12)$ such that for any $0<r< r_0$ and %for any 
$s>0$,
\begin{align}\label{equ:auxest}
\int_{\scrY}\phi_r(g_s\Lambda_A)\,\d A
\:\: \begin{cases}
{\displaystyle \asymp_d r^{\varkappa_d}\log^{\lambda_d}\bigl(\tfrac{1}{r}\bigr)} & \textrm{if $r>e^{-\eta s}$},
\\[8pt]
{\displaystyle \ll_d e^{-\varkappa_d\eta s/2}} & \textrm{if $r\leq e^{-\eta s}$.}
\end{cases}
\end{align}
%\begin{itemize}
%\item[(1)] if $r>e^{-\eta s}$, then
%\begin{align*}
%\int_{\scrY}\phi_r(g_s\Lambda_A)dA\asymp_d r^{\varkappa_d}\log^{\lambda_d}\left(\frac{1}{r}\right);
%\end{align*}
%\item[(2)] if $r<e^{-\eta s}$, then
%\begin{align*}
%\int_{\scrY}\phi_r(g_s\Lambda_A)dA\ll_d e^{-\eta s/2}.
%\end{align*}
%\end{itemize}
In particular, for any 
sequence $\{\rho_k\}_{k\in\N}\subset (0, \frac12)$
with $\lim_{k\to\infty}\rho_k=0$, we have
\begin{align}\label{equ:diveeqspser}
\sum_k\int_{\scrY}\phi_{\rho_k}(g_k\Lambda_A)\,\d A=\infty\quad 
\Longleftrightarrow\quad \sum_k \rho_k^{\varkappa_d}\log^{\lambda_d}\left(\frac{1}{\rho_k}\right)=\infty.
\end{align}
\end{lem}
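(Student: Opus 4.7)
The plan is to apply the effective equidistribution \eqref{equ:effequ} directly to $f=\phi_r$. Since $\chi_{\widetilde{\Delta}_r}\leq \phi_r\leq \chi_{\widetilde{\Delta}_{2r}}$, Theorem \ref{thm:volumethickening} gives $\mu_d(\phi_r)\asymp_d r^{\varkappa_d}\log^{\lambda_d}(1/r)$ for $r$ small, and combining with the norm bound $\mathcal{N}_\ell(\phi_r)\ll_d r^{-L}$ from Lemma~\ref{lem:smapprox} yields
\begin{align*}
\int_{\mathcal{Y}}\phi_r(g_s\Lambda_A)\,\d A\;=\;\mu_d(\phi_r)+O_d\bigl(e^{-\delta s}r^{-L}\bigr).
\end{align*}
So the whole game is to compare the error $e^{-\delta s}r^{-L}$ to the main term on the two sides of the threshold $r=e^{-\eta s}$.

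For Case 1, $r>e^{-\eta s}$, the definition $\eta=\delta/(\varkappa_d+L)$ gives $\delta s>(\varkappa_d+L)\log(1/r)$, so $e^{-\delta s}r^{-L}<r^{\varkappa_d}$. Since $d\geq 2$ forces $\lambda_d\geq 1$, this error is $o\bigl(r^{\varkappa_d}\log^{\lambda_d}(1/r)\bigr)$ as $r\to 0^+$, hence (after choosing $r_0$ small) it is dominated by $\tfrac12\mu_d(\phi_r)$, which delivers the two-sided asymptotic $\asymp_d r^{\varkappa_d}\log^{\lambda_d}(1/r)$. For Case 2, $r\leq e^{-\eta s}$, the direct estimate is useless because of the $r^{-L}$ blow-up; the trick is to replace $\phi_r$ by $\phi_{r'}$ at the critical scale $r':=2e^{-\eta s}$. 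Since $2r\leq r'$, we have
\begin{align*}
\phi_r\leq \chi_{\widetilde{\Delta}_{2r}}\leq\chi_{\widetilde{\Delta}_{r'}}\leq\phi_{r'},
\end{align*}
so (provided $s$ is large enough that $r'<r_0$) Case 1 applied to $r'$ gives
\begin{align*}
\int_{\mathcal{Y}}\phi_r(g_s\Lambda_A)\,\d A\;\leq\;\int_{\mathcal{Y}}\phi_{r'}(g_s\Lambda_A)\,\d A\;\ll_d\;(r')^{\varkappa_d}\log^{\lambda_d}(1/r')\;\asymp_d\;e^{-\varkappa_d\eta s}s^{\lambda_d},
\end{align*}
and since $s^{\lambda_d}\ll e^{\varkappa_d\eta s/2}$ we conclude $\ll_d e^{-\varkappa_d\eta s/2}$. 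For bounded $s$ the trivial bound $\int\phi_r\leq 1$, combined with $e^{-\varkappa_d\eta s/2}$ being bounded below, handles the remaining range.

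For the consequence \eqref{equ:diveeqspser}, I would split the index set into $\mathcal{K}_1:=\{k:\rho_k>e^{-\eta k}\}$ and $\mathcal{K}_2:=\{k:\rho_k\leq e^{-\eta k}\}$. Over $\mathcal{K}_1$ the first regime of \eqref{equ:auxest} gives termwise comparability of the two series. Over $\mathcal{K}_2$ the integrals are majorized by the convergent series $\sum_k e^{-\varkappa_d\eta k/2}$, while the functional $x\mapsto x^{\varkappa_d}\log^{\lambda_d}(1/x)$ is monotone increasing on a right neighborhood of $0$, so for $k$ large, $\rho_k^{\varkappa_d}\log^{\lambda_d}(1/\rho_k)\leq e^{-\varkappa_d\eta k}(\eta k)^{\lambda_d}$, whose sum also converges. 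Thus both series inherit their divergence/convergence behavior from the contribution over $\mathcal{K}_1$, yielding \eqref{equ:diveeqspser}.

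The main obstacle is exactly the non-trivial $r^{-L}$ growth of the Sobolev norm of the approximating function, which prevents the error term in the equidistribution statement from being useful once $r$ drops below the equidistribution scale $e^{-\eta s}$. The key idea that makes this manageable is the monotonicity $\phi_r\leq\phi_{r'}$ for $2r\leq r'$, which lets us convert the problem into an upper bound at the critical scale $r'\asymp e^{-\eta s}$ where the effective equidistribution still gives a usable asymptotic; the extra $\log^{\lambda_d}$ factor that appears is of polynomial order in $s$ and is swallowed by splitting the exponent $\varkappa_d\eta s$ as $\tfrac12\varkappa_d\eta s+\tfrac12\varkappa_d\eta s$.
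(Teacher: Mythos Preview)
Your proposal is correct and follows essentially the same approach as the paper: applying \eqref{equ:effequ} to $\phi_r$, comparing main term versus error in the regime $r>e^{-\eta s}$, and in the regime $r\leq e^{-\eta s}$ dominating $\phi_r$ by $\phi_{r'}$ at the critical scale $r'=2e^{-\eta s}$ (the paper calls this $\rho$) to reduce to the first regime, with the trivial bound covering bounded $s$. Your treatment of \eqref{equ:diveeqspser} via the split $\mathcal{K}_1\cup\mathcal{K}_2$ is likewise the paper's argument, with your monotonicity remark making explicit why $\sum_{k\in\mathcal{K}_2}\rho_k^{\varkappa_d}\log^{\lambda_d}(1/\rho_k)$ converges.
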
 

\begin{proof}
First we note that by Theorem \ref{thm:volumethickening} and the relation $\chi_{\widetilde{\Delta}_r}\leq \phi_r\leq \chi_{\widetilde{\Delta}_{2r}}$ (see Lemma \ref{lem:smapprox}),
we have
$\mu_d(\phi_r)\asymp_d r^{\varkappa_d}\log^{\lambda_d}\left(\frac{1}{r}\right)$.
Furthermore, if 
$r>e^{-\eta s}$, then 
the ratio of the main term and the error term in 
\eqref{equ:mainest} satisfies:
\begin{align*}
\frac{\mu_d(\phi_r)}{e^{-\delta s}r^{-L}}
\asymp_d\frac{r^{\varkappa_d}\log^{\lambda_d}\bigl(\tfrac{1}{r}\bigr)}{e^{-\delta s}r^{-L}}>\log^{\lambda_d}\bigl(\tfrac{1}{r}\bigr),
\end{align*}
which we can force to be as large as we like by taking the constant $r_0$ sufficiently small
(in a way which only depends on $d$).
Hence it follows from \eqref{equ:mainest}
that \eqref{equ:auxest} holds
in the case $r>e^{-\eta s}$.

Next assume $r\leq e^{-\eta s}$.
Set
%applying this result to the $r$-value
$\rho:=2e^{-\eta s}$.
If $\rho<r_0$, then by what we proved in the previous paragraph,
\begin{align*}
\int_{\scrY}\phi_{\rho}(g_s\Lambda_A)\,\d A\asymp_d \rho^{\varkappa_d}\log^{\lambda_d}\bigl(\tfrac{1}{\rho}\bigr)
\ll_d e^{-\varkappa_d\eta s/2},
\end{align*}
and hence the bound in \eqref{equ:auxest} follows,
since 
$\phi_r\leq \chi_{\widetilde{\Delta}_{2r}}\leq \chi_{\widetilde{\Delta}_{\rho}}\leq \phi_{\rho}$
(again see Lemma \ref{lem:smapprox}).
In the remaining case when $\rho\geq r_0$,
we have $s\ll_d 1$ and $e^{-\varkappa_d\eta s/2}\gg_d1$,
and hence the bound in \eqref{equ:auxest} holds simply because of $\phi_r\leq1$.
This completes the proof of \eqref{equ:auxest}.

For the last part of the lemma, since $\lim_{k\to\infty}\rho_k=0$, after possibly deleting finitely many terms from
the two sums in \eqref{equ:diveeqspser},
we may assume $\rho_k<r_0$ for all appearing terms.
Next, using the second bound in \eqref{equ:auxest}
and the fact that both {of}  the series
$\sum_{k} e^{-\varkappa_d\eta k/2}$
and
$\sum_{\rho_k\leq e^{-\eta k}} \rho_k^{\varkappa_d}\log^{\lambda_d}\left(\frac{1}{\rho_k}\right)$
converge,
it follows that the two divergence statements in 
\eqref{equ:diveeqspser} %both 
remain unaffected if all the terms 
for which $\rho_k\leq e^{-\eta k}$ are removed from the respective series.   %sum in question.
After this operation, the equivalence in 
\eqref{equ:diveeqspser} is an immediate consequence of the first 
relation in \eqref{equ:auxest}.
\end{proof}

\subsection{The convergence case} % of Theorem \ref{thm:improvedirichleconwei}}

This case is now easily handled using Lemma \ref{lem:auxest}.
%The convergent case of Theorem \ref{thm:improvedirichleconwei} follows immediately from Lemma \ref{lem:auxest}. 
%Here we first give a quick
\begin{proof}[Proof of the convergence case of Theorem \ref{thm:improvedirichleconwei}]
Let $r=r_{\psi}: [s_0,\infty)\to (0,\infty)$ be the continuous, decreasing %$r$-
function corresponding to $\psi$ as in Lemma \ref{lem:danicor}. 
First note that since the series \eqref{equ:crticalseriesconj} converges, we have $\lim_{t\to\infty}t\psi(t)=1$ (or equivalently, $\lim_{s\to\infty}r(s)=0$ as seen from the proof of Lemma \ref{lem:danicor}). Moreover, by the last part of Lemma \ref{lem:danicor}, the fact that the series \eqref{equ:crticalseriesconj} converges implies that the series $\sum_k r(k)^{\varkappa_d}\log^{\lambda_d}\bigl(1+\tfrac{1}{r(k)}\bigr)$ also converges.

Now for each $k>s_0$ let us define 
\begin{align*}
\overline{B}_k:=\bigcup_{0\leq s<1}g_{-s} \Delta^{-1}[0, \omega_1r(k+s)]\quad \textrm{and}\quad \overline{E}_k:=\left\{\Lambda_A\in \mathcal{Y}\col g_k \Lambda_A\in \overline{B}_{k}\right\},
\end{align*}
where $\omega_1:=\max\{m\alpha_i, n\beta_j\col 1\leq i\leq m, 1\leq j\leq n\}$ is as in Proposition \ref{prop:dyint}.
In view of Proposition \ref{prop:dyint} (and the paragraph after it), it suffices to show that for $\textrm{Leb}$-a.e. $\Lambda_A\in \mathcal{Y}$, $g_k \Lambda_A\in \overline{B}_k$ for only finitely many $k>s_0$,
or equivalently, that the limsup set $\limsup_{k\to\infty}\overline{E}_k$ is of zero measure. 
Thus in view of the Borel-Cantelli lemma, it suffices to show that $\sum_k\textrm{Leb}(\overline{E}_k)<\infty$.

%Now let us consider
%\begin{align*}
%r_1 : [s_0,\infty)\to (0,\infty), s\mapsto r_1(s):=\max\{\omega_1r(s), s^{-1}\}.
%\end{align*}
%Then clearly, $r_1(s)\geq \omega_1r(s)$ for all $s>s_0$ and $\lim_{s\to\infty}r_1(s)=0$. In particular, using also the fact that $r(\cdot)$ is decreasing in $s$ we have for all $k>s_0$, 
To prove this, we will approximate the shrinking targets $\{\overline{B}_k\}_{k>s_0}$ from above. Since %$\lim_{s\to\infty}r(s)=0$
{$r(s)\to 0$ as $s\to\infty$}, by enlarging $s_0$ if necessary 
(equivalently, enlarging $t_0$ {as in} Lemma \ref{lem:danicor}),
%(equivalently enlarging $t_0$ since by Lemma \ref{lem:danicor}, $t_0$ is a strictly increasing function in $s_0$) 
we may assume $\omega_1r(s)\in (0, \frac12)$ for all $s>s_0$. Moreover, by Lemma \ref{lem:danicor}, $r(\cdot)$ is decreasing; thus with $\rho_k:=\omega_1r(k)$, {we have}
\begin{align}\label{equ:inclrelasp}
\overline{B}_k %\subset \widetilde{\Delta}_{\omega_1r(k)}
\subset \widetilde{\Delta}_{\rho_k},
\qquad \forall \,k>s_0.
\end{align}
Recall that for each
$0<r<1$ we have fixed a 
function $\phi_r\in C^{\infty}_c(X_d)$ as in Lemma \ref{lem:smapprox}.
Now for each $k>s_0$, we have
$\chi_{\overline{B}_k}\leq\chi_{\widetilde{\Delta}_{\rho_k}}\leq\phi_{\rho_k}$
(by \eqref{equ:inclrelasp} and Lemma \ref{lem:smapprox}),
implying that
\begin{align}\label{LebIkupperbound}
\textrm{Leb}(\overline{E}_k)&=\int_{\mathcal{Y}}\chi_{\overline{B}_k}(g_k\Lambda_A)\,\d A\leq \int_{\mathcal{Y}}\phi_{\rho_k}(g_k \Lambda_A)\,\d A.
\end{align}
Next, it follows from $\rho_k=\omega_1r(k)$ and the convergence of $\sum_k r(k)^{\varkappa_d}\log^{\lambda_d}\bigl(1+\tfrac{1}{r(k)}\bigr)$
that the series $\sum_k \rho_k^{\varkappa_d}\log^{\lambda_d}\bigl(\tfrac{1}{\rho_k}\bigr)$ also converges;
{in addition}, $\lim_{k\to\infty}\rho_k=0$ since $\lim_{k\to\infty}r(k)=0$.
Hence by the last part of Lemma \ref{lem:auxest} combined with \eqref{LebIkupperbound},
we have $\sum_{k}\textrm{Leb}\left(\overline{E}_k\right)<\infty$, finishing the proof.
\end{proof}
\begin{remark}\label{rmk:samlei}
Let $(m,n)=(2,1)$; thus $d=3$. In \cite[Theorem 1.1]{ChowYang2019}, Chow and Yang proved an effective equidistribution result for certain Diophantine lines in $\scrY$ translated under the full (two dimensional) diagonal subgroup of $G$ along certain restricted directions. In particular, their result implies the following: Let $(a,b)\in \R^2$ be a \textit{Diophantine} vector (see \cite[p.\ 2]{ChowYang2019} for the definition), and let $J\subset \R$ be a compact subinterval. Then these exist constants $\ell'\in\N$, $c\in (0,1)$ and $\delta'>0$ such that for any {pair of weights} $\bm{\alpha}=(\frac{c'}{1+c'}, \frac{1}{1+c'})$ with $0<c'\leq c$, for any $f\in C_c^{\infty}(X_3)$ and for any $s>0$
\begin{align}\label{equ:effequcy}
\frac{1}{|J|}\int_{J}f\left(g_s\Lambda_{\bm{v}(x)}\right)\d x=\mu_3(f)+O\left(e^{-\delta' s}\|f\|_{L^{\infty}_{\ell'}}\right),
\end{align}
where $g_s=g^{\bm{\alpha},1}_s=\diag(e^{\alpha_1 s}, e^{\alpha_2 s}, e^{-s})$ with $\bm{\alpha}$ as above, $\bm{v}(x):=(ax+b, x)^t\in \R^2$ and $\|\cdot\|_{L^{\infty}_{\ell'}}$ %is 
the ``$L^{\infty}$, degree $\ell'$'' Sobolev norm defined by
\begin{align*}
\|f\|_{L^{\infty}_{\ell'}}:=\sum_{\deg(Z)\leq \ell'}\|\mathcal{D}_Z(f)\|_{C^0}.
\end{align*}  
Here $\|\cdot\|_{C^0}$ is the uniform norm on $C^{\infty}_c(X_3)$ as before. On the other hand, it is easy to see from Lemma \ref{lem:smappest} and the relation \eqref{equ:diffrela} that there exists $L'>1$ such that 
\begin{align}\label{equ:normestcy}
\|\phi_r\|_{L^{\infty}_{\ell'}}\ll_{\ell'} r^{-L'},\qquad \forall\ 0<r<1.
\end{align}
Using a similar analysis with \eqref{equ:effequcy} and \eqref{equ:normestcy} in place of \eqref{equ:effequ} and \eqref{lem:smapproxres} respectively, we can conclude that if the series \eqref{equ:crticalseriesconj} (with $d=3$) converges, then for $\textrm{Leb}$-a.e. $x\in J$ the column vector $\bm{v}(x)=(ax+b, x)^t$ is $\psi_{\bm{\alpha},1}$-Dirichlet.
\end{remark}
\subsection{The divergence case}
In this subsection we prove the divergence case of Theorem \ref{thm:improvedirichleconwei}. %In fact, we will prove a slightly stronger result which requires a weaker (but more technical) sufficient condition for the same conclusion as in Theorem \ref{thm:improvedirichlediv}, see Theorem \ref{thm:divcaseweco}. We will then prove Theorem \ref{thm:improvedirichlediv} by showing that conditions \eqref{equ:polydecacond} and \eqref{equ:condipsi} imply the assumptions in Theorem \ref{thm:divcaseweco}. 
We first record from
\cite{KochenStone1964}
the following divergence Borel-Cantelli lemma which we will use.

%\noindent
%{\color{red}\textbf{NEW VERSION OF Lemma \ref{lem:borcandiv}:}}

\begin{lem}\label{lem:borcandiv}
Let $(\mathcal{X},\mu)$ be a probability space. Let $\{h_k\}_{k\in\N}$ be a sequence of measurable functions on $\mathcal{X}$ taking values in $[0,1]$. Let $b_k:=\mu(h_k)$. Suppose $\sum_kb_k=\infty$ and 
\begin{align}\label{equ:quasiinde}
\liminf_{k_2\to\infty}\frac{\int_{\mathcal{X}}\left(\sum_{i=k_1}^{k_2}h_i(x)-\sum_{i=k_1}^{k_2}b_i\right)^2\d\mu(x)}{\left(\sum_{i=k_1}^{k_2}b_i\right)^2}=0\quad \textrm{for some $k_1\in\N$}. 
\end{align}
Then for $\mu$-a.e. $x\in \mathcal{X}$, $h_k(x)>0$ infinitely often.
\end{lem}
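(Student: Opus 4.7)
The plan is to prove this standard Kochen--Stone-type divergence Borel--Cantelli lemma by a second-moment argument. For brevity write $S_{k_1,k_2}(x) := \sum_{i=k_1}^{k_2} h_i(x)$ and $B_{k_1,k_2} := \sum_{i=k_1}^{k_2} b_i$, where $k_1$ is the integer for which assumption \eqref{equ:quasiinde} holds. Note that the hypothesis $\sum_k b_k = \infty$ forces $B_{k_1,k_2} \to \infty$ as $k_2 \to \infty$.

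The complementary event $A := \{x \in \mathcal{X} \col h_k(x) > 0 \text{ for only finitely many } k\}$ decomposes as $A = \bigcup_{N \in \N} A_N$, where $A_N := \{x \in \mathcal{X} \col h_k(x) = 0 \text{ for all } k \geq N\}$; it therefore suffices to show $\mu(A_N) = 0$ for every $N$. The key observation is that since $0 \leq h_i \leq 1$, any $x \in A_N$ satisfies $S_{k_1,k_2}(x) \leq N$ for all $k_2 \geq N$. As $B_{k_1,k_2} \to \infty$, we may restrict attention to $k_2$ large enough that $B_{k_1,k_2} > 2N$, for which
\begin{align*}
A_N \subseteq \bigl\{x \in \mathcal{X} \col |S_{k_1,k_2}(x) - B_{k_1,k_2}| \geq \tfrac{1}{2} B_{k_1,k_2}\bigr\}.
\end{align*}

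Applying Chebyshev's inequality to the right-hand side yields
\begin{align*}
\mu(A_N) \leq \frac{4 \int_{\mathcal{X}} \bigl(S_{k_1,k_2}(x) - B_{k_1,k_2}\bigr)^2 \d\mu(x)}{B_{k_1,k_2}^2}.
\end{align*}
Taking the liminf over $k_2$ and invoking the hypothesis \eqref{equ:quasiinde}, the right-hand side vanishes, giving $\mu(A_N) = 0$. Summing over $N$ concludes that $\mu(A) = 0$, i.e.\ $h_k(x) > 0$ infinitely often for $\mu$-a.e.\ $x$.

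There is no real obstacle here: the only mildly subtle point is the observation that on $A_N$ the random sum $S_{k_1,k_2}$ stays bounded while its mean $B_{k_1,k_2}$ diverges, so the deviation $|S_{k_1,k_2} - B_{k_1,k_2}|$ is automatically comparable to $B_{k_1,k_2}$ itself. Everything else is a direct application of the Chebyshev bound to the hypothesis \eqref{equ:quasiinde}.
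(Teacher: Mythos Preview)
Your proof is correct and complete. The argument via Chebyshev's inequality is clean: on each $A_N$ the partial sum $S_{k_1,k_2}$ stays bounded by $N$ while $B_{k_1,k_2}\to\infty$, so the variance hypothesis forces $\mu(A_N)=0$.

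The paper takes a different route: it rewrites the numerator in \eqref{equ:quasiinde} as $\mu(Y_k^2)-\mu(Y_k)^2$ for $Y_k=\sum_{i=k_1}^{k+k_1-1}h_i$, observes that the hypothesis is equivalent to $\limsup_k \mu(Y_k)^2/\mu(Y_k^2)=1$, and then invokes the Kochen--Stone theorem \cite{KochenStone1964} to conclude that $\limsup_k Y_k(x)/\mu(Y_k)>0$ almost everywhere. Your argument is more elementary and self-contained, avoiding the external reference entirely. The trade-off is that the Kochen--Stone formulation extends directly to the weaker hypothesis in Remark~\ref{rmk:posme} (where the $\liminf$ is merely bounded by some $C<\infty$), yielding the quantitative bound $\mu\bigl(\{h_k>0\text{ i.o.}\}\bigr)\geq 1/(1+C)$; your Chebyshev bound in that setting would only give $\mu(A_N)\leq 4C$, which is vacuous once $C\geq 1/4$.
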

\begin{proof}
Let $Y_1,Y_2,\ldots$ be the sequence of random variables defined by
$Y_k(x)=\sum_{i=k_1}^{k+k_1-1}h_i(x)$ ({$k%=1,2,\ldots
\in\N$}).
Note that
\begin{align*}
\int_{\mathcal{X}}\biggl(\sum_{i=k_1}^{k_2}h_i(x)-\sum_{i=k_1}^{k_2}b_i\biggr)^2\, \d\mu(x)
=\mu\bigl(Y_{k_2-k_1+1}^2\bigr)-\mu\bigl(Y_{k_2-k_1+1}\bigr)^2;
\end{align*}
hence \eqref{equ:quasiinde} implies that
$\limsup_{k\to\infty}\mu(Y_k)^2/\mu(Y_k^2)=1$.
%Hence
 {Therefore} by part (iii) of the main theorem in 
\cite{KochenStone1964},
for $\mu$-a.e.\ $x$ we have
$\limsup_{k\to\infty}Y_k(x)/\mu(Y_k)>0$.
Also $\mu(Y_k)=\sum_{i=k_1}^{k+k_1-1}b_i\to\infty$ as $k\to\infty$.
Hence it follows that
for $\mu$-a.e.\ $x$ we have
$\lim_{k\to\infty}Y_k(x)=+\infty$,
and in particular $h_i(x)>0$ for infinitely many $i$.
\end{proof}
\begin{remark}\label{rmk:posme}
If one replaces the assumption \eqref{equ:quasiinde} by the weaker assumption that 
\begin{align*}%\label{equ:quasiindewe}
\liminf_{k_2\to\infty}\frac{\int_{\mathcal{X}}\left(\sum_{i=k_1}^{k_2}h_i(x)-\sum_{i=k_1}^{k_2}b_i\right)^2\d\mu(x)}{\left(\sum_{i=k_1}^{k_2}b_i\right)^2}=:C<\infty\quad \textrm{for some $k_1\in\N$},
\end{align*}
then by the application of part (iii) of the main theorem in 
\cite{KochenStone1964} we get instead
$$
\mu\left(\left\{x\in X\col \limsup_{k\to\infty}\frac{Y_k(x)}{\mu(Y_k)}>0\right\}\right)\geq \frac{1}{1+C}.
$$ 
In particular, there is a positive measure set of $x\in X$ such that $h_i(x)>0$ infinitely often.
\end{remark}

\begin{proof}[Proof of the divergence case of Theorem \ref{thm:improvedirichleconwei}]
First we note that in view of Remark \ref{rmk:kw} we may assume $\lim_{t\to\infty}t\psi(t)=1$.
% since otherwise there would exists $c\in (0,1)$ such that $\psi(t)<\frac{c}{t}$. In particular, $\psi_{\bm{\alpha},\bm{\beta}}$-Dirichlet matrices are also $(\tfrac{c}{t})_{\bm{\alpha},\bm{\beta}}$-Dirichlet. But the set of $(\tfrac{c}{t})_{\bm{\alpha},\bm{\beta}}$-Dirichlet matrices is a null set \cite{} {\color{red}{[the \textit{weighted non-improvability} certainly does not follow from Davenport-Schmidt; is there any reference for this result? Otherwise we might need to add a short proof of it] }}; hence the set of $\psi_{\bm{\alpha},\bm{\beta}}$-Dirichlet matrices is also a null set, finishing the proof of this case. Then by the assumption, $\psi(\cdot)$ also satisfies the condition \eqref{equ:condipsi}. 
Let $r=r_{\psi}$ be the continuous, decreasing %$r$-
function corresponding to $\psi$ as in Lemma \ref{lem:danicor};
then from the proof of that lemma we have
$\lim_{s\to\infty}r(s)=0$. 
Also by Lemma~\ref{lem:danicor}, since the series \eqref{equ:crticalseriesconj} diverges, the series $\sum_kr(k)^{\varkappa_d}\log^{\lambda_d}\bigl(1+\tfrac{1}{r(k)}\bigr)$ also diverges. 
%From the proof of Lemma \ref{lem:danicor} we also know that $\lim_{s\to\infty}r(s)=0$. 
%As seen from the proof of the convergence case, we also have that $r(\cdot)$ is decreasing and $\lim_{s\to\infty}r(s)=0$. 
Moreover, by Remark \ref{rmk:equivinte}, condition \eqref{equ:condipsi} is equivalent to
\begin{align}\label{equ:serassump2}
\liminf_{s_1\to\infty}
\frac{\sum_{s_0<k\leq s_1}r(k)^{\varkappa_d}\log^{\lambda_d+1}\bigl(1+\frac1{r(k)}\bigr)}
{\Bigl(\sum_{s_0<k\leq s_1}r(k)^{\varkappa_d}\log^{\lambda_d}\bigl(1+\frac1{r(k)}\bigr)\Bigr)^2}
=0.
\end{align}
Now for any $k>s_0$ let
\begin{align*}
\underline{B}_k=\bigcup_{0\leq s<1}g_{-s}\Delta^{-1}[0, \omega_2r(k+s)]%\quad \textrm{and}\quad \underline{I}_k:=\left\{\Lambda_A\in \mathcal{Y}: g_k \Lambda_A\in \underline{B}_{k}\right\}
\end{align*}
be as before with $\omega_2:=\min\{m\alpha_i, n\beta_j\col 1\leq i\leq m, 1\leq j\leq n\}$ as in Proposition \ref{prop:dyint}. In view of Proposition \ref{prop:dyint} (and the paragraph after it) it suffices to show that for $\Leb$-a.e. $\Lambda_A\in \mathcal{Y}$, $g_k \Lambda_A\in \underline{B}_k$ infinitely often. 

In this case we will approximate the shrinking targets $\left\{\underline{B}_k\right\}_{k>s_0}$ from below. 
Recall that for each
$0<r<1$ we have fixed a 
function $\phi_r\in C^{\infty}_c(X_d)$ as in Lemma \ref{lem:smapprox}.
Again we may assume $\omega_2r(s)\in (0,\frac12)$ for all $s>s_0$, and since $r(\cdot)$ is decreasing, we have $\widetilde{\Delta}_{\omega_2r(k+1)}\subset \underline{B}_{k}$, implying that, with $\rho_k:=\omega_2r(k+1)/2$:
\begin{align}\label{equ:inclreldiv}
f_k:=\phi_{\rho_k}\leq \chi_{\widetilde{\Delta}_{\omega_2r(k+1)}}\leq\chi_{\underline{B}_k},\qquad \forall \,k>s_0.
%\qquad(\textrm{with $\rho_k:=\omega_2r(k+1)/2$}).
\end{align}
%Here $\phi_{\rho_k}$ is the smooth function as in Lemma \ref{lem:smapprox}. 
Let us set, for each $k>s_0$,
%Then similar as in the proof of the convergent case, for each $k>s_0$ if we set 
$$
b_k:=\int_{\scrY}f_k(g_k\Lambda_A)\,\d A=\int_{\scrY}\phi_{\rho_k}(g_k\Lambda_A)\,\d A.
$$ 
Then, similarly {to} the proof of the convergence case,
by applying the last part of Lemma \ref{lem:auxest}
and using the relation $\rho_k=\omega_2r(k+1)/2$ and 
the facts that the series $\sum_kr(k)^{\varkappa_d}\log^{\lambda_d}\bigl(1+\tfrac{1}{r(k)}\bigr)$ diverges
and $\lim_{s\to\infty}r(s)=0$,
it follows that the series $\sum_kb_k$ also diverges. 

Now for each $k>s_0$, let $h_k$ be the function on $\mathcal{Y}$ defined by $h_k(\Lambda_A):=f_k(g_k \Lambda_A)$. 
Then in view of the definition of $f_k:=\phi_{\rho_k}$ and the relation 
$\chi_{\widetilde{\Delta}_{\rho_k}}\leq \phi_{\rho_k}\leq \chi_{\widetilde{\Delta}_{2\rho_k}}$, {the function}
$h_k$ takes values in $[0,1]$, and
\begin{align*}
\int_{\mathcal{Y}}h_k(\Lambda_A)\,\d A=\int_{\mathcal{Y}}f_k(g_k \Lambda_A)\,\d A=b_k.
\end{align*}
We will apply Lemma \ref{lem:borcandiv} to the 
probability space $(\mathcal{Y}, \textrm{Leb})$
and the sequence $\{h_k\}_{k>s_0}$. 
We have already seen that $\sum_kb_k=\infty$;
thus in view of Lemma \ref{lem:borcandiv} it suffices to show that 
$\{h_k\}_{k> s_0}$ satisfies %the 
condition \eqref{equ:quasiinde}. 

Let us take $C>0$ sufficiently large so that for all $k>C$, $\rho_k\in (0, r_0)$, {where $r_0$ is} the constant as in Lemma \ref{lem:auxest}. For any $k_2>k_1> C$, let us denote
\begin{align*}
Q_{k_1,k_2}&:=\int_{\mathcal{Y}}\left(\sum_{i=k_1}^{k_2}h_i(\Lambda_A)-\sum_{i=k_1}^{k_2}b_i\right)^2\,\d A
=\sum_{k_1\leq i,j\leq k_2}\int_{\mathcal{Y}}\left(h_i(\Lambda_A)h_j(\Lambda_A)-b_ib_j\right)\,\d A.
\end{align*}
Using the fact that for each 
$k_1\leq i\leq k_2$, 
$$
\int_{\mathcal{Y}}\left(h^2_i(\Lambda_A)-b_i^2\right)\,\d A\leq \int_{\mathcal{Y}}h_i(\Lambda_A)\,\d A=b_i,
$$
we have
\begin{align*}
Q_{k_1,k_2}
%&<\sum_{i=k_1}^{k_2}\int_{\mathcal{Y}}h_i^2\left(\Lambda_A\right)dm(A)+2\sum_{k_1\leq i<j\leq k_2}\int_{\mathcal{Y}}h_i(\Lambda_A)h_j(\Lambda_A)dm(A)\\
&\leq \sum_{i=k_1}^{k_2}b_i+2\sum_{k_1\leq i<j\leq k_2}\int_{\mathcal{Y}}\left(h_i(\Lambda_A)h_j(\Lambda_A)-b_ib_j\right)\,\d A.
\end{align*}
Fix $k_1\leq i<j\leq k_2$; we will use two different estimates for the term $\int_{\mathcal{Y}}\big(h_i(\Lambda_A)h_j(\Lambda_A)-b_ib_j\big)\,\d A$ depending on whether $\min\{i, j-i\}$ is large or small. First, applying the effective doubly mixing \eqref{equ:effmixing} to the pair $(f_i, f_j)$, we get 
\begin{align}\label{equ:corelation2pre}
\int_{\mathcal{Y}}f_i(g_i \Lambda_A)f_j(g_j\Lambda_A)\,\d A=\mu_d(f_i)\mu_d(f_j)+O\left(e^{-\delta\min\{i, j-i\}}\mathcal{N}_{\ell}(f_i)\mathcal{N}_{\ell}(f_j)\right).
\end{align}
On the other hand, by \eqref{equ:effequ} we have
\begin{align}\label{thm:divcasewecopf2}
b_k=\mu_d(f_k)+O\left(e^{-\delta k}\mathcal{N}_{\ell}(f_k)\right),\quad \forall\ k>C.
\end{align}
Combining 
\eqref{thm:divcasewecopf2}, \eqref{equ:corelation2pre}, the norm estimate $\scrN_{\ell}(f_k)=\scrN_{\ell}(\phi_{\rho_k})\ll_{d} \rho_k^{-L}$ (by \eqref{lem:smapproxres}) and noting that $\int_{\scrY}h_i(\Lambda_A)h_j(\Lambda_A)\,\d A=\int_{\mathcal{Y}}f_i(g_i \Lambda_A)f_j(g_j\Lambda_A)\,\d A$,
we have
%this estimate with the doubly mixing \eqref{equ:corelation2pre} and the norm estimate \eqref{equ:normestdiv} we have for all $i, j>C$ 
\begin{align*}%\label{equ:corelation2}
\left|\int_{\scrY}\left(h_i(\Lambda_A)h_j(\Lambda_A)-b_ib_j\right)\,\d A\right|\ll_d e^{-\delta\min\{i, j-i\}}\rho_i^{-L}\rho_j^{-L}<e^{-\delta\min\{i, j-i\}}\rho_j^{-2L}.
\end{align*}
On the other hand, using the trivial 
estimate $|h_ih_j-b_ib_j|\leq h_ih_j+ b_ib_j\leq h_j+b_j$ we have
\begin{align}\label{equ:trivialest}
\biggl|\int_{\mathcal{Y}}\left(h_i(\Lambda_A)h_j(\Lambda_A)-b_ib_j\right)\,\d A\biggr|
&\leq \int_{\mathcal{Y}}\left(h_j(\Lambda_A)+b_j\right)\,\d A
=2b_j.
\end{align}
Combining these two bounds,
we conclude:
\begin{align}\label{thm:divcasewecopf1}
Q_{k_1,k_2}\ll_d \sum_{i=k_1}^{k_2}b_i+
\sum_{j=k_1+1}^{k_2}\sum_{i=k_1}^{j-1}
\min\left\{e^{-\delta\min\{i, j-i\}}\rho_j^{-2L},b_j\right\}.
\end{align}
In order to bound the above inner sum we replace $k_1$ by $1$ and use the symmetry $i\mapsto j-i$
to get
\begin{align}\label{thm:divcasewecopf1a}
\sum_{i=k_1}^{j-1}\min\left\{e^{-\delta\min\{i, j-i\}}\rho_j^{-2L},b_j\right\}
\leq 2\sum_{i=1}^{j-1}\min\left\{e^{-\delta\,i}\rho_j^{-2L},b_j\right\}.
\end{align}
In the last sum,
all terms are $\leq b_j$,
and there are 
at most $O_d\bigl(\log\bigl(2+b_j^{-1}\rho_j^{-2L}\bigr)\bigr)$
terms which are equal to $b_j$
(indeed, remember that $\delta$ depends only on $d$).
Furthermore, if there are any terms which are less than $b_j$,
then these are bounded above by
$b_j,b_j e^{-\delta},b_j e^{-2\delta},\ldots$,
and so their sum is $O_d(b_j)$.
It follows that the last sum in \eqref{thm:divcasewecopf1a}
is $O_d\left(b_j\min\left\{j,\log\bigl(2+b_j^{-1}\rho_j^{-2L}\bigr)\right\}\right)$,
and hence from \eqref{thm:divcasewecopf1} we get
\begin{align*}%\label{thm:divcasewecopf10}
Q_{k_1,k_2}\ll_d \sum_{i=k_1}^{k_2}b_i+
\sum_{j=k_1+1}^{k_2}b_j\min\left\{j,\log\bigl(2+b_j^{-1}\rho_j^{-2L}\bigr)\right\}
\ll\sum_{j=k_1}^{k_2}b_j\min\left\{j,\log\bigl(2+b_j^{-1}\rho_j^{-2L}\bigr)\right\}.
\end{align*}
Let $\eta=\frac{\delta}{\varkappa_d+L}$ be as in Lemma \ref{lem:auxest} and set $\kappa:=\frac{\varkappa_d\eta}{2}$. 
Then by \eqref{equ:auxest} we have for each $k_1\leq j\leq k_2$:
\begin{align}\label{equ:divcasewecopf8}
b_j=\int_{\scrY}\phi_{\rho_j}(g_j\Lambda_A)\,\d A
\:\: \begin{cases}
{\displaystyle \asymp_d \rho_j^{\varkappa_d}\log^{\lambda_d}\bigl(\tfrac{1}{\rho_j}\bigr)} & \textrm{if $\rho_j>e^{-\eta j}$,}
\\[8pt]
{\displaystyle \ll_d e^{-\kappa j}} & \textrm{if $\rho_j\leq e^{-\eta j}$.}
\end{cases}
\end{align}
Thus for any $k_2>k_1>C$ we have (recalling that $\rho_j=\omega_2r(j+1)/2$)
\begin{align*}
Q_{k_1,k_2}
&\ll_d\sum_{\substack{k_1\leq j\leq k_2\\ (\rho_j\leq e^{-\eta j})}}e^{-\kappa j}j
+\sum_{\substack{k_1\leq j\leq k_2\\ (\rho_j> e^{-\eta j})}}
\rho_j^{\varkappa_d}\log^{\lambda_d}\Bigl(\frac1{\rho_j}\Bigr)\log\left(2+\rho_j^{-\varkappa_d-2L}\log^{-\lambda_d}\Bigl(\frac1{\rho_j}\Bigr)\right)
\hspace{40pt}
\\
&\ll_d 1+\sum_{j=k_1}^{k_2}\rho_j^{\varkappa_d}\log^{\lambda_d+1}\Bigl(\frac1{\rho_j}\Bigr)\ll 1+\sum_{j=k_1+1}^{k_2+1}r(j)^{\varkappa_d}\log^{\lambda_d+1}\Bigl(1+\frac1{r(j)}\Bigr).
\end{align*}
Similarly, by \eqref{equ:divcasewecopf8},
for any fixed $k_1>C$ we have as $k_2\to\infty$:
\begin{align*}
\sum_{j=k_1}^{k_2}b_j
\gg_d\sum_{\substack{k_1\leq j\leq k_2\\ (\rho_j> e^{-\eta j})}}
\rho_j^{\varkappa_d}\log^{\lambda_d}\Bigl(\frac1{\rho_j}\Bigr)
\gg\sum_{j=k_1}^{k_2}\rho_j^{\varkappa_d}\log^{\lambda_d}\Bigl(\frac1{\rho_j}\Bigr)\gg_{\omega_2}\sum_{j=k_1+1}^{k_2+1}r(j)^{\varkappa_d}\log^{\lambda_d}\Bigl(1+\frac1{r(j)}\Bigr),
\end{align*}
where the second relation holds since
the series $\sum_{j=k_1}^{\infty}\rho_j^{\varkappa_d}\log^{\lambda_d}\bigl(\frac1{\rho_j}\bigr)$ diverges (this follows from the relation $\rho_j=\omega_2r(j+1)/2$ and {the} fact that the series $\sum_jr(j)^{\varkappa_d}\log^{\lambda_d}\bigl(1+\tfrac{1}{r(j)}\bigr)$ diverges), while the same sum restricted to those $j$ for which
$\rho_j\leq e^{-\eta j}$ is convergent.

Combining the last two bounds, we conclude that for any $k_1>C$, and for $k_2$ sufficiently large,
\begin{align}\label{thm:divcasewecopf21}
\frac{Q_{k_1,k_2}}{\left(\sum_{j=k_1}^{k_2}b_j\right)^2}\ll_{d,\omega_2}
\frac{1+\sum_{j=k_1+1}^{k_2+1}r(j)^{\varkappa_d}\log^{\lambda_d+1}\bigl(1+\frac1{r(j)}\bigr)}
{\Bigl(\sum_{j=k_1+1}^{k_2+1}r(j)^{\varkappa_d}\log^{\lambda_d}\bigl(1+\frac1{r(j)}\bigr)\Bigr)^2}.
\end{align}
Since the series $\sum_j r(j)^{\varkappa_d}\log^{\lambda_d}\bigl(1+\frac1{r(j)}\bigr)$ diverges, 
%the 
condition \eqref{equ:serassump2} implies that 
the limit inferior of the expression in the right hand side of \eqref{thm:divcasewecopf21}
tends to zero as $k_2\to\infty$.
Hence \eqref{equ:quasiinde} holds.
We have also noted that $\sum_kb_k=\infty$.
Hence by Lemma \ref{lem:borcandiv},
for $\textrm{Leb}$-a.e.\ $A\in M_{m,n}(\R/\Z)$ {we have}  $h_k(\Lambda_A)=f_k(g_k \Lambda_A)>0$ infinitely often. %Finally, this, 
Together with %the relation
 \eqref{equ:inclreldiv}, {this} implies that for $\textrm{Leb}$-a.e.\ $A\in M_{m,n}(\R/\Z)$, {the lattice} $g_k\Lambda_A$ {belongs to} $\supp(f_k)\subset \underline{B}_k$ { for infinitely many $k\in\N$}. This finishes the proof.
\end{proof}
\begin{remark}\label{rmk:final}
%\comm{[*** The following: NO; C's Theorem VIII is more special]
%``In the setting of asymptotic approximation, there is a general result due to Cassels
%\cite{Cassels1950} saying that the set of matrices $A\in M_{m,n}(\R/\Z)$ for which \eqref{equ:dirichlet}
%has solutions for an unbounded set of $t>0$
%always has Lebesgue measure zero or one, even if $\psi$ is not necessarily decreasing.''}

For the divergence case in Theorem \ref{thm:improvedirichleconwei}, we note that if one replaces the assumption \eqref{equ:condipsi} by the weaker assumption that 
\begin{align*}%\label{equ:condipsi}
\liminf_{t_1\to\infty}\frac{\sum_{t_0\leq k\leq t_1}k^{-1}F_{\psi}(k)^{\varkappa_d}\log^{\lambda_d+1}\left(\frac{1}{F_{\psi}(k)}\right)}
{\left(\sum_{t_0\leq k\leq t_1}k^{-1}F_{\psi}(k)^{\varkappa_d}\log^{\lambda_d}\Bigl(\frac{1}{F_{\psi}(k)}\Bigr)\right)^2}< \infty,
\end{align*} 
then, in view of Remark \ref{rmk:equivinte}, Remark \ref{rmk:posme} and the estimate \eqref{thm:divcasewecopf21}, we can conclude that, under this weaker assumption, $\DI_{\bm{\alpha},\bm{\beta}}(\psi)^c$ is of positive Lebesgue measure.
{It is an interesting question, to which we do not know the answer,
whether $\DI_{\bm{\alpha},\bm{\beta}}(\psi)$ must always be of
zero or full Lebesgue measure. See \cite{BeresnevichVelani2008} for an analogous question in the setting of asymptotic approximation.}
 %\comm{Finally we mention that if one can show the Lebesgue measure of $\DI_{\bm{\alpha},\bm{\beta}}(\psi)$ is either $0$ or $1$ (analogous to Cassels's theorem \cite{Cassels1950} in the asymptotic approximation setting), then we can further conclude that $\DI_{\bm{\alpha},\bm{\beta}}(\psi)^c$ is of full Lebesgue measure for $\psi$ satisfying the above assumption. However, to the best of our knowledge, such a result is currently not known in the uniform approximation setting.}
\end{remark}

\bibliographystyle{abbrv}
\bibliography{DKbibliog}

\end{normalsize}
\end{document}